\theoremstyle{plain}
\newtheorem{thm}{Theorem}[section]
\newtheorem{lem}[thm]{Lemma}
\newtheorem{prop}[thm]{Proposition}
\theoremstyle{definition}
\newtheorem{rem}[thm]{Remark}
\newtheorem{exe}[thm]{Example}
\theoremstyle{definition}
\newtheorem{defn}[thm]{Definition}
\def\makeautorefname#1#2{\expandafter\def\csname#1autorefname\endcsname{#2}}
\newcommand{\cC}{\mathcal{C}}
\newcommand{\cD}{\mathcal{D}}
\newcommand{\cI}{\mathcal{I}}
\newcommand{\cM}{\mathcal{M}}
\newcommand{\cR}{\mathcal{R}}
\newcommand{\cZ}{\mathcal{Z}}
\newcommand{\bC}{\mathbb{C}}
\newcommand{\bP}{\mathbb{P}}
\newcommand{\bR}{\mathbb{R}}
\newcommand{\bZ}{\mathbb{Z}}
\newcommand{\un}{\mathbbm{1}}
\DeclareMathOperator{\HOM}{HOM}
\DeclareMathOperator{\Mod}{Mod}
\DeclareMathOperator{\Bimod}{Bimod}
\DeclareMathOperator{\BIMOD}{BIMOD}
\DeclareMathOperator{\OMoperator}{OM}
\DeclareMathOperator{\Hom}{Hom}
\DeclareMathOperator{\End}{End}
\DeclareMathOperator{\cone}{Cone}
\DeclareMathOperator{\br}{\mathbf p}
\DeclareMathOperator{\Lderiv}{L}
\DeclareMathOperator{\img}{img}
\newcommand{\vsim}{\rotatebox{90}{\(\sim\)}}
\newcommand{\vsimeq}{\rotatebox{90}{\(\simeq\)}}
\newcommand{\vsimeqop}{\rotatebox{-90}{\(\simeq\)}}
\newcommand{\BBimod}{\mathfrak{B}\text{imod}}
\newcommand{\id}{\text{Id}}
\newcommand{\Tau}{\mathrm{T}}
\definecolor{myblue}{rgb}{0,.5,1}
\definecolor{mygreen}{rgb}{.3,.75,.1}
\newcommand{\tikzdiag}[2][]{\tikz[#1,thick,baseline={([yshift=1ex+#2]current bounding box.center)}]}
\newcommand{\tikzdiagc}[1][]{\tikzdiag[#1]{-1ex}}
\newcommand{\tikzdiagh}[1][]{\tikzdiag[#1]{-2ex}}
\newcommand{\tikzdiagd}[1][]{\tikzdiag[#1]{0ex}}
\tikzset{
    partial ellipse/.style args={#1:#2:#3}{
        insert path={+ (#1:#3) arc (#1:#2:#3)}
    }
}
\tikzstyle{giga thick}=[ line width=.7mm]
\tikzstyle{movieline}=[double,double distance=2pt]
\tikzstyle{moviedashed}=[dashed, line width=2pt]
\tikzstyle{rrect}=[rectangle, rounded corners=2pt,draw=black,fill=white,inner sep=3pt]
\tikzstyle{tikzdot}=[fill, circle, inner sep=2pt]
\tikzstyle{wall} = [
\tikzstyle{leash} = [dashed, color=gray]
\tikzstyle{dleash} = [dashed, double, double distance=1pt, color=gray]
\tikzstyle{ds} = [ double, double=lightgray, double distance=1pt]
\tikzstyle{dt} = [dashed, thin]
\tikzstyle{dds} = [ds, dt]
\tikzstyle{vt} = [very thick]
\tikzset{>={stealth}}
\newcommand{\tikzbrace}[4]{\draw[decoration={brace,mirror,raise=-8pt},decorate]  (#1-.1,#3 -.35) -- node {#4} (#2+.1,#3-.35)}
\DeclareMathOperator{\kh}{Kh}
\DeclareMathOperator{\CSeq}{CSeq}
\DeclareMathOperator{\F}{F}
\newcolumntype{C}{>{\centering\arraybackslash}X}
\newcommand{\chcobcat}{\text{\textbf{ChCob}}}
\newcommand{\linchcobcat}{{R\chcobcat}}
\newcommand{\dotchcobcat}{{R\chcobcat}^2_{\bullet}}
\newcommand{\Moddg}{\Mod_{dg}}
\newcommand{\Bimoddg}{\Bimod_{dg}}
\newcommand{\Lotimes}{\otimes^{\Lderiv}}
\newcommand{\tqft}{\mathcal{F}}
\newcommand{\tangleSpace}{B}
\newcommand{\red}[1]{\widehat{#1}}
\newcommand{\redTangleSpace}{\red{\tangleSpace}}
\newcommand{\chcob}{W}
\newcommand{\assoc}{\alpha}
\newcommand{\gradC}{\mathcal{G}}
\newcommand{\shiftFunctCol}{\Phi}
\newcommand{\shiftFunct}[2][]{{\varphi^{#1}_{#2}}}
\newcommand{\shiftDom}[2][]{{D^{#1}_{#2}}}
\newcommand{\shiftCenter}[1]{\Sigma}
\newcommand{\compMap}[3][]{{\beta^{#1}_{#2,#3}}}
\newcommand{\vCompMap}[3][]{{\gamma^{#1}_{#2,#3}}}
\newcommand{\interCompMap}[5][]{{\Xi^{#1}_{\substack{#2,#4\\#3,#5}}}}
\newcommand{\commutMap}[5][]{{\tau^{#1}_{\substack{#2,#4\\#3,#5}}}}
\newcommand{\chdefect}[1]{\widehat \chi(#1)}
\newcommand{\eulerChar}{\chi}
\newcommand{\BIMdg}[3]{\BIMOD_{dg}^{#1}(#2,#3)}
\newcommand{\cKOM}{\mathcal{K}\OMoperator}
\newcommand{\und}[1]{{\underline{#1}}}
\newcommand{\neutralElement}{\mathbf{e}}
\newcommand{\simpleRootsSet}{\Pi}
\newcommand{\ladder}{\mathcal{L}}
\newcommand{\ladderToTangle}{\overline \ladder}
\newcommand{\UtoBfunctor}{\tqft \overline \ladder}
\newcommand{\oddKC}{\mathcal{U^-_{R}}}
\newcommand{\oddKCeven}{{}^{2}\mathcal{U^-_{R}}}
\newcommand{\BIMODH}{2\text{-}\BIMOD_q}
\newcommand{\bi}{\mathbf{i}}
\newcommand{\bj}{\mathbf{j}}
\newcommand{\bw}{\mathbf{w}}
\title{Odd Khovanov homology for tangles}
\author{Gr\'egoire Naisse}
\address{Max-Planck Institute for Mathematics\\
 Vivatsgasse 7 \\ 
53111 Bonn\\ 
Germany}
\email{gregoire.naisse@gmail.com}
\author{Krzysztof Putyra}
\address{Institut f\"ur Mathematik\\
Universit\"at Z\"urich\\
Winterthurerstrasse 190
CH-8057 Z\"urich
}
\email{krzysztof.putyra@math.uzh.ch}
\begin{document}


\begin{abstract}
We extend the covering of even and odd Khovanov link homology to tangles, using arc algebras. 
For this, we develop the theory of quasi-associative algebras and bimodules graded over a category with a 3-cocycle. 
Furthermore, we show that a covering version of a level 2 cyclotomic half 2-Kac--Moody algebra acts on the bicategory of quasi-associative bimodules over the covering arc algebras, relating our work to a construction of Vaz. 
\end{abstract}


\maketitle



\section{Introduction}\label{sec:intro}

In the seminal paper~\cite{khovanov00}, Khovanov constructed a homology theory for links that categorifies the Jones polynomial, in the sense that the graded Euler characteristic of the former coincides with the later.  
The construction relies on a 2d-TQFT obtained from the Frobenius algebra $\bZ[x]/(x^2)$. 
Khovanov extended his construction to tangles with an even number of endpoints in \cite{khovanovHn}. 
The invariant then takes the form, for a tangle $T$, of the homotopy type of a complex $\kh(T)$ of graded bimodules over \emph{arc algebras} $H^n$.  
It respects a gluing property, meaning that given a pair of tangles $T'$ and $T$, we have
$
\kh(T') \otimes_{H^n} \kh(T) \cong \kh(T'T).
$
Generalizations of arc algebras were introduced by Stroppel~\cite{stroppelalgebra} and independently by Chen--Khovanov~\cite{chenkhovanov}, allowing to extend the construction to any tangle. These algebras are given an in-depth study by Brundan and Stroppel in the series of paper~\cite{brundanstroppel1,brundanstroppel2,brundanstroppel3,brundanstroppel4}.

Ozsv\'ath, Rasmussen and Szab\'o constructed in~\cite{ORS} a distinct version of Khovanov link homology. It is based on a projective TQFT, defined only up to sign. 
They called it \emph{odd Khovanov homology} since it makes use of anticommutative variables instead of commutative ones. Their construction yields a distinct invariant of links from the usual `\emph{even}' Khovanov homology. 
Odd Khovanov homology also categorifies the Jones polynomial, and both construction agree modulo 2.  
Furthermore, even Khovanov homology can be thought as obtained from the geometry of the complex projective line $\bP^1(\bC)$~\cite{stroppelwebster12}, while odd Khovanov homology is obtained from the real one $\bP^1(\bR)$~\cite{oddspringer}.

The second author of this article built a framework~\cite{putyra14} that allows a cobordism-type interpretation of Ozsv\'ath--Rasmussen--Szab\'o projective TQFT. 
The idea is to enrich the notion of cobordisms with a \emph{chronology}, i.e. a framed Morse function that separates critical points~\cite{igusa}. 
These \emph{chronological cobordisms} form a 2-category $\chcobcat$ where the 2-morphisms are diffeotopies of the chronology maps. 
Then, the projective TQFT of~\cite{ORS} becomes a genuine 2-functor. 
In~\cite{putyra14}  is also introduced the notion of \emph{covering Khovanov homology} for links, coming from a complex of modules over the ground ring $R := \bZ[X,Y,Z^{\pm 1}]/(X^2=Y^2=0)$. Specializing $X=Y=Z=1$ recovers even Khovanov homology, and $X=Z=1$ and $Y=-1$ yields the odd one. 

In~\cite{naissevaz}, the first author and Vaz gave an odd version of the arc algebras using Ozsv\'ath--Rasmussen--Szab\'o projective TQFT. 
These \emph{odd arc algebras} are not associative, preventing the construction of an odd invariant for tangles by simply mimicking \cite{khovanovHn}. 
Indeed, while it is not hard to associate a chain complex of graded spaces to a tangle by considering all the ways one can close it, and do the usual procedure of~\cite{khovanov00,khovanovHn,ORS}, the non-associativity prevents to have a gluing property. . 
In particular, it is not clear how to define the tensor product of modules over a non-associative algebra. 
Following an unpublished idea of the second author and Shumakovitch, it is explained in~\cite{naissevaz} how the odd arc algebras can be made quasi-associative by grading them over some groupoid with an associator (i.e. a 3-cocycle). This notion extends  the definition of quasi-associative algebras of Albuquerque and Majid~\cite{quasialgebra}. 
Unfortunately, this quasi-associative structure is not enough to construct the desired bimodules that are needed for the construction of a tangle invariant as in~\cite{khovanovHn}. 
Another issue is that the maps used to form the complex $\kh(T)$ do not preserve the grading, thus requiring a notion of grading shift functor for quasi-associative bimodules. 

\smallskip

The present paper gives a solution to these problems, and we construct a covering version of Khovanov homology for tangles. 
Moreover, we also connect our construction to an tangle odd invariant introduced by Vaz in~\cite{pedrodd}. 

\subsection*{Main results and structure of the paper}

We first recall in \cref{sec:chcob} the notion of chronological cobordisms. We use them in \cref{sec:arcbimod} to give a covering version of the arc algebras  over $R$. 

In \cref{sec:gradcat}, we extend the definition of quasi-associative algebras and bimodules to structures graded over a category with an associator. 
We also introduce the notion of a \emph{shifting system}, which allows to construct grading shift functors for these objects. These can be interpreted as a generalization of parity shift functors in the theory of supergraded structures. 
We also explain how to construct `graded' categories of bimodules, where the maps posses non-trivial degree, by enriching to notion of shifting-system to a \emph{shifting 2-system}. This can be seen as a generalization of the notion of supercategories of superbimodules as in \cite{supermonoidal}. 
Finally, we develop basic notions of homological algebra for these structures. 

In \cref{sec:gradCat}, we construct a grading category $\gradC$ where the morphisms are given by a pair of a flat tangle and an element in $\bZ \times \bZ$. We show that the covering arc algebras are quasi-associative over $\gradC$. 
Note that our grading category is different from the one in~\cite{naissevaz}. 
We also construct a shifting-2-system over $\gradC$, where the shifts are given by a pair of a chronological cobordism together with a shift in the $\bZ \times \bZ$-grading. 
All this allows us in \cref{sec:tanglehomology} to mimic the construction of \cite{khovanovHn} in the quasi-associative context. 
We show it yields a tangle version of covering Khovanov homology, that agrees over a link with the one from~\cite{putyra14}, and respects the gluing property. 
Furthermore, by specializing the parameters, we get a tangle version of odd Khovanov homology from \cite{ORS}.  

In the last section, we relate our construction to Vaz tangle invariant~\cite{pedrodd}. This invariant is built using an odd version of a level 2 cyclotomic half 2-Kac--Moody algebra associated to $\mathfrak{gl}_n$, in the same spirit as Khovanov--Lauda~\cite{khovanovlauda3} and Rouquier~\cite{rouquier}, and odd categorification~\cite{oddnh, oddslt,supercat1,supercat2}. 
We extend the 2-Kac--Moody of~\cite{pedrodd} to a convering version over $R$.  
We show it
acts in a graded sense on the bicategory of quasi-associative bimodules over the covering arcs algebras, giving an odd version of the 2-action constructed in~\cite{brundanstroppel3} for the even case. 
Furthermore, after applying this  2-functor, the complex associated to a tangle in~\cite{pedrodd} coincides with ours. 
We conjecture that this 2-functor is fully faithful, and therefore, that Vaz invariant is equivalent to our version of odd Khovanov homology.

\subsection*{Acknowledgments}
The authors would like to thank Pedro Vaz for interesting discussions. 
The authors would also like to thank the Erwin Schr\"odinger International Institute for Mathematics and Physics for their hospitality. 
G.N. is grateful to the Max Planck Institute for Mathematics in Bonn for its hospitality and financial support. 
G.N. was a Research Fellow of the Fonds de la Recherche Scientifique - FNRS, under Grant no.~1.A310.16 while starting working on this project. 
%



%

\section{Chronological cobordims}\label{sec:chcob}

We recall the notion of chronological cobordisms and the 2-categories they form, following ~\cite{putyra14}. 

\subsection{Chronological cobordims}

Let $\Sigma_0$ and $\Sigma_1$ be two compact oriented $1$-manifolds embedded in $\bR^2$. An \emph{(embedded) cobordism} from $\Sigma_0$ to $\Sigma_1$ is  a compact surface $W \subset \bR^2 \times I$ such that $\partial W \cong \Sigma_0 \sqcup -\Sigma_1$, where $-\Sigma_1$ means we take $\Sigma_1$ with reverse orientation. 

Given a compact manifold $W$, a generic function $f : W \rightarrow I$ has only Morse critical points. A generic homotopy $f_t$ of Morse functions can have degenerate singularities of the simplest form, modeled by a cubic polynomial:
\[
q(x_1,\dots, x_n) = q(0) - x_1^2 - \cdots - x_k^2 + x_{k+1}^2 + \cdots + x_{n-1}^2 + x_n^3.
\]
Such a point $p$ is called an \emph{$A_2$-singularity}. It can be characterized by requiring $Hess_p(f)$ to have a $1$-dimensional kernel,  on wich $d^3f_p$ does not vanish. A function that has only Morse and $A_2$-singularities is called an \emph{Igusa function  \cite{igusa}}.

Choose a Riemannian metric on $W$, so that we can consider the Hessian of $f$ as a linear function $Hess_p(f) : T_pW \rightarrow T_pW$. Because it is symmetric, the tangent space $T_pW$ at a critical point $p$ decomposes into $T_pW = E_p^+ \oplus E_p^- \oplus N_p$, where $E_p^\pm$ is the positive or negative eigenspace, and $N_p$ is the nullspace of $Hess_p(f)$. 
A \emph{framing} on $f: W \rightarrow I$ is a choice of an orthonormal bases of the negative eigenspace $E^-_p$, extended by a vector $v \in N_p$ for which $d^3f_p(v,v,v) > 0$, if $p$ is degenerate.

\begin{defn}[Cf. \cite{putyra14}] An \emph{(embedded) chronological cobordism} $(W,\tau)$ is a surface $W \subset \bR^2 \times I$ with a framing $\tau$ on the restriction $\pi|_S$ of the canonical projection $\pi : \bR^2 \times I \rightarrow I$, such that
\begin{itemize}
\item $W$ is transverse to $\bR^2 \times \partial I$;
\item $\partial W \subset \bR^2 \times \partial I$ (it is a cobordism from $\partial W \cap \bR^2 \times \{0\}$ to $\partial W \cap \bR^2 \times \{1\}$); 
\item $\pi|_W$ is a Morse function with at most one critical point on each level.
\end{itemize}
A \emph{change of chronology} is a diffeotopy $H_t$ of $\bR^2 \times I$, such that each $\pi_{H_t(W)}$ is a generic homotopy of Morse functions, together with a smooth family of framings on $\pi|_{H_t(W)}$. 
\end{defn}

We say that two embedded chronological cobordisms are \emph{equivalent} if they can be related by a diffeotopy $H_t$ for which $\pi|_{H_t(W)}$ is separative Morse at every moment $t\in I$. This is for example the case when $\pi \circ H_t = \pi$ for all $t \in I$, that is when we deform the cobordism only along the horizontal plane. We refer to such diffeotopies as \emph{horizontal}. Another example is given by \emph{vertical} diffeotopies, that is when $H_t(p,z) = (p,h_t(z))$ for some diffeotopy $h_t$ on $I$. This means we deform uniformly the cobordism along the vertical axis. 

Two changes of chronology between equivalent cobordisms are \emph{equivalent} if, after composing with the equivalences of cobordisms, they are homotopic in the space of oriented Igusa functions.

\subsubsection{Locally vertical changes of chronology}

A important family of changes of chronology is given by the locally vertical changes of chronology. This means they are vertical inside some cylinders fixed in $\bR^2 \times I$. 

\begin{defn}[Cf. \cite{putyra14}]
Let $W$ be a chronological cobordism. 
Choose a family of disjoint disks $D_1, \dots, D_R$ in $\bR^2$, such that $W$ intersects each $\partial C_i$ in vertical lines, where $C_i := D_i \times I$. 
We say that a change of chronology $H_t$ on $W$ is \emph{locally vertical} with respect to the cylinders $C_i$ if it is vertical inside each $C_i$, but fixes all points outside them, except small annular neighborhoods of $\partial C_i$, in which we interpolate the two behaviors.
\end{defn}

The particularity of locally vertical change of chronologies is that they are unique up to homotopy.

\begin{prop}[{\cite[Proposition 4.4]{putyra14}}]\label{prop:locvertchange}
Let $H_t$ and $H'_t$ be two locally vertical changes of chronology with respect to the same cylinders. If $H_1 = H_1'$, then they are  homotopic in the space of framed diffeotopies. 
\end{prop}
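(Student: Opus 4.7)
The overall strategy is to exploit the contractibility of the space of orientation-preserving self-diffeomorphisms of the interval $I$ (say fixing endpoints), and the fact that a locally vertical change of chronology is essentially determined by one such path per cylinder together with a canonical interpolation outside.

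First, I would use that the cylinders $C_1,\dots,C_R$ are pairwise disjoint to treat each one independently. Inside a fixed cylinder $C_i = D_i \times I$, verticality means that $H_t(p,z) = (p, h^{(i)}_t(z))$ for some smooth path $t \mapsto h^{(i)}_t$ in $\text{Diff}^+(I)$ based at the identity, and similarly $H'_t(p,z) = (p, h'^{(i)}_t(z))$. The assumption $H_1 = H'_1$ forces $h^{(i)}_1 = h'^{(i)}_1$ for every $i$.

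Next, using that $\text{Diff}^+(I)$ is contractible — indeed convex after a standard change of variables, so any two based paths with a common endpoint are canonically homotopic rel endpoints via straight-line interpolation — I obtain, for each $i$, a two-parameter family $h^{(i)}_{t,s}$ with $h^{(i)}_{t,0} = h^{(i)}_t$, $h^{(i)}_{t,1} = h'^{(i)}_t$, $h^{(i)}_{0,s} = \text{id}$, $h^{(i)}_{1,s} = h^{(i)}_1$. I would then globalize these local homotopies: inside each $C_i$ use the vertical diffeotopy induced by $h^{(i)}_{t,s}$; outside every cylinder use the identity; and in the small annular collars of $\partial C_i$ use the same cutoff interpolation as appears in the definition of a locally vertical change of chronology. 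Different choices of cutoff produce canonically isotopic results, so this step is well defined up to higher homotopy.

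Finally, concerning framings: a vertical diffeotopy preserves the vertical tangent direction and acts trivially on horizontal directions, so it transports the framing of $\pi|_W$ canonically. Consequently the two-parameter family $H_{t,s}$ acquires a natural smooth family of framings obtained by pulling back the framing on $W$, and this is compatible with the framings already carried by $H_t$ and $H'_t$ on the boundary of the $(t,s)$-square. The main obstacle I expect is the careful bookkeeping in the annular collars, where the cutoff interpolation mixes vertical and non-vertical behavior; there one must check that the two-parameter family can be made smooth and framed simultaneously, but since the relevant spaces of framings are fibers of a principal bundle with contractible structure group component this reduces to another standard contractibility argument.
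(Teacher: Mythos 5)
The paper cites this statement from \cite[Proposition~4.4]{putyra14} without giving a proof, so there is no in-paper argument to compare against; your reduction to the contractibility of $\mathrm{Diff}^+(I)$ in each cylinder, followed by a cutoff interpolation in the annuli, is a reasonable reconstruction of what such a proof must look like.

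The justification you give for the framing step, however, is wrong, and this is a genuine gap. You assert that the framings ``are fibers of a principal bundle with contractible structure group component,'' so that extending them reduces to another contractibility argument. But at a death of a surface cobordism the negative eigenspace of the Hessian is $2$-dimensional, so the relevant fiber is $O(2)$, whose identity component $SO(2)\simeq S^1$ is \emph{not} contractible. If the family of framings attached to a change of chronology were genuinely free extra data, two choices with the same endpoints could differ by a nontrivial class in $\pi_1(SO(2))\cong\bZ$, and the proposition would simply be false. What actually makes the step work is the opposite observation: for a locally vertical change of chronology the framing carries no freedom at all. Framings live only at critical points of $\pi|_{H_t(W)}$, and every critical point sits either strictly inside some cylinder $C_i$ (where $H_t$ is honestly vertical) or outside all the annular collars (where $H_t$ is the identity); the annuli contain no critical points because $W$ meets each $\partial C_i$ in vertical lines. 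In either region $H_t|_W$ intertwines the height functions up to an orientation-preserving reparametrization of $I$, so the framing at time $t$ is forced to be the pushforward of the initial framing along $H_t|_W$. Consequently the framing of your two-parameter family $H_{t,s}$ is uniquely determined, smooth in $(t,s)$, and automatically matches the framings of $H_t$ and $H'_t$ on the boundary of the square; no contractibility of a frame bundle is needed or indeed available. With the final sentence of your write-up replaced by this rigidity argument, the proof goes through.
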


\subsection{The 2-category $\chcobcat$}

From now on, except if specified otherwise, all cobordisms are embedded chronological cobordisms. 

 Given a cobordism $(W',\tau')$ from $B$ to $C$ and a cobordism $(W,\tau$) from $A$ to $B$, one can compose them by gluing the cobordisms together along $B$ (adding a collar if necessary, see~\cite{putyra14}[\S3]) and rescaling. Given in addition a change of chronology $H'$ on $(W',\tau')$ and a change of chronology $H$ on $(W,\tau)$, we can compose them \emph{horizontally}, giving a change of chronology $H' \circ H$ on $(W' \circ W, \tau' \circ \tau)$.
 
  Let $(W_1,\tau_1)$ and $(W_2,\tau_2)$ be cobordisms with a change of chronology $H : (W_1, \tau_1) \Rightarrow (W_2, \tau_2)$ and let and $(W_2',\tau_2')$ be a cobordism equivalent to $H(W_2,\tau_2)$. Let also $H' : (W_2', \tau_2') \Rightarrow (W_3, \tau_3)$ be a change of chronology on $(W_2',\tau_2')$. Then, we can \emph{vertically compose} $H'$ with $H$, giving a change of chronology $H' \star H : (W_1,\tau_1) \Rightarrow (W_3,\tau_3)$ as the composition $(W_1, \tau_1) \xrightarrow{H} (W_2, \tau_2) \xrightarrow{\simeq} (W_2, \tau_2') \xrightarrow{H'} (W_3, \tau_3)$. 

We usually draw cobordisms as diagrams read from bottom to top, and we draw the chosen framing as an arrow written next to the critical point. Composition on the left is translated by gluing from above.  

\begin{defn}
Let $\chcobcat$ be the (strict) 2-category consisting of
\begin{itemize}
 \item objects are finite disjoint collections of circles in $\bR^2$;
 \item 1-morphisms in $\Hom_{\chcobcat}(A,B)$ are classes of embedded chronological cobordisms from $A$ to $B$, up to equivalence;
 \item composition of $1$-morphism is given by gluing cobordisms;
 \item $2$-morphisms are given by classes of changes of chronology up to equivalence;
 \item vertical and horizontal compositions of $2$-morphisms are given by $\star$ and $\circ$ defined above, respectively.
\end{itemize}
\end{defn}

The 2-category $\chcobcat$ becomes Gray monoidal (that is a monoidal 2-category,  see~\cite[Appendix B.2]{putyra14}) when equipped with the horizontal monoidal product of cobordims given by \emph{right-then-left juxtaposition} of cobordism, pushing all critical points of the cobordism on the left to the top:
\[
\tikzdiagh[scale=.5]{
	\draw (0,0) .. controls (0,.-.25) and (1,-.25) .. (1,0);
	\draw[dashed] (0,0) .. controls (0,.25) and (1,.25) .. (1,0);
	\draw (0,0) -- (0,2);
	\draw (1,0) -- (1,2);
	\draw (0,2) .. controls (0,1.75) and (1,1.75) .. (1,2);
	\draw (0,2) .. controls (0,2.25) and (1,2.25) .. (1,2);
	\node at(1.5,.25) {\tiny $\dots$};
	\node at(1.5,1.75) {\tiny $\dots$};
	\draw (2,0) .. controls (2,.-.25) and (3,-.25) .. (3,0);
	\draw[dashed] (2,0) .. controls (2,.25) and (3,.25) .. (3,0);
	\draw (2,0) -- (2,2);
	\draw (3,0) -- (3,2);
	\draw (2,2) .. controls (2,1.75) and (3,1.75) .. (3,2);
	\draw (2,2) .. controls (2,2.25) and (3,2.25) .. (3,2);
	\filldraw [fill=white, draw=black,rounded corners] (-.5,.5) rectangle (3.5,1.5) node[midway] { $W'$};
}
\  \otimes \ 
\tikzdiagh[scale=.5]{
	\draw (0,0) .. controls (0,.-.25) and (1,-.25) .. (1,0);
	\draw[dashed] (0,0) .. controls (0,.25) and (1,.25) .. (1,0);
	\draw (0,0) -- (0,2);
	\draw (1,0) -- (1,2);
	\draw (0,2) .. controls (0,1.75) and (1,1.75) .. (1,2);
	\draw (0,2) .. controls (0,2.25) and (1,2.25) .. (1,2);
	\node at(1.5,.25) {\tiny $\dots$};
	\node at(1.5,1.75) {\tiny $\dots$};
	\draw (2,0) .. controls (2,.-.25) and (3,-.25) .. (3,0);
	\draw[dashed] (2,0) .. controls (2,.25) and (3,.25) .. (3,0);
	\draw (2,0) -- (2,2);
	\draw (3,0) -- (3,2);
	\draw (2,2) .. controls (2,1.75) and (3,1.75) .. (3,2);
	\draw (2,2) .. controls (2,2.25) and (3,2.25) .. (3,2);
	\filldraw [fill=white, draw=black,rounded corners] (-.5,.5) rectangle (3.5,1.5) node[midway] { $W$};
}
\ := \  \tikzdiagh[scale=.5]{
	\draw (0,0) .. controls (0,.-.25) and (1,-.25) .. (1,0);
	\draw[dashed] (0,0) .. controls (0,.25) and (1,.25) .. (1,0);
	\draw (0,0) -- (0,4);
	\draw (1,0) -- (1,4);
	\draw (0,4) .. controls (0,3.75) and (1,3.75) .. (1,4);
	\draw (0,4) .. controls (0,4.25) and (1,4.25) .. (1,4);
	\node at(1.5,.25) {\tiny $\dots$};
	\node at(1.5,3.75) {\tiny $\dots$};
	\draw (2,0) .. controls (2,.-.25) and (3,-.25) .. (3,0);
	\draw[dashed] (2,0) .. controls (2,.25) and (3,.25) .. (3,0);
	\draw (2,0) -- (2,4);
	\draw (3,0) -- (3,4);
	\draw (2,4) .. controls (2,3.75) and (3,3.75) .. (3,4);
	\draw (2,4) .. controls (2,4.25) and (3,4.25) .. (3,4);
	\filldraw [fill=white, draw=black,rounded corners] (-.5,2.5) rectangle (3.5,3.5) node[midway] { $W'$};
} \ \tikzdiagh[scale=.5]{
	\draw (0,0) .. controls (0,.-.25) and (1,-.25) .. (1,0);
	\draw[dashed] (0,0) .. controls (0,.25) and (1,.25) .. (1,0);
	\draw (0,0) -- (0,4);
	\draw (1,0) -- (1,4);
	\draw (0,4) .. controls (0,3.75) and (1,3.75) .. (1,4);
	\draw (0,4) .. controls (0,4.25) and (1,4.25) .. (1,4);
	\node at(1.5,.25) {\tiny $\dots$};
	\node at(1.5,3.75) {\tiny $\dots$};
	\draw (2,0) .. controls (2,.-.25) and (3,-.25) .. (3,0);
	\draw[dashed] (2,0) .. controls (2,.25) and (3,.25) .. (3,0);
	\draw (2,0) -- (2,4);
	\draw (3,0) -- (3,4);
	\draw (2,4) .. controls (2,3.75) and (3,3.75) .. (3,4);
	\draw (2,4) .. controls (2,4.25) and (3,4.25) .. (3,4);
	\filldraw [fill=white, draw=black,rounded corners] (-.5,.5) rectangle (3.5,1.5) node[midway] { $W$};
} 
\]
The unit is given by the empty cobordism. Given two changes of chronology $H : W_1 \Rightarrow W_2$ and $H' : W'_1 \Rightarrow W_2'$, we obtain a change of chronology $H' \otimes H : W_1' \otimes W_1 \Rightarrow W_2' \otimes W_2$ by setting
\[
(H' \otimes H) _t (x) := \begin{cases}
H'_t(x), & \text{ if $x\in W$}, \\
H_t(x), & \text{ if $x \in W'$}, \\
x, & \text{otherwise},
\end{cases}
\]
where the otherwise can occur because of the identity corbodisms we added below $W'$ and above $W$.

\smallskip

The $1$-morphisms in $\chcobcat$ (as Gray monoidal category)  are generated by the five elementary cobordisms:
\begin{center}
\begin{tabularx}{\textwidth}{CCCCC}
\centering \tikzdiagh[scale=.5]{
	\draw (0,0) .. controls (0,1) and (1,1) .. (1,2);
	\draw (1,0) .. controls (1,1) and (2,1) .. (2,0);
	\draw (3,0) .. controls (3,1) and (2,1) .. (2,2);
	\draw (0,0) .. controls (0,-.25) and (1,-.25) .. (1,0);
	\draw[dashed] (0,0) .. controls (0,.25) and (1,.25) .. (1,0);
	\draw (2,0) .. controls (2,-.25) and (3,-.25) .. (3,0);
	\draw[dashed] (2,0) .. controls (2,.25) and (3,.25) .. (3,0);
	\draw (1,2) .. controls (1,1.75) and (2,1.75) .. (2,2);
	\draw (1,2) .. controls (1,2.25) and (2,2.25) .. (2,2);
	\draw [->] (1.15,.15) -- (1.85,.15);
} 
&
 \tikzdiagh[xscale=.5,yscale=-.5]{
	\draw (0,0) .. controls (0,1) and (1,1) .. (1,2);
	\draw (1,0) .. controls (1,1) and (2,1) .. (2,0);
	\draw (3,0) .. controls (3,1) and (2,1) .. (2,2);
	\draw (0,0) .. controls (0,-.25) and (1,-.25) .. (1,0);
	\draw (0,0) .. controls (0,.25) and (1,.25) .. (1,0);
	\draw (2,0) .. controls (2,-.25) and (3,-.25) .. (3,0);
	\draw (2,0) .. controls (2,.25) and (3,.25) .. (3,0);
	\draw[dashed] (1,2) .. controls (1,1.75) and (2,1.75) .. (2,2);
	\draw (1,2) .. controls (1,2.25) and (2,2.25) .. (2,2);
	\draw [->] (1.25,.45) -- (1.75,-.15);
} 
 &
 \tikzdiagc[scale=.5]{
	\draw (1,2) .. controls (1,1) and (2,1) .. (2,2);
	\draw (1,2) .. controls (1,1.75) and (2,1.75) .. (2,2);
	\draw (1,2) .. controls (1,2.25) and (2,2.25) .. (2,2);
} 
 &
 \tikzdiagh[scale=.5]{
	\draw (1,0) .. controls (1,1) and (2,1) .. (2,0);
	\draw (1,0) .. controls (1,-.25) and (2,-.25) .. (2,0);
	\draw[dashed] (1,0) .. controls (1,.25) and (2,.25) .. (2,0);
	\draw[->] (1.5,1.5) [partial ellipse=0:270:3ex and 1ex];
} 
 &
 \tikzdiagh[xscale=-.5,yscale=.5]{
	\draw (1,0) .. controls (1,1) and (2,1) .. (2,0);
	\draw (1,0) .. controls (1,-.25) and (2,-.25) .. (2,0);
	\draw[dashed] (1,0) .. controls (1,.25) and (2,.25) .. (2,0);
	\draw[->] (1.5,1.5) [partial ellipse=0:270:3ex and 1ex];
} 
\\
 merge 
&
 split
 &
 birth
 &
 positive death
 &
 negative death
\end{tabularx}
\end{center}
with a twist $\tikzdiagc[scale=.5]{
	\draw (0,0) .. controls (0,-.25) and (1,-.25) .. (1,0);
	\draw[dashed] (0,0) .. controls (0,.25) and (1,.25) .. (1,0);
	\draw (2,0) .. controls (2,-.25) and (3,-.25) .. (3,0);
	\draw[dashed] (2,0) .. controls (2,.25) and (3,.25) .. (3,0);
	\draw (0,0) .. controls (0,1) and (2,1) .. (2,2);
	\draw (1,0) .. controls (1,1) and (3,1) .. (3,2);
	\draw (2,0) .. controls (2,1) and (0,1) .. (0,2);
	\draw (3,0) .. controls (3,1) and (1,1) .. (1,2);
	\draw (0,2) .. controls (0,1.75) and (1,1.75) .. (1,2);
	\draw (0,2) .. controls (0,2.25) and (1,2.25) .. (1,2);
	\draw (2,2) .. controls (2,1.75) and (3,1.75) .. (3,2);
	\draw (2,2) .. controls (2,2.25) and (3,2.25) .. (3,2);
}$ acting as a strict symmetry (see \cite[Definition B.9]{putyra14}).

\begin{defn}
The \emph{$\bZ\times \bZ$-degree} of a cobordism $W$ is 
\[
\deg(W) := |W| := (\text{\#births} - \text{\#merges}, \text{\#death} - \text{\#splits}).
\] 
\end{defn} 

\subsection{Linearized $\linchcobcat$}

\begin{figure}
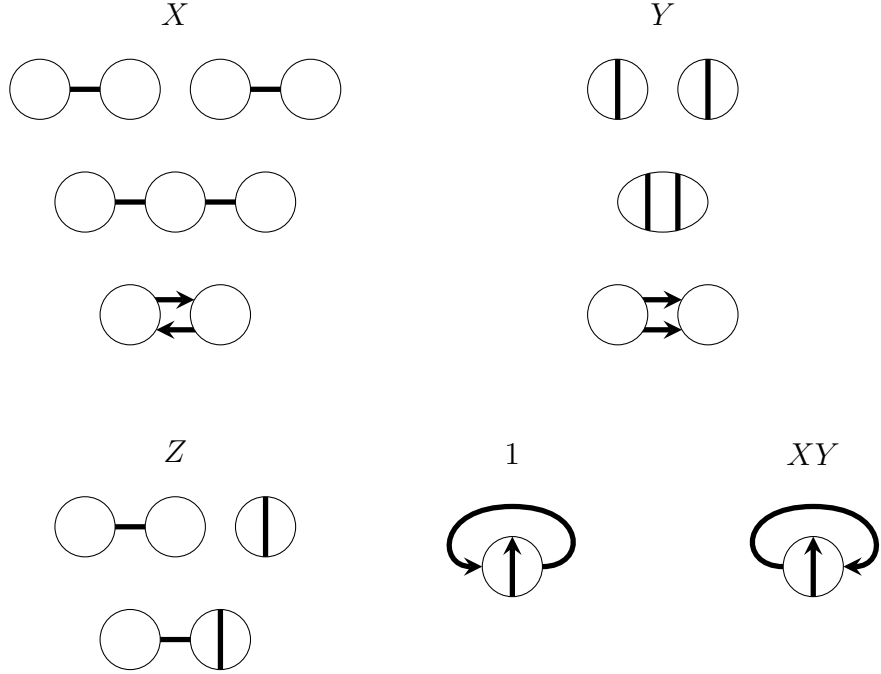

  \centering
\begin{tabularx}{.8\textwidth}{CC}\centering
\tikz{
  	\node at (0,0) {$X$};
  	\draw[giga thick] (-1.8,-1) -- (-.6,-1);
  	\draw[giga thick] (.6,-1) -- (1.8,-1);
  	\draw[fill=white] (-1.8,-1) circle (.4cm);
  	\draw[fill=white] (-.6,-1) circle (.4cm);
  	\draw[fill=white] (.6,-1) circle (.4cm);
  	\draw[fill=white] (1.8,-1) circle (.4cm);
  	\draw[giga thick] (-1.2,-2.5) -- (1.2,-2.5);
  	\draw[fill=white] (-1.2,-2.5) circle (.4cm);
  	\draw[fill=white] (0,-2.5) circle (.4cm);
  	\draw[fill=white] (1.2,-2.5) circle (.4cm);
  	\draw[giga thick,->] (-.6,-3.8) -- (.254,-3.8);
  	\draw[giga thick,->] (.6,-4.2) -- (-.254,-4.2);
  	\draw[fill=white] (-.6,-4) circle (.4cm);
  	\draw[fill=white] (.6,-4) circle (.4cm);
}
&
\tikz{
  	\node at (0,0) {$Y$};
  	\draw[fill=white] (-.6,-1) circle (.4cm);
  	\draw[fill=white] (.6,-1) circle (.4cm);
  	\draw[giga thick] (-.6,-1.4) -- (-.6,-.6);
  	\draw[giga thick] (.6,-1.4) -- (.6,-.6);
  	\draw[fill=white] (0,-2.5) ellipse (.6cm and .4cm);
	\begin{scope}
	   	\clip (0,-2.5) ellipse (.6cm and .4cm);
  		\draw[giga thick] (-.2,-2.9) -- (-.2,-2.1);
  		\draw[giga thick] (.2,-2.9) -- (.2,-2.1);
	\end{scope}
  	\draw[giga thick,->] (-.6,-3.8) -- (.254,-3.8);
  	\draw[giga thick,->] (-.6,-4.2) -- (.254,-4.2);
  	\draw[fill=white] (-.6,-4) circle (.4cm);
  	\draw[fill=white] (.6,-4) circle (.4cm);
}
\\
&
\\
&
\\
  \tikz{
  	\node at (0,0) {$Z$};
  	\draw[giga thick] (-1.2,-1) -- (0,-1);
  	\draw[fill=white] (-1.2,-1) circle (.4cm);
  	\draw[fill=white] (0,-1) circle (.4cm);
  	\draw[fill=white] (1.2,-1) circle (.4cm);
  	\draw[giga thick] (1.2,-1.4) -- (1.2,-.6);
  	\draw[giga thick] (-.6,-2.5) -- (.6,-2.5);
  	\draw[fill=white] (-.6,-2.5) circle (.4cm);
  	\draw[fill=white] (.6,-2.5) circle (.4cm);
  	\draw[giga thick] (.6,-2.9) -- (.6,-2.1);
  }
  &
  \tikz{
  	\node at (-2,0) {$1$};
  	\draw[fill=white] (-2,-1.5) circle (.4cm);
  	\draw[giga thick,->] (-2,-1.9) -- (-2,-1.1);
  	\draw[giga thick,->] (-1.6,-1.5) .. controls (-1,-1.5) and (-1,-.7) .. (-2,-.7)
  		.. controls (-3,-.7) and (-3,-1.5) .. (-2.4,-1.5);
  	\node at (2,0) {$XY$};
  	\draw[fill=white] (2,-1.5) circle (.4cm);
  	\draw[giga thick,->] (2,-1.9) -- (2,-1.1);
  	\draw[giga thick,->] (1.6,-1.5) .. controls (1,-1.5) and (1,-.7) .. (2,-.7)
  		.. controls (3,-.7) and (3,-1.5) .. (2.4,-1.5);
  	\draw[opacity=0] (.6,-2.5) circle (.4cm);
  }
\end{tabularx}
  \caption{Diagrams for elementary changes of chronologies, 
grouped by values of the function $\imath$. Thin lines are the input circles and thick 
arrows visualize saddle points. Orientations of the arrows are omitted if $\imath$ does 
not depend on them. For the $Z$ it means that we go from doing the merge then the split to the split then the merge. The other direction gives $Z^{-1}$. }
\label{fig:chchange}
\end{figure}

Let $R := \bZ[X,Y,Z^{\pm 1}]/ (X^2=Y^2=1)$.  To each elementary change of chronology, we assign a monomial in $R$ as given by \cref{fig:chchange}.
Any change of chronology can be decomposed as a composition of such elementary changes. Let $\imath$ be the map that associate to a change of chronology the resulting monomial after decomposing it and applying the above mentioned rule. From~\cite{putyra14}, we know it is well-defined and multiplicative in the sense that $\imath(H' \circ H) = \imath(H')\imath(H)$ and $\imath(H' \star H) = \imath(H')\imath(H)$. 

\begin{defn}
The \emph{linearized category of cobordims} $\linchcobcat$ is the $R$-linear monoidal category consisting of
\begin{itemize}
\item objects are the same as in $\chcobcat$;
\item morphisms are $R$-linear combinations of morphisms in $\chcobcat$ modulo the relation $W' = \imath(H)W$ for each change of chronology $H : W \rightarrow W'$;
\item the monoidal product is given by right-then-left juxtaposition of cobordisms. 
\end{itemize}
\end{defn}

Let $\lambda_R : (\bZ\times\bZ)^2 \rightarrow R$ be the bilinear map given by
\begin{equation}\label{eq:deflambdaR}
 \lambda_R\bigl((a',b'),(a,b)\bigr) := X^{a'a} Y^{b'b} Z^{a'b-b'a},
\end{equation}
where bilinear means $\lambda_R(x,y+z) = \lambda_R(x,y)\lambda_R(z)$ and $\lambda_R(x+y,z) = \lambda_R(x,z) \lambda_R(y,z)$.

It appears that $\linchcobcat$ admits a nice combinatorial description, with morphisms being $R$-linear combinations of cobordims modulo the following local relations:
\begin{align}\label{eq:reverseorientation}
 \tikzdiagh[scale=.5]{
	\draw (0,0) .. controls (0,1) and (1,1) .. (1,2);
	\draw (1,0) .. controls (1,1) and (2,1) .. (2,0);
	\draw (3,0) .. controls (3,1) and (2,1) .. (2,2);
	\draw (0,0) .. controls (0,-.25) and (1,-.25) .. (1,0);
	\draw[dashed] (0,0) .. controls (0,.25) and (1,.25) .. (1,0);
	\draw (2,0) .. controls (2,-.25) and (3,-.25) .. (3,0);
	\draw[dashed] (2,0) .. controls (2,.25) and (3,.25) .. (3,0);
	\draw (1,2) .. controls (1,1.75) and (2,1.75) .. (2,2);
	\draw (1,2) .. controls (1,2.25) and (2,2.25) .. (2,2);
	\draw [->] (1.15,.15) -- (1.85,.15);
}\   &= X \ \tikzdiagh[scale=.5]{
	\draw (0,0) .. controls (0,1) and (1,1) .. (1,2);
	\draw (1,0) .. controls (1,1) and (2,1) .. (2,0);
	\draw (3,0) .. controls (3,1) and (2,1) .. (2,2);
	\draw (0,0) .. controls (0,-.25) and (1,-.25) .. (1,0);
	\draw[dashed] (0,0) .. controls (0,.25) and (1,.25) .. (1,0);
	\draw (2,0) .. controls (2,-.25) and (3,-.25) .. (3,0);
	\draw[dashed] (2,0) .. controls (2,.25) and (3,.25) .. (3,0);
	\draw (1,2) .. controls (1,1.75) and (2,1.75) .. (2,2);
	\draw (1,2) .. controls (1,2.25) and (2,2.25) .. (2,2);
	\draw [<-] (1.15,.15) -- (1.85,.15);
} 
 &
 \tikzdiagh[xscale=.5,yscale=-.5]{
	\draw (0,0) .. controls (0,1) and (1,1) .. (1,2);
	\draw (1,0) .. controls (1,1) and (2,1) .. (2,0);
	\draw (3,0) .. controls (3,1) and (2,1) .. (2,2);
	\draw (0,0) .. controls (0,-.25) and (1,-.25) .. (1,0);
	\draw (0,0) .. controls (0,.25) and (1,.25) .. (1,0);
	\draw (2,0) .. controls (2,-.25) and (3,-.25) .. (3,0);
	\draw (2,0) .. controls (2,.25) and (3,.25) .. (3,0);
	\draw[dashed] (1,2) .. controls (1,1.75) and (2,1.75) .. (2,2);
	\draw (1,2) .. controls (1,2.25) and (2,2.25) .. (2,2);
	\draw [->] (1.25,.45) -- (1.75,-.15);
}\   &= Y \ \tikzdiagh[xscale=.5,yscale=-.5]{
	\draw (0,0) .. controls (0,1) and (1,1) .. (1,2);
	\draw (1,0) .. controls (1,1) and (2,1) .. (2,0);
	\draw (3,0) .. controls (3,1) and (2,1) .. (2,2);
	\draw (0,0) .. controls (0,-.25) and (1,-.25) .. (1,0);
	\draw (0,0) .. controls (0,.25) and (1,.25) .. (1,0);
	\draw (2,0) .. controls (2,-.25) and (3,-.25) .. (3,0);
	\draw (2,0) .. controls (2,.25) and (3,.25) .. (3,0);
	\draw[dashed] (1,2) .. controls (1,1.75) and (2,1.75) .. (2,2);
	\draw (1,2) .. controls (1,2.25) and (2,2.25) .. (2,2);
	\draw [<-] (1.25,.45) -- (1.75,-.15);
} 
 &
 \   &= Y  \  
 \\
 \label{eq:chcobrelbirthmerge}
 \tikzdiagh[scale=.5]{
	\draw (1,0) .. controls (1,.-.25) and (2,-.25) .. (2,0);
	\draw[dashed] (1,0) .. controls (1,.25) and (2,.25) .. (2,0);
	\draw (1,0) -- (1,4);
	\draw (2,0) -- (2,4);
	\draw (1,4) .. controls (1,3.75) and (2,3.75) .. (2,4);
	\draw (1,4) .. controls (1,4.25) and (2,4.25) .. (2,4);
}\   &= \  \tikzdiagc[scale=.5]{
	\draw (2,0) .. controls (2,.-.25) and (3,-.25) .. (3,0);
	\draw[dashed] (2,0) .. controls (2,.25) and (3,.25) .. (3,0);
	\draw (2,0) -- (2,2);
	\draw (3,0) -- (3,2);
	\draw (2,2) .. controls (2,1.75) and (3,1.75) .. (3,2);
	\draw[dashed] (2,2) .. controls (2,2.25) and (3,2.25) .. (3,2);
	\draw (0,2) .. controls (0,1) and (1,1) .. (1,2);
	\draw (0,2) .. controls (0,1.75) and (1,1.75) .. (1,2);
	\draw[dashed] (0,2) .. controls (0,2.25) and (1,2.25) .. (1,2);
	\draw (0,2) .. controls (0,3) and (1,3) .. (1,4);
	\draw (1,2) .. controls (1,3) and (2,3) .. (2,2);
	\draw (3,2) .. controls (3,3) and (2,3) .. (2,4);
	\draw (1,4) .. controls (1,3.75) and (2,3.75) .. (2,4);
	\draw (1,4) .. controls (1,4.25) and (2,4.25) .. (2,4);
	\draw [<-] (1.15,2.15) -- (1.85,2.15);
} 
&
 \   &=  \  \tikzdiagc[xscale=.5,yscale=-.5]{
	\draw (2,0) .. controls (2,.-.25) and (3,-.25) .. (3,0);
	\draw (2,0) .. controls (2,.25) and (3,.25) .. (3,0);
	\draw (2,0) -- (2,2);
	\draw (3,0) -- (3,2);
	\draw[dashed]  (2,2) .. controls (2,1.75) and (3,1.75) .. (3,2);
	\draw(2,2) .. controls (2,2.25) and (3,2.25) .. (3,2);
	\draw (0,2) .. controls (0,1) and (1,1) .. (1,2);
	\draw[dashed] (0,2) .. controls (0,1.75) and (1,1.75) .. (1,2);
	\draw (0,2) .. controls (0,2.25) and (1,2.25) .. (1,2);
	\draw[->] (.5,.5) [partial ellipse=-180:90:3ex and 1ex];
	\draw (0,2) .. controls (0,3) and (1,3) .. (1,4);
	\draw (1,2) .. controls (1,3) and (2,3) .. (2,2);
	\draw (3,2) .. controls (3,3) and (2,3) .. (2,4);
	\draw[dashed]  (1,4) .. controls (1,3.75) and (2,3.75) .. (2,4);
	\draw (1,4) .. controls (1,4.25) and (2,4.25) .. (2,4);
	\draw [->] (1.25,2.45) -- (1.75,1.85);
} 
 &
 \   &=   \  \tikzdiagc[xscale=-.5,yscale=-.5]{
	\draw (2,0) .. controls (2,.-.25) and (3,-.25) .. (3,0);
	\draw (2,0) .. controls (2,.25) and (3,.25) .. (3,0);
	\draw (2,0) -- (2,2);
	\draw (3,0) -- (3,2);
	\draw[dashed]  (2,2) .. controls (2,1.75) and (3,1.75) .. (3,2);
	\draw(2,2) .. controls (2,2.25) and (3,2.25) .. (3,2);
	\draw (0,2) .. controls (0,1) and (1,1) .. (1,2);
	\draw[dashed] (0,2) .. controls (0,1.75) and (1,1.75) .. (1,2);
	\draw (0,2) .. controls (0,2.25) and (1,2.25) .. (1,2);
	\draw[->] (.5,.5) [partial ellipse=-180:90:3ex and 1ex];
	\draw (0,2) .. controls (0,3) and (1,3) .. (1,4);
	\draw (1,2) .. controls (1,3) and (2,3) .. (2,2);
	\draw (3,2) .. controls (3,3) and (2,3) .. (2,4);
	\draw[dashed]  (1,4) .. controls (1,3.75) and (2,3.75) .. (2,4);
	\draw (1,4) .. controls (1,4.25) and (2,4.25) .. (2,4);
	\draw [->] (1.75,2.45) -- (1.25,1.85);
} 
\end{align}
and
\begin{align}
\label{eq:cobcommute1}
 \tikzdiagh[scale=.5]{
	\draw (0,0) .. controls (0,.-.25) and (1,-.25) .. (1,0);
	\draw[dashed] (0,0) .. controls (0,.25) and (1,.25) .. (1,0);
	\draw (0,0) -- (0,4);
	\draw (1,0) -- (1,4);
	\draw (0,4) .. controls (0,3.75) and (1,3.75) .. (1,4);
	\draw (0,4) .. controls (0,4.25) and (1,4.25) .. (1,4);
	\node at(1.5,.25) {\tiny $\dots$};
	\node at(1.5,3.75) {\tiny $\dots$};
	\draw (2,0) .. controls (2,.-.25) and (3,-.25) .. (3,0);
	\draw[dashed] (2,0) .. controls (2,.25) and (3,.25) .. (3,0);
	\draw (2,0) -- (2,4);
	\draw (3,0) -- (3,4);
	\draw (2,4) .. controls (2,3.75) and (3,3.75) .. (3,4);
	\draw (2,4) .. controls (2,4.25) and (3,4.25) .. (3,4);
	\node at(3.5,.25) {\tiny $\dots$};
	\node at(3.5,3.75) {\tiny $\dots$};
	\draw (4,0) .. controls (4,.-.25) and (5,-.25) .. (5,0);
	\draw[dashed] (4,0) .. controls (4,.25) and (5,.25) .. (5,0);
	\draw (4,0) -- (4,4);
	\draw (5,0) -- (5,4);
	\draw (4,4) .. controls (4,3.75) and (5,3.75) .. (5,4);
	\draw (4,4) .. controls (4,4.25) and (5,4.25) .. (5,4);
	\filldraw [fill=white, draw=black,rounded corners] (-.5,.5) rectangle (2.5,1.5) node[midway] { $W'$};
	\filldraw [fill=white, draw=black,rounded corners] (2.5,2.5) rectangle (5.5,3.5) node[midway] { $W$};
}\   &= \lambda_R(|W|,|W'|) \ \tikzdiagh[scale=.5]{
	\draw (0,0) .. controls (0,.-.25) and (1,-.25) .. (1,0);
	\draw[dashed] (0,0) .. controls (0,.25) and (1,.25) .. (1,0);
	\draw (0,0) -- (0,4);
	\draw (1,0) -- (1,4);
	\draw (0,4) .. controls (0,3.75) and (1,3.75) .. (1,4);
	\draw (0,4) .. controls (0,4.25) and (1,4.25) .. (1,4);
	\node at(1.5,.25) {\tiny $\dots$};
	\node at(1.5,3.75) {\tiny $\dots$};
	\draw (2,0) .. controls (2,.-.25) and (3,-.25) .. (3,0);
	\draw[dashed] (2,0) .. controls (2,.25) and (3,.25) .. (3,0);
	\draw (2,0) -- (2,4);
	\draw (3,0) -- (3,4);
	\draw (2,4) .. controls (2,3.75) and (3,3.75) .. (3,4);
	\draw (2,4) .. controls (2,4.25) and (3,4.25) .. (3,4);
	\node at(3.5,.25) {\tiny $\dots$};
	\node at(3.5,3.75) {\tiny $\dots$};
	\draw (4,0) .. controls (4,.-.25) and (5,-.25) .. (5,0);
	\draw[dashed] (4,0) .. controls (4,.25) and (5,.25) .. (5,0);
	\draw (4,0) -- (4,4);
	\draw (5,0) -- (5,4);
	\draw (4,4) .. controls (4,3.75) and (5,3.75) .. (5,4);
	\draw (4,4) .. controls (4,4.25) and (5,4.25) .. (5,4);
	\filldraw [fill=white, draw=black,rounded corners] (-.5,2.5) rectangle (2.5,3.5) node[midway] { $W'$};
	\filldraw [fill=white, draw=black,rounded corners] (2.5,.5) rectangle (5.5,1.5) node[midway] { $W$};
} 
 \\
\label{eq:cobcommute2}
 \tikzdiagh[scale=.5]{
	\draw (0,0) .. controls (0,.-.25) and (1,-.25) .. (1,0);
	\draw[dashed] (0,0) .. controls (0,.25) and (1,.25) .. (1,0);
	\draw (0,0) -- (0,4);
	\draw (1,0) -- (1,4);
	\draw (0,4) .. controls (0,3.75) and (1,3.75) .. (1,4);
	\draw (0,4) .. controls (0,4.25) and (1,4.25) .. (1,4);
	\node at(1.5,.25) {\tiny $\dots$};
	\node at(1.5,3.75) {\tiny $\dots$};
	\draw (2,0) .. controls (2,.-.25) and (3,-.25) .. (3,0);
	\draw[dashed] (2,0) .. controls (2,.25) and (3,.25) .. (3,0);
	\draw (2,0) -- (2,4);
	\draw (3,0) -- (3,4);
	\draw (2,4) .. controls (2,3.75) and (3,3.75) .. (3,4);
	\draw (2,4) .. controls (2,4.25) and (3,4.25) .. (3,4);
	\filldraw [fill=white, draw=black,rounded corners] (-.5,.5) rectangle (3.5,1.5) node[midway] { $W'$};
} \  \tikzdiagh[scale=.5]{
	\draw (0,0) .. controls (0,.-.25) and (1,-.25) .. (1,0);
	\draw[dashed] (0,0) .. controls (0,.25) and (1,.25) .. (1,0);
	\draw (0,0) -- (0,4);
	\draw (1,0) -- (1,4);
	\draw (0,4) .. controls (0,3.75) and (1,3.75) .. (1,4);
	\draw (0,4) .. controls (0,4.25) and (1,4.25) .. (1,4);
	\node at(1.5,.25) {\tiny $\dots$};
	\node at(1.5,3.75) {\tiny $\dots$};
	\draw (2,0) .. controls (2,.-.25) and (3,-.25) .. (3,0);
	\draw[dashed] (2,0) .. controls (2,.25) and (3,.25) .. (3,0);
	\draw (2,0) -- (2,4);
	\draw (3,0) -- (3,4);
	\draw (2,4) .. controls (2,3.75) and (3,3.75) .. (3,4);
	\draw (2,4) .. controls (2,4.25) and (3,4.25) .. (3,4);
	\filldraw [fill=white, draw=black,rounded corners] (-.5,2.5) rectangle (3.5,3.5) node[midway] { $W$};
} \   &= \lambda_R(|W|,|W'|) \  \tikzdiagh[scale=.5]{
	\draw (0,0) .. controls (0,.-.25) and (1,-.25) .. (1,0);
	\draw[dashed] (0,0) .. controls (0,.25) and (1,.25) .. (1,0);
	\draw (0,0) -- (0,4);
	\draw (1,0) -- (1,4);
	\draw (0,4) .. controls (0,3.75) and (1,3.75) .. (1,4);
	\draw (0,4) .. controls (0,4.25) and (1,4.25) .. (1,4);
	\node at(1.5,.25) {\tiny $\dots$};
	\node at(1.5,3.75) {\tiny $\dots$};
	\draw (2,0) .. controls (2,.-.25) and (3,-.25) .. (3,0);
	\draw[dashed] (2,0) .. controls (2,.25) and (3,.25) .. (3,0);
	\draw (2,0) -- (2,4);
	\draw (3,0) -- (3,4);
	\draw (2,4) .. controls (2,3.75) and (3,3.75) .. (3,4);
	\draw (2,4) .. controls (2,4.25) and (3,4.25) .. (3,4);
	\filldraw [fill=white, draw=black,rounded corners] (-.5,2.5) rectangle (3.5,3.5) node[midway] { $W'$};
} \ \tikzdiagh[scale=.5]{
	\draw (0,0) .. controls (0,.-.25) and (1,-.25) .. (1,0);
	\draw[dashed] (0,0) .. controls (0,.25) and (1,.25) .. (1,0);
	\draw (0,0) -- (0,4);
	\draw (1,0) -- (1,4);
	\draw (0,4) .. controls (0,3.75) and (1,3.75) .. (1,4);
	\draw (0,4) .. controls (0,4.25) and (1,4.25) .. (1,4);
	\node at(1.5,.25) {\tiny $\dots$};
	\node at(1.5,3.75) {\tiny $\dots$};
	\draw (2,0) .. controls (2,.-.25) and (3,-.25) .. (3,0);
	\draw[dashed] (2,0) .. controls (2,.25) and (3,.25) .. (3,0);
	\draw (2,0) -- (2,4);
	\draw (3,0) -- (3,4);
	\draw (2,4) .. controls (2,3.75) and (3,3.75) .. (3,4);
	\draw (2,4) .. controls (2,4.25) and (3,4.25) .. (3,4);
	\filldraw [fill=white, draw=black,rounded corners] (-.5,.5) rectangle (3.5,1.5) node[midway] { $W$};
} 
\end{align}
for all cobordisms $W'$ and $W$.

Note that in particular, two diffeomorphic cobordisms (with different chronology) are equal in $\Hom_\linchcobcat$  up to a scalar in $R$. However, this scalar is in general not unique since, for example:
\[
\tikzdiagc[scale=.5]{
	\draw (0,0) .. controls (0,1) and (1,1) .. (1,2);
	\draw (1,0) .. controls (1,1) and (2,1) .. (2,0);
	\draw (3,0) .. controls (3,1) and (2,1) .. (2,2);
	\draw (0,0) .. controls (0,-.25) and (1,-.25) .. (1,0);
	\draw[dashed] (0,0) .. controls (0,.25) and (1,.25) .. (1,0);
	\draw (2,0) .. controls (2,-.25) and (3,-.25) .. (3,0);
	\draw[dashed] (2,0) .. controls (2,.25) and (3,.25) .. (3,0);
	\draw (1,2) .. controls (1,1.75) and (2,1.75) .. (2,2);
	\draw (1,2) .. controls (1,2.25) and (2,2.25) .. (2,2);
	\draw [->] (1.15,.25) -- (1.85,.25);
	\draw[dashed] (1,-2) .. controls (1,-1.75) and (2,-1.75) .. (2,-2);
	\draw (1,-2) .. controls (1,-2.25) and (2,-2.25) .. (2,-2);
	\draw  (1,-2) .. controls (1,-1) and (0,-1) .. (0,0);
	\draw  (2,-2) .. controls (2,-1) and (3,-1) .. (3,0);
	\draw (1,0) .. controls (1,-1) and (2,-1) .. (2,0);
	\draw [->] (1.35,-.55) -- (1.75,.05);
}
\ = XY\ 
\tikzdiagc[scale=.5]{
	\draw (0,0) .. controls (0,1) and (1,1) .. (1,2);
	\draw (1,0) .. controls (1,1) and (2,1) .. (2,0);
	\draw (3,0) .. controls (3,1) and (2,1) .. (2,2);
	\draw (0,0) .. controls (0,-.25) and (1,-.25) .. (1,0);
	\draw[dashed] (0,0) .. controls (0,.25) and (1,.25) .. (1,0);
	\draw (2,0) .. controls (2,-.25) and (3,-.25) .. (3,0);
	\draw[dashed] (2,0) .. controls (2,.25) and (3,.25) .. (3,0);
	\draw (1,2) .. controls (1,1.75) and (2,1.75) .. (2,2);
	\draw (1,2) .. controls (1,2.25) and (2,2.25) .. (2,2);
	\draw [->] (1.15,.25) -- (1.85,.25);
	\draw[dashed] (1,-2) .. controls (1,-1.75) and (2,-1.75) .. (2,-2);
	\draw (1,-2) .. controls (1,-2.25) and (2,-2.25) .. (2,-2);
	\draw  (1,-2) .. controls (1,-1) and (0,-1) .. (0,0);
	\draw  (2,-2) .. controls (2,-1) and (3,-1) .. (3,0);
	\draw (1,0) .. controls (1,-1) and (2,-1) .. (2,0);
	\draw [->] (1.35,-.55) -- (1.75,.05);
}
\]

\subsection{Cobordisms with corners}

We define a \emph{chronological cobordism with corners} in the same fashion as a chronological cobordism except that:
\begin{itemize}
\item $W$ is contained in $I \times \bR \times I$;
\item $\partial W \subset I \times \bR \times \partial I \cup \partial I\times \bR \times I$;
\item $\partial W \cap \partial I\times \bR \times I = \sqcup_i \{p_i \times I\} $ for a finite collection of disjoint $p_i \in \partial I \times \bR \times \{0\}$;
\item $W$ is transverse to $I \times \bR \times \partial I$ and to $\partial I \times \bR \times I$;
\end{itemize}
Similarly, we ask for a change of chronology $H_t$ to restrict on $\partial I \cap \bR \cap I$ as $H_t(x,y,z) = (x,h_t(y),h'_t(z))$ for some diffeotopies $h_t, h'_t$ on $I$.
All this means the cobordism can have borders, but these are vertical, and a change of chronology preserves that. 

We say that two cobordisms with corners $W'$ and $W$ are \emph{horizontally composable} if 
\[
\#\{p_i \in  \{0\} \times I \times \{0\} \cap W' \} = \{p_i \in \{1\} \times I \times \{0\} \cap W \}.
\]
This means that $W'$ has the same number of boundaries on the right than $W$ on the left. Thus, in that situation we can horizontally compose $W'$ with $W$ by doing a right-then-left composition $W' \bullet W$, adding small curtains to smooth the composition if necessary, and rescaling. For example:
\[
\tikzdiagc[scale=.35]{
	\draw (.5,1) -- (.5,3) -- (4.5,3) -- (4.5,1);
	\draw (0,0) .. controls (1.5,0) and (2,1) .. (.5,1); 
	\draw (4,0) .. controls (2.5,0) and (3,1) .. (4.5,1); 
	\filldraw[fill=white, draw=white] (.5,.55)  rectangle  (4,2);
	\draw[dashed] (0,0) .. controls (1.5,0) and (2,1) .. (.5,1); 
	\draw[dashed] (4,0) .. controls (2.5,0) and (3,1) .. (4.5,1); 
	\draw (1.375,.5) .. controls (1.375,1.5) and (3.125,1.5) .. (3.125,.5);
	\draw[dashed] (.5,1) -- (.5,3) -- (4.5,3) -- (4.5,1);
	\draw (0,0) -- (0,2) -- (4,2) -- (4,0);
}
\ \bullet \ 
\tikzdiagc[scale=.35]{
	\draw (.5,3) -- (.5,1) -- (4.5,1) -- (4.5,3);
	\filldraw [fill=white, draw=white] (0,0) rectangle (4,2); 
	\draw (0,2) .. controls (1.5,2) and (2,3) .. (.5,3); 
	\draw (4,2) .. controls (2.5,2) and (3,3) .. (4.5,3); 
	\draw (1.375,2.5) .. controls (1.375,1.5) and (3.125,1.5) .. (3.125,2.5);
	\draw[dashed] (.5,3) -- (.5,1) -- (4.5,1) -- (4.5,3);
	\draw (0,2) -- (0,0) -- (4,0) -- (4,2);
}
\ = \ 
\tikzdiagc[scale=.35]{
	\draw (0,0) .. controls (1.5,0) and (2,1) .. (.5,1); 
	\draw (4,0) .. controls (2.5,0) and (3,1) .. (4.5,1); 
	\filldraw[fill=white, draw=white] (.5,.55)  rectangle  (4.5,2);
	\draw[dashed] (0,0) .. controls (1.5,0) and (2,1) .. (.5,1); 
	\draw[dashed] (4,0) .. controls (2.5,0) and (3,1) .. (4.5,1); 
	\draw (0,0) -- (0,4);
	\draw[dashed] (.5,1) -- (.5,5);
	\draw (1.375,.5) -- (1.375,2.5);
	\draw (3.125,.5) -- (3.125,2.5);
	\draw (.5,4) -- (.5,5) -- (4.5,5) -- (4.5,4);
	%
	%
	%
	%
	\draw (1.375,2.5) .. controls (1.375,3.5) and (3.125,3.5) .. (3.125,2.5);
	%
	\draw (0,2) -- (0,4) -- (4,4);
	%
	%
	%
	%
	\draw (8,1) -- (8.5,1) -- (8.5,5);
	%
	%
	%
	\draw (5.375,2.5) .. controls (5.375,1.5) and (7.125,1.5) .. (7.125,2.5);
	\draw[dashed] (4.5,5) -- (4.5,1) -- (8.5,1) -- (8.5,3);
	\draw (4,4) -- (4,0) -- (8,0) -- (8,2);
	\draw (4,4) .. controls (5.5,4) and (6,5) .. (4.5,5); 
	\draw (8,4) .. controls (6.5,4) and (7,5) .. (8.5,5); 
	\draw (8,2) -- (8,4);
	\draw (8.5,3) -- (8.5,5);
	\draw (5.375,2.5) -- (5.375,4.5);
	\draw (7.125,2.5) -- (7.125,4.5);
}
\]

\begin{rem}
Note that the $\bZ\times \bZ$-grading of cobordisms does not extend to  cobordisms with corners.
\end{rem}




\section{Arc algebras and bimodules}\label{sec:arcbimod}

We construct the non-associative covering arc algebras $H^n$ over $R$, as well as non-associative bimodules associated to flat tangles, using the procedure from \cite{khovanovHn} and the chronological TQFT from \cite{putyra14}.

\subsection{Flat tangles}

Recall that a \emph{flat tangle} is a tangle diagram with no crossing. 
Let $B_n^m$ be the set of classes of flat tangles from $2n$ fixed points on the horizontal line $\bR \times \{ 0 \} \subset \bR^2$ to $2m$ fixed points on $\bR \times \{1\} \subset \bR^2$, taken up to ambient isotopy that fixes the endpoints. Let $\bar{\phantom a} : B_m^n \rightarrow B_n^m$ be the map that takes the  mirror image along the horizontal line $\bR \times \{1/2\}$. 
When $m = 0$, we simply write $B^n$, and we refer to elements in $B^n$ as \emph{crossingless matchings}. We  put $ B^\bullet := \sqcup_{n \geq 0} B^n$, and we write $|a| = n$ whenever $a \in B^n$. There is a composition map
\[
B_n^{p} \times B_m^n \rightarrow B_m^p, \quad (t',t) \mapsto t't,
\]
where $t' t$ is given by gluing $t'$ on top of $t$, and rescaling. 
Let $1_n \in B_n^n$ be the identity tangle on $2n$ strands:
\[
1_n := 
\tikzdiag[xscale=.5]{0}{
	\draw (0,0) -- (0,1);
	\draw (1,0) -- (1,1);
	\node at(2,.5) {$\dots$};
	\draw (3,0) -- (3,1);
	\draw (4,0) -- (4,1);
	\tikzbrace{0}{4}{0}{$2n$};
}
\]
It is a neutral element for the composition map. 

\subsection{Surgery}\label{ssec:surgery}

For $a \in B^n, b\in B^m$ and $t \in B_n^m$, we have that $\bar bta$ is a closed 1-manifold. 
Let $1_{ba}(t) : \bar bta \rightarrow \bar bta$ be the identity cobordism given by $ \bar bta \times [0,1]$. We write $\un_t$ for the identity cobordism with corners $t \times [0,1]$. 
Given two flat tangles $t' \in B^{|c|}_{|b|}$ and $t \in B_{|a|}^{|b|}$, there is a unique  (up to homeomorphism) minimal cobordism $ \bar c t' b \bar b t a \rightarrow \bar c t' t a$ given by surgery over $b$, that is by contracting the symmetric arcs of $\bar b b$ using saddles. 
For each such cobordism, we choose the chronology given by adding the saddle points from right to left and orienting everything upward.
 We write $\chcob_{cba}(t',t) : \bar c t' b \bar b t a \rightarrow \bar c t' t a$ the resulting chronological cobordism.
 Note that $\chcob_{cba}(t',t)$ has Euler characteristic $-|b|$. 

\begin{exe}
Take 
\begin{align*}
a = c &= \ 
\tikzdiag[scale=.5]{1ex}{
	\draw (0,0) .. controls (0,-1) and (3,-1) .. (3,0);
	\draw (1,0) .. controls (1,-.5) and (2,-.5) .. (2,0);
}
&
b &= \ 
\tikzdiag[scale=.5]{1ex}{
	\draw (0,0) .. controls (0,-.5) and (1,-.5) .. (1,0);
	\draw (2,0) .. controls (2,-.5) and (3,-.5) .. (3,0);
}
\end{align*}
in $B^2$. Then, $\chcob_{cba}(1_2,1_2)$ is given by the movie:
\begin{center}
\tikzdiagc[scale=.5]{
	\draw (.5,0) -- (.5,4);
	\node at(2.5,2) {\tikz[scale=.5,  thick]{
		\draw (1,1) .. controls (1,0) and (4,0) .. (4,1);
		\draw (2,1) .. controls (2,.5) and (3,.5) .. (3,1);
		\draw (1,1) .. controls (1,1.5) and (2,1.5) .. (2,1);
		\draw (3,1) .. controls (3,1.5) and (4,1.5) .. (4,1);
		\draw [->] (3.5,1.5) -- (3.5,2.5);
		\draw (1,3) .. controls (1,2.5) and (2,2.5) .. (2,3);
		\draw (3,3) .. controls (3,2.5) and (4,2.5) .. (4,3);
		\draw (1,3) .. controls (1,4) and (4,4) .. (4,3);
		\draw (2,3) .. controls (2,3.5) and (3,3.5) .. (3,3);
	}};
	\draw (4.5,0) -- (4.5,4);
	\node at(6.5,2) {\tikz[scale=.5,  thick]{
		\draw (1,1) .. controls (1,0) and (4,0) .. (4,1);
		\draw (2,1) .. controls (2,.5) and (3,.5) .. (3,1);
		\draw (1,1) .. controls (1,1.5) and (2,1.5) .. (2,1);
		\draw [->] (1.5,1.5) -- (1.5,2.5);
		\draw (3,1) -- (3,3);
		\draw (4,1) -- (4,3);
		\draw (1,3) .. controls (1,2.5) and (2,2.5) .. (2,3);
		\draw (1,3) .. controls (1,4) and (4,4) .. (4,3);
		\draw (2,3) .. controls (2,3.5) and (3,3.5) .. (3,3);
	}};
	\draw (8.5,0) -- (8.5,4);
	\node at(10.5,2) {\tikz[scale=.5,  thick]{
		\draw (1,1) .. controls (1,0) and (4,0) .. (4,1);
		\draw (2,1) .. controls (2,.5) and (3,.5) .. (3,1);
		\draw (1,1) -- (1,3);
		\draw (2,1) -- (2,3);
		\draw (3,1) -- (3,3);
		\draw (4,1) -- (4,3);
		\draw (1,3) .. controls (1,4) and (4,4) .. (4,3);
		\draw (2,3) .. controls (2,3.5) and (3,3.5) .. (3,3);
	}};
	\draw (12.5,0) -- (12.5,4);
	\node at(14.5,2) {\tikz[scale=.5,  thick]{
		\draw (1,1) .. controls (1,0) and (4,0) .. (4,1);
		\draw (2,1) .. controls (2,.5) and (3,.5) .. (3,1);
		\draw (1,1) .. controls (1,2) and (4,2) .. (4,1);
		\draw (2,1) .. controls (2,1.5) and (3,1.5) .. (3,1);
	}};
	\draw (16.5,0) -- (16.5,4);
	\draw[movieline] (0,0) -- (17,0);
	\draw[moviedashed] (0,0) -- (17,0);
	\draw[movieline] (0,4) -- (17,4);
	\draw[moviedashed] (0,4) -- (17,4);
}
\end{center}
\end{exe}

\subsection{Chronological TQFT}\label{sec:oddfunctor}

Let $\Mod_R$ be the category of $\bZ\times\bZ$-graded $R$-modules with maps that preserve the degree. 
Recall the  bilinear map $\lambda_R : (\bZ\times\bZ)^2 \rightarrow R$ given by
\[
 \lambda_R\bigl((a',b'),(a,b)\bigr) := X^{a'a} Y^{b'b} Z^{a'b-b'a},
\]
and note that $\lambda_R^{-1}(x,y) = \lambda_R(y,x)$. 
The category $\Mod_R$ is equipped with the usual notion of tensor product, but comes with a symmetry $\tau_{M,N} : M \otimes N \rightarrow N \otimes M$ given by 
\[
\tau_{M,N}(m \otimes n) := \lambda_R(\deg m, \deg n) n \otimes m.
\]
We picture this formula as:
\[
\tikzdiag{0}
{
	\draw (0,-.25) node[below] {$m$} -- (0,0) .. controls (0,.5) and (1,.5) .. (1,1);
	\draw (1,-.75) node[below] {$n$} -- (1,0)  .. controls (1,.5) and (0,.5) .. (0,1);
}
\ = \ 
\tikzdiag{0}
{
	\draw (1,-.25) node[below] {$m$} -- (1,0) -- (1,1);
	\draw (0,-.75) node[below] {$n$} -- (0,0) -- (0,1);
}
\ = \lambda_R(|m|,|n|) \ 
\tikzdiag{0}
{
	\draw (1,-.75) node[below] {$m$} -- (1,0) -- (1,1);
	\draw (0,-.25) node[below] {$n$} -- (0,0) -- (0,1);
}
\]

Let $A := Rv_+ \oplus  Rv_-$ be the free $R$-module generated by two elements $v_+$ and $v_-$ with degrees
\begin{align*}
\deg_R(v_+) &:= (1, 0),
&
\deg_R(v_-) &:= (0,-1).
\end{align*}
Then, we consider the chronological TQFT $\tqft : \linchcobcat \rightarrow \Mod_R$ from~\cite{putyra14} mapping the monoidal product of $\linchcobcat$ to the tensor product of $ \Mod_R$, the twist cobordism to the symmetry $\tau$, a single circle to $A$, and
\begin{align*}
\tqft\left(\tikzdiagh[scale=.5]{
	\draw (0,0) .. controls (0,1) and (1,1) .. (1,2);
	\draw (1,0) .. controls (1,1) and (2,1) .. (2,0);
	\draw (3,0) .. controls (3,1) and (2,1) .. (2,2);
	\draw (0,0) .. controls (0,-.25) and (1,-.25) .. (1,0);
	\draw[dashed] (0,0) .. controls (0,.25) and (1,.25) .. (1,0);
	\draw (2,0) .. controls (2,-.25) and (3,-.25) .. (3,0);
	\draw[dashed] (2,0) .. controls (2,.25) and (3,.25) .. (3,0);
	\draw (1,2) .. controls (1,1.75) and (2,1.75) .. (2,2);
	\draw (1,2) .. controls (1,2.25) and (2,2.25) .. (2,2);
	\draw [<-] (1.15,.15) -- (1.85,.15);
}\right) : A \otimes A \rightarrow A &:= 
\begin{cases}
v_+ \otimes v_+ \mapsto v_+, & v_+ \otimes v_- \mapsto v_-, \\
v_- \otimes v_- \mapsto 0, & v_- \otimes v_+ \mapsto XZ v_-,
\end{cases}
\\
\tqft\left(\tikzdiagh[xscale=.5,yscale=-.5]{
	\draw (0,0) .. controls (0,1) and (1,1) .. (1,2);
	\draw (1,0) .. controls (1,1) and (2,1) .. (2,0);
	\draw (3,0) .. controls (3,1) and (2,1) .. (2,2);
	\draw (0,0) .. controls (0,-.25) and (1,-.25) .. (1,0);
	\draw (0,0) .. controls (0,.25) and (1,.25) .. (1,0);
	\draw (2,0) .. controls (2,-.25) and (3,-.25) .. (3,0);
	\draw (2,0) .. controls (2,.25) and (3,.25) .. (3,0);
	\draw[dashed] (1,2) .. controls (1,1.75) and (2,1.75) .. (2,2);
	\draw (1,2) .. controls (1,2.25) and (2,2.25) .. (2,2);
	\draw [->] (1.25,.45) -- (1.75,-.15);
}\right) : A  \rightarrow A \otimes A &:= 
\begin{cases}
v_+ \mapsto v_- \otimes v_+ + YZ v_+ \otimes v_-,  &\\
v_- \mapsto v_- \otimes v_-, &
\end{cases}
\\
\tqft\left(\right) : R  \rightarrow A  &:= 
\begin{cases}
1 \mapsto v_+, & 
\end{cases}
\\
\tqft\left(\right) : A  \rightarrow R &:= 
\begin{cases}
v_+ \mapsto 0,  &\\
v_- \mapsto 1. &
\end{cases}
\end{align*}

\subsection{Dotted cobordisms}

There is also a notion of dotted cobordisms (see \cite[\S11]{putyra14}), where we allow the chronological cobordism (with corners or not) to be decorated by dots on the surface. A dot has degree $(-1,-1)$ and move freely on the surface at the cost of multiplying by scalars in $R$ whenever we exchange dots with critical points, respecting~\cref{eq:cobcommute1} and ~\cref{eq:cobcommute2}. Let $\dotchcobcat$ be the 2-category defined similarly as $\linchcobcat$ but where cobordisms are dotted, modulo the following \emph{dot relations}:
\begin{align}\label{eq:dotcob}
\tikzdiagh[xscale=-.5,yscale=.5]{
	\draw (1,0) .. controls (1,1) and (2,1) .. (2,0);
	\draw (1,0) .. controls (1,-.25) and (2,-.25) .. (2,0);
	\draw[dashed] (1,0) .. controls (1,.25) and (2,.25) .. (2,0);
	\draw[->] (1.5,1.5) [partial ellipse=0:270:3ex and 1ex];
	\draw (1,0) .. controls (1,-1) and (2,-1) .. (2,0);
	%
} \ &= 0,
&
\tikzdiagh[xscale=-.5,yscale=.5]{
	\draw (1,0) .. controls (1,1) and (2,1) .. (2,0);
	\draw (1,0) .. controls (1,-.25) and (2,-.25) .. (2,0);
	\draw[dashed] (1,0) .. controls (1,.25) and (2,.25) .. (2,0);
	\draw[->] (1.5,1.5) [partial ellipse=0:270:3ex and 1ex];
	\draw (1,0) .. controls (1,-1) and (2,-1) .. (2,0);
	%
	\node[tikzdot] at (1.5,.25) {};
} \ &= 1,
&
\ &= \ 
\tikzdiagh[xscale=-.5,yscale=.5]{
	\draw (1,0) .. controls (1,1) and (2,1) .. (2,0);
	\draw (1,0) .. controls (1,-.25) and (2,-.25) .. (2,0);
	\draw[dashed] (1,0) .. controls (1,.25) and (2,.25) .. (2,0);
	\draw[->] (1.5,1.5) [partial ellipse=0:270:3ex and 1ex];
	\node[tikzdot] at(1.5,.5){};
	\draw (1,4) .. controls (1,3) and (2,3) .. (2,4);
	\draw (1,4) .. controls (1,3.75) and (2,3.75) .. (2,4);
	\draw (1,4) .. controls (1,4.25) and (2,4.25) .. (2,4);
}
\ + \ 
\tikzdiagh[xscale=-.5,yscale=.5]{
	\draw (1,0) .. controls (1,1) and (2,1) .. (2,0);
	\draw (1,0) .. controls (1,-.25) and (2,-.25) .. (2,0);
	\draw[dashed] (1,0) .. controls (1,.25) and (2,.25) .. (2,0);
	\draw[->] (1.5,1.5) [partial ellipse=0:270:3ex and 1ex];
	\node[tikzdot] at(1.5,3.5){};
	\draw (1,4) .. controls (1,3) and (2,3) .. (2,4);
	\draw (1,4) .. controls (1,3.75) and (2,3.75) .. (2,4);
	\draw (1,4) .. controls (1,4.25) and (2,4.25) .. (2,4);
} \ ,
&
\tikzdiagh[xscale=1,yscale=1]{
	\draw (0,1) .. controls (.25,1.25) and (.75,1-.25) .. (1,1);
	\draw (0,0) .. controls (.25,.25) and (.75,-.25) .. (1,0);
	\draw (0,0) -- (0,1);
	\draw (1,0) -- (1,1);
	\node[tikzdot]at(.5,.25){};
	\node[tikzdot]at(.5,.75){};
}
\ &= 0,
\end{align}
and extending \cref{eq:cobcommute1} and \cref{eq:cobcommute2} to dots. 

The functor $\tqft$ extends to a functor $\tqft_\bullet :  \dotchcobcat \rightarrow \Mod_R$ with
\[
\tqft_\bullet\left( \ 
\tikzdiagh[scale=.5]{
	\draw (1,0) .. controls (1,.-.25) and (2,-.25) .. (2,0);
	\draw[dashed] (1,0) .. controls (1,.25) and (2,.25) .. (2,0);
	\draw (1,0) -- (1,2);
	\draw (2,0) -- (2,2);
	\draw (1,2) .. controls (1,1.75) and (2,1.75) .. (2,2);
	\draw (1,2) .. controls (1,2.25) and (2,2.25) .. (2,2);
	\node[tikzdot] at (1.5,1){};
} \ 
\right) : A \rightarrow A 
:= \begin{cases}
v_+ \mapsto v_-, &\\
v_- \mapsto 0. &
\end{cases}
\]

\subsection{Arc spaces and arc algebras}\label{sec:arcspaces}

Consider a flat tangle $t \in B_n^m$. Define the space
\begin{align*}
\tqft(t) &:= \bigoplus_{a\in B^n, b\in B^m} {_b}\tqft(t){_a},&
 {_b}\tqft(t){_a} &:= \tqft(\bar{b} t a) .
\end{align*}
Given another flat tangle $t' \in B_m^k$, define the composition map $\mu[t',t]$ as
\[
{_d}\tqft(t'){_c} \otimes_R {_b}\tqft(t){_a} \xrightarrow{\mu[t',t]} 0,
\]
whenever $c \neq b$, and as making the following diagram commutes otherwise:
\[
\begin{tikzcd}[column sep=10ex]
{_c}\tqft(t'){_b} \otimes_R {_b}\tqft(t){_a} \ar{rr}{\mu_{cba}[t',t]}  \ar[sloped]{d}{\simeq} && {_c}\tqft(t't){_a}  \\
\tqft(\bar{c} t' b) \otimes_R \tqft(\bar{b} t a)  \ar[sloped]{r}{\simeq} & \tqft(\bar{c}t'b \bar{b}ta)  \ar{r}{\tqft(\chcob_{cba}(t',t))} & \tqft(\bar{c}t'ta) \ar[sloped]{u}{\simeq}
\end{tikzcd}
\]
for $c \in B^k, b \in  B^m$ and $a \in B^n$. 

\begin{rem}
Note that the composition map $\mu[t',t]$ does not preserve the $\bZ\times\bZ$-grading. 
This will be fixed in \cref{sec:tanglehomology} using the tools from \cref{sec:gradcat}.
\end{rem}

\begin{defn}
The \emph{arc algebra $H^n$} is given by 
\[
H^n := \bigoplus _{a,b \in B^n} {_b}\tqft(1_n){_a},
\]
with multiplication $\mu[1_n, 1_n]$. 
\end{defn}

\begin{exe}
As a simple example, we can take $n=1$. Then, $H^1 \cong A$ with $\mu(v_+,v_+) = v_+$, $\mu(v_+,v_-) = v_-$, $\mu(v_-,v_+) = XZ v_-$ and $\mu(v_-,v_-) = 0$. 
\end{exe}

\begin{rem}
Note that $H^n$ is not associative in the proper sense (see~\cite{naissevaz} for an example with $n=2$) and also not unital. One of the main goal of~\cref{sec:gradcat} will be to construct a framework  such that it becomes associative and unital as an algebra object in the right monoidal category. 
\end{rem}




\section{$\cC$-graded structures}\label{sec:gradcat}

Let $\cC$ be a small category and $\Bbbk$ a unital commutative ring.
We write $\cC^{[n]} := \{ X_0 \xleftarrow{f_1} X_1 \xleftarrow{f_2} \cdots \xleftarrow{f_n} X_n \}$ for the set of paths of length $n$  in $\cC$.

\begin{defn}
A \emph{grading category} is pair $(\cC,\assoc)$ where $\assoc : \cC^{[3]} \rightarrow \Bbbk^\times$ is a 3-cocycle, meaning it respects
\[
d\assoc(g,h,k,l) := 
\assoc(h,k,l) \assoc(gh,k,l)^{-1} \assoc(g,hk,l) \assoc(g,h,kl)^{-1} \assoc(g,h,k) = 1,
\]
for all $g \circ h \circ k \circ l \in \cC^{[4]}$. 
We call $\assoc$ the \emph{associator}.
\end{defn}

We assume for the rest of the section that we have fixed a grading category $(\cC, \assoc)$, and a unital commutative ring $\Bbbk$.

\begin{defn}
A \emph{$\cC$-graded  $\Bbbk$-module} is a $\Bbbk$-module $M$ with a decomposition $M \cong \bigoplus_{g \in \cC} M_g$. 
Given $x \in M_g$, we write $|x| := g$. 
A \emph{graded map} $f : M \rightarrow N$ between two $\cC$-graded modules $M,N$ is a $\Bbbk$-linear map such that $f(M_g) \subset N_g$ for all $g \in \cC$. 
\end{defn}

We define the category $\Mod^\cC$ of $\cC$-graded $\Bbbk$-modules with objects being $\cC$-graded $\Bbbk$-modules and morphisms being graded maps.
It is a monoidal category with $M' \otimes M$ defined as
\begin{align*}
M' \otimes M &:=  \bigoplus_{g\in \cC} (M' \otimes M)_g,
&
(M' \otimes M)_g &:=  \bigoplus_{g = g_2 \circ g_1} M'_{g_2} \otimes_\Bbbk M_{g_1}.
\end{align*}
The coherence isomorphism $\alpha : (M'' \otimes M') \otimes M \xrightarrow{\simeq} M'' \otimes (M' \otimes M)$ is given by 
\[
(z \otimes y) \otimes x \mapsto \assoc(|z|,|y|,|x|) z \otimes (y \otimes x),
\] 
for homogeneous elements $x \in M''_{|x|}, y \in M'_{|y|}, z \in M_{|z|}$.
The unit object is 
\[
I_\cC := \bigoplus_{X \in Obj(\cC)} (\Bbbk)_{\id_X},
\]
 with left unitor $\lambda : I_\cC \otimes M \cong M$ given by
\begin{align*}
(k \otimes m) &\mapsto  \lambda(|k|,|m|) km, & \lambda(|k|,|m|) &:= \alpha(\id_Y,\id_Y, |m|)^{-1},
\intertext{
for $m \in M_g$,  $k \in (\Bbbk)_{\id_Y}$ and $g : X \rightarrow Y$. The right unitor $\rho$ is constructed similarly with 
}
(m \otimes k') &\mapsto \rho(|m|,|k'|) mk',  & \rho(|m|,|k'|) := \alpha(|m|,\id_X,\id_X), 
\end{align*}
for $k' \in  (\Bbbk)_{\id_X}$, so that
\begin{equation}\label{eq:unitorsrel}
\lambda(\id_Y,x)  = \rho(z,\id_Y) \assoc(z,\id_{Y},x) .
\end{equation}

\begin{rem}
Note that the definition of a grading category can be made more general by allowing arbitrary unitors respecting \cref{eq:unitorsrel}, and everything written below generalizes. 
\end{rem}

\subsection{$\cC$-graded algebras}

Having a monoidal category at thand, constructing classical algebraic structures becomes straightforward. 

\begin{defn}
A \emph{$\cC$-graded algebra} is a monoid object in $\Mod^\cC$. 
\end{defn}

Explicitly, it is $\cC$-graded $\Bbbk$-module $A \cong \bigoplus_{g \in \cC} A_g$ with a $\Bbbk$-linear multiplication map 
\[
\mu_A : A \otimes A \rightarrow A,
\]
usually written as $yx := \mu_A(y,x)$ for $x,y \in A$,
and a unit element $1_X \in A_{\id_X}$ for each $X \in Obj(\cC)$, 
respecting:
\begin{itemize}
\item $\mu_A$ is graded $\mu_A(A_{g'}, A_{g}) \subset A_{g' \circ g}$;
\item $\mu_A$ is associative $(zy)x = \assoc(|z|,|y|,|x|) z(yx)$;
\item $1_Y x = \lambda(\id_Y, |x|)  x$ and $x 1_X = \rho(|x|, \id_X)x$ for all $x \in A_{|x| : X \rightarrow Y}$.
\end{itemize}
In particular, $I_\cC$ is a $\cC$-graded algebra with multiplication induced by $\Bbbk$ and $xy = 0$ whenever $|x| \neq |y|$. 

\begin{defn}
Let $A_1, A_2$ be two $\cC$-graded algebras. An \emph{$A_2$-$A_1$-bimodule} is a bimodule object over the pair $(A_2,A_1)$ in $\Mod^\cC$. 
\end{defn}

Explicitly, it is a $\cC$-graded $\Bbbk$-module $M \cong \bigoplus_{g \in \cC} M_g$ with graded $\Bbbk$-linear left and right action maps 
\begin{align*}
\rho_L &: A_2 \otimes M \rightarrow M, &
\rho_R &: M \otimes A_1 \rightarrow M,
\end{align*}
written as $y \cdot m := \rho_L(y,m)$ and $m \cdot x := \rho_R(m,x)$ for $x \in A_1, y \in A_2$ and $m \in M$, 
 respecting:
\begin{itemize}
\item $(y'y) \cdot m = \assoc(|y'|,|y|,|m|) y' \cdot (y \cdot m)$;
\item $(m \cdot x') \cdot x = \assoc(|m|,|x'|,|x|) m \cdot (x'x)$;
\item $(y \cdot m) \cdot x = \assoc(|y|,|m|,|x|) y \cdot (m \cdot x)$;
\item $1_Y\cdot m = \lambda(\id_Y,|m|) m$ and $m \cdot 1_X = \rho(|m|,\id_X)m$ for all $m \in M_{|m| : X \rightarrow Y}$,
\end{itemize}
for all $y',y \in A_2$, $x',x \in A_1$ and $m \in M$. 
We obtain the notion of a left (resp. right) $A$-module as a $A$-$I_\cC$-bimodule (resp. $I_\cC$-$A$-bimodule).
A graded map of $A_2$-$A_1$-bimodules $f : M \rightarrow M'$ is a graded $\Bbbk$-linear map preserving the actions $f(y \cdot m) = y \cdot f(m)$ and $f(m \cdot x) = f(m) \cdot x$ for all $x \in A_1, m \in M$ and $y \in A_2$.
We write $\Bimod^\cC(A_2,A_1)$ for the category of $A_2$-$A_1$-bimodules with graded maps. 
For the particular case of $A$-$I_\cC$-modules (i.e. left $A$-modules) we write it as $\Mod^\cC(A)$. 

\smallskip

Take $M' \in \Bimod^\cC(A_3,A_2)$ and $M \in \Bimod^\cC(A_2, A_1)$. We first observe that $M' \otimes M$ has the structure of a $(A_3, A_1)$-bimodule with left and right actions defined by making the following diagrams commute:
\[
\begin{tikzcd}[column sep = -3ex]
A_3 \otimes (M' \otimes M) \ar{rr} \ar[swap]{rd}{\assoc^{-1}} && M' \otimes M \\
&(A_3 \otimes M') \otimes M \ar[swap]{ur}{\rho_L \otimes 1}&
\end{tikzcd}
\qquad
\begin{tikzcd}[column sep = -3ex]
(M' \otimes M) \otimes A_1 \ar{rr} \ar[swap]{rd}{\assoc} && M' \otimes M \\
& M' \otimes (M \otimes A_1) \ar[swap]{ur}{1 \otimes \rho_R}&
\end{tikzcd}
\]
that is
\begin{align*}
y \cdot (m' \otimes m) &:= \assoc(|y|,|m'|,|m|)^{-1} (y \cdot m') \otimes m, \\
(m' \otimes m) \cdot x &:= \assoc(|m'|,|m|,|y|) m' \otimes (m \cdot x), 
\end{align*}
for all homogeneous $x \in A_1$, $y \in A_3$, $m \in M$ and $m' \in M'$. 

\begin{defn}
The \emph{tensor product} of $M'$ with $M$ over $A_2$ is the coequalizer
\[
\begin{tikzcd}[row sep = 1ex]
(M' \otimes A_2) \otimes M \ar["\vsimeqop"',swap]{dd}{\assoc} \ar[pos=.35]{drr}{\rho_R \otimes 1} && & \\
&& M' \otimes M \ar{r} & M' \otimes_{A_2} M, \\
M ' \otimes (A_2 \otimes M)  \ar[swap,pos=.35]{urr}{1 \otimes \rho_L} & &  &
\end{tikzcd}
\]
 in $\Mod^\cC$.
\end{defn}

Explicitly, we have
\[
M' \otimes_{A_2} M := M' \otimes M / \bigl((m' \cdot x) \otimes m - \assoc(|m'|,|x|,|m|) m' \otimes (x \cdot m)\bigr).
\]
It is an $A_3$-$A_1$-bimodule with left and right actions induced by the ones on $M' \otimes M$. 

\smallskip

Given a map of $A_2$-$A_1$-bimodules $f : M \rightarrow N$ and another one of $A_3$-$A_2$-bimodules $f' :M' \rightarrow N'$ we obtain an induced map $(f' \otimes f) : (M' \otimes_{A_2} M) \rightarrow (N' \otimes_{A_2} N)$ given by $(f'\otimes f)(m' \otimes m) := f'(m') \otimes f(m)$. 

\smallskip

For $M \in \Bimod^\cC(A_2, A_1)$ we construct a functor 
\[
M \otimes_{A_1} - : \Mod^\cC(A_1) \rightarrow \Mod^{\cC}(A_2), \quad N \mapsto M \otimes_{A_1} N.
\]
Putting all these together, we obtain a bicategory $\BBimod^\cC$ with objects being $\cC$-graded algebras and $1$-hom spaces $\Hom(A_1,A_2) := \Bimod^\cC(A_2,A_1)$. The horizontal composition $\Hom(A_1,A_2) \times \Hom(A_2,A_3)$ is given by tensor product over $A_2$. 

\begin{exe}\label{ex:assocZ}
As an easy example we can consider the category $\cZ$ with one object $\star$ and $\Hom_\cZ(\star, \star) := \bZ$, with $\alpha=1$. Then, a $\cZ$-graded object is the same thing as a $\bZ$-graded one. 
\end{exe}

\subsection{$\cC$-grading shift}
A \emph{$\cC$-grading shift} $\shiftFunct{}$ is a collection of maps
\[
\shiftFunct{} = \{ \shiftFunct[YX]{} : \shiftDom[YX]{} \subset \Hom_\cC(X,Y) \rightarrow \Hom_\cC (X,Y)\}_{X,Y \in Obj(\cC)}.
\]
We write $\shiftFunct{}(g) := \shiftFunct[YX]{}(g)$ whenever $g \in \shiftDom[YX]{}$.

\begin{defn}
Let $\shiftCenter{S}(\cC)$ be the wide subcategory of $\cC$ given by
\begin{align*}
\Hom_{\shiftCenter{S}(\cC)}(X,Y) := 
\begin{cases}
\emptyset, &\text{whenever $X \neq Y$,} \\
Z\bigl(\End_\cC(X)\bigr), &\text{otherwise,}
\end{cases}
\end{align*}
where $Z$ denotes the center. 
\end{defn}

\begin{defn}
A \emph{$\cC$-shifting system} $S = (I,\shiftFunctCol
)$ is the datum of
\begin{itemize}
\item a monoid  $I$ with composition $\bullet : I \times I \rightarrow I$ and neutral element $\neutralElement \in I$;
\item  a collection of $\cC$-grading shifts $\shiftFunctCol = \{\shiftFunct{i}\}_{i \in I}$,
\end{itemize}
 respecting:
\begin{itemize}
\item $\shiftFunctCol$ contains the \emph{neutral shift} $\shiftFunct{\neutralElement}$ with $\shiftDom[YX]{\neutralElement} := \Hom_{\shiftCenter{S}(\cC)}(X,Y)$ and $\shiftFunct[YX]{\neutralElement} := \id_{\shiftDom[YX]{\neutralElement}}$; 
\item for each pair $\shiftFunct[ZY]{j}$ and $\shiftFunct[YX]{i}$ we have $\shiftDom[ZY]{j} \circ \shiftDom[YX]{i} \subset \shiftDom[ZX]{j \bullet i}$ and
\[
\begin{tikzcd}
\shiftDom[ZY]{j} \times \shiftDom[YX]{i}
\ar{r}{-\circ-}
\ar[swap]{d}{(\shiftFunct[ZY]{j}, \shiftFunct[YX]{i})}
&
\shiftDom[ZX]{j\bullet i}
\ar{d}{\shiftFunct[ZX]{j\bullet i}}
\\
\Hom_\cC(Y,Z) \times \Hom_\cC(X,Y)
\ar[swap]{r}{-\circ-}
&
\Hom_\cC(X,Z)
\end{tikzcd}
\]
commutes;
\item there is a subset $I_\id \subset I$ such that for each pair $X,Y \in Obj(\cC)$ there is a partition of $\Hom_\cC(X,Y) = \sqcup_{i \in I_\id} \shiftDom[XY]{i}$, and $\shiftFunct{i} = \id_{\shiftDom[YX]{i}}$ for all $i \in I_\id$;
\item if $I$ contains an absorbing element $0 \in I$, then $\shiftFunct{0}$ is a \emph{null shift} with $\shiftDom[XY]{0} = \emptyset$. 
\end{itemize}
\end{defn}

\begin{rem}\label{rem:neutralShift}
Note that the neutral shift $\shiftFunct{\neutralElement}$ preserves only $\shiftCenter{}(\cC)$. 
In general, we cannot hope to ask for it to preserve the whole hom-spaces. Indeed, suppose we have a non trivial shift $\shiftFunct{i} : \id_X \mapsto x$ where $x \neq \id_X \in \End(X)$, and consider an element $y \in \End(X)$ such that $x \circ y \neq y \circ x$. Then, we would have $\shiftFunct{\neutralElement \bullet  i}(y  \circ \id_X) = \shiftFunct{\neutralElement}(y) \circ \shiftFunct{i}(\id_X) = y \circ x$ and $\shiftFunct{i \bullet  \neutralElement}(\id_X \circ y) = \shiftFunct{i}(\id_X) \circ \shiftFunct{\neutralElement}(y) = x \circ y$, but we also want $\shiftFunct{\neutralElement \bullet  i}(y  \circ \id_X) = \shiftFunct{i}(y) = \shiftFunct{\neutralElement \bullet i}(\id_X \circ y)$, which is a contradiction. Since $y \notin Z(\End(X))$, we actually have $\shiftFunct{\neutralElement}(y) = 0$. 
\end{rem}

Let  $\beta = \{ \compMap[ZYX]{j}{i} \}$ be a collection of maps 
\[
\compMap[ZYX]{j}{i} : \shiftDom[ZY]{j} \times \shiftDom[YX]{i} \rightarrow \Bbbk^\times,
\]
for all $X,Y,Z \in Obj(\cC)$ and $i,j \in I$, such that $\compMap[ZYX]{\neutralElement}{\neutralElement} = 1$. 

\begin{defn}
We say that a $\cC$-shifting system $S$ 
 is \emph{compatible with $\assoc$ through $\beta$} if 
\begin{align}
\label{eq:shiftingsystcomp}
\begin{split}
\assoc(g'',g',g)&
\compMap[ZXW]{k\bullet j}{i}(g''g',g)
\compMap[ZYX]{k}{j}(g'',g') \\
&= 
\compMap[ZYW]{k}{j\bullet i}(g'', g'g)
\compMap[YXW]{j}{i}(g',g)
\assoc(\shiftFunct[ZY]{k}(g''), \shiftFunct[YX]{j}(g'), \shiftFunct[XW]{i}(g)),
\end{split}
\end{align}
for all $Z \xleftarrow{g''} Y \xleftarrow{g'} X \xleftarrow{g} W \in \shiftDom[ZY]{k} \circ \shiftDom[YX]{j} \circ \shiftDom[XW]{i}$ and $i,j,k \in I$. 
In that situation, we refer to $\compMap[ZYX]{j}{i}$ as \emph{compatibility maps}. 
Also, we write $\compMap{j}{i}(g',g) := \compMap[XYZ]{j}{i}(g',g)$ for all $Z \xleftarrow{g'} Y \xleftarrow{g} X \in \shiftDom[ZY]{j} \circ \shiftDom[YX]{i}$. 
\end{defn}

Let $S = (I, \{\shiftFunct{i}\}_{i \in I})$ be a $\cC$-shifting system compatible with $\assoc$. 
For each $i \in I$ we define the \emph{grading shift functor} $\shiftFunct{i} : \Mod^\cC \rightarrow \Mod^\cC$ as the identity on morphisms, and for each $\cC$-graded $\Bbbk$-module $M$ 
we put $\shiftFunct{i}(M) := \bigoplus_{g \in \shiftDom{i}} \shiftFunct{i}(M)_{\shiftFunct{i}(g)}$ where $\shiftFunct{i}(M)_{\shiftFunct{i} (g)} := M_g$. 
In other words, the grading shift functor sends elements in degree $g  \in D_i$ to elements in degree $\shiftFunct{i}(g)$, and elements in degree not in $\shiftDom{i}$ to zero. 
The \emph{identity shift functor} is given by $\shiftFunct{\id} := \bigoplus_{i \in I_\id} \shiftFunct{i}$. 
Note that $\shiftFunct{\id}(M) \cong M$. 

\begin{rem}
As already pointed out in \cref{rem:neutralShift}, in general we have $\shiftFunct{\id} \neq \shiftFunct{\neutralElement}$. 
\end{rem}
For $M,M' \in \Mod^\cC$ there is a canonical isomorphism
\[
\compMap{j}{i} : \shiftFunct{j}(M') \otimes \shiftFunct{i}(M) \xrightarrow{\simeq} \shiftFunct{j \bullet i}(M' \otimes M),
\quad m' \otimes m \mapsto \compMap{j}{i}(|m'|, |m|) m' \otimes m,
\]
for all homogeneous $m \in M, m' \in M'$. 
These isomorphisms are compatible with $\assoc$ in the sense that the diagram
\begin{equation}\label{eq:shiftingsystcompdiag}
\begin{tikzcd}[column sep = 15ex]
(\shiftFunct{k} M'' \otimes \shiftFunct{j} M') \otimes \shiftFunct{i}(M) \ar{r}{\assoc} \ar[swap]{d}{\compMap{k}{j} \otimes 1} 
 & \shiftFunct{k}(M'') \otimes (\shiftFunct{j} M' \otimes \shiftFunct{i} M) \ar{d}{1 \otimes \compMap{j}{i}}
  \\
\shiftFunct{k \bullet j}(M'' \otimes M') \otimes \shiftFunct{i}(M) \ar[swap]{d}{\compMap{k \bullet j}{i}}
& \shiftFunct{k}(M'') \otimes \shiftFunct{j \bullet i}(M' \otimes M) \ar{d}{\compMap{k}{j \bullet i}}
 \\
\shiftFunct{k \bullet j \bullet i}((M'' \otimes M') \otimes M) \ar[swap]{r}{\shiftFunct{k \bullet j \bullet i}\assoc} 
& \shiftFunct{k \bullet j  \bullet i} (M'' \otimes (M' \otimes M)),
\end{tikzcd}
\end{equation}
commutes 
for all $i,j,k \in I$ and $M,M',M'' \in \Mod^\cC$, thanks to~\cref{eq:shiftingsystcomp}.

\begin{rem}
For the sake of simplicity, we will often use $\widetilde I := I \sqcup \{\id\}$. Then, when we use a compatibility map $\compMap{\id}{i}$, we mean 
\[
\compMap{\id}{i}(g',g) := \compMap{j}{i}(g',g),
\]
for $g' \in \shiftDom{j}$ and $j \in I_\id$. In other words, $\compMap{\id}{i} = \sum_{j \in I_\id} \compMap{j}{i}$ where we interpret $\compMap{j}{i} (g',g) = 0$ whenever $g' \notin \shiftDom{j}$. 
 We will also write $\shiftFunct{\id \bullet i} := \bigoplus_{j \in I_\id} \shiftFunct{j \bullet i}$.
 The same applies for $\compMap{j}{\id}$. We also put $\compMap{\id}{\id} := 1$. 
\end{rem}

\begin{prop}
The compatibility map $\compMap{j}{i}$ defines a natural isomorphism
\[
\beta_{j,i} : \shiftFunct{j}(-) \otimes \shiftFunct{i}(-) \xrightarrow{\simeq} \shiftFunct{j \bullet i}(- \otimes -)
\]
of bifunctors for all $i,j \in \widetilde I$. 
\end{prop}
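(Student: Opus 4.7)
The plan is to unpack the definition of $\beta_{j,i}$ and then check separately (a) well-definedness as a graded map, (b) bijectivity, and (c) naturality in both arguments; the case $i$ or $j$ equal to $\id$ will be handled by the convention $\beta_{\id,i} := \sum_{k \in I_{\id}} \beta_{k,i}$ (and similarly on the other side), so it suffices to treat $i, j \in I$ and invoke linearity.

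First I would verify that $\beta_{j,i}$ lands in $\shiftFunct{j \bullet i}(M' \otimes M)$. Given homogeneous $m' \in M'_{g'}$ and $m \in M_g$ with $g' \in \shiftDom{j}$ and $g \in \shiftDom{i}$, the element $m' \otimes m$ has degree $\shiftFunct{j}(g') \circ \shiftFunct{i}(g)$ inside $\shiftFunct{j}(M') \otimes \shiftFunct{i}(M)$. The compatibility square for composition versus shifting (the second axiom of a $\cC$-shifting system) gives $g' \circ g \in \shiftDom{j \bullet i}$ and $\shiftFunct{j}(g') \circ \shiftFunct{i}(g) = \shiftFunct{j \bullet i}(g' \circ g)$, which is precisely the degree of $m' \otimes m$ inside $\shiftFunct{j \bullet i}(M' \otimes M)$. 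Elements whose degrees are not of this form contribute $0$ on both sides, so $\beta_{j,i}$ is a well-defined morphism in $\Mod^\cC$.

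Next I would show it is an isomorphism. Since $\compMap{j}{i}(g', g) \in \Bbbk^\times$, the formula $\compMap{j}{i}(g', g)^{-1} (m' \otimes m)$ defines a graded two-sided inverse on homogeneous elements and extends $\Bbbk$-linearly. Finally, naturality is essentially a formality: for graded maps $f : M \to N$ and $f' : M' \to N'$, and a homogeneous tensor $m' \otimes m$, one computes
\[
\shiftFunct{j \bullet i}(f' \otimes f)\bigl(\beta_{j,i}^{M',M}(m' \otimes m)\bigr)
= \compMap{j}{i}(|m'|,|m|)\, f'(m') \otimes f(m),
\]
while
\[
\beta_{j,i}^{N',N}\bigl(\shiftFunct{j}(f')(m') \otimes \shiftFunct{i}(f)(m)\bigr)
= \compMap{j}{i}(|f'(m')|,|f(m)|)\, f'(m') \otimes f(m);
\]
these agree because $\shiftFunct{?}$ acts as the identity on morphisms and $f, f'$ preserve degree, so $|f'(m')| = |m'|$ and $|f(m)| = |m|$.

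The only step with any substance is the degree calculation, and the single structural input there is the axiom that composition of morphisms intertwines the shifts $\shiftFunct{j}, \shiftFunct{i}, \shiftFunct{j \bullet i}$. The cocycle condition \eqref{eq:shiftingsystcomp} is not needed here; it will only enter in the proof that the coherence hexagon \eqref{eq:shiftingsystcompdiag} commutes, which is a separate statement. So I do not expect a genuine obstacle for the naturality claim itself.
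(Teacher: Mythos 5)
Your proposal is correct and follows essentially the same route as the paper's proof, which consists only of the naturality-square check and declares it ``immediate since the grading shift functors do not modify the morphisms, $\compMap{j}{i}$ depends only on the degree, and $f$, $f'$ preserve the degrees'' — exactly your closing observation. You add explicit verifications of well-definedness (via the shifting-system axiom $\shiftDom{j}\circ\shiftDom{i}\subset\shiftDom{j\bullet i}$ and $\shiftFunct{j}(g')\circ\shiftFunct{i}(g)=\shiftFunct{j\bullet i}(g'\circ g)$) and of bijectivity, which the paper takes for granted; these are sound and harmless additions, not a different method.
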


\begin{proof}
We need to check that
\[
\begin{tikzcd}
\shiftFunct{j}(M') \otimes \shiftFunct{i}(M) 
\ar{r}{\compMap{j}{i}}
\ar[swap]{d}{f' \otimes f}
&
\shiftFunct{j \bullet i}(M' \otimes M)
\ar{d}{f' \otimes f}
\\
\shiftFunct{j}(N') \otimes \shiftFunct{i}(N) 
\ar[swap]{r}{\compMap{j}{i}}
&
\shiftFunct{j \bullet i}(N' \otimes N)
\end{tikzcd}
\]
commutes for all $f : M \rightarrow N$ and $f' : M' \rightarrow N'$. This is immediate since the grading shift functors do not modify the morphisms, and $\compMap{j}{i}$ depends only on the degree, and $f$ and $f'$ preserve the degrees.
\end{proof}

\subsection{Shifting bimodules}\label{sec:shiftingbimod}

Consider a $\cC$-shifting system $(S = (I,\{\shiftFunct{i}\}_{i \in I}),\compMap{}{})$ compatible with $\assoc$. 
We assume that all $\cC$-graded algebras $A$ are supported on $\shiftCenter{S}(\cC)$ in the sense that $A_g = 0$ whenever $g \notin \Hom_{\shiftCenter{S}(\cC)}$. Thus, we have $\shiftFunct{\neutralElement}(A) \cong A$. 

\begin{defn}\label{defn:shiftedbim}
For $M \in  \Bimod^\cC(A_2, A_1)$ we define the \emph{shifted bimodule} $\shiftFunct{i}(M)$ as the $\cC$-graded $\Bbbk$-module $M$ shifted by $\shiftFunct{i}$ with left and right actions
given by making the diagrams
\[
\begin{tikzcd}
A_2  \otimes \shiftFunct{i}(M)  \ar[swap]{d}{\vsimeq} \ar{r}{\shiftFunct{i}\rho_L} & \shiftFunct{i}(M) 
\\
\shiftFunct{\neutralElement}(A_2)\otimes \shiftFunct{i}(M) \ar[swap]{d}{\compMap{\neutralElement}{i}} &
\\
\shiftFunct{\neutralElement \bullet i}(A_2 \otimes M) \ar{r}{\rho_L} & \shiftFunct{\neutralElement  \bullet  i} \ar[equal]{uu}(M)
\end{tikzcd}
\qquad
\begin{tikzcd}
\shiftFunct{i}(M) \otimes A_1 \ar[swap]{d}{\vsimeq} \ar{r}{\shiftFunct{i}\rho_R} & \shiftFunct{i}(M) 
\\
\shiftFunct{i}(M) \otimes \shiftFunct{\neutralElement }(A_1) \ar[swap]{d}{\compMap{i}{\neutralElement }} &
\\
\shiftFunct{i \bullet \neutralElement}(M \otimes A_1) \ar{r}{\rho_R} & \shiftFunct{i \bullet \neutralElement} \ar[equal]{uu}(M)
\end{tikzcd}
\]
commute. 
\end{defn}

Explicitly, it gives us
\begin{align*}
y \cdot \shiftFunct{i}(m) := \compMap{\neutralElement }{ i}(|y|,|m|) \shiftFunct{i}(y \cdot m), \\
\shiftFunct{i}(m) \cdot x := \compMap{i}{\neutralElement }(|m|,|x|) \shiftFunct{i}(m \cdot x),
\end{align*}
for all $x \in A_1,y \in A_2$ and $m \in M$. 

\begin{rem}
Note that we need to have $A_i$ supported in $\shiftCenter{S}(\cC)$-degree, otherwise we cannot identify $A_i$  with $\shiftFunct{\neutralElement }(A_i)$.
\end{rem}

\begin{prop}
The shifted bimodule $\shiftFunct{i}(M)$ is an $A_2$-$A_1$-bimodule. 
\end{prop}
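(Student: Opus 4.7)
The plan is to verify directly each of the four bimodule axioms (left and right associativity, compatibility of the two actions, and the unit conditions) for the shifted actions specified in Definition \ref{defn:shiftedbim}. Since $A_2$ and $A_1$ are supported on $\shiftCenter{S}(\cC)$, any element $y \in A_2$ or $x \in A_1$ satisfies $|y|, |x| \in \shiftDom{\neutralElement}$ and is fixed by $\shiftFunct{\neutralElement}$, so the compatibility maps $\compMap{\neutralElement}{i}$ and $\compMap{i}{\neutralElement}$ can be evaluated at such degrees. Using the explicit formulas
\[
y \cdot \shiftFunct{i}(m) = \compMap{\neutralElement}{i}(|y|,|m|)\,\shiftFunct{i}(y \cdot m), \qquad
\shiftFunct{i}(m) \cdot x = \compMap{i}{\neutralElement}(|m|,|x|)\,\shiftFunct{i}(m \cdot x),
\]
each axiom reduces, after cancelling $\shiftFunct{i}$ of the (already valid) corresponding axiom for $M$, to a scalar identity in $\Bbbk^\times$ that is an instance of the compatibility equation \eqref{eq:shiftingsystcomp}.

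More precisely, I would proceed case-by-case, each time choosing the triple $(k,j,i_{\mathrm{compat}})$ in \eqref{eq:shiftingsystcomp} so as to produce exactly the scalar that arises. For left associativity, applied to $(g'',g',g) = (|y'|,|y|,|m|)$, one takes $k = j = \neutralElement$ and the third index equal to $i$, and uses $\compMap{\neutralElement}{\neutralElement} = 1$ together with $\shiftFunct{\neutralElement}(|y'|) = |y'|$, $\shiftFunct{\neutralElement}(|y|) = |y|$ to recover the identity needed to match $(y'y)\cdot\shiftFunct{i}(m)$ with $\assoc(|y'|,|y|,\shiftFunct{i}(|m|))\,y'\cdot(y\cdot\shiftFunct{i}(m))$. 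For right associativity one takes $(k,j,i_{\mathrm{compat}}) = (i,\neutralElement,\neutralElement)$ evaluated at $(|m|,|x'|,|x|)$, and for the mixed action $(y\cdot\shiftFunct{i}(m))\cdot x$ versus $y\cdot(\shiftFunct{i}(m)\cdot x)$ one takes $(\neutralElement,i,\neutralElement)$ evaluated at $(|y|,|m|,|x|)$. The two unit conditions follow from the same equation by plugging identities into the arguments: the left unit uses $(k,j,i_{\mathrm{compat}}) = (\neutralElement,\neutralElement,i)$ at $(\id_Y,\id_Y,|m|)$, which together with $\lambda(\id_Y,-) = \assoc(\id_Y,\id_Y,-)^{-1}$ yields $\compMap{\neutralElement}{i}(\id_Y,|m|) = \lambda(\id_Y,\shiftFunct{i}(|m|))\lambda(\id_Y,|m|)^{-1}$, exactly what is required for $1_Y\cdot\shiftFunct{i}(m) = \lambda(\id_Y,|\shiftFunct{i}(m)|)\shiftFunct{i}(m)$; similarly the right unit uses $(i,\neutralElement,\neutralElement)$ at $(|m|,\id_X,\id_X)$ and the formula for $\rho$.

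Conceptually, all of this is the statement that the natural isomorphism $\compMap{j}{i} : \shiftFunct{j}(-) \otimes \shiftFunct{i}(-) \xrightarrow{\simeq} \shiftFunct{j \bullet i}(- \otimes -)$ is coherent with respect to $\assoc$ by diagram \eqref{eq:shiftingsystcompdiag}, so the bimodule axioms (which are commutative diagrams in $\Mod^\cC$) transport from $M$ to $\shiftFunct{i}(M)$ via these compatibility isomorphisms. Thus one may alternatively present the proof as: the left and right action maps $\rho_L$ and $\rho_R$ on $\shiftFunct{i}(M)$ are obtained from those on $M$ by conjugating by $\compMap{\neutralElement}{i}$ and $\compMap{i}{\neutralElement}$ respectively, and each bimodule diagram for $\shiftFunct{i}(M)$ becomes, after pasting with the relevant compatibility squares, the $\shiftFunct{i}$-image of the same diagram for $M$, which commutes by assumption.

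The only real obstacle is bookkeeping: keeping track of whether one should insert $\compMap{\neutralElement}{i}$ or $\compMap{i}{\neutralElement}$ at each step, and verifying that the resulting scalar identities are genuinely the correct specialisations of \eqref{eq:shiftingsystcomp} (with the right $\shiftFunct{\neutralElement}(g) = g$ simplifications on central degrees). Once the five case reductions above are written out, the proof amounts to reading off \eqref{eq:shiftingsystcomp} in each case.
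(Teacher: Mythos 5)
Your proof is correct and takes the same route as the paper: the paper simply asserts that the claim follows immediately from \cref{eq:shiftingsystcomp}, the support of $A_1,A_2$ in $\shiftCenter{S}(\cC)$-degree, and $\compMap{\neutralElement}{\neutralElement}=1$, whereas you carry out the case-by-case specialization of the triple $(k,j,i)$ explicitly, which is exactly the unpacking of that remark.
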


\begin{proof}
It follows immediately from \cref{eq:shiftingsystcomp} together with the fact that $A_1$ and $A_2$ are supported  in $\shiftCenter{S}(\cC)$-degree, and $\compMap{\neutralElement}{\neutralElement} = 1$.
\end{proof}

Therefore, we obtain a shifting endofunctor 
\[
\shiftFunct{i} : \Bimod^\cC(A_2, A_1) \rightarrow \Bimod^\cC(A_2,A_1),
\]
 for each $i \in I$ induced by $\shiftFunct{i} : \Mod^\cC \rightarrow \Mod^\cC$.

\begin{prop}
Let $M' \in \Bimod^\cC(A_3,A_2)$ and $M \in \Bimod^\cC(A_2, A_1)$. Then, there is an isomorphism of $A_3$-$A_1$-bimodules 
\[
\compMap{j}{i} : \shiftFunct{j}(M') \otimes_{A_2} \shiftFunct{i}(M)  \xrightarrow{\simeq} \shiftFunct{j \bullet i} (M' \otimes_{A_2} M),
\]
induced by the canonical isomorphism $\compMap{j}{i} : \shiftFunct{j} (M') \otimes \shiftFunct{i} (M) \xrightarrow{\simeq} \shiftFunct{j \bullet i}(M' \otimes M)$. 
\end{prop}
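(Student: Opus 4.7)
The plan is to show that the canonical isomorphism $\compMap{j}{i}$ of $\cC$-graded $\Bbbk$-modules already constructed on $\shiftFunct{j}(M') \otimes \shiftFunct{i}(M)$ is (a) a morphism of $A_3$-$A_1$-bimodules, and (b) compatible with the relation defining the coequalizer over $A_2$, so that it descends to the quotient. Invertibility is automatic since $\compMap{j}{i}$ is pointwise multiplication by a scalar in $\Bbbk^\times$.

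For (a), I would verify that $\compMap{j}{i}$ intertwines the left $A_3$-action and the right $A_1$-action on source and target. This is a direct computation using \cref{defn:shiftedbim} and the definitions of the induced actions on the tensor product: one expands $y \cdot (\shiftFunct{j}(m') \otimes \shiftFunct{i}(m))$ on both sides and compares scalars. The equality of scalars reduces to an instance of \cref{eq:shiftingsystcomp} with $i$ replaced by the relevant indices and with one of the three degrees being $\id_X \in \shiftCenter{S}(\cC)$. Since $A_3$ and $A_1$ are supported in $\shiftCenter{S}(\cC)$-degree, the associator factors simplify and everything works out. The same verification for the right action is symmetric.

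For (b), the main point is to show that $\compMap{j}{i}$ sends the submodule generated by
\[
(\shiftFunct{j}(m') \cdot x) \otimes \shiftFunct{i}(m) - \assoc(|\shiftFunct{j}(m')|, |x|, |\shiftFunct{i}(m)|)\, \shiftFunct{j}(m') \otimes (x \cdot \shiftFunct{i}(m))
\]
into the analogous submodule on the target, for $x \in A_2$. Unpacking both terms using \cref{defn:shiftedbim} and the definition of $\compMap{j}{i}$ gives scalars involving $\compMap{j}{\neutralElement}(|m'|,|x|)$, $\compMap{\neutralElement}{i}(|x|,|m|)$, and associators evaluated at shifted and unshifted degrees. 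The desired equality is then exactly \cref{eq:shiftingsystcomp} applied to the triple $(|m'|, |x|, |m|)$ in $\shiftDom{j} \circ \shiftDom{\neutralElement} \circ \shiftDom{i}$, which is the main obstacle: one must chase the compatibility diagram \cref{eq:shiftingsystcompdiag} carefully to match scalars, using that $\shiftFunct{\neutralElement}$ is the identity on $\shiftCenter{S}(\cC)$-degrees and $\compMap{\neutralElement}{\neutralElement}=1$.

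Once (a) and (b) are established, the universal property of the coequalizer gives the induced $A_3$-$A_1$-bimodule map $\compMap{j}{i} : \shiftFunct{j}(M') \otimes_{A_2} \shiftFunct{i}(M) \to \shiftFunct{j \bullet i}(M' \otimes_{A_2} M)$. Its inverse is constructed identically from $\compMap{j}{i}(g',g)^{-1}$, which also passes to the coequalizer by the same argument, so the induced map is an isomorphism as claimed.
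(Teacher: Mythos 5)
Your proposal is correct and takes essentially the same approach as the paper: both proofs rely on the compatibility relation \eqref{eq:shiftingsystcomp} (equivalently the diagrammatic form \eqref{eq:shiftingsystcompdiag}) as the central ingredient, specialized to triples where one slot is the neutral shift $\neutralElement$, and both exploit that $A_1,A_2,A_3$ are supported on $\shiftCenter{S}(\cC)$-degrees so that $\shiftFunct{\neutralElement}$ acts trivially there. The only cosmetic difference is the ordering: you verify the bimodule-map property on the ambient tensor product $\shiftFunct{j}(M')\otimes\shiftFunct{i}(M)$ first and then check descent through the coequalizer, whereas the paper first establishes descent as a map of $\Bbbk$-modules and then checks that the induced map preserves the actions. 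Both orderings reduce to the same scalar identities and either leads to the conclusion by the universal property of the coequalizer.
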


\begin{proof}
We first show that $\compMap{j}{i}$ induces an isomorphism as $\Bbbk$-modules. Consider the following diagram:
\[
\begin{tikzcd}[column sep = -5ex]
\bigl( \shiftFunct{j}(M') \otimes A_2  \bigr) \otimes \shiftFunct{i}(M) \ar{drr}{\shiftFunct{j} \rho_R \otimes 1} \ar[swap]{ddd}{\compMap{j \bullet \neutralElement}{i}\circ(\compMap{j}{\neutralElement } \otimes 1)}  \ar{ddr}{\assoc } &&&&
\\
&& \shiftFunct{j}(M') \otimes \shiftFunct{i}(M) \ar{rr} \ar["\vsimeq"']{ddd}{\compMap{j}{i}} & \hspace{15ex} & \shiftFunct{j}(M') \otimes_{A_2} \shiftFunct{i}(M) \ar[dashed,swap]{ddd}{\vsimeq}
\\
&\shiftFunct{j}(M') \otimes \bigl( A_2 \otimes \shiftFunct{i}(M) \bigr) 
\ar["{1\otimes\shiftFunct{i}\rho_L}" description]{ur}
\ar["{\compMap{j}{\neutralElement \bullet i}\circ(1\otimes\compMap{\neutralElement }{i})}" description, near start]{ddd}
&&&
\\
 \shiftFunct{j \bullet \neutralElement \bullet i}\bigl( (M' \otimes A_2) \otimes M \bigr)  \ar["\rho_R \otimes 1" near end]{drr} \ar[swap]{ddr}{\assoc}&&&&
\\
&& \shiftFunct{j \bullet i}(M' \otimes M) \ar{rr} & \hspace{15ex}& \shiftFunct{j \bullet i}(M' \otimes_{A_2} M)
\\
&\shiftFunct{j \bullet \neutralElement \bullet i}\bigl(M' \otimes (A_2 \otimes M)\bigr) \ar{ur}{1 \otimes \rho_L} &&&
\end{tikzcd}
\]
The left square is equivalent to~\cref{eq:shiftingsystcompdiag} and thus commutes. The other two squares commute by definition of $\shiftFunct{i}\rho_L$ and $\shiftFunct{j}\rho_R$. Since $\compMap{j}{i}$ is an isomorphism, we obtain by universal property of the coequalizer an induced isomorphism. 

We show that the induced map $\compMap{j}{i}$ preserves the left action, the proof for the right action being similar. Consider the following diagram:
\[
\begin{tikzcd}[column sep = -5ex]
&\bigl( A_3 \otimes \shiftFunct{j}(M') \bigr) \otimes \shiftFunct{i}(M) \ar{rr}{\compMap{\neutralElement }{j}} & \hspace{12ex} & \shiftFunct{\neutralElement \bullet j}(A_3 \otimes M) \otimes \shiftFunct{i}(M) \ar{dr}{\rho_L \otimes 1} &
\\
A_3 \otimes \bigl( \shiftFunct{j}(M') \otimes \shiftFunct{i}(M) \bigr) \ar{ur}{\assoc^{-1}} \ar[swap]{d}{1 \otimes\compMap{j}{i}}  \ar{rrrr}{\rho_L} &&&& \shiftFunct{j}(M') \otimes \shiftFunct{i}(M) \ar{d}{\compMap{j}{i}}
\\
A_3 \otimes \shiftFunct{j \bullet i}(M' \otimes M) \ar{rrrr}{\rho_L} \ar[swap]{dr}{\compMap{\neutralElement }{j \bullet i}} &&&& \shiftFunct{j \bullet i}(M' \otimes M)
\\
&\shiftFunct{\neutralElement  \bullet  j \bullet i}\bigl(A_3 \otimes (M' \otimes M)\bigr) \ar[swap]{rr}{\assoc^{-1}} & \hspace{12ex}  & \shiftFunct{\neutralElement \bullet j \bullet i}\bigl((A_3\otimes M') \otimes M\bigr) \ar[swap]{ur}{\rho_L \otimes 1} &
\end{tikzcd}
\]
where the upper and lower parts commute by definition of the left action on a tensor product of bimodules. Furthermore, the outer part of the diagram commutes thanks to~\cref{eq:shiftingsystcompdiag}. Thus, we conclude the inner part commutes, and $\compMap{j}{i}$ preserves the left action. 
\end{proof}

\begin{exe}
As in \cref{ex:assocZ}, we can consider $\cZ$. In this case, we can take $I := \bZ$ (for the additive structure), $\shiftFunct{n} : \Hom_\cZ(\star,\star) \rightarrow  \Hom_\cZ(\star,\star), m \mapsto m+n$ for all $n \in \bZ$, and $\compMap{n}{m} := 1$ for all $m,n \in \bZ$. It matches trivially with the usual notion of $\bZ$-grading shift.  
\end{exe}

\begin{exe}\label{exe:supershift}
A more interesting example is given by looking at the category $\cZ_2$ with a single object $\star$ and $\Hom_{\cZ_2}(\star, \star) := \bZ/2\bZ$, and $\alpha = 1$. Then, we take $I := \{\neutralElement, \pi\} := \bZ/2\bZ$, with $\shiftFunct{\neutralElement} := \id$ and $\shiftFunct{\pi}(m) := m+1 \mod 2$. We also put $\compMap{\neutralElement}{\pi}(x,y) := (-1)^{x}$, $\compMap{\pi}{\neutralElement}(x,y) := 1$, and $\compMap{\pi}{\pi}(x,y) := (-1)^{x}$. Then, it corresponds with the classical notion of super-graded structures, with $\shiftFunct{\pi}$ being the parity shift. The sign appearing in the compatibility map $\compMap{\neutralElement}{\pi}$ is the same as in the usual isomorphism
\[
M' \otimes \Pi M \xrightarrow{\simeq} \Pi\bigl(M' \otimes M'),
\]
where $\Pi$ is the parity shift functor, see for example \cite{supermonoidal}.
\end{exe}

\begin{exe}\label{exe:lambdaRshift}
Another similar example is given by the category $\cZ^2$ with one object $\star$ and $\Hom_{\cZ^2}(\star,\star) := \bZ \times \bZ$, with $\alpha = 1$. We take $I := \bZ \times \bZ$, with $\shiftFunct{(n_2,n_1)}(x_2,x_1) := (x_2+n_2, x_1+n_1)$, and $\compMap{(n_2,n_1)}{(m_2,m_1)}((y_2,y_1),(x_2,x_1)) := \lambda_R((y_2,y_1), (m_2,m_1))$, where $\lambda_{R}$ is the bilinear map defined in \cref{eq:deflambdaR}. This constructs shifting functors on $\Mod_R$. 
\end{exe}

\begin{exe}
Both \cref{exe:supershift} and \cref{exe:lambdaRshift} can be generalized as follow. Consider a monoid $G$ and put $\cC_G$ to be the category with a single object $\star$ and $\Hom_{\cC_G}(\star,\star) := G$, and put $\alpha = 1$. Consider a map $\lambda_G : G \times G \rightarrow \Bbbk$ such that $\lambda_G(x,a) \lambda_G(xy,b) = \lambda_G(y,b)\lambda_G(x,ab)$ for all $x,y,a,b \in G$ (this happens whenever $\lambda_G$ is bilinear ; if we also ask that $\lambda_G(1,1) = 1$, then the two conditions are equivalent). Then, put $I := Z(G)$ (the center of $G$), $\shiftFunct{a}(x) := ax$, and $\compMap{b}{a}(y,x) := \lambda_G(y,a)$. 
\end{exe}

\begin{rem}
In \cref{sec:gradCat}, we will construct a more sophisticated grading category, with a non-trivial associator, and where $I$ is very different from the hom-spaces in the grading category. 
\end{rem}

\subsection{$\cC$-graded dg-modules}\label{sec:gradeddg}

There is a natural notion of $\cC$-graded dg-modules (differentially graded modules) that is almost identitical to the usual notion (see for example~\cite{keller}). Most results about dg-modules holds for $\cC$-graded dg-modules.

\begin{defn}
A \emph{$\cC$-graded dg-bimodule} $(M,d_M)$ over a pair of $\cC$-graded $\Bbbk$-algebras $A_2$ and $A_1$ is a $\bZ\times\cC$-graded $A_2$-$A_1$-bimodule  $M = \bigoplus_{n \in \bZ, g \in \cC} M_g^n$, where we call the $\bZ$-grading \emph{homological}, together with a differential $d_M$ such that: 
\begin{itemize}
\item $d_M(M_g^n) \subset M_g^{n+1}$;
\item $d_M(y \cdot m) =  y \cdot d_M(m)$;
\item $d_M(m \cdot x) = d_M(m) \cdot x$;
\item $d_M \circ d_M = 0$,
\end{itemize}
for all $y \in A_2, m\in M$ and $x \in A_1$. \\
A \emph{map of $\cC$-graded dg-bimodules} preserves both the $\cC$-grading and the homological grading, and commutes with the differentials. 
\end{defn}

We write $\Bimoddg^\cC(A_2, A_1)$ for the abelian category of $\cC$-graded $A_2$-$A_1$-dg-bimodules, and $\Moddg^\cC(A)$ for the abelian category of $\cC$-graded $A$-$I_\cC$-dg-bimodules. 
We also write $|m|_\cC := \deg_\cC(m) := g$ and $|m|_h :=\deg_h(m) := n$ for respectively the $\cC$-degree and homological degree of $m \in M_g^n$. 

\smallskip

The tensor product over $A_2$ of a dg-bimodule $(M',d_{M'}) \in \Bimoddg^\cC(A_3, A_2)$ with another one $(M,d_M) \in \Bimoddg^\cC(A_2, A_1)$ is defined as
\[
(M', d_{M'}) \otimes_{A_2} (M, d_M) := 
(M' \otimes_{A_2} M, d_{M'\otimes M}), 
\]
where $\deg_h(m' \otimes m) := \deg_h(m') + \deg_h(m)$, and 
\[
d_{M'\otimes M}(m' \otimes m) := d_{M'}(m') \otimes m + (-1)^{\deg_h(m')} m' \otimes d_M(m).
\]

Given an $A_2$-$A_1$-dg-bimodule $(M,d_M)$, its \emph{homology} is $H(M,d_M) := \ker(d_M)/\img(d_M)$, which is a $\bZ \times \cC$-graded bimodule. As usual, a map between two $\cC$-graded dg-bimodules $f : (M,d_M) \rightarrow (M', d_{M'})$ induces a map on homology $f^* : H(M, d_M) \rightarrow H(M', d_{M'})$, and  we say that $f$ is a \emph{quasi-isomorphism} whenever $f^*$ is an isomorphism. 
The \emph{derived category} $\cD^{\cC}(A)$ of a $\cC$-graded algebra $A$ is given by localizing the category  $\Moddg^\cC(A)$ along quasi-isomorphisms. We also define similarly the derived category of dg-bimodules $\cD^{\cC}(A_2,A_1)$. 

\smallskip

The \emph{homological shift functor} is the functor
\[
[1] : \Bimoddg^\cC(A_2, A_1) \rightarrow \Bimoddg^\cC(A_2, A_1),
\]
given by sending the dg-module $(M,d_M)$ to $(M[1], d_{M[1]})$ where
\begin{itemize}
\item the homological degree is shifted up by one, $(M[1])_g^h := M_g^{h-1}$;
\item $d_{M[1]} := -d_M$;
\item $M[1]$ inherits the left and right actions of $M$, that is $y \cdot (m)[1]  := (y \cdot m)[1]$ and $(m)[1] \cdot x := (m \cdot x)[1]$, 
\end{itemize}
and the identity on morphisms. 

\begin{rem}
There is a more general notion of $\cC$-graded dg-module over a $\cC$-graded dg-algebra, where the algebra itself carries a $\bZ$-grading and a differential. In that situation, the homological shifting functor would need to twist the left action as usual. 
\end{rem}

Let $f : (M,d_M) \rightarrow (M',d_{M'})$ be a map of $\cC$-graded dg-bimodules. The \emph{mapping cone of $f$} is 
\begin{align*}
\cone(f) &:= (M[-1] \oplus M', d_C), 
& d_C &:= 
\begin{pmatrix}
-d_M & 0 \\
f & d_{M'}
\end{pmatrix}
\end{align*}
It is a $\cC$-graded dg-bimodule that fits in a short exact sequence
\[
0 \rightarrow (M',d_{M'})   \xrightarrow{\imath_{M'}} \cone(f) \xrightarrow{\pi_M} (M,d_M)[-1] \rightarrow 0.
\]
The derived category  $\cD^{\cC}(A)$  is a triangulated category with translation functor given by the homological shift $[-1]$, and the distinguished triangles are isomorphic to triangles of the following form:
\[
(M,d_{M}) \xrightarrow{f} (M',d_{M'}) \xrightarrow{\imath_{m'}} \cone(f) \xrightarrow{\pi_M}  (M,d_M)[-1].
\]

\begin{defn}
We say that a $\cC$-graded dg-module over $A$ is \emph{relatively projective} if it is a direct summand of a direct sum of shifted copies (both in homological and in $\cC$-degree) of the free dg-module $(A,0)$. 
We say that a $\cC$-graded dg-module is \emph{cofibrant} if it is a direct summand in $\Moddg^\cC(A)$ of the inverse limit of a filtration
\[
0 = F_0 \subset F_1 \subset F_2 \subset \cdots \subset F_r \subset F_{r+1} \subset \cdots,
\]
where each $F_{r+1}/F_r$ is isomorphic to a relatively projective dg-module. 
\end{defn}

A nice property of cofibrant dg-modules is that taking a tensor product with such a dg-module preserves quasi-isomorphisms: given a quasi-isomorphism $f : M \xrightarrow{\sim} M'$ and a cofibrant dg-module $P$, then $f \otimes 1 : M \otimes_A P \xrightarrow{\sim} M' \otimes_A P$ is a quasi-isomorphism. By standard arguments in homological algebra (see~\cite{keller}), we obtain the following:

\begin{prop}
For any $\cC$-graded dg-module $M$, there exists a cofibrant dg-module $\br(M)$ with a surjective quasi-isomorphism
\[
\begin{tikzcd}
\br(M) \ar[twoheadrightarrow]{r}{\sim} & M.
\end{tikzcd}
\]
We call $\br(M)$ the \emph{bar resolution} of $M$, and the assignment $M \rightarrow \br(M)$ is functorial.
\end{prop}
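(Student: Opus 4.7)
The plan is to imitate the classical bar resolution, carefully inserting associator factors at every re-association of tensor factors. For $M \in \Moddg^\cC(A)$, I would define
\[
\br(M) := \bigoplus_{n \geq 0} T_n, \qquad T_n := \bigl(A^{\otimes(n+1)} \otimes M\bigr)[n],
\]
with all tensor products parenthesized to the right, so that the leftmost $A$ supplies the left $A$-module structure. The differential takes the form $d = d_{\text{int}} + d_{\text{bar}}$, where $d_{\text{int}}$ applies $d_M$ with the usual Koszul sign, and $d_{\text{bar}} : T_n \to T_{n-1}$ is an alternating sum of $n$ face maps. For $0 \leq i \leq n-2$, the $i$-th face map re-associates so that the $i$-th and $(i+1)$-st copies of $A$ become adjacent and then multiplies them; the last face map re-associates so that the final $A$ lies next to $M$ and applies the left action. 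Each re-association contributes an $\assoc$-factor determined by the $\cC$-degrees of the nearby elements.

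First I would verify $d^2 = 0$. The internal part squares to zero trivially, and the cross-term $d_{\text{int}} d_{\text{bar}} + d_{\text{bar}} d_{\text{int}}$ vanishes because $d_M$ is a map of left $A$-modules. The interesting point is that $d_{\text{bar}}^2 = 0$: adjacent face maps cancel in pairs, and the cancellations are compatible precisely because $\assoc$ satisfies the 3-cocycle condition $d\assoc = 1$. This is the heart of the construction, since the cocycle identity is exactly what forces two different orders of re-association and multiplication on three consecutive $A$-factors to produce the same scalar.

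Second, for cofibrancy I would use the filtration $F_r := \bigoplus_{n \leq r} T_n$. Because $d_{\text{bar}}$ strictly decreases $n$ while $d_{\text{int}}$ preserves it, each $F_r$ is a sub-dg-module, and the subquotient $F_{r+1}/F_r$ inherits only $d_{\text{int}}$. Viewed as a left $A$-module via the leftmost factor, $F_{r+1}/F_r \cong A \otimes \bigl( A^{\otimes(r+1)} \otimes M \bigr)[r+1]$ is free on the right factor. Refining this filtration further by the internal homological grading of $M$ (and then by a basis of the underlying $\Bbbk$-module), each successive quotient becomes isomorphic to a direct sum of appropriately shifted copies of $(A,0)$, hence is relatively projective. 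This exhibits $\br(M)$ as the direct limit of a relatively projective filtration, hence as cofibrant.

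Third, I would show the augmentation $\varepsilon : \br(M) \to M$ given on $T_0$ by $a \otimes m \mapsto a \cdot m$ (and zero on higher $T_n$) is a surjective quasi-isomorphism. Surjectivity is clear via the units $1_X$. Acyclicity of the augmented complex is verified by the extra degeneracy $s(a_0 \otimes \cdots \otimes a_n \otimes m) := 1_Y \otimes a_0 \otimes \cdots \otimes a_n \otimes m$, where $Y$ is the appropriate target object of the outer $\cC$-degree; it satisfies $ds + sd = \id$ up to unitor scalars, and cleaning these scalars is exactly where~\eqref{eq:unitorsrel} enters. Functoriality is transparent: a map $f : M \to N$ induces $\br(f)(a_0 \otimes \cdots \otimes a_n \otimes m) := a_0 \otimes \cdots \otimes a_n \otimes f(m)$, commuting with both differentials since $f$ is a map of dg-bimodules. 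The main obstacle is the simultaneous bookkeeping of Koszul signs, associator scalars, unitor scalars, and grading shifts; the 3-cocycle condition guarantees consistency of the face maps while~\eqref{eq:unitorsrel} guarantees consistency of the augmentation and contracting homotopy, so the task is to package all of these into a single coherent definition of $d_{\text{bar}}$.
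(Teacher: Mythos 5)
The paper itself offers no proof: it simply asserts the statement after pointing the reader to Keller's \emph{Deriving DG categories}, where the cofibrant replacement is built by the semi-free (cell-attaching) construction --- iteratively surjecting onto $M$ and onto the kernels by coproducts of shifted copies of $(A,0)$ and totalizing --- rather than by the classical bar complex. Your approach is therefore a genuinely different realization of the statement, and it is the one most naturally suggested by the name $\br(M)$; the verification of $d^2=0$ via the 3-cocycle identity, the augmentation via the units $1_X$, the extra degeneracy argument, and the functoriality claim are all sound and are exactly the points the $\cC$-graded setting makes nontrivial.

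The gap is in the cofibrancy step, and it is not a bookkeeping issue. With the paper's definitions, a \emph{relatively projective} dg-module is a direct summand of a direct sum of shifted copies of $(A,0)$ --- in particular it carries the zero differential --- so the naive bar filtration by $T_n$ does not suffice (the subquotient retains the internal differential from $d_M$), and you correctly propose to refine further. But your refinement ``by a basis of the underlying $\Bbbk$-module'' presupposes that $A^{\otimes n}\otimes_\Bbbk M$ is free over $\Bbbk$. Neither $A$ nor $M$ is assumed $\Bbbk$-free in the statement, and $\Bbbk$ is an arbitrary commutative ring; if, say, $M^k$ has $\Bbbk$-torsion, then $A\otimes_\Bbbk(A^{\otimes n}\otimes M^k)$ is not a summand of a direct sum of shifted copies of $(A,0)$, so the finest subquotients of your filtration need not be relatively projective. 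The semi-free construction Keller uses sidesteps this entirely because each stage attaches genuinely free cells regardless of the $\Bbbk$-module structure of $M$; the classical bar complex, by contrast, is cofibrant only under a flatness/freeness hypothesis on $A$ and $M$ over $\Bbbk$. In the paper's intended applications ($\Bbbk = R$ and $A = H^n$, which are $R$-free) your argument goes through, but as a proof of the proposition as stated you would either need to add that hypothesis or replace the bar complex with a semi-free resolution.
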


This allows us to define for any $M \in \Bimoddg^\cC(A_2,A_1)$ the \emph{derived tensor product} functor
\[
M \Lotimes_A - : \cD^\cC(A_1) \rightarrow \cD^\cC(A_2), \quad M \Lotimes_{A_1} X := M \otimes_{A_1} \br(X).
\]
Whenever $M$ is cofibrant as right $A_1$-dg-module, then we have $M \otimes_{A_1} - \cong M \Lotimes_{A_1} -$.

\subsection{Homogeneous maps}\label{sec:homogeneousmap}

We use the same hypothesis as in \cref{sec:shiftingbimod}.

\begin{defn}
Let $M$ and $N$ be $\cC$-graded $A_2$-$A_1$-bimodules. We say that a (non-graded) map $f : M \rightarrow N$ is \emph{purely homogeneous} of degree $i \in \widetilde I$ if for all $m \in M$ we have
\begin{itemize}
\item $f(m) = 0$ whenever $|m| \notin \shiftDom{i}$;
\item $|f(m)| = \shiftFunct{i}(|m|)$;
\item $y \cdot f(m)  =  \compMap{\neutralElement}{i}(|y|,|m|) f(y \cdot m)$ for all $y \in A_2$;
\item $f(m) \cdot x  =  \compMap{i}{\neutralElement}(|m|,|x|) f(m \cdot x)$ for all $x \in A_1$.
\end{itemize}
A map $f : M \rightarrow N$ is \emph{homogeneous} if it is a finite sum $f =  \sum_{j \in J \subset \widetilde I} f_j$ of purely homogeneous maps $f_j : M \rightarrow N$ of degree $j$. 
\end{defn}

Note that if $f : M \rightarrow N$ is purely homogeneous of degree $i$, then the induced map $\bar f : \shiftFunct{i}(M) \rightarrow N$, where $\bar f(\shiftFunct{i} x) := f(x)$, is graded. 
Let $g : M' \rightarrow N'$ be a homogeneous map of $A_3$-$A_2$-bimodules and $f : M \rightarrow N$ be a homogeneous map of $A_2$-$A_1$-bimodules. We define the tensor map $g \otimes f := \sum_{j \in I} (f \otimes g)_j$ where
\[
(g \otimes f)_j (m'\otimes m) := \sum_{i' \bullet i) = j} \compMap{|g|}{|f|}(|m'|,|m|)^{-1} g_{i'}(m') \otimes f_i(m),
\]
for all homogeneous elements $m' \in M', m \in M$. 

\smallskip

We would like to define a (non-abelian) category $\BIMOD^\cC(A_2,A_1)$ of $\cC$-graded bimodules with homogeneous maps, hence having hom-spaces 
\[
\Hom_{\BIMOD^\cC(A_2,A_1)}(M,N) := \bigoplus_{i \in \widetilde I} \Hom_{\Bimod^\cC(A_2,A_1)}(\shiftFunct{i}(M), N).
\]
However, the composition of homogeneous maps is in general not homogeneous, so that $\BIMOD^\cC(A_2,A_1)$ is in general not a category. 


\subsection{$\cC$-shifting 2-system}\label{sec:2shiftingsystem}

In order to have that the composition of two purely homogeneous maps is purely homogeneous, we need to define a way to \emph{vertically} compose grading shifts. 

\begin{defn}
A \emph{$\cC$-shifting 2-system} $S = \{\cI, \shiftFunctCol\}$ is a $\cC$-shifting system $\{(\cI, \bullet, \neutralElement), \shiftFunctCol\}$ such that  $\cI$ is equipped with an associative \emph{vertical} composition map 
\[
 - \circ - : \cI \times \cI \rightarrow \cI,
 \]
  respecting
\begin{itemize}
\item $\neutralElement \circ \neutralElement = \neutralElement$;
\item $\shiftDom{j \circ i} =\shiftDom{i} \cap \shiftFunct{i}^{-1}(\shiftDom{j})$;
\item $\shiftFunct{j}|_{\shiftFunct{i}(\shiftDom{i}) \cap \shiftDom{j}} \circ \shiftFunct{i}|_{\shiftDom{j \circ i}} = \shiftFunct{j \circ i}$;
\item $\shiftFunct{(j' \circ i') \bullet (j \circ i)} = \shiftFunct{(j' \bullet j) \circ (i' \bullet i)}$ and they have the same domain, 
\end{itemize}
for all $j',j,i',i \in \cI$.
\end{defn}

As before, we need some compatibility maps. Thus, consider a collection of maps
\[
\vCompMap[YX]{j}{i} : \shiftDom[YX]{i} \rightarrow \Bbbk^\times,
\]
for all $j,i \in \cI$ and $Y,X \in \cC$, and where $\vCompMap[YX]{\neutralElement}{\neutralElement} = 1$. We also require that $\vCompMap[YX]{j}{i} = 1$ whenever $i$ or $j \in I_\id$. 
Consider as well a collection of invertible scalars
\[
\interCompMap[ZYX]{j'}{i'}{j}{i} \in \Bbbk^\times,
\]
respecting $\interCompMap[YX]{j'}{i'}{j}{i} = 1$ whenever $(j' \circ i') \bullet (j \circ i) = (j' \bullet j) \circ (i' \bullet i)$ (for example, whenever $j'=i'=\neutralElement$). Furthermore, if $j',i',j$ or $i$ is in $I_\id$, then we can exchange it with any other element of $I_\id$ and $\interCompMap[YX]{j'}{i'}{j}{i}$ stays the same. 
We will write $\interCompMap{j'}{i'}{j}{i}(g) := \interCompMap[YX]{j'}{i'}{j}{i}$ whenever $|g|_\cC \in \Hom_\cC(X,Y)$. 

\begin{defn}\label{def:shifting2systemcomp}
We say that a $\cC$-shifting 2-system $S$ is \emph{compatible with $\assoc$ through $(\beta, \gamma, \Xi)$} if the underlying $\cC$-shifting system is compatible with $\assoc$ through $\beta$ and 
\begin{equation}\label{eq:shiftingsystvcomp}
\compMap[ZYX]{j' \circ i'}{j \circ i}(g',g) 
\vCompMap[YX]{j'}{i'}(g') 
\vCompMap[ZX]{j}{i}(g) 
\interCompMap[ZYX]{j'}{i'}{j}{i}
=
\vCompMap[ZX]{j' \bullet j}{i' \bullet i}(g'g) 
\compMap[ZYX]{i'}{i}(g',g) 
\compMap[ZYX]{j'}{j'}(\shiftFunct{i'}(g'), \shiftFunct{i}(g)),
\end{equation}
for all $g' \in \shiftDom[ZY]{i} , g \in \shiftDom[YX]{i}$, 
and
\begin{equation}\label{eq:shiftingsystvcomp2}
\vCompMap[YX]{k \circ j}{i}(g)  \vCompMap[YX]{k}{j}(\shiftFunct{i}(g)) = \vCompMap[YX]{k}{j \circ i}(g) \vCompMap[YX]{j}{i}(g),
\end{equation}
for all $g \in  \shiftDom[YX]{i}$. 
\end{defn}

This allows us to construct natural isomorphisms
\begin{align*}
\shiftFunct{j} \circ \shiftFunct{i} &\xrightarrow{\simeq} \shiftFunct{j \circ i}, \\
\shiftFunct{(j' \circ i') \bullet (j \circ i) } &\xrightarrow{\simeq} \shiftFunct{(j' \bullet j) \circ (i' \bullet i)},
\end{align*}
in $\Mod^\cC$ given by 
\begin{align*}
\shiftFunct{j} \circ \shiftFunct{i}(M) &\rightarrow \shiftFunct{j \circ i}(M), & m &\mapsto \vCompMap{j}{i}(|m|) m, \\
\shiftFunct{(j' \circ i') \bullet (j \circ i) }(M) &\rightarrow\shiftFunct{(j' \bullet j) \circ (i' \bullet i)}(M), & m &\mapsto \interCompMap{j'}{i'}{j}{i}(|m|) m,
\end{align*}
for all homogenous element $m \in M$. 
Then, \cref{eq:shiftingsystvcomp} means the following diagram commutes:
\begin{equation}\label{eq:shiftingsystvcompdiag}
\begin{tikzcd}
\shiftFunct{j'} \circ \shiftFunct{i'}(M') \otimes \shiftFunct{j}\circ \shiftFunct{i}(M) 
\ar{r}{\compMap{j'}{j}}
\ar[swap]{d}{\vCompMap{j'}{i'} \otimes \vCompMap{j}{i}}
&
\shiftFunct{j' \bullet j}\bigl(\shiftFunct{i'} (M') \otimes \shiftFunct{i}(M) \bigr)
\ar{r}{\compMap{i'}{i}}
&
\shiftFunct{j' \bullet j} \circ \shiftFunct{i' \bullet i}(M' \otimes M)
\ar{d}{\vCompMap{j' \bullet j}{i' \bullet i}}
\\
\shiftFunct{j' \circ i'}(M') \otimes \shiftFunct{j \circ i}(M)
\ar[swap]{r}{\compMap{j' \circ i'}{j \circ i}}
&
\shiftFunct{(j' \circ i') \bullet (j \circ i) }(M' \otimes M)
\ar[swap]{r}{\interCompMap{j'}{i'}{j}{i}}
&
\shiftFunct{(j' \bullet j) \circ (i' \bullet i)}(M' \otimes M)
\end{tikzcd}
\end{equation}
for all $M',M \in \Mod^\cC$, and \cref{eq:shiftingsystvcomp2} the following one:
\begin{equation}\label{eq:shiftingsystvcompdiag2}
\begin{tikzcd}
\shiftFunct{k} \circ \shiftFunct{j} \circ \shiftFunct{i} (M)
\ar{r}{\vCompMap{j}{i}}
\ar[swap]{d}{\vCompMap{k}{j}}
&
\shiftFunct{k} \circ \shiftFunct{j \circ i}(M)
\ar{d}{\vCompMap{k}{j \circ i}}
\\
\shiftFunct{k \circ j} \circ \shiftFunct{i}(M)
\ar[swap]{r}{\vCompMap{k\circ j}{i}}
&
\shiftFunct{k \circ j \circ  i}(M)
\end{tikzcd}
\end{equation}
for all $M \in \Mod^\cC$. 

\smallskip

As before, we extend it to $\tilde \cI := \cI \sqcup \{\id\}$ where putting $\id$ in $\interCompMap[YX]{-}{-}{-}{-}$ means we can replace it by any element $j \in I_\id$. 
Since $\shiftFunct{\id} \circ \shiftFunct{i} \cong \shiftFunct{i} \cong \shiftFunct{i} \circ \shiftFunct{\id}$ by identity maps, we put $\id \circ i = i = i \circ \id$. We also extend $ \shiftFunct{(j' \bullet j) \circ (i' \bullet i)}$ for $j',j,i',i \in \tilde \cI$ by replacing any occurrence of $\id$ with a direct sum over elements in $I_\id$. 

\begin{prop}
The natural isomorphisms 
$\shiftFunct{j} \circ \shiftFunct{i} \xrightarrow{\simeq} \shiftFunct{j \circ i}$
 and 
$\shiftFunct{(j' \circ i') \bullet (j \circ i) } \xrightarrow{\simeq} \shiftFunct{(j' \bullet j) \circ (i' \bullet i)}$
 both induce natural isomorphisms between endofunctors of $\Bimod^\cC(A_2,A_1)$.
\end{prop}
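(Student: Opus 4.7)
The plan is to verify that the two natural isomorphisms constructed in \cref{eq:shiftingsystvcompdiag} and \cref{eq:shiftingsystvcompdiag2} at the level of $\Mod^\cC$ lift to natural isomorphisms between endofunctors of $\Bimod^\cC(A_2,A_1)$. Since each map is given pointwise by multiplication by a scalar in $\Bbbk^\times$, bijectivity at the underlying $\Bbbk$-module level is automatic, and naturality in $M$ is immediate because the defining scalars depend only on the $\cC$-degree of the element while morphisms in $\Bimod^\cC$ preserve this degree. The only content is therefore the compatibility of the scalar multiplication with the left $A_2$- and right $A_1$-actions on shifted bimodules as specified by \cref{defn:shiftedbim}.

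For the first isomorphism $\shiftFunct{j}\circ\shiftFunct{i}(M)\xrightarrow{\simeq}\shiftFunct{j\circ i}(M)$, I would unfold \cref{defn:shiftedbim} twice on the source to obtain
\[
y \cdot m \;=\; \compMap{\neutralElement}{j}\bigl(|y|,\shiftFunct{i}(|m|)\bigr)\,\compMap{\neutralElement}{i}(|y|,|m|)\,(y \cdot_M m),
\]
and once on the target to obtain $y \cdot m = \compMap{\neutralElement}{j\circ i}(|y|,|m|)\,(y \cdot_M m)$, for $y \in A_2$ and $m \in M$. Multiplying both sides by $\vCompMap{j}{i}$ and comparing, compatibility with the left action reduces to the identity obtained by specializing \cref{eq:shiftingsystvcomp} to $j'=i'=\neutralElement$, using $\vCompMap{\neutralElement}{\neutralElement}=1$ together with $\interCompMap{\neutralElement}{\neutralElement}{j}{i}=1$ (the latter since $(\neutralElement\circ\neutralElement)\bullet(j\circ i)=(\neutralElement\bullet j)\circ(\neutralElement\bullet i)$). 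Compatibility with the right action follows symmetrically by specializing \cref{eq:shiftingsystvcomp} to $j=i=\neutralElement$.

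For the second isomorphism $\shiftFunct{(j'\circ i')\bullet(j\circ i)}\xrightarrow{\simeq}\shiftFunct{(j'\bullet j)\circ(i'\bullet i)}$, the source and target coincide as underlying $\cC$-grading shifts by the defining axiom of a $\cC$-shifting 2-system, so the left and right actions on both sides are governed by the same twisting scalars $\compMap{\neutralElement}{?}$ and $\compMap{?}{\neutralElement}$. Compatibility of the scalar multiplication by $\interCompMap{j'}{i'}{j}{i}$ with the $A_2$- and $A_1$-actions therefore reduces to the observation that this scalar is unchanged under composing the degree of $m$ with an endomorphism in $\shiftCenter{S}(\cC)$, which holds because the algebras $A_2$ and $A_1$ are supported on $\shiftCenter{S}(\cC)$ and so acting by their elements does not alter the source/target objects on which $\interCompMap[ZYX]{j'}{i'}{j}{i}$ depends. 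The main obstacle throughout is essentially bookkeeping: tracking precisely which compatibility map is invoked and in which specialization of its arguments. Once \cref{eq:shiftingsystvcomp} is unpacked in the specializations indicated above, the combinatorial identities fall out automatically, and the proof mirrors the structure of the preceding proposition for $\compMap{j}{i}$.
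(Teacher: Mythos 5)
Your treatment of the first isomorphism, $\vCompMap{j}{i}\colon\shiftFunct{j}\circ\shiftFunct{i}\Rightarrow\shiftFunct{j\circ i}$, is correct and matches the paper's approach: one specializes \cref{eq:shiftingsystvcomp} (equivalently, diagram \cref{eq:shiftingsystvcompdiag}) at $j'=i'=\neutralElement$ with $M'=A_2$ to get left-action compatibility, and at $j=i=\neutralElement$ with $M=A_1$ to get right-action compatibility; in these specializations $\vCompMap{\neutralElement}{\neutralElement}=1$ and $\interCompMap{\neutralElement}{\neutralElement}{j}{i}=\interCompMap{j'}{i'}{\neutralElement}{\neutralElement}=1$ make the extraneous arrows trivial. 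This is exactly what the paper invokes.

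Your argument for the second isomorphism, $\interCompMap{j'}{i'}{j}{i}\colon\shiftFunct{(j'\circ i')\bullet(j\circ i)}\Rightarrow\shiftFunct{(j'\bullet j)\circ(i'\bullet i)}$, has a gap. You assert that because the two grading-shift functors coincide (and have the same domain), the source and target bimodules are "governed by the same twisting scalars" $\compMap{\neutralElement}{?}$ and $\compMap{?}{\neutralElement}$ — i.e.\ you are implicitly claiming that $\compMap{\neutralElement}{k_1}=\compMap{\neutralElement}{k_2}$ whenever $\shiftFunct{k_1}=\shiftFunct{k_2}$ with equal domains. That is not an axiom of the shifting 2-system: the compatibility maps $\compMap{j}{i}$ are auxiliary data indexed by pairs of elements of $\cI$, not by the induced functions, and the definition of the shifted bimodule $\shiftFunct{k}(M)$ genuinely uses the index $k$ through $\compMap{\neutralElement}{k}$ and $\compMap{k}{\neutralElement}$, so two indices with the same underlying shift can a priori produce two different bimodule structures on the same graded module. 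Your subsequent point — that $\interCompMap[YX]{j'}{i'}{j}{i}$ depends only on the source and target objects, hence is unchanged by composing with morphisms in $\shiftCenter{S}(\cC)$ — is correct but does not by itself close the gap; you still need the two families of action scalars to agree. The paper's (admittedly terse) argument does not rely on the unstated equality you assume; it points at the full commutativity of \cref{eq:shiftingsystvcompdiag} as the mechanism relating $\compMap{\neutralElement}{(j'\circ i')\bullet(j\circ i)}$, $\compMap{\neutralElement}{(j'\bullet j)\circ(i'\bullet i)}$, and $\interCompMap{j'}{i'}{j}{i}$ via the other compatibility maps. To repair your argument you should derive the required identity between the two $\compMap{\neutralElement}{?}$'s (resp.\ $\compMap{?}{\neutralElement}$'s) directly from \cref{eq:shiftingsystvcomp} and \cref{eq:shiftingsystcomp} rather than asserting it.
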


\begin{proof}
We only need to show the natural isomorphisms respect the bimodule structure. It immediately follows from \cref{eq:shiftingsystvcompdiag}, and  using the fact that $\interCompMap{\neutralElement}{\neutralElement}{j}{i} = \interCompMap{j'}{i'}{\neutralElement}{\neutralElement} = 1$. 
\end{proof}

Furthermore, given such a system and two purely homogeneous maps $g : M' \rightarrow M''$ and $f : M \rightarrow M'$ in $\BIMOD^\cC(A_2,A_1)$ of degree $j$ and $i$ respectively, we define their \emph{$\cC$-graded composition} as 
\[
(g \circ_\cC f)(m) := \vCompMap{j}{i}(|m|)^{-1} (g \circ f)(m).
\]

\begin{prop}
Equipped with the $\cC$-graded composition, 
$\BIMOD^\cC(A_2,A_1)$ is an additive category. 
\end{prop}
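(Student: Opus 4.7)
The plan is to verify the four components of the definition: closure of $\circ_\cC$ under composition of purely homogeneous maps, associativity, existence of identities, and the standard additive data (bilinear bracket, zero object, and biproducts). The last point is essentially free, since the hom-spaces are defined as direct sums of $\Hom$-groups in $\Bimod^\cC(A_2,A_1)$, which are already abelian groups, and the zero bimodule and direct sums are inherited from there. So the real work is the first three.

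For closure, given $f : M \to M'$ purely homogeneous of degree $i$ and $g : M' \to M''$ purely homogeneous of degree $j$, I will check that $g \circ_\cC f$ is purely homogeneous of degree $j \circ i$. The vanishing condition is automatic from $\shiftDom{j \circ i} = \shiftDom{i} \cap \shiftFunct{i}^{-1}(\shiftDom{j})$, and the degree identity $|(g \circ_\cC f)(m)| = \shiftFunct{j \circ i}(|m|)$ follows from $\shiftFunct{j} \circ \shiftFunct{i} = \shiftFunct{j \circ i}$ on $\shiftDom{j \circ i}$. The compatibility with the left and right actions is the key calculation; expanding
\[
y \cdot g(f(m)) = \compMap{\neutralElement}{j}(|y|,\shiftFunct{i}(|m|))\,\compMap{\neutralElement}{i}(|y|,|m|)\, g(f(y\cdot m)),
\]
and comparing with $\compMap{\neutralElement}{j\circ i}(|y|,|m|)(g\circ_\cC f)(y\cdot m)$, the required equality reduces to \cref{eq:shiftingsystvcomp} specialized to $j'=i'=\neutralElement$, using $\vCompMap{\neutralElement}{\neutralElement}=1$, $\interCompMap{\neutralElement}{\neutralElement}{j}{i}=1$, and the fact that $|y|$ is central so $\shiftFunct{\neutralElement}(|y|)=|y|$. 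The analogous computation for the right action uses the same identity with the opposite specialization. This is the main obstacle of the proof; everything else is organizational.

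Associativity of $\circ_\cC$ on purely homogeneous maps $h$ (degree $k$), $g$ (degree $j$), $f$ (degree $i$) boils down to comparing the two scalars appearing after cancelling the common factor $h(g(f(m)))$:
\[
\vCompMap{k\circ j}{i}(|m|)^{-1}\vCompMap{k}{j}(\shiftFunct{i}(|m|))^{-1}
\quad\text{versus}\quad
\vCompMap{k}{j\circ i}(|m|)^{-1}\vCompMap{j}{i}(|m|)^{-1},
\]
which is exactly the content of \cref{eq:shiftingsystvcomp2}. For the identity morphism on $M$, I take $\id_M$ viewed in the component with $i=\id\in\widetilde\cI$; since $\vCompMap{\id}{-}=\vCompMap{-}{\id}=1$ by the normalization of the vertical compatibility maps, one has $\id_M\circ_\cC f=f$ and $g\circ_\cC\id_M=g$ for all purely homogeneous $f,g$.

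Finally, I extend $\circ_\cC$ bilinearly to homogeneous maps by decomposing each morphism into its purely homogeneous components and applying the above on each pair; bilinearity and associativity then propagate automatically, and the identity $\id_M$ remains a two-sided unit because its only nonzero component lies in the $\id$-summand. Combined with the preexisting abelian-group structure on each summand and the inherited zero and biproducts, this gives $\BIMOD^\cC(A_2,A_1)$ the structure of an additive category.
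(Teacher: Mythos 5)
Your proof is correct and follows essentially the same route as the paper's: closure of $\circ_\cC$ under composition of purely homogeneous maps reduces to \cref{eq:shiftingsystvcomp} with $j'=i'=\neutralElement$ (resp.\ $j=i=\neutralElement$) for the left (resp.\ right) action, using $\vCompMap{\neutralElement}{\neutralElement}=1$ and $\interCompMap{\neutralElement}{\neutralElement}{j}{i}=\interCompMap{j'}{i'}{\neutralElement}{\neutralElement}=1$; associativity reduces to \cref{eq:shiftingsystvcomp2}. The paper states these two reductions without unwinding them, and likewise treats the identity morphism, bilinearity, and additive data as implicit; your fuller account of those points is consistent with the intended argument.
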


\begin{proof}
First, we obtain that if $g$ has degree $j$ and $f$ has degree $i$, then $g \circ_\gradC f$ has degree $j \circ i$ in $\BIMOD^\cC(A_2,A_1)$. 
This follows from \cref{eq:shiftingsystvcomp} using the fact that $\interCompMap{\neutralElement}{\neutralElement}{j}{i} = \interCompMap{j'}{i'}{\neutralElement}{\neutralElement} = 1$. 
Then, we need to show that the $\cC$-graded composition is associative, which follows from \cref{eq:shiftingsystvcomp2}.  
\end{proof}

\begin{prop}\label{prop:tensorcomphomomaps}
Consider two pairs of purely homogeneous maps $f_1 : M_1 \rightarrow M_2, f_2 : M_2 \rightarrow M_3 \in \BIMOD^\cC(A_3,A_2)$ and $g_1 : N_1 \rightarrow N_2, g_2 : N_2 \rightarrow N_3 \in  \BIMOD^\cC(A_2,A_1)$. We have
\[
\bigl((g_2 \circ_\cC g_1) \otimes (f_2 \circ_\cC f_1)\bigr) (m \otimes_{A_2} n) = \interCompMap{|g_2|}{|g_1|}{|f_2|}{|f_1|}(|m| \bullet |n|)  \bigl((g_2 \otimes f_2) \circ_\cC (g_1 \otimes f_1) \bigr) (m \otimes_{A_2} n),
\]
for all $m \in M_1, n \in N_1$.
\end{prop}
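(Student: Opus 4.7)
The plan is to unfold both sides using the definitions of the $\cC$-graded composition $\circ_\cC$ from \cref{sec:2shiftingsystem} and of the tensor of homogeneous maps from \cref{sec:homogeneousmap}, then to reduce the claim to the cocycle relation \cref{eq:shiftingsystvcomp} that defines compatibility of the $\cC$-shifting $2$-system through $(\beta,\gamma,\Xi)$.

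First I would evaluate the left-hand side. Setting $G := g_2 \circ_\cC g_1$ of degree $|g_2|\circ|g_1|$ and $F := f_2 \circ_\cC f_1$ of degree $|f_2|\circ|f_1|$, applying the tensor of homogeneous maps yields
\[
(G \otimes F)(m \otimes n) = \compMap{|g_2|\circ|g_1|}{|f_2|\circ|f_1|}(|m|,|n|)^{-1}\, G(m) \otimes F(n),
\]
and unfolding the two $\circ_\cC$ compositions introduces the scalars $\vCompMap{|g_2|}{|g_1|}(|m|)^{-1}$ and $\vCompMap{|f_2|}{|f_1|}(|n|)^{-1}$, with underlying vector $g_2(g_1(m)) \otimes f_2(f_1(n))$.

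Next I would evaluate the right-hand side. With $P := g_1 \otimes f_1$ of degree $|g_1|\bullet|f_1|$ and $Q := g_2 \otimes f_2$ of degree $|g_2|\bullet|f_2|$, two successive applications of the tensor of homogeneous maps produce the scalars $\compMap{|g_1|}{|f_1|}(|m|,|n|)^{-1}$ and $\compMap{|g_2|}{|f_2|}(\shiftFunct{|g_1|}(|m|),\shiftFunct{|f_1|}(|n|))^{-1}$, while the outer $\circ_\cC$ contributes $\vCompMap{|g_2|\bullet|f_2|}{|g_1|\bullet|f_1|}(|m|\circ|n|)^{-1}$, acting on the same underlying vector $g_2(g_1(m)) \otimes f_2(f_1(n))$. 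Both sides therefore agree on the underlying tensor, reducing the identity to an equality of scalars in $\Bbbk^\times$.

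Rearranging that equality to solve for the ratio of the two sides reproduces exactly the cocycle relation \cref{eq:shiftingsystvcomp} from \cref{def:shifting2systemcomp}, whose whole purpose is to measure the discrepancy between the orderings \emph{compose then tensor} and \emph{tensor then compose}; the correction term it introduces is by definition $\interCompMap{|g_2|}{|g_1|}{|f_2|}{|f_1|}(|m|\circ|n|)$. Equivalently, the statement may be read off by chasing $m\otimes n$ through the outer rectangle of \cref{eq:shiftingsystvcompdiag}. The main obstacle is purely bookkeeping: one must keep careful track of the fact that the middle $\compMap$ scalar on the right-hand side is evaluated at the \emph{shifted} degrees $\shiftFunct{|g_1|}(|m|)$ and $\shiftFunct{|f_1|}(|n|)$ rather than at $|m|$ and $|n|$, and verify that all source/target objects line up so that $\interCompMap$ can be evaluated in the correct $\Hom_\cC$-component. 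Once these alignments are checked, \cref{eq:shiftingsystvcomp} delivers the result directly.
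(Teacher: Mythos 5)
Your proposal is correct and follows essentially the same route as the paper: the paper's proof simply invokes \cref{eq:shiftingsystvcompdiag}, and your unfolding of both sides via the definitions of $\circ_\cC$ and $\otimes$ on purely homogeneous maps, and matching the resulting scalar equation to \cref{eq:shiftingsystvcomp} with $j'=|g_2|$, $i'=|g_1|$, $j=|f_2|$, $i=|f_1|$, $g'=|m|$, $g=|n|$, is precisely that argument spelled out.
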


\begin{proof}
It follows from \cref{eq:shiftingsystvcompdiag}.
\end{proof}

\begin{exe}
Consider again superstructures as in \cref{exe:supershift}. We put $j \circ i = j + i \mod 2$, and we have $\vCompMap[\star\star]{j}{i} := 1$ and $\interCompMap[\star\star]{j'}{i'}{j}{i} := (-1)^{i'j}$. Then, it coincides with the usual notion of composition of tensor product of supermaps. 
\end{exe}

\begin{exe}
Recall \cref{exe:lambdaRshift}. We put $\vCompMap[\star\star]{(n_1,n_2)}{(m_1,m_2)} := 1$ and $\interCompMap[\star\star]{(n'_1,n'_2)}{(m'_1,m'_2)}{(n_1,n_2)}{(m_1,m_2)} := \lambda_R((m_1',m_2'),(n_1,n_2))$. 
\end{exe}

\subsection{Graded commutativity}\label{sec:grshiftBIMOD}

Suppose $(\cC,\assoc)$ is equipped with a compatible $\cC$-shifting 2-system $\{\cI, \shiftFunctCol\}$. 

\begin{defn}
A \emph{commutativity system} on $\{\cI, \shiftFunctCol\}$ is a collection $\Tau := \{((j',i'), (j,i)) \in \cI^2 \times \cI^2\}$, such that 
\begin{itemize}
\item if $((j',i'), (j,i)) \in \Tau$ then $\shiftFunct{j'\circ i'} = \shiftFunct{j \circ i}$ and $((j,i),(j',i')) \in \Tau$;
\item if $((j_2',i'_2), (j_2,i_2)) \in \Tau, ((j_1',i'_1), (j_1,i_1)) \in \Tau$ then $((j_2' \bullet j_1',i'_2 \bullet i_1'), (j_2 \bullet j_1,i_2 \bullet i_1)) \in \Tau$.
\end{itemize}
\end{defn}

Consider a collection of scalars
\[
\commutMap[YX]{j'}{i'}{j}{i} \in\Bbbk^\times,
\]
for all $Y,X \in Obj(\cC)$ and $((j',i'), (j,i)) \in \Tau$, such that 
 $\commutMap[YX]{j'}{i'}{j}{i}= 1$ whenever $j' \circ i' = j \circ i$, and $(\commutMap[YX]{j'}{i'}{j}{i})^{-1} = \commutMap[YX]{j}{i}{j'}{i'}$ for all $((j',i'), (j,i)) \in \Tau$. 
If $((j',i'),(j,i)) \notin \Tau$, then we put $\commutMap{j'}{i'}{j}{i} := 0$. We write $\commutMap{j'}{i'}{j}{i}(g) := \commutMap[YX]{j'}{i'}{j}{i}$ whenever $g \in \Hom_\cC(X,Y)$. 
 
 \begin{defn}
 We say that the commutativity system $\Tau$ is \emph{compatible with $\{\assoc, \beta, \Xi\}$ through $\tau$}  if it respects the following compatibility condition:
\begin{equation}\label{eq:compCommut}
\compMap{j_2\circ i_2}{j_1\circ i_1}(g',g)
\commutMap{j'_2}{i'_2}{j_2}{i_2}(g')
\commutMap{j'_1}{i'_1}{j_1}{i_1}(g)
\interCompMap{j_2}{i_2}{j_1}{i_1}(g' \bullet g)
= 
\compMap{j_2'\circ i_2'}{j_1'\circ i_1'}(g',g)
\commutMap{j'_2 \bullet j_1'}{i'_2 \bullet i'_1}{j_2 \bullet j_1}{i_2 \bullet i_1}(g' \bullet g)
\interCompMap{j_2'}{i_2'}{j_1'}{i_1'}(g' \bullet g),
\end{equation}
for all  $((j_2',i'_2), (j_2,i_2)) , ((j_1',i'_1), (j_1,i_1)) \in \Tau$ and $g' \in \shiftDom{j_2 \circ i_2},g \in \shiftDom{j_1 \circ i_1}$. We also require that
\begin{equation}\label{eq:compCommut2}
	\commutMap[YX]{\id \bullet i}{j \bullet \id}{j \bullet \id}{\id \bullet i}
	= 
	(\interCompMap[YX]{\id}{j}{i}{\id})^{-1}
	\interCompMap[YX]{j}{\id}{\id}{i}.
\end{equation}
\end{defn}

For each $((j',i'), (j,i)) \in \Tau$ we obtain a natural isomorphism of functors
\[
 \shiftFunct{j' \circ i'} \xrightarrow{\simeq} \shiftFunct{j \circ i},
\]
in $\Mod^\cC$ given by 
\[
 \shiftFunct{j' \circ i'}(M) \rightarrow \shiftFunct{j \circ i}(M), \quad m \mapsto \commutMap{j'}{i'}{j}{i}(|m|) m,
\]
for all homogenous element $m \in M$.
Then, the compatibility condition ensures the following diagram commutes
\begin{equation}\label{eq:diagcompcommut}
\begin{tikzcd}[column sep = 6ex, row sep=6ex]
\shiftFunct{j_2' \circ i_2'}(M_2) \otimes \shiftFunct{j_1' \circ i_1'}(M_1)
\ar{r}{\compMap{j_2' \circ i_2'}{j_1' \circ i_1'}}
\ar[swap]{d}{\commutMap{j'_2}{i'_2}{j_2}{i_2} \otimes \commutMap{j'_1}{i'_1}{j_1}{i_1}}
&
\shiftFunct{(j_2' \circ i_2')\bullet(j_1' \circ i_1')}(M_2 \otimes M_1)
\ar{r}{\interCompMap{j_2'}{i_2'}{j_1'}{i_1'}}
&
\shiftFunct{(j_2' \bullet j_1') \circ (i_2' \bullet i_1')}(M_2 \otimes M_1)
\ar{d}{\commutMap{j'_2 \bullet j_1'}{i'_2 \bullet i'_1}{j_2 \bullet j_1}{i_2 \bullet i_1}}
\\
\shiftFunct{j_2 \circ i_2}(M_2) \otimes \shiftFunct{j_1 \circ i_1}(M_1)
\ar[swap]{r}{\compMap{j_2\circ i_2}{j_1\circ i_1}}
&
\shiftFunct{(j_2 \circ i_2) \bullet (j_1 \circ i_1)}(M_2 \otimes M_1)
\ar[swap]{r}{\interCompMap{j_2}{i_2}{j_1}{i_1}}
&
\shiftFunct{(j_2 \bullet j_1) \circ (i_2 \circ i_1)}(M_2 \otimes M_1)
\end{tikzcd}
\end{equation}
as well as the following one:
\begin{equation*}
\begin{tikzcd}
	\shiftFunct{(\id \bullet i) \circ (j \bullet \id)} (M)
	\ar[swap]{dd}{\commutMap{\id \bullet i}{j \bullet \id}{j \bullet \id}{\id \bullet i}}
	\ar{r}{\left(\interCompMap{\id}{j}{i}{\id}\right)^{-1}}
	&
	\shiftFunct{(\id \circ j) \bullet (i \circ \id)}(M)
	\ar[equals]{d}
	\\
	&
	\shiftFunct{j \bullet i}(M)
	\ar[equals]{d}
	\\
	\shiftFunct{(j \bullet \id) \circ (\id \bullet i)} (M)
	\ar[swap]{r}{\interCompMap{j}{\id}{\id}{i}}
	&
	\shiftFunct{(j \circ \id) \bullet (\id \circ i)}(M)
\end{tikzcd}
\end{equation*}

\begin{defn}
Given a $\cC$-shifting 2-system with a compatible commutativity system $\Tau$, we say that a diagram of purely homogeneous maps
\[
\begin{tikzcd}[ampersand replacement=\&]
M_{11} 
\ar{r}{f_{1*}} 
\ar[swap]{d}{f_{*1}}
\&
M_{12}
\ar{d}{f_{*2}}
\\
M_{21}
\ar[Rightarrow,shorten <= 2ex, shorten >= 2ex]{ur}{\tau}
\ar[swap]{r}{f_{2*}}
\&
M_{22}
\end{tikzcd}
\]
is \emph{$\cC$-graded commutative} if $((|f_{*2}|,|f_{1*}|),(|f_{2*}|,|f_{*1}|)) \in \Tau$ and 
\[
\commutMap{|f_{*2}|}{|f_{1*}|}{|f_{2*}|}{|f_{*1}|}.(f_{2*}\circ f_{*1}) = (f_{*2} \circ f_{1*}).
\]
We extend the definition linearly to homogeneous maps. 
\end{defn}

\begin{prop}\label{prop:fotimesgcommutes}
Given two $\cC$-graded commutative diagrams
\begin{align*}
\begin{tikzcd}[ampersand replacement=\&]
N_{11} 
\ar{r}{g_{1*}} 
\ar[swap]{d}{g_{*1}}
\&
N_{12}
\ar{d}{g_{*2}}
\\
N_{21}
\ar[swap]{r}{g_{2*}}
\ar[Rightarrow,shorten <= 2ex, shorten >= 2ex]{ur}
\&
N_{22}
\end{tikzcd}
&&
\text{and}
&&
\begin{tikzcd}[ampersand replacement=\&]
M_{11} 
\ar{r}{f_{1*}} 
\ar[swap]{d}{f_{*1}}
\&
M_{12}
\ar{d}{f_{*2}}
\\
M_{21}
\ar[swap]{r}{f_{2*}}
\ar[Rightarrow,shorten <= 2ex, shorten >= 2ex]{ur}
\&
M_{22}
\end{tikzcd}
\end{align*}
then the diagram 
\begin{equation*}
\begin{tikzcd}[ampersand replacement=\&]
N_{11} \otimes_A M_{11}
\ar{r}{g_{1*} \otimes f_{1*}} 
\ar[swap]{d}{g_{*1} \otimes f_{*1}}
\&
N_{12} \otimes_A M_{12}
\ar{d}{g_{*2} \otimes f_{*2}}
\\
N_{21} \otimes_A M_{21}
\ar[swap]{r}{g_{2*} \otimes f_{2*}}
\ar[Rightarrow,shorten <= 3ex, shorten >= 3ex]{ur}
\&
N_{22} \otimes_A M_{22}
\end{tikzcd}
\end{equation*}
is $\cC$-graded commutative. 
\end{prop}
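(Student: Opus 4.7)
The plan is to evaluate both legs of the prospective commuting square on a pure tensor and reduce the statement to the scalar identity~\cref{eq:compCommut}. First, I would observe that the second axiom of a commutativity system guarantees $((|g_{\ast 2}| \bullet |f_{\ast 2}|, |g_{1 \ast}| \bullet |f_{1\ast}|),(|g_{2\ast}| \bullet |f_{2\ast}|, |g_{\ast 1}| \bullet |f_{\ast 1}|)) \in \Tau$, so the notion of $\cC$-graded commutativity makes sense for the tensor-product square in the first place.

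Next I would expand $(g_{\ast 2} \otimes f_{\ast 2}) \circ_{\cC} (g_{1\ast} \otimes f_{1\ast})$ on a homogeneous element $n \otimes m \in N_{11} \otimes_A M_{11}$. Using \cref{prop:tensorcomphomomaps} twice I would rewrite this as
\[
\interCompMap{|g_{\ast 2}|}{|g_{1\ast}|}{|f_{\ast 2}|}{|f_{1\ast}|}(|n|\bullet |m|)^{-1} \bigl((g_{\ast 2} \circ_{\cC} g_{1\ast}) \otimes (f_{\ast 2} \circ_{\cC} f_{1\ast})\bigr)(n \otimes m),
\]
and then apply the commutativity hypotheses of the two given squares. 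This yields the scalar $\commutMap{|g_{\ast 2}|}{|g_{1\ast}|}{|g_{2\ast}|}{|g_{\ast 1}|}(|n|)\,\commutMap{|f_{\ast 2}|}{|f_{1\ast}|}{|f_{2\ast}|}{|f_{\ast 1}|}(|m|)$ multiplying $(g_{2\ast} \circ_{\cC} g_{\ast 1}) \otimes (f_{2\ast} \circ_{\cC} f_{\ast 1})$. Applying \cref{prop:tensorcomphomomaps} in the reverse direction then converts this into a multiple of $(g_{2\ast} \otimes f_{2\ast}) \circ_{\cC} (g_{\ast 1} \otimes f_{\ast 1})$, with the extra scalar being $\interCompMap{|g_{2\ast}|}{|g_{\ast 1}|}{|f_{2\ast}|}{|f_{\ast 1}|}(|n|\bullet |m|)$.

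Comparing this with the expression $\commutMap{|g_{\ast 2}|\bullet|f_{\ast 2}|}{|g_{1\ast}|\bullet|f_{1\ast}|}{|g_{2\ast}|\bullet|f_{2\ast}|}{|g_{\ast 1}|\bullet|f_{\ast 1}|} \cdot (g_{2\ast} \otimes f_{2\ast}) \circ_{\cC} (g_{\ast 1} \otimes f_{\ast 1})$ reduces the claim to exactly the scalar identity \cref{eq:compCommut}, once one takes into account that on elements of the tensor product $N_{12} \otimes_A M_{12}$ and $N_{21} \otimes_A M_{21}$ the compatibility maps $\compMap{j_2\circ i_2}{j_1\circ i_1}$ and $\compMap{j'_2\circ i'_2}{j'_1\circ i'_1}$ of \cref{eq:compCommut} arise precisely from the $\Bbbk$-linear identification of shifted tensor products dictated by the bimodule structure. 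Equivalently, the result can be phrased as the commutativity of the outer rectangle of diagram~\cref{eq:diagcompcommut}, which we are allowed to read off directly.

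The only subtlety — and thus the main obstacle — is tracking the $\compMap{}{}$ factors that appear when we commute $\otimes$ with $\circ_{\cC}$. These are absorbed cleanly by \cref{eq:compCommut}, but they must be handled carefully since $\compMap{}{}$ depends non-trivially on the degrees of the arguments. The verification that the $\beta$-contributions on the two sides match up is essentially the content of \cref{eq:compCommut}, so there is no new mathematics: the proposition is an immediate consequence of the compatibility axiom of the commutativity system together with \cref{prop:tensorcomphomomaps}.
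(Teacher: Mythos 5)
Your proposal is correct and matches the paper's (one-line) proof, which simply invokes \cref{prop:tensorcomphomomaps} and \cref{eq:compCommut} — exactly the two ingredients you identify and combine. Your careful bookkeeping of the $\beta$- and $\Xi$-factors, and the observation that the discrepancy between $\compMap{j_2\circ i_2}{j_1\circ i_1}$ and $\compMap{j'_2\circ i'_2}{j'_1\circ i'_1}$ is absorbed by \cref{eq:compCommut}, is precisely the content the paper leaves implicit.
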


\begin{proof}
It is a consequence of \cref{prop:tensorcomphomomaps} and \cref{eq:compCommut}.
\end{proof}

\begin{prop}
The natural isomorphism $ \commutMap{j'}{i'}{j}{i} : \shiftFunct{j' \circ i'} \xrightarrow{\simeq} \shiftFunct{j \circ i}$ induces a natural isomorphism as endofunctors over $\Bimod^\cC(A_2,A_1)$.
\end{prop}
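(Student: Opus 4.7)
The plan is to lift the natural isomorphism $\commutMap{j'}{i'}{j}{i} \colon \shiftFunct{j' \circ i'}(M) \xrightarrow{\simeq} \shiftFunct{j \circ i}(M)$ from $\Mod^\cC$ to $\Bimod^\cC(A_2,A_1)$. Two things have to be checked: (a) each component is a morphism of $A_2$-$A_1$-bimodules, and (b) naturality with respect to bimodule morphisms. Part (b) is immediate: any bimodule morphism preserves the $\cC$-grading, and on each $M$ our natural isomorphism is scalar multiplication by $\commutMap{j'}{i'}{j}{i}(|m|)$, a quantity determined purely by the degree of $m$. The substantive content is therefore (a).

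For (a), I would apply the coherence identity \cref{eq:compCommut} (equivalently, the commutative diagram \cref{eq:diagcompcommut}) to the pair of graded modules $(A_2, M)$, choosing all four indices on the algebra factor equal to the neutral element, i.e.\ $j_2' = i_2' = j_2 = i_2 = \neutralElement$, while keeping $(j_1', i_1', j_1, i_1) = (j', i', j, i)$ on the module factor. Three simplifications occur. Because $A_2$ is supported on $\shiftCenter{S}(\cC)$ we have $\shiftFunct{\neutralElement}(A_2) \cong A_2$ canonically; next $\commutMap{\neutralElement}{\neutralElement}{\neutralElement}{\neutralElement} = 1$ (since $\neutralElement\circ\neutralElement = \neutralElement\circ\neutralElement$); and finally both $\interCompMap{\neutralElement}{\neutralElement}{j}{i}$ and $\interCompMap{\neutralElement}{\neutralElement}{j'}{i'}$ equal $1$, because $(\neutralElement\circ\neutralElement)\bullet(j\circ i) = j\circ i = (\neutralElement\bullet j)\circ(\neutralElement\bullet i)$, and analogously for the primed indices. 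After these cancellations, writing $g' = |y|$, $g = |m|$ for $y \in A_2$ and $m \in M$, \cref{eq:compCommut} collapses to
\[
\compMap{\neutralElement}{j \circ i}(|y|,|m|)\,\commutMap{j'}{i'}{j}{i}(|m|) \;=\; \compMap{\neutralElement}{j' \circ i'}(|y|,|m|)\,\commutMap{j'}{i'}{j}{i}(|y|\bullet|m|),
\]
which, by the left-action formula of \cref{defn:shiftedbim}, is exactly the identity expressing that $m \mapsto \commutMap{j'}{i'}{j}{i}(|m|)\,m$ intertwines the left $A_2$-actions on $\shiftFunct{j' \circ i'}(M)$ and $\shiftFunct{j \circ i}(M)$. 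A symmetric application with $A_1$ tensored on the right, using $\compMap{*}{\neutralElement}$ in place of $\compMap{\neutralElement}{*}$, gives compatibility with the right $A_1$-action.

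Combining (a) and (b) then shows that $\commutMap{j'}{i'}{j}{i}$ lifts to a natural isomorphism between $\shiftFunct{j' \circ i'}$ and $\shiftFunct{j \circ i}$ as endofunctors on $\Bimod^\cC(A_2,A_1)$. The argument is formally parallel to the earlier proposition promoting $\shiftFunct{j}\circ\shiftFunct{i} \xrightarrow{\simeq} \shiftFunct{j\circ i}$ from $\Mod^\cC$ to bimodules, where $\interCompMap$ played the analogous simplifying role. The only place where real care is needed is the scalar bookkeeping: verifying that the trivializations of $\interCompMap$ and $\commutMap$ at $\neutralElement$-specialized indices occur exactly as claimed. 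I expect this to be routine rather than a true obstacle, because each trivialization is an immediate instance of the defining conditions of a $\cC$-shifting 2-system and a compatible commutativity system.
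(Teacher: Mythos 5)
Your proof is correct and follows essentially the same route as the paper: specialize the coherence diagram \cref{eq:diagcompcommut} (equivalently \cref{eq:compCommut}) to the pair $(A_2,M)$ (resp.\ $(M,A_1)$) with the algebra-factor indices set to $\neutralElement$, then use the trivializations $\commutMap{\neutralElement}{\neutralElement}{\neutralElement}{\neutralElement}=1$ and $\interCompMap{\neutralElement}{\neutralElement}{j'}{i'}=\interCompMap{\neutralElement}{\neutralElement}{j}{i}=\interCompMap{j'}{i'}{\neutralElement}{\neutralElement}=\interCompMap{j}{i}{\neutralElement}{\neutralElement}=1$ to extract the intertwining identity for the left and right actions. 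The paper's proof is simply a terse one-line citation of these same facts, which you have unpacked.
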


\begin{proof}
It follows from the commutativity of \cref{eq:diagcompcommut} and the fact that $\commutMap{\neutralElement}{\neutralElement}{\neutralElement}{\neutralElement} = 1$, and $\interCompMap{\neutralElement}{\neutralElement}{j'}{i'} = \interCompMap{\neutralElement}{\neutralElement}{j}{i} = 1$ and $\interCompMap{j'}{i'}{\neutralElement}{\neutralElement} = \interCompMap{j}{i}{\neutralElement}{\neutralElement} = 1$. 
\end{proof}

We also define the grading shift functor 
\[
\shiftFunct{i} : \BIMOD^\cC(A_2,A_1) \rightarrow \BIMOD^\cC(A_2,A_1),
\]
as the usual grading shift by $i$ on objects, and acting on maps as
\[
\shiftFunct{i}(f)(m) := \commutMap{i}{|f|}{|f|}{i}(|m|) f(m),
\]
for $f : M \rightarrow N$ purely homogeneous and $m \in M$. Note that $\shiftFunct{i}(f) = 0$ whenever $((i,|f|),(|f|,i)) \notin \Tau$.

\begin{exe}
Still for superstructures as in \cref{exe:supershift}, we put $\Tau := \{((j',i'),(j,i)) | j'+i' \equiv j+i \mod 2\}$ with $\commutMap[\star,\star]{\pi}{\pi}{\pi}{\pi} := -1$ and $\commutMap[\star,\star]{j'}{i'}{j}{i} := 1$ otherwise. 
\end{exe}

\begin{exe}
Recall \cref{exe:lambdaRshift}. We take $\Tau := \{\bigl( ((n_1,n_2), (m_1,m_2)), ((m_1,m_2),(n_1,n_2)) \bigr))$. We put $\commutMap[\star,\star]{(n_1,n_2)}{(m_1,m_2)}{(m_1,m_2)}{(n_1,n_2)} :=\lambda_R((n_1,n_2),(m_1,m_2))$.
\end{exe}

\subsection{Dg-$\cC$-graded modules}

Suppose $(\cC,\assoc)$ is equipped with a compatible $\cC$-shifting 2-system $\{\cI, \shiftFunctCol\}$ with a commutativity system $\Tau$.

In \cref{sec:gradeddg}, we have introduced the notion of $\cC$-graded dg-(bi)module, which comes with a differential that preserves the $\cC$-grading. 
We now introduce a notion of dg-(bi)module with a differential that is $\cC$-homogeneous.  

\begin{defn}
A \emph{dg-$\cC$-graded bimodule} $(M,d_M)$ over a pair of $\cC$-graded $\Bbbk$-algebras $A_2$ and $A_1$ is a $\bZ\times\cC$-graded $A_2$-$A_1$-bimodule  $M = \bigoplus_{n \in \bZ, g \in \cC} M_g^n$,  with a differential $d_M = \sum_{j \in J \subset I} d_{M,j}$, where $J$ is finite, such that:
\begin{itemize}
\item $d_{M,j}(M_g^n) \subset M_{\shiftFunct{j}(g)}^{n+1}$ if $g \in D_j$ and $d_{M,j}(M_g^n)  = 0$ otherwise;
\item $d_{M,j}(y \cdot m) = \compMap{\neutralElement}{j}(|y|_\cC, |m|_\cC)^{-1} y \cdot d_{M,j}(m)$;
\item $d_{M,j}(m \cdot x) = \compMap{j}{\neutralElement}(|m|_\cC, |x|_\cC)^{-1} d_{M,j}(m) \cdot x$;
\item $d_M \circ d_M = 0$,
\end{itemize}
for all $y \in A_2, m\in M$ and $x \in A_1$. \\
A map of dg-$\cC$-graded bimodules is an homogeneous map of $\cC$-graded bimodules that preserves the homological grading and $\cC$-graded commutes with the differentials. 
\end{defn}

Let $\BIMdg{\cC}{A_2}{A_1}$ be the additive category of dg-$\cC$-graded $A_2$-$A_1$-bimodule. 
The homology is defined as before, but it is not $\cC$-graded anymore, and therefore not a bimodule. 
However, it is still a $\bZ$-graded space, and a map of dg-$\cC$-graded bimodules induces a $\bZ$-graded map in homology. 
 The homological shift functor is also constructed as before. 
Let $f : M \rightarrow M'$ be a map of dg-$\cC$-graded bimodules.  The \emph{mapping cone of $f$} is 
\begin{align*}
\cone(f) &:= (M[-1] \oplus M', d_C), 
& d_C &:= 
\begin{pmatrix}
- \commutMap{|d_{M'}|}{|f|}{|f|}{|d_M|} d_M & 0 \\
f & d_{M'}
\end{pmatrix}
\end{align*}
Note that $d_C$ is an homogeneous map, and it yields a complex since $f$ $\cC$-graded commutes with $d_M$ and $d_{M'}$.  The tensor product of $M' \in \BIMdg{\cC}{A_3}{A_2}$ with $M \in \BIMdg{\cC}{A_2}{A_1}$ is defined as before, except that
\begin{align*}
d_{M' \otimes M,j}&(m' \otimes m) := \\
 &(d_{M',j} \otimes 1)(m' \otimes m) + (-\commutMap{\id \bullet i}{i' \bullet \id}{i' \bullet \id}{\id \bullet i}(|m'|\bullet|m|))^{\deg_h(m')} (1 \otimes d_{M,j})(m' \otimes m).
\end{align*}
Note that since $\commutMap{\id \bullet i}{i' \bullet \id}{i' \bullet \id}{\id \bullet i} = 
	(\interCompMap{\id}{j}{i}{\id})^{-1}
	\interCompMap{j}{\id}{\id}{i}$
 the definition is independent from a choice of commutative system. Also, it gives a chain complex thanks to \cref{prop:tensorcomphomomaps}.

\begin{rem}
Note that $\BIMdg{\cC}{A_2}{A_1}$ is not a monoidal category in the usual sense. Indeed, the tensor product is not a bifunctor because of \cref{prop:tensorcomphomomaps}. We can think of it as a $\cC$-graded bifunctor. 
\end{rem}

We say that two maps $f,g : M \rightarrow M'$ are \emph{homotopic} $f \sim g$ if there is a map $h : M[-1] \rightarrow N$ in $\BIMOD^\cC_\bZ(A_2,A_1)$ (thus preserving the homological grading, homogeneous w.r.t. the $\cC$-grading and not necessarily commuting with the differentials) such that
\[
f -g = d_{M'} \circ h + \commutMap{|d_{M'}|}{|h|}{|h|}{|d_M|}   h \circ d_{M}.
\]
Note that obviously homotopic maps induce equivalent maps in homology. 
Also recall that a homotopy equivalence is pair of maps $f : M \rightarrow M'$ and $g : M' \rightarrow M$ such that $g \circ f \sim \id_M$ and $f \circ g \sim \id_{M'}$, thus inducing an isomorphism in homology. 

\begin{defn}
Let $\cKOM^{\cC}(A_2,A_1)$ be the additive category with the same objects as $\BIMdg{\cC}{A_2}{A_1}$ and hom-spaces given by
\[
\Hom_{\cKOM^{\cC}(A_2,A_1)}(M,N) := \Hom_{\BIMdg{\cC}{A_2}{A_1}}(M,N)/ \sim. 
\]
We refer to it as the \emph{$\cC$-graded homotopy category} of $A_2$-$A_1$-bimodules.
\end{defn}

%
%
%
%




\section{The grading category $\gradC$}\label{sec:gradCat}

Consider the subset $\redTangleSpace_n^m \subset \tangleSpace_n^m$ of reduced flat tangles (i.e. without free loop). Let 
\[
\red{\phantom{t}}: \tangleSpace_n^m \rightarrow \redTangleSpace_n^m
\]
 be the map that remove free loops. 
Define also the map  $s_{cba}(-,-) : \tangleSpace_{|b|}^{|c|} \times \tangleSpace_{|a|}^{|b|} \rightarrow \bZ^2$ for $a,b,c \in B^\bullet$  as given for $t' \in \tangleSpace_{|b|}^{|c|}$ and $t \in \tangleSpace_{|a|}^{|b|}$ by 
\[
s_{cba}(t',t) := \deg(\chcob_{cba}(t',t)),
\]
where $\chcob_{cba}(t',t)$ is the canonical cobordism defined in \cref{ssec:surgery}. 

\begin{lem}\label{lem:splitcounting}
For any triple of flat tangles $t'' \in  B_{|c|}^{|d|},t' \in B_{|b|}^{|c|}$ and $t \in B_{|a|}^{|b|}$, we have $s_{dca}(t'',t't)+s_{cba}(t',t) = s_{dcb}(t'',t') + s_{dba}(t''t',t)$. 
\end{lem}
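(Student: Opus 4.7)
The plan is to view both sides of the claimed equality as the $\bZ\times\bZ$-degree of a single composite cobordism, obtained in two different orders by successively contracting the symmetric pairs of arcs in $c\bar c$ and in $\bar b b$. Starting from the 1-manifold $\bar d t'' c \bar c t' b \bar b t a$, one can either first apply the surgery associated to $\bar b b$ (giving $\bar d t'' c \bar c t' t a$) followed by the one over $c\bar c$, yielding a cobordism $W_1$ to $\bar d t'' t' t a$; or first apply the surgery over $c\bar c$ (giving $\bar d t'' t' b \bar b t a$) followed by the one over $\bar b b$, producing a cobordism $W_2$ to the same target.

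First I would observe that the $\bZ\times\bZ$-degree is additive under both vertical composition and horizontal juxtaposition of chronological cobordisms, since each type of critical point (merge, split, birth, death) is enumerated separately in each piece. In particular, identity cylinders have degree $0$. Extending $\chcob_{cba}(t',t)$ by identity cylinders on the $\bar d t''$ strands (and similarly for the other building blocks) and composing vertically with $\chcob_{dca}(t'',t't)$, respectively $\chcob_{dba}(t''t',t)$, gives
\begin{align*}
    \deg(W_1) &= s_{dca}(t'', t't) + s_{cba}(t',t),
    &
    \deg(W_2) &= s_{dcb}(t'', t') + s_{dba}(t''t', t).
\end{align*}

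The core step is to show $\deg(W_1) = \deg(W_2)$. Both $W_1$ and $W_2$ are built purely from surgery saddles, with no births or deaths, and with exactly $|b|+|c|$ saddle critical points in total. For such a cobordism $W$, the degree is simply $(-\#\text{merges}(W),\, -\#\text{splits}(W))$, and one has
\[
\#\text{merges}(W) + \#\text{splits}(W) = |b|+|c|,
\qquad
\#\text{splits}(W) - \#\text{merges}(W) = |\pi_0(\text{target})| - |\pi_0(\text{source})|,
\]
where the right-hand identity holds because each merge decreases the number of connected components by one while each split increases it by one. Since $W_1$ and $W_2$ share the same source $\bar d t'' c \bar c t' b \bar b t a$ and the same target $\bar d t'' t' t a$, both quantities agree, so $\#\text{merges}(W_1) = \#\text{merges}(W_2)$ and $\#\text{splits}(W_1) = \#\text{splits}(W_2)$, hence $\deg(W_1) = \deg(W_2)$.

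The only mildly subtle point is the second identity above, expressing $\#\text{splits} - \#\text{merges}$ purely in terms of the boundary; the rest is routine bookkeeping of critical points. Once this invariance is noted, the lemma follows immediately by combining it with the additivity of $\deg$ under composition.
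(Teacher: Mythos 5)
Your proof is correct and, once unpacked, follows essentially the same logic as the paper's one-line argument, which invokes the minimality of the Euler characteristic of the canonical cobordisms: both composites $W_1$ and $W_2$ are minimal-genus cobordisms between the same source $\bar d t'' c \bar c t' b \bar b t a$ and target $\bar d t'' t'ta$, consisting of $|b|+|c|$ saddles and nothing else, and the $\bZ\times\bZ$-degree of such a cobordism is determined by the total saddle count (equivalently $\chi$) together with the change in $\pi_0$ across the boundary. You have simply spelled out the topological bookkeeping that the paper leaves implicit.
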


\begin{proof}
It follows by minimality condition on the Euler characteristic of the canonical cobordisms $\chcob_{cba}(t',t)$. 
\end{proof}

\begin{defn}
We define the category $\gradC$ as: 
\begin{itemize}
\item objects are given by elements in $B^\bullet$;
\item hom-spaces are
\[
\Hom_{\gradC}(a,b) := \redTangleSpace_n^m \times \bZ^2,
\qquad
\id_a = (1_n, (n,0)),
\]
 for $a \in B^n$ and $b \in B^m$; 
\item composition is
\[
(t',p') \circ (t,p) := (\red{t't}, p+p'+s_{cba}(t',t) ),
\] 
for $(t,p) \in \Hom_{\gradC}(a,b)$ and $(t',p') \in \Hom_{\gradC}(b,c)$.
\end{itemize} 
\end{defn}

Note that the composition in $\gradC$ is associative thanks to \cref{lem:splitcounting}. 

We turn $\gradC$ into a grading category by equipping it with the associator $\assoc : \gradC^{[3]} \rightarrow R$ given
for
\[
d \xleftarrow{(t'',p'')} c \xleftarrow{(t',p')} b \xleftarrow{(t,p)} a \quad \in \gradC^{[3]},
\]
 by
\[
\assoc((t'',p''),(t',p'),(t,p)) := \assoc_1 \assoc_2,
\]
where 
\begin{equation}\label{eq:lambda1diag}
\begin{tikzcd}
& \bar d t'' t' b \otimes \bar b t a \ar{dr}{\chcob_{dba}(t''t',t)} & \\
\bar d t'' c  \otimes \bar c t' b  \otimes  \bar b t a \ar{ur}{\chcob_{dcb}(t'',t') \otimes 1_{ba}(t)} \ar[swap]{dr}{1_{dc}(t'') \otimes \chcob_{cba}(t',t) } && \bar d t'' t' t a \\
& \bar d t'' c \otimes \bar c t' t a \ar[phantom]{uu}{\quad\rotatebox{90}{$\Rightarrow$}\ \assoc_1} \ar[swap]{ur}{\chcob_{dca}(t'',t't)}
&
\end{tikzcd}
\end{equation}
in the sense that 
there exists a unique (up to homotopy) locally vertical change of chronology 
\[
H : \chcob_{dca}(t'',t't) \circ  \bigl( 1_{dc}(t'') \otimes \chcob_{cba}(t',t) \bigr) \Rightarrow \chcob_{dba}(t''t',t) \circ \bigl( \chcob_{dcb}(t'',t') \otimes 1_{ba}(t) \bigr),
\]
 and we take $\assoc_1 := \imath(H)$, 
%
%
%
%
and where
\[
\assoc_2 :=  \lambda_R( s_{cba}(t',t), p''), 
\]
with $\lambda_R$ being the bilinear map defined in \cref{eq:deflambdaR}.

\smallskip

In terms of pictures we can explain $\alpha$ as coming from
\[
\tikzdiag[xscale=1]{0}{
	\draw (0,1) node[below]{$(t'',p'')$}
		.. controls (0,1.5) and (.5,1.5) ..
		(.5,2)
		.. controls (.5,2.5) and (1.25,2.5) ..
		(1.25,3); 
	\draw (1,.5) node[below]{$(t',p')$}
		--
		(1,1)
		.. controls (1,1.5) and (.5,1.5) ..
		(.5,2);
	\draw (2, 0) node[below]{$(t,p)$}
		--
		(2,2)
		.. controls (2,2.5) and (1.25,2.5) ..
		(1.25,3);
	%
}
\ = \assoc_1 \ 
\tikzdiag[xscale=1]{0}{
	\draw (-.5,1) node[below]{$(t'',p'')$}
		--
		(-.5,2)
		.. controls (-.5,2.5) and (.25,2.5) ..
		(.25,3);
	\draw (.5,.5) node[below]{$(t',p')$}
		--
		(.5,1)
		.. controls (.5,1.5) and (1,1.5) ..
		(1,2);
	\draw (1.5,0) node[below]{$(t,p)$}
		--
		(1.5,1)
		.. controls (1.5,1.5) and (1,1.5) ..
		(1,2)
		.. controls (1,2.5) and (.25,2.5) ..
		(.25,3);
}
\ = \assoc_1 \assoc_2 \ 
\tikzdiag[xscale=1]{0}{
	\draw (-.5,2) node[below]{$(t'',p'')$}
		.. controls (-.5,2.5) and (.25,2.5) ..
		(.25,3);
	\draw (.5,.5) node[below]{$(t',p')$}
		.. controls (.5,1) and (1,1) ..
		(1,1.5)
		--
		(1,2)
		.. controls (1,2.5) and (.25,2.5) ..
		(.25,3);
	\draw (1.5,0) node[below]{$(t,p)$}
		--
		(1.5,.5)
		.. controls (1.5,1) and (1,1) ..
		(1,1.5)
		--
		(1,2);
}
\]
where the trivalent vertices represent the canonical cobordisms used to form the composition maps $\mu$. 

\begin{rem}
Note that the definition of $\assoc$ also make sense for non-reduced flat tangles. However, taking the reduced version of the tangle would give the same value $\assoc(t'',t',t) = \assoc(\widehat t'', \widehat t', \widehat t)$ since $\chcob_{cba}(t',t)$ is the identity on the extra loops in $t'$ and $t$. 
\end{rem}

\begin{prop}\label{prop:gradCassoc}
The map $\assoc : \gradC^{[3]} \rightarrow R$ defined above is a 3-cocycle.
\end{prop}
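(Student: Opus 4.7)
The plan is to decompose $\assoc = \assoc_1 \cdot \assoc_2$ and show that $d\assoc_1$ and $d\assoc_2$ are mutual inverses, so that $d\assoc=1$. Throughout, write the four composable morphisms as $f_i = (t_i,p_i)$ for $i=1,\dots,4$, and abbreviate the surgery degree as $s(t_i,t_j)$ (its three object-subscripts are determined by composability).

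For the combinatorial piece, expand the five factors of $d\assoc$ using the composition rule $(t',p')\circ(t,p)=(\widehat{t't},\, p+p'+s(t',t))$ in $\gradC$ and the formula $\assoc_2((t'',p''),(t',p'),(t,p)) = \lambda_R(s(t',t),p'')$. Bilinearity of $\lambda_R$ splits the product into pieces indexed by which $p_i$ sits in the second slot. The coefficients of $p_1$ telescope via \cref{lem:splitcounting} applied to $(t_2,t_3,t_4)$, giving $s(t_2t_3,t_4) - s(t_2,t_3t_4) + s(t_2,t_3) = s(t_3,t_4)$, which cancels against the $p_1$-contribution from the first two factors; the $p_2$-contributions cancel directly. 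One is left with
\[
d\assoc_2 = \lambda_R\bigl(s(t_3,t_4),\, s(t_1,t_2)\bigr)^{-1}.
\]

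For the topological piece, the five bracketings of $f_1 f_2 f_3 f_4$ give five chronologies on the same composite cobordism, differing only in the time-ordering of the three canonical surgery cobordisms. Each edge of the Mac Lane pentagon is then a locally vertical change of chronology, and by \cref{prop:locvertchange} it is determined up to homotopy by its source and target; consequently the composition around the pentagon is the canonical self-change of the starting chronology. Four of the edges re-time saddles of surgeries that share a spatial interface and can be realised by locally vertical changes with trivial $\imath$-value. The remaining edge interchanges the time-orders of $\chcob(t_1,t_2)$ and $\chcob(t_3,t_4)$, which are supported in spatially disjoint cylinders; by relations~\eqref{eq:cobcommute1}--\eqref{eq:cobcommute2} this interchange contributes exactly $\lambda_R(s(t_3,t_4),s(t_1,t_2))$. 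Hence $d\assoc_1 = \lambda_R(s(t_3,t_4),s(t_1,t_2))$, and $d\assoc = d\assoc_1 \cdot d\assoc_2 = 1$.

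The hard part is the pentagon analysis for $\assoc_1$: one must select canonical representatives for the chronology at each bracketing (respecting the right-to-left, upward saddle convention) so that four pentagon edges are realised by locally vertical changes inside a single cylinder at one shared interface, with only the one ``distant'' edge picking up the non-trivial $\lambda_R$ factor from the horizontal-juxtaposition relations. \cref{prop:locvertchange} is precisely what allows this reduction without ever manipulating explicit framed homotopies.
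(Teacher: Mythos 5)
Your decomposition of $\assoc = \assoc_1 \cdot \assoc_2$ and the strategy of showing $d\assoc_1$ and $d\assoc_2$ are mutually inverse is exactly the paper's approach, and the $\assoc_2$ computation is correct: the $p_2$-terms cancel directly, the $p_1$-terms telescope by \cref{lem:splitcounting}, and the surviving factor is $\lambda_R(s(t_3,t_4),s(t_1,t_2))^{-1}$.

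The $\assoc_1$ part, however, has a genuine gap, and it is precisely the part you flag as ``hard.'' There are two problems. First, the two assertions you make are in tension: if the five $\assoc_1$-edges really formed a closed pentagonal loop of locally vertical changes with respect to a fixed family of cylinders over $\bar b b, \bar c c, \bar d d$, then \cref{prop:locvertchange} applied to the composite and to the identity would force $\imath$ of the loop to be $1$, i.e.\ $d\assoc_1 = 1$, contradicting your claim $d\assoc_1 = \lambda_R(s(t_3,t_4),s(t_1,t_2))$. Second, the accounting ``four edges trivial, one $\lambda_R$'' does not hold: each $\assoc_1$-value is $\imath$ of a change of chronology reordering two surgeries over spatially disjoint families of circles, and by~\eqref{eq:cobcommute1}--\eqref{eq:cobcommute2} this is a nontrivial monomial in $X,Y,Z$ determined by the surgery degrees; there is no choice of representatives that makes four of them identically $1$. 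The subtlety you have missed is that the pentagon in fact does \emph{not} close up. The bracketing corresponding to $(f_1 f_2)(f_3 f_4)$ carries an ambiguity in its chronology (the two inner surgeries are spatially disjoint and can happen in either order), and the two $\assoc_1$-edges incident to this vertex land on different chronologies. The paper resolves this by introducing a sixth vertex (the second chronology of this bracketing) and a sixth edge --- the Gray interchange of the two distant surgeries, labelled $\lambda_R(s_{cba}(t',t), s_{edc}(t''',t''))$. The resulting hexagon of locally vertical changes with the same cylinders \emph{does} commute by \cref{prop:locvertchange}, and equating the $\imath$-values of the two paths gives $d\assoc_1 = \lambda_R(s_{cba}(t',t),s_{edc}(t''',t''))$, which cancels against your $d\assoc_2$. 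So your final formula is right, but the mechanism producing the $\lambda_R$ factor is the failure of the pentagon to close, not a cancellation of four trivial edges against one nontrivial one.
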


\begin{proof}
By definition, $d\alpha(g''',g'',g',g)$ computes the difference between taking the two possible path in the following diagram:
\begin{equation}\label{eq:dassocpent}
\begin{tikzcd}[column sep=1ex, row sep = .5ex]
&
\tikzdiag[scale=.5]{0}{
	\draw (0,2) node[below]{$g'''$}
		.. controls (0,2.5) and (.75,2.5) ..
		(.75,3)
		.. controls (.75,3.5) and (1.875,3.5) ..
		(1.875,4);
	\draw (1,.5) node[below]{$g''$}
		.. controls (1,1) and (1.5,1) .. 
		(1.5,1.5);
	\draw (2, 0) node[below]{$g'$}
		--
		(2,.5)
		.. controls (2,1) and (1.5,1) .. 
		(1.5,1.5)
		--
		(1.5,2)
		.. controls (1.5,2.5) and (.75,2.5) ..
		(.75,3);
	\draw (3,-.5) node[below]{$g\phantom{'}$}
		--
		(3,3)
		.. controls (3,3.5) and (1.875,3.5) ..
		(1.875,4);
}
\ar{rr}{\assoc(g''',g''g',g)}
&{}&
\tikzdiag[scale=.5]{0}{
	\draw (0,3) node[below]{$g'''$}
		.. controls (0,3.5) and (1.125,3.5) ..
		(1.125,4);
	\draw (1,.5) node[below]{$g''$}
		.. controls (1,1) and (1.5,1) .. 
		(1.5,1.5);
	\draw (2, 0) node[below]{$g'$}
		--
		(2,.5)
		.. controls (2,1) and (1.5,1) .. 
		(1.5,1.5)
		.. controls (1.5,2) and (2.25,2) ..
		(2.25,2.5);
	\draw (3,-.5) node[below]{$g\phantom{'}$}
		--
		(3,1.5)
		.. controls (3,2) and (2.25,2) ..
		(2.25,2.5)
		-- 
		(2.25,3)
		.. controls (2.25,3.5) and (1.125,3.5) ..
		(1.125,4);
}
\ar{dr}{\assoc(g'',g',g)}
&
\\
\tikzdiag[scale=.5]{0}{
	\draw (0,1) node[below]{$g'''$}
		.. controls (0,1.5) and (.5,1.5) ..
		(.5,2)
		.. controls (.5,2.5) and (1.25,2.5) ..
		(1.25,3)
		.. controls (1.25,3.5) and (2.125,3.5) ..
		(2.125,4);
	\draw (1,.5) node[below]{$g''$}
		--
		(1,1)
		.. controls (1,1.5) and (.5,1.5) ..
		(.5,2);
	\draw (2, 0) node[below]{$g'$}
		--
		(2,2)
		.. controls (2,2.5) and (1.25,2.5) ..
		(1.25,3);
	\draw (3,-.5) node[below]{$g\phantom{'}$}
		--
		(3,3)
		.. controls (3,3.5) and (2.125,3.5) ..
		(2.125,4);
}
\ar{ur}{\assoc(g''',g'',g')}
\ar[swap]{rr}{\assoc(g'''g'',g',g)}
&&
\tikzdiag[scale=.5]{0}{
	\draw (0,2) node[below]{$g'''$}
		.. controls (0,2.5) and (.5,2.5) ..
		(.5,3)
		.. controls (.5,3.5) and (1.5,3.5) ..
		(1.5,4);
	\draw (1,1.5) node[below]{$g''$}
		-- 
		(1,2)
	 	.. controls (1,2.5) and (.5,2.5) ..
	 	(.5,3);
	\draw (2, 0) node[below]{$g'$}
		.. controls (2,.5) and (2.5,.5) ..
		(2.5,1);
	\draw (3,-.5) node[below]{$g\phantom{'}$}
		--
		(3,0)
		.. controls (3,.5) and (2.5,.5) ..
		(2.5,1)
		--
		(2.5,3)
		.. controls (2.5,3.5) and (1.5,3.5) ..
		(1.5,4);
}
\ar[swap]{rr}{\assoc(g''',g'',g'g)}
 \ar[phantom]{u}{ \quad \rotatebox{90}{$\Rightarrow$}\ d\assoc}
&&
\tikzdiag[scale=.5]{0}{
	\draw (0,3) node[below]{$g'''$}
		.. controls (0,3.5) and (.875,3.5) ..
		(.875,4);
	\draw (1,1.5) node[below]{$g''$}
	 	.. controls (1,2) and (1.75,2) ..
	 	(1.75,2.5);
	\draw (2, 0) node[below]{$g'$}
		.. controls (2,.5) and (2.5,.5) ..
		(2.5,1);
	\draw (3,-.5) node[below]{$g\phantom{'}$}
		--
		(3,0)
		.. controls (3,.5) and (2.5,.5) ..
		(2.5,1)
		--
		(2.5,1.5)
		.. controls (2.5,2) and (1.75,2) ..
		(1.75,2.5)
		--
		(1.75,3)
		.. controls (1.75,3.5) and (.875,3.5) ..
		(.875,4);
}
\end{tikzcd}
\end{equation}
Since we are considering two locally vertical changes of chronology with same source and target, the following diagram of cobordisms commutes by \cref{prop:locvertchange}: 
\[
\begin{tikzcd}[column sep=5ex, row sep = -2ex]
&
\tikzdiag[scale=.5]{0}{
	\draw (0,-.5)node[below]{$t'''$}
		--
		(0,2) 
		.. controls (0,2.5) and (.75,2.5) ..
		(.75,3)
		.. controls (.75,3.5) and (1.875,3.5) ..
		(1.875,4);
	\draw (1,-.5) node[below]{$t''$}
		--
		(1,.5) 
		.. controls (1,1) and (1.5,1) .. 
		(1.5,1.5);
	\draw (2, -.5) node[below]{$t'$}
		--
		(2,.5)
		.. controls (2,1) and (1.5,1) .. 
		(1.5,1.5)
		--
		(1.5,2)
		.. controls (1.5,2.5) and (.75,2.5) ..
		(.75,3);
	\draw (3,-.5) node[below]{$t\phantom{'}$}
		--
		(3,3)
		.. controls (3,3.5) and (1.875,3.5) ..
		(1.875,4);
}
\ar{rrrr}{\assoc_1(t''',\widehat{t''t'},t)}
&&{}&&
\tikzdiag[scale=.5]{0}{
	\draw (0,-.5) node[below]{$t'''$}
		--
		(0,3)
		.. controls (0,3.5) and (1.125,3.5) ..
		(1.125,4);
	\draw (1,-.5) node[below]{$t''$}
		--
		(1,.5)
		.. controls (1,1) and (1.5,1) .. 
		(1.5,1.5);
	\draw (2, -.5) node[below]{$t'$}
		--
		(2,.5)
		.. controls (2,1) and (1.5,1) .. 
		(1.5,1.5)
		.. controls (1.5,2) and (2.25,2) ..
		(2.25,2.5);
	\draw (3,-.5) node[below]{$t\phantom{'}$}
		--
		(3,1.5)
		.. controls (3,2) and (2.25,2) ..
		(2.25,2.5)
		-- 
		(2.25,3)
		.. controls (2.25,3.5) and (1.125,3.5) ..
		(1.125,4);
}
\ar{dr}{\assoc_1(t'',t',t)}
&
\\
\tikzdiag[scale=.5]{0}{
	\draw (0,-.5) node[below]{$t'''$}
		--
		(0,1)
		.. controls (0,1.5) and (.5,1.5) ..
		(.5,2)
		.. controls (.5,2.5) and (1.25,2.5) ..
		(1.25,3)
		.. controls (1.25,3.5) and (2.125,3.5) ..
		(2.125,4);
	\draw (1,-.5) node[below]{$t''$}
		--
		(1,1)
		.. controls (1,1.5) and (.5,1.5) ..
		(.5,2);
	\draw (2, -.5) node[below]{$t'$}
		--
		(2,2)
		.. controls (2,2.5) and (1.25,2.5) ..
		(1.25,3);
	\draw (3,-.5) node[below]{$t\phantom{'}$}
		--
		(3,3)
		.. controls (3,3.5) and (2.125,3.5) ..
		(2.125,4);
}
\ar{ur}{\assoc_1(t''',t'',t')}
\ar[swap,pos=.8]{dr}{\assoc_1(\widehat{t'''t''},t',t)}
&&&&&&
\tikzdiag[scale=.5]{0}{
	\draw (0,-.5) node[below]{$t'''$}
		--
		(0,3)
		.. controls (0,3.5) and (.875,3.5) ..
		(.875,4);
	\draw (1,-.5) node[below]{$t''$}
		--
		(1,1.5)
	 	.. controls (1,2) and (1.75,2) ..
	 	(1.75,2.5);
	\draw (2, -.5) node[below]{$t'$}
		--
		(2,0)
		.. controls (2,.5) and (2.5,.5) ..
		(2.5,1);
	\draw (3,-.5) node[below]{$t\phantom{'}$}
		--
		(3,0)
		.. controls (3,.5) and (2.5,.5) ..
		(2.5,1)
		--
		(2.5,1.5)
		.. controls (2.5,2) and (1.75,2) ..
		(1.75,2.5)
		--
		(1.75,3)
		.. controls (1.75,3.5) and (.875,3.5) ..
		(.875,4);
}
\\
&
\tikzdiag[scale=.5]{-3ex}{
	\draw (0,-.5) node[below]{$t'''$}
		--
		(0,1)
		.. controls (0,1.5) and (.5,1.5) ..
		(.5,2)
		-- 
		(.5,3)
		.. controls  (.5,3.5) and (1.5,3.5) ..
		(1.5,4);
	\draw (1,-.5) node[below]{$t''$}
		-- 
		(1,1)
	 	.. controls (1,1.5) and (.5,1.5) ..
	 	(.5,2);
	\draw (2, -.5) node[below]{$t'$}
		--
		(2,2)
		.. controls (2,2.5) and (2.5,2.5) .. 
		(2.5,3);
	\draw (3,-.5) node[below]{$t\phantom{'}$}
		--
		(3,2)
		.. controls (3,2.5) and (2.5,2.5) .. 
		(2.5,3)
		.. controls  (2.5,3.5) and (1.5,3.5) ..
		(1.5,4);
}
\ar[swap]{rrrr}{\lambda_R( s_{cba}(t',t), s_{edc}(t''',t''))}
&&{} 
&&
\tikzdiag[scale=.5]{-3ex}{
	\draw (0,-.5) node[below]{$t'''$}
		--
		(0,2)
		.. controls (0,2.5) and (.5,2.5) ..
		(.5,3)
		.. controls (.5,3.5) and (1.5,3.5) ..
		(1.5,4);
	\draw (1,-.5) node[below]{$t''$}
		-- 
		(1,2)
	 	.. controls (1,2.5) and (.5,2.5) ..
	 	(.5,3);
	\draw (2, -.5) node[below]{$t'$}
		--
		(2,0)
		.. controls (2,.5) and (2.5,.5) ..
		(2.5,1);
	\draw (3,-.5) node[below]{$t\phantom{'}$}
		--
		(3,0)
		.. controls (3,.5) and (2.5,.5) ..
		(2.5,1)
		--
		(2.5,3)
		.. controls (2.5,3.5) and (1.5,3.5) ..
		(1.5,4);
}
\ar[swap,pos=.2]{ur}{\assoc_1(t''',t'',\widehat{t't})}
&
\end{tikzcd}
\] 
Therefore, we have that the contribution of $\assoc_1$ in \cref{eq:dassocpent} is
\[
\text{(top)}  = \lambda_R\bigl(s_{cba}(t',t), s_{edc}(t''',t'')\bigr) \text{(bottom)}.
\] 

 For the $\assoc_2$ contribution, we have for the top that
\begin{align*}
&\lambda_R\bigl( s_{dcb}(t'',t'), p'''\bigr)
 \lambda_R\bigl(s_{dba}(t''t',t), p'''\bigr)
 \lambda_R\bigl(s_{cba}(t',t), p''\bigr)
 \\
 &\quad = 
 \lambda_R\bigl(s_{dcb}(t'',t') + s_{dba}(\widehat{t''t'},t), p'''\bigr)
 \lambda_R\bigl(s_{cba}(t',t), p''\bigr),
\end{align*}
and for the bottom that
\begin{align*}
&\lambda_R\bigl(s_{cba}(t',t), p'''+p''+s_{edc}(t''',t'')\bigr)
\lambda_R\bigl(s_{dca}(t'',\widehat{t't}), p'''\bigr)
\\
& \quad =
\lambda_R\bigl(s_{cba}(t',t), s_{edc}(t''',t'')\bigr)
\lambda_R\bigl(s_{cba}(t',t), p'''\bigr)
\lambda_R\bigl(s_{cba}(t',t), p''\bigr)
\lambda_R\bigl(s_{dca}(t'',\widehat{t't}), p'''\bigr)
\\
& \quad =
\lambda_R\bigl(s_{cba}(t',t), s_{edc}(t''',t'')\bigr)
\lambda_R\bigl(s_{dca}(t'',\widehat{t't}) + s_{cba}(t',t), p'''\bigr)
\lambda_R\bigl(s_{cba}(t',t), p''\bigr),
\end{align*}
using the bilinearity of $\lambda_R$. 
 By \cref{lem:splitcounting}, we conclude that the $\assoc_2$ contribution in \eqref{eq:dassocpent} is
\[
 \lambda_R\bigl(s_{cba}(t',t),s_{edc}(t''',t'')\bigr) \text{(top)} = \text{(bottom)}.
\]
Putting all these together, we conclude that $d\assoc = 0$. 
\end{proof}

\subsection{$\gradC$-grading shift}

Our goal is to construct a $\gradC$-shifting system compatible with $\assoc$.
First, we note that $\shiftCenter{}(\gradC)$ is the subcategory generated by identity tangles. Explicitly, for $a \in B^n$ and $b \in B^m$, we have
\[
\Hom_{\shiftCenter{S}(\gradC)}(a,b) =
\begin{cases}
\{(\id_n, p) | p \in \bZ\}, &\text{ if $n=m$,} \\
\emptyset, &\text{otherwise.}
\end{cases}
\]

For a 
chronological cobordism with corners $\chcob : t \rightarrow t'$, with $t,t' \in \tangleSpace_n^m$, and for $v \in \bZ^2$, we write $\chcob^v$ for the pair $(\chcob, v)$.
Then, we define the shifting map 
$\shiftFunct{\chcob^v}$ with domain $\shiftDom[ba]{\chcob^v} := \{ (\red t,p) \in \Hom_{\gradC}(a,b) | p \in \bZ^2\}$ and 
\[
	\shiftFunct{\chcob^v}(\red t,p) := \bigl(\red{t'}, p+v+\deg({1_{\bar b}\chcob 1_{a}})\bigr),
\] 
for $(\red t, p) \in \Hom_\gradC(a,b)$.

Given $\chcob_1 : t_1 \rightarrow t'_1$ and $\chcob_2 : t_2 \rightarrow t'_2$ with $t_1,t_1' \in \tangleSpace_n^m$ and $t_2,t_2' \in \tangleSpace_m^{n'}$ we obtain a cobordism $\chcob_2 \bullet \chcob_1 : t_2t_1 \rightarrow t_2't_1'$ by right-then-left horizontal composition. 
In this case we put $\chcob_2^{v_2} \bullet \chcob_1^{v_1} := (\chcob_2 \bullet \chcob_1)^{v_2 + v_1}$. Otherwise, we put $\chcob_2^{v_2} \bullet \chcob_1^{v_1} := 0$.  Then, we define the monoid $I :=  \{\chcob^v\}_{\chcob, v} \sqcup \{\neutralElement,0\}$ with composition $\bullet$ defined above, neutral element $\neutralElement$ and an absorbing element $0$. 
Finally, we take $I_\id := \{ (\un_t)^0 \}_{t \in \redTangleSpace_\bullet^\bullet } \subset I$, where we recall $\un_t$ is the identity cobordism on $t$. 

\begin{prop}
The datum of $S :=(I, \{\shiftFunct{\chcob^v}\}_{\chcob,v} \sqcup \{\shiftFunct{\neutralElement}, \shiftFunct{0}\})$ forms a $\gradC$-shifting system. 
\end{prop}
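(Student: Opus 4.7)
The plan is to verify the four axioms of a $\gradC$-shifting system. Three of them are routine: the monoid $(I,\bullet)$ with neutral $\neutralElement$ and absorbing $0$ inherits its associativity from that of horizontal composition of chronological cobordisms with corners; the neutral shift $\shiftFunct{\neutralElement}$ and null shift $\shiftFunct{0}$ satisfy their defining conditions by construction; and for $(\un_t)^0 \in I_\id$ with $t \in \redTangleSpace_\bullet^\bullet$, because $\un_t$ has no critical points one computes $\shiftFunct{(\un_t)^0}(t,p) = (t,p)$, so $\shiftFunct{(\un_t)^0}$ is the identity on $\shiftDom[ba]{(\un_t)^0} = \{(t,p) \mid p \in \bZ^2\}$. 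The required partition $\Hom_\gradC(a,b) = \bigsqcup_{t \in \redTangleSpace_{|a|}^{|b|}} \shiftDom[ba]{(\un_t)^0}$ then follows directly from the definition of $\gradC$.

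The substance is in the composition axiom. Given horizontally composable $\chcob_1 : t_1 \to t_1'$ in $\tangleSpace_{|a|}^{|b|}$ and $\chcob_2 : t_2 \to t_2'$ in $\tangleSpace_{|b|}^{|c|}$ with $v_1, v_2 \in \bZ^2$, the identity $\red{\red{t_2}\,\red{t_1}} = \red{t_2 t_1}$ shows that $(\red{t_2},q) \circ (\red{t_1},p) \in \shiftDom[ca]{(\chcob_2 \bullet \chcob_1)^{v_1+v_2}}$. Unpacking both sides of
\[
\shiftFunct{(\chcob_2 \bullet \chcob_1)^{v_1+v_2}}\bigl((\red{t_2},q) \circ (\red{t_1},p)\bigr) = \shiftFunct{\chcob_2^{v_2}}(\red{t_2},q) \circ \shiftFunct{\chcob_1^{v_1}}(\red{t_1},p)
\]
and comparing the $\bZ^2$-components reduces the problem to the degree identity
\[
s_{cba}(\red{t_2},\red{t_1}) + \deg\bigl(1_{\bar c}(\chcob_2 \bullet \chcob_1) 1_a\bigr) = \deg(1_{\bar c} \chcob_2 1_b) + \deg(1_{\bar b} \chcob_1 1_a) + s_{cba}(\red{t_2'},\red{t_1'}).
\]
The remaining cases (non-composable $\chcob_1, \chcob_2$, or one of $\neutralElement, 0$ as a factor) are vacuous or specialize this identity.

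The hard part is the degree identity. The key observation is that the $\bZ \times \bZ$-degree of any closed chronological cobordism $W : \Sigma_0 \to \Sigma_1$ is a topological invariant: writing $\deg(W) = (\chi_1,\chi_2)$, one has $\chi_1 + \chi_2 = \chi(W)$ and $\chi_1 - \chi_2 = |\pi_0(\Sigma_1)| - |\pi_0(\Sigma_0)|$, so the pair is determined by the Euler characteristic and source/target data alone. In particular, $\deg$ is additive under composition and disjoint union of closed cobordisms. Both sides of the identity then express $\deg$ of the same closed cobordism from $\bar c\, t_2 b \bar b\, t_1 a$ to $\bar c\, t_2' t_1' a$ (the two chronologies differ but the critical points of each index agree): the left-hand side decomposes it as the surgery over $b\bar b$ followed by $1_{\bar c}(\chcob_2 \bullet \chcob_1) 1_a$, while the right-hand side decomposes it as the disjoint union $1_{\bar c}\chcob_2 1_b \sqcup 1_{\bar b}\chcob_1 1_a$ (using $\bar c t_2 b \bar b t_1 a = (\bar c t_2 b) \sqcup (\bar b t_1 a)$) followed by the surgery at the new level. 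Free loops in $t_i$ or $t_i'$ contribute only disjoint circles decoupled from every critical point, so they do not affect any degree in the identity and one may work throughout with reduced tangles.
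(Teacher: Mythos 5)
The paper's own proof of this proposition is the single word ``Straightforward,'' so your detailed verification fills a genuine gap. Your reduction to the degree identity is exactly right, and in fact that identity is precisely the paper's \cref{lem:horcompdeg} (stated a few lines below the proposition and proved there by exhibiting a diffeomorphism between the two composite cobordisms). Your approach to the identity differs slightly and is arguably cleaner: instead of relying implicitly on the fact that $\deg$ is a diffeomorphism invariant, you make this explicit by observing that $\deg(W) = (\chi_1,\chi_2)$ is determined by the topological data $\chi_1 + \chi_2 = \chi(W)$ and $\chi_1 - \chi_2 = |\pi_0(\Sigma_1)| - |\pi_0(\Sigma_0)|$, so any two diffeomorphic cobordisms with the same source and target automatically have the same $\bZ\times\bZ$-degree regardless of chronology. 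That characterization is a nice addition.

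One small imprecision worth tightening: you describe both sides of the identity as ``$\deg$ of the same closed cobordism'' decomposed two ways, but strictly the two decompositions are of two diffeotopic cobordisms with different chronologies (surgery-then-cobordism vs.\ cobordism-then-surgery). The conclusion is unaffected since, by your $(\chi, \pi_0)$ formula, diffeomorphic cobordisms with matching boundary data have equal degree. Similarly, the remark about reduced tangles is correct but could be stated more directly: $s_{cba}(\red{t_2},\red{t_1}) = s_{cba}(t_2,t_1)$ because the surgery cobordism $\chcob_{cba}(t_2,t_1)$ acts as the identity on free loops, so all four terms in the identity genuinely compose in $\deg$.
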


\begin{proof}
Straightforward.
\end{proof}

We construct compatibility maps for $S$ by putting 
\[
\compMap[cba]{\chcob_2^{v_2} }{\chcob_1^{v_1}}((\red t_2,p_2),(\red t_1,p_1)) := \beta_1 \beta_2  \beta_2' \beta_2'',
\]
with $\beta_1 \in R$ given by making the following diagram commute 
\begin{equation}\label{eq:diagbeta1}
\begin{tikzcd}
& \bar c t_2' b \bar b t_1' a \ar{dr}{\chcob_{cba}(t_2', t_1')} &
\\
\bar c t_2 b \bar b t_1 a \ar[swap]{dr}{\chcob_{cba}(t_2,t_1)}  \ar{ur}{1_{\bar c} \chcob_2 1_{b \bar b} \chcob_1 1_{a}}
& & 
\bar c t_2' t_1' a 
\\
& \bar c t't a \ar[swap]{ur}{1_{\bar c}(\chcob_2 \bullet \chcob_1) 1_a}  \ar[phantom]{uu}{\quad \rotatebox{90}{$\Rightarrow$}\ \beta_1} &
\end{tikzcd}
\end{equation}
in the same sense as in \cref{eq:lambda1diag}, 
and
\begin{align*}
\beta_2 &:= \lambda_R(s_{cba}(t_2',t_1'), v_2+v_1), \\ 
\beta_2' &:=  \lambda_R( \deg(1_{\bar c}\chcob_21_b), v_1), \\ 
\beta_2'' &:= \lambda_R(p_2, \deg(1_{\bar b}\chcob_1 1_a)+ v_1).  
\end{align*}
In terms of pictures, we think of it as
\begin{align}
\begin{split}\label{eq:pictdefbeta}
\tikzdiag{0}{
	\draw (0,1) node[below]{$(t_2,p_2)$}
		--
		(0,1.75) node[near end,rrect]{$\chcob_2$}
		--
		(0,2.5) node[near end,rrect]{$v_2$}
		.. controls (0,3) and (.5,3) ..
		(.5,3.5) -- (.5,4);
	\draw (1,-1) node[below]{$(t_1,p)$}
		--
		(1,-.25) node[near end,rrect]{$\chcob_1$}
		-- 
		(1,.5) node[near end,rrect]{$v_1$}
		-- 
		(1,2.5)
		.. controls (1,3) and (.5,3) ..
		(.5,3.5);
}
\ &= \beta_2'' \ 
\tikzdiag{0}{
	\draw (0,-.5) node[below]{$(t_2,p_2)$}
		--
		(0,1)
		--
		(0,1.75) node[near end,rrect]{$\chcob_2$}
		--
		(0,2.5) node[near end,rrect]{$v_2$}
		.. controls (0,3) and (.5,3) ..
		(.5,3.5) -- (.5,4);
	\draw (1,-1) node[below]{$(t_1,p)$}
		--
		(1,-.5)
		--
		(1,.25) node[near end,rrect]{$\chcob_1$}
		-- 
		(1,1) node[near end,rrect]{$v_1$}
		-- 
		(1,2.5)
		.. controls (1,3) and (.5,3) ..
		(.5,3.5);
}
\ = \beta_2' \beta_2'' \ 
\tikzdiag{0}{
	\draw (0,-.5) node[below]{$(t_2,p_2)$}
		--
		(0,.25)
		--
		(0,1)node[near end,rrect]{$\chcob_2$}
		--
		(0,1.75) 
		--
		(0,2.5) node[near end,rrect]{$v_2$}
		.. controls (0,3) and (.5,3) ..
		(.5,3.5) -- (.5,4);
	\draw (1,-1) node[below]{$(t_1,p)$}
		--
		(1,-.5)
		--
		(1,.25) node[near end,rrect]{$\chcob_1$}
		-- 
		(1,1) 
		-- 
		(1,1.75) node[near end,rrect]{$v_1$}
		-- 
		(1,2.5)
		.. controls (1,3) and (.5,3) ..
		(.5,3.5);
}
\\
\ &=  \beta_2\beta_2'\beta_2'' \
 \tikzdiag{0}{
	\draw (0,-.75) node[below]{$(t_2,p_2)$}
		--
		(0,-.25)
		--
 		(0,.5)  node[near end,rrect]{$\chcob_2$}
		.. controls (0,1) and (.5,1) ..
		(.5,1.5)  
		--
		(.5,2.5)  node[midway,rrect]{$v_2 + v_1$};
	\draw (1,-1.25) node[below]{$(t_1,p)$} 
		--
		(1,-1)
		--
		(1,-.25)  node[near end,rrect]{$\chcob_1$}
		-- 
		(1,.5)
		.. controls (1,1) and (.5,1) ..
		(.5,1.5);
}
\ =  \beta_1\beta_2\beta_2'\beta_2'' \
 \tikzdiag{0}{
	\draw (0,-1.5) node[below]{$(t_2,p_2)$}
		.. controls (0,-1) and (.5,-1) ..
		(.5,-.5)  
		--
		(.5,0)
		--
		(.5,.75) node[midway,rrect]{$\chcob_2\bullet\chcob_1$}
		--
		(.5,1.5)  node[midway,rrect]{$v_2+v_1$}
		--
		(.5,1.75);
	\draw (1,-2) node[below]{$(t_1,p)$} 
		--
		(1,-1.5)
		.. controls (1,-1) and (.5,-1) ..
		(.5,-.5);
}
\end{split}
\end{align}
The compatibility maps with the neutral shift $\shiftFunct{\neutralElement}$ are computed in a similar fashion, by treating it as the identity cobordism on the identity tangle, and with $v = 0$. 

\begin{rem}
We computed $\compMap{}{}$ by thinking of a degree shift as shifting first by the chronological cobordism $\chcob$, then shifting the $\bZ\times\bZ$-degree by $v$. We could also have done it in the reverse order, modifying only $\beta_2 := \lambda_R(s_{cba}(t_2,t_1), v_2+v_1)$ and $\beta_2' := (v_2, \chcob_1)$. It would give a different set of compatibility maps.
\end{rem}

\begin{lem} \label{lem:horcompdeg}
We have
\[
\deg(1_{\bar c}(\chcob_2 \bullet \chcob_1)1_a) +  s_{cba}(t_2,t_1) = \deg(1_{\bar c} \chcob_2 1_b) + \deg(1_{\bar b} \chcob_1 1_a) + s_{cba}(t_2',t_1'). 
\]
\end{lem}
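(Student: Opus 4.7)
The plan is to interpret both sides of the equation as the $\bZ \times \bZ$-degree of closed chronological cobordisms that coincide up to a change of chronology.

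First I would introduce the two composite closed cobordisms
\[
W_L := \bigl(1_{\bar c}(\chcob_2 \bullet \chcob_1) 1_a\bigr) \circ \chcob_{cba}(t_2, t_1),
\]
\[
W_R := \chcob_{cba}(t_2', t_1') \circ \bigl((1_{\bar c} \chcob_2 1_b) \otimes (1_{\bar b} \chcob_1 1_a)\bigr),
\]
both going from $\bar c t_2 b \bar b t_1 a$ to $\bar c t_2' t_1' a$. Using that $\deg$ is additive under vertical composition and under the monoidal product of closed cobordisms (each critical point contributes to $\deg$ independently, its birth/death/merge/split type being determined by the level set immediately below it, which is unaffected by composition or by the right-then-left juxtaposition), one gets $\deg(W_L) = \text{LHS}$ and $\deg(W_R) = \text{RHS}$. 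So it suffices to prove $\deg(W_L) = \deg(W_R)$.

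Next I would observe that $W_L$ and $W_R$ share the same underlying unchronologized surface and the same set of critical points, namely those of $\chcob_1$, of $\chcob_2$, and of the surgery on $b \bar b$, occupying pairwise disjoint cylindrical regions of $\bR^2 \times I$. The only difference is the vertical ordering: in $W_L$ the surgery saddles lie below the critical points of $\chcob_1 \bullet \chcob_2$, whereas in $W_R$ they lie above. Since the three relevant regions are spatially disjoint, there is a locally vertical change of chronology $H : W_L \Rightarrow W_R$ that merely permutes these heights, and by \cref{prop:locvertchange} it is unique up to homotopy. Because $\deg$ is invariant under changes of chronology --- an $A_2$-singularity creates or destroys only a (birth, merge) or a (split, death) pair, each contributing $0$ to $\deg$, while swaps of Morse critical values trivially preserve the counts --- we conclude $\deg(W_L) = \deg(W_R)$.

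The main subtlety will lie in justifying the additivity of $\deg$: the merge/split type of a saddle depends a priori on the global connectivity of its level set, so one must verify that decomposing the cobordism into pieces does not change these types. For vertical composition this is immediate, since the level set just below a saddle of the upper piece is determined by the top boundary of the lower piece alone; for the monoidal product, the two pieces sit in disjoint parts of $\bR^2$, so connectivity computations there are independent. Once this is in hand, the geometric picture --- that $W_L$ and $W_R$ describe the same surgery-plus-$\chcob_i$ composite performed in two different orders --- reduces the lemma to a straightforward degree invariance statement.
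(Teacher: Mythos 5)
Your proof is correct and takes essentially the same route as the paper: both compare the two composite cobordisms $W_L$ and $W_R$, observe that they differ only by a rearrangement of the chronology (the paper phrases this as a diffeomorphism between them, illustrated by the pictures in its proof), and conclude that the degrees agree, whence the identity follows from additivity of $\deg$. You spell out the additivity of $\deg$ under vertical composition and monoidal product, as well as the invariance of $\deg$ under (locally vertical) changes of chronology, which the paper's one-line argument leaves implicit.
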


\begin{proof}
There is a diffeomorphism
\[
\tikzdiag[yscale=.5,xscale=.75]{0}{
	\draw (1,-1.5) node[below]{\small $t_2$}
		--
		(1,-.5)
		-- 
		(1,.5)   node[near end,rrect]{\small $\chcob_2$}
		--
		(1,1)
		.. controls (1,1.5) and (1.5,1.5) ..
		(1.5,2);
	\draw (2,-1.5) node[below]{\small $t_1$}
		--
		(2, -.5)   node[near end,rrect]{\small $\chcob_1$}
		--
		(2,1)
		.. controls (2,1.5) and (1.5,1.5) ..
		(1.5,2)
		--
		(1.5,2.5);
}
\ \cong \ 
\tikzdiag[yscale=.5,xscale=.75]{0}{
	\draw (1,-1.5) node[below]{\small $t_2$}
		--
		(1,-1)
		.. controls (1,-.5) and (1.5,-.5) ..
		(1.5,0);
	\draw (2,-1.5) node[below]{\small $t_1$}
		--
		(2, -1)   
		.. controls (2,-.5) and (1.5,-.5) ..
		(1.5,0)
		--
		(1.5,2.5) node[midway,rrect]{\small $\chcob_2 \bullet \chcob_1$};
}
\]
between the cobordisms. Thus, they have the same degree.
\end{proof}

\begin{prop}\label{prop:assoccompbeta}
The construction above forms a $\gradC$-shifting system compatible with $\assoc$.
\end{prop}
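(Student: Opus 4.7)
The plan is to verify the compatibility equation \cref{eq:shiftingsystcomp} by expanding each $\assoc$ as $\assoc_1 \assoc_2$ and each compatibility map as the product $\beta_1 \beta_2 \beta_2' \beta_2''$, and then splitting the resulting identity into two independent pieces: a cobordism-theoretic part (involving $\assoc_1$ and $\beta_1$) and a $\lambda_R$-valued part (involving $\assoc_2$ and $\beta_2, \beta_2', \beta_2''$). First I would check that $S$ itself is a $\gradC$-shifting system (this is essentially formal: the domains are the full hom-spaces minus the empty case, $I_\id$ is the set of identity-cobordism shifts with $v=0$, and the monoid structure on $I$ given by right-then-left juxtaposition is associative).

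For the cobordism-theoretic part, both sides of \cref{eq:shiftingsystcomp} describe composite surgery cobordisms connecting the same source to the same target once the three shift cobordisms $\chcob_k, \chcob_j, \chcob_i$ have been stacked in. Each side decomposes this composite into a sequence of local surgeries $\chcob_{cba}(-,-)$ (contributing $\assoc_1$) and horizontal insertions of the $\chcob_\bullet$'s into the surgery diagrams (contributing $\beta_1$), but in two different orders. Since all the intermediate changes of chronology involved are locally vertical relative to the disks around the $b$-regions being surgered and around the shift boxes, I invoke \cref{prop:locvertchange} to conclude that any two such comparisons are homotopic as framed diffeotopies, and hence yield the same scalar under $\imath$. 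In practice I would draw the large composite diagram in the style of \cref{eq:dassocpent} (but with shift boxes $\chcob_k, \chcob_j, \chcob_i$ stacked on the top strands) and read off that the two decompositions give the same total scalar from $\imath$.

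For the $\lambda_R$-valued part, the identity reduces to a multiplicative equation in $R^\times$ whose exponents are $\bZ$-linear in the four data $v_k, v_j, v_i$, the $p_\ast$'s, the degrees $\deg(1 \chcob_\bullet 1)$, and the surgery counts $s_{cba}(\ast,\ast)$. I would expand both sides using bilinearity of $\lambda_R$ so as to split every $\lambda_R(x+y,z)$ and $\lambda_R(x,y+z)$ into products, then match the factors. The nontrivial matchings come from two places: \cref{lem:splitcounting}, which relates $s_{dca}(t'',t't) + s_{cba}(t',t)$ to $s_{dcb}(t'',t') + s_{dba}(t''t',t)$ and thereby lets the different groupings of $\assoc_2$'s be reconciled; and \cref{lem:horcompdeg}, which lets one exchange $\deg\bigl(1_{\bar c}(\chcob_2 \bullet \chcob_1)1_a\bigr)$ with $\deg(1_{\bar c}\chcob_2 1_b) + \deg(1_{\bar b}\chcob_1 1_a)$ modulo an $s$-correction, accounting for the discrepancy between the two orders of horizontal composition on the shift side.

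The main obstacle will not be any individual step but rather the bookkeeping: eight distinct $\lambda_R$-factors appear on each side of \cref{eq:shiftingsystcomp} (once $\assoc_2$, $\beta_2$, $\beta_2'$, $\beta_2''$ are all expanded), and they must be paired up correctly. The cleanest route is the diagrammatic calculus exhibited in \cref{eq:pictdefbeta}, in which each $\lambda_R$-scalar is realized as the cost of sliding one labeled box past another; doing this uniformly on both sides of the compatibility equation and invoking \cref{lem:splitcounting}, \cref{lem:horcompdeg}, and \cref{prop:locvertchange} at the appropriate moments should collapse the two sides to a common expression.
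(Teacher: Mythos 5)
Your overall strategy — split \cref{eq:shiftingsystcomp} into a cobordism-theoretic piece ($\assoc_1$ and $\beta_1$) and a $\lambda_R$-valued piece ($\assoc_2$ and $\beta_2, \beta_2', \beta_2''$), handle the first by comparing changes of chronology and the second by expanding $\lambda_R$ bilinearly with the aid of \cref{lem:splitcounting} and \cref{lem:horcompdeg} — is exactly the structure of the paper's proof. However, there is one genuine gap. You assert that the comparison in the cobordism-theoretic part is entirely mediated by locally vertical changes of chronology, so that by \cref{prop:locvertchange} the two sides ``yield the same scalar under $\imath$,'' and you then expect the $\lambda_R$-valued identity to close on its own. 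This is not what happens. When you chase the cobordism diagram around, some of the intermediate moves require commuting a surgery region belonging to $\chcob_3$ past one belonging to $\chcob_1$ (and similarly exchanging the surgery over $b$ with the surgery over $c$); these are \emph{not} locally vertical in a common family of cylinders, and by \cref{eq:cobcommute1}--\cref{eq:cobcommute2} they produce nontrivial scalars $\lambda_R(s_{dcb}(t_3,t_2), |\chcob_1|)$ and $\lambda_R(s_{cba}(t_2',t_1'), |\chcob_3|)$. The upshot is that the $\assoc_1/\beta_1$ contributions to the two sides of \cref{eq:shiftingsystcomp} do \emph{not} cancel on their own; they differ by the ratio of these two $\lambda_R$-factors. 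The $\lambda_R$-valued piece likewise fails to balance independently: after all the expansions, \cref{lem:horcompdeg}, and \cref{lem:splitcounting}, a matching residual ratio survives, and only the combination of the two pieces closes the identity. If you attempt to verify the two halves separately as you propose, the first will fail (you will find an uncancelled ratio), and you will need to carry that ratio into the second computation. The key fact to internalize is that ``horizontal composition order'' differences between $\chcob_i$'s and the surgery cobordisms $\chcob_{cba}$ are detected by $\imath$ precisely because they are commutations of distant critical points, and hence are governed by $\lambda_R$ of the respective $\bZ^2$-degrees rather than being homotopically trivial.
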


\begin{proof}
We fix 
\[
d \xleftarrow{(\red t_3,p_3)} c \xleftarrow{(\red t_2,p_2)} b \xleftarrow{(\red t_1,p_1)} a \quad \in \gradC^{[3]}
\]
and $\chcob_i : t_i \rightarrow t_i'$ and $v_i \in \bZ^{2}$ for $i \in \{1,2,3\}$.  
Since it is clear from the context, we will write below $|\chcob_1|$ instead of $\deg(1_{\bar b}\chcob_1 1_a)$ and so on. 

We first look at the contribution of $\assoc_1$ and $\beta_1$ in the left and right side of \cref{eq:shiftingsystcomp}. 
By \cref{prop:locvertchange}, the following diagram of cobordisms commutes: 
\[
\begin{tikzcd}[column sep=5ex]
\tikzdiag[yscale=.5,xscale=.75]{0}{
	\draw (0,-2)
		--
		(0,0)
		--
		(0,1)   node[near end,rrect]{\small $3$}
		.. controls (0,1.5) and (.5,1.5) ..
		(.5,2)
		.. controls (.5,2.5) and (1.25,2.5) ..
		(1.25,3);
	\draw (1,-2)
		--
		(1,-1)
		-- 
		(1,0)   node[near end,rrect]{\small $2$}
		--
		(1,1)
		.. controls (1,1.5) and (.5,1.5) ..
		(.5,2);
	\draw (2,-2)
		--
		(2, -1)   node[near end,rrect]{\small $1$}
		--
		(2,2)
		.. controls (2,2.5) and (1.25,2.5) ..
		(1.25,3);
}
\ar{r}{\beta_1}
\ar[swap]{d}{\alpha_1}
&
\tikzdiag[yscale=.5,xscale=.75]{0}{
	\draw (0,-2)
		--
		(0,0) 
		.. controls (0,.5) and (.5,.5) ..
		(.5,1)
		--
		(.5,2)node[midway,rrect]{\small $3\bullet2$}
		.. controls (.5,2.5) and (1.25,2.5) ..
		(1.25,3);
	\draw (1,-2)
		-- 
		(1,0)   
		--
		(1,0)
		.. controls (1,.5) and (.5,.5) ..
		(.5,1);
	\draw (2,-2)
		--
		(2, -1)   node[near end,rrect]{\small $1$}
		--
		(2,2)
		.. controls (2,2.5) and (1.25,2.5) ..
		(1.25,3);
}
\ar{rr}{\lambda_R(s_{dcb}(t_3,t_2), |\chi_1|)}
&
\hspace{2ex}
&
\tikzdiag[yscale=.5,xscale=.75]{0}{
	\draw (0,-2)
		--
		(0,-1) 
		.. controls (0,-.5) and (.5,-.5) ..
		(.5,0)
		--
		(.5,1)
		--
		(.5,2)node[midway,rrect]{\small $3\bullet2$}
		.. controls (.5,2.5) and (1.25,2.5) ..
		(1.25,3);
	\draw (1,-2)
		--
		(1,-1)
		.. controls (1,-.5) and (.5,-.5) ..
		(.5,0);
	\draw (2,-2)
		--
		(2, 0) 
		--
		(2,1)node[midway,rrect]{\small $1$}
		--
		(2,2)
		.. controls (2,2.5) and (1.25,2.5) ..
		(1.25,3);
}
\ar{r}{\beta_1}
&
\tikzdiag[yscale=.5,xscale=.75]{0}{
	\draw (0,-2)
		--
		(0,-1) 
		.. controls (0,-.5) and (.5,-.5) ..
		(.5,0)
		.. controls (.5,.5) and (1.25,.5) ..
		(1.25,1)
		--
		(1.25,3) node[midway,rrect]{\small $3\bullet2 \bullet 1$};
	\draw (1,-2)
		--
		(1,-1)
		.. controls (1,-.5) and (.5,-.5) ..
		(.5,0);
	\draw (2,-2)
		--
		(2, 0)
		.. controls (2,.5) and (1.25,.5) ..
		(1.25,1);
}
\ar{d}{\assoc_1}
\\
\tikzdiag[yscale=.5,xscale=.75]{0}{
	\draw (0,-2)
		--
		(0,1)
		--
		(0,1)   node[near end,rrect]{\small $3$}
		--
		(0,2)
		.. controls (0,2.5) and (.75,2.5) ..
		(.75,3);
	\draw (1,-2)
		--
		(1,-1)
		-- 
		(1,0)   node[near end,rrect]{\small $2$}
		--
		(1,1)
		.. controls (1,1.5) and (1.5,1.5) ..
		(1.5,2);
	\draw (2,-2)
		--
		(2, -1)   node[near end,rrect]{\small $1$}
		--
		(2,1)
		.. controls (2,1.5) and (1.5,1.5) ..
		(1.5,2)
		.. controls (1.5,2.5) and (.75,2.5) .. 
		(.75,3);
}
\ar{rr}{\lambda_R(s_{cba}(t_2',t_1'), |\chcob_3|)}
&
&
\tikzdiag[yscale=.5,xscale=.75]{0}{
	\draw (0,-1.5)
		--
		(0,1) 
		--
		(0,2.5)  node[near end,rrect]{\small $3$}
		.. controls (0,3) and (.75,3) ..
		(.75,3.5);
	\draw (1,-1.5)
		--
		(1,-.5)
		-- 
		(1,.5)   node[near end,rrect]{\small $2$}
		--
		(1,1)
		.. controls (1,1.5) and (1.5,1.5) ..
		(1.5,2);
	\draw (2,-1.5)
		--
		(2, -.5)   node[near end,rrect]{\small $1$}
		--
		(2,1)
		.. controls (2,1.5) and (1.5,1.5) ..
		(1.5,2)
		--
		(1.5,2.5)
		.. controls (1.5,3) and (.75,3) .. 
		(.75,3.5);
}
\ar{r}{\beta_1}
&
\tikzdiag[yscale=.5,xscale=.75]{0}{
	\draw (0,-1.5)
		--
		(0,1) 
		--
		(0,2.5)  node[near end,rrect]{\small $3$}
		.. controls (0,3) and (.75,3) ..
		(.75,3.5);
	\draw (1,-1.5)
		--
		(1,-.5)
		.. controls (1,0) and (1.5,0) ..
		(1.5,.5);
	\draw (2,-1.5)
		--
		(2, -.5) 
		.. controls (2,0) and (1.5,0) ..
		(1.5,.5)
		--
		(1.5,1)
		--
		(1.5,1.5)node[midway,rrect]{\small $2\bullet1$}
		--
		(1.5,2.5)  
		.. controls (1.5,3) and (.75,3) .. 
		(.75,3.5);
}
\ar{r}{\beta_1}
&
\tikzdiag[yscale=.5,xscale=.75]{0}{
	\draw (0,-1.5)
		--
		(0,.5) 
		.. controls (0,1) and (.75,1) ..
		(.75,1.5);
	\draw (1,-1.5)
		--
		(1,-.5)
		.. controls (1,0) and (1.5,0) ..
		(1.5,.5);
	\draw (2,-1.5)
		--
		(2, -.5) 
		.. controls (2,0) and (1.5,0) ..
		(1.5,.5)  
		.. controls (1.5,1) and (.75,1) .. 
		(.75,1.5)
		--
		(.75,3.5) node[midway,rrect]{\small $3\bullet2 \bullet 1$};
}
\end{tikzcd}
\]
where a box with label $i$ is $\chcob_i$. 
Therefore, we have
\begin{equation}\label{eq:compalpha1beta1}
\lambda_R(s_{dcb(t_3,t_2)}, |\chcob_1|) \times \text{(left)} =  \lambda_R(s_{cba}(t_2',t_1'), |\chcob_3|)  \times \text{(right)}.
\end{equation}

We now compute the contribution of $\alpha_2, \beta_2, \beta_2'$ and $\beta_2''$. 
The left side of \cref{eq:shiftingsystcomp} yields
\begin{align}
\begin{split} \label{eq:compatleft1}
&\lambda_R(s_{cba}(t_2,t_1), p_3) \\
\times &\lambda_R(s_{dba}(t_3't_2', t_1'), v_3+v_2+v_1) \lambda_R(|\chcob_3 \bullet \chcob_2|, v_1) \lambda_R( p_3+p_2+s_{dcb}(t_3,t_2), |\chcob_1|+v_1) \\
\times&\lambda_R(s_{dcb}(t_3',t_2'), v_3 + v_2) \lambda_R(|\chcob_3|,v_2) \lambda_R(p_3, |\chcob_2| + v_2),
\end{split}
\end{align}
and the right side yields
\begin{align}
\begin{split}  \label{eq:compatright1}
&\lambda_R(s_{dca}(t_3',t_2't_1'), v_3+v_2+v_1) \lambda_R(|\chcob_3|, v_2+v_1)\lambda_R(p_3, v_2+v_1+|\chcob_2 \bullet \chcob_1|) \\
\times & \lambda_R(s_{cba}(t_2',t_1'), v_2+v_1) \lambda_R(|\chcob_2|,v_1) \lambda_R(p_2, |\chcob_1|+v_1) \\
\times & \lambda_R(s_{cba}(t_2', t_1'), p_3+|\chcob_3| + v_3).
\end{split}
\end{align}
We use \cref{lem:horcompdeg} to get
\[
\lambda_R(|\chcob_3 \bullet \chcob_2|, v_1) = \lambda_R(|\chcob_3|+|\chcob_2|, v_1)\lambda_R(s_{dcb}(t_3',t_2'),v_1)\lambda_R^{-1}(s_{dcb}(t_3,t_2),v_1),
\]
in \cref{eq:compatleft1}. We also decompose $\lambda_R(|\chcob_1|+v_1, p_3+p_2+s_{dcb}(t_3,t_2))$ in \cref{eq:compatleft1}, getting a term $\lambda_R(s_{dcb}(t_3,t_2),v_1)$ that cancels with $\lambda_R^{-1}(s_{dcb}(t_3,t_2),v_1)$ above, so that remains
\begin{align}
\begin{split} \label{eq:compatleft2}
&\lambda_R(s_{cba}(t_2,t_1), p_3) \\
\times &\lambda_R(|\chcob_3|+|\chcob_2|+p_3+p_2, v_1) \\
\times &\lambda_R(s_{dba}(t_3't_2', t_1'), v_3+v_2+v_1)  \lambda_R( p_3+p_2+s_{dcb}(t_3,t_2), |\chcob_1|) \\
\times&\lambda_R(s_{dcb}(t_3',t_2'), v_3 + v_2+v_1) \lambda_R(|\chcob_3|,v_2) \lambda_R(p_3, |\chcob_2| + v_2),
\end{split}
\end{align}
in \cref{eq:compatleft1}. Playing a similar game on \cref{eq:compatright1}, we obtain
\begin{align}
\begin{split} \label{eq:compatright2}
&\lambda_R(p_3,|\chcob_2| + |\chcob_1|) \lambda_R(s_{cba}(t_2,t_1),p_3) \\
\times &\lambda_R(s_{dca}(t_3',t_2't_1'), v_3+v_2+v_1) \lambda_R(|\chcob_3|, v_2+v_1)\lambda_R(p_3, v_2+v_1) \\
\times & \lambda_R(s_{cba}(t_2',t_1'), v_3+v_2+v_1) \lambda_R(|\chcob_2|,v_1) \lambda_R(p_2, |\chcob_1|+v_1) \\
\times & \lambda_R(s_{cba}(t_2', t_1'),|\chcob_3| ),
\end{split}
\end{align}
where we also use the fact that $\lambda_R(x,y) = \lambda^{-1}_R(y,x)$. By \cref{lem:splitcounting}, we have
\begin{align*}
\lambda_R(s_{dba}(t_3't_2', t_1'),& v_3+v_2+v_1) \lambda_R(s_{dcb}(t_3',t_2'), v_3 + v_2+v_1 = \\
&\lambda_R(s_{dca}(t_3',t_2't_1'), v_3+v_2+v_1)\lambda_R(s_{cba}(t_2',t_1'), v_3+v_2+v_1),
\end{align*}
so that a careful examination of the remaining terms yields
\[
 \lambda_{R}(s_{cba}(t_2',t_1'), |\chcob_3|) \times \cref{eq:compatleft2} = \lambda_R(s_{dcb}(t_3,t_2), |\chcob_1|)  \times \cref{eq:compatright2}.
\]
Together with \cref{eq:compalpha1beta1}, it concludes the proof. 
\end{proof}

\subsection{Vertical composition}

We extend the  $\gradC$-shifting system to a  $\gradC$-shifting 2-system. For this, we define the vertical composion on $I$ as follows:
\[
\chcob_2^{v_2} \circ \chcob_1^{v_1} := 
\begin{cases}
 (\chcob_2 \circ \chcob_1)^{v_2+v_1}, &  \text{if $t_1' = t_2$,}  
 \\
 0, & \text{otherwise,}
 \end{cases}
\]
for $\chcob_1 : t_1 \rightarrow t_1'$ and $\chcob_2 : t_2 \rightarrow t_2'$. 
Then, we construct compatibility maps by setting 
\begin{align*}
\vCompMap[ba]{\chcob_2^{v_2}}{\chcob_1^{v_1}}((t, p)) := \lambda_R(\deg(1_{\bar b} W_2 1_a), v_1),
\intertext{and}
\interCompMap[cba]{{\chcob_2'}^{v_2'}}{{\chcob_2}^{v_2}}{{\chcob_1'}^{v_1'}}{{\chcob_1}^{v_1}}((t,p)) := \imath(H_a^c) \lambda_R\bigl(v_2 ,v_1' \bigr),
\end{align*}
where $H : (W_2' \circ W_2) \bullet (W_1' \circ W_1) \Rightarrow (W_2' \bullet W_1') \circ (W_2 \bullet W_1)$ is a locally vertical change of chronology. This is if the cobordism are vertically composable, otherwise the compatibility maps are zero. 

\begin{prop}
The $\gradC$-shifting 2-system $S$ defined above is compatible with $\assoc$ through $(\beta, \gamma, \Xi)$ also defined above.
\end{prop}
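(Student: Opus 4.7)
The compatibility of the underlying $\gradC$-shifting system through $\beta$ is exactly \cref{prop:assoccompbeta}, so what remains is to verify the two equations~\cref{eq:shiftingsystvcomp} and~\cref{eq:shiftingsystvcomp2} of \cref{def:shifting2systemcomp}.

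The easy step is \cref{eq:shiftingsystvcomp2}. Writing $W_i, W_j, W_k$ for the underlying cobordisms (with vertical shifts $v_i, v_j, v_k$) and noting that vertical composition of cobordisms adds degrees, $\deg(1_{\bar b}(W_k \circ W_j)1_a) = \deg(1_{\bar b}W_k 1_a)+\deg(1_{\bar b}W_j 1_a)$, both sides of~\cref{eq:shiftingsystvcomp2} become products of $\lambda_R$-values which are readily seen to agree by bilinearity of $\lambda_R$ (given in \cref{eq:deflambdaR}). I would write this out as a direct two-line computation.

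The substantial step is \cref{eq:shiftingsystvcomp}. My plan is to unfold both sides into a product of two kinds of scalar contributions: a $\lambda_R$-part coming from $\beta_2\beta_2'\beta_2''$, $\vCompMap{\cdot}{\cdot}$ and the $\lambda_R$-factor of $\interCompMap{\cdot}{\cdot}{\cdot}{\cdot}$; and an $\imath(H)$-part coming from the $\beta_1$ factors and the $\imath(H)$ factor of $\interCompMap{\cdot}{\cdot}{\cdot}{\cdot}$. For the $\imath(H)$-part, both sides correspond to locally vertical changes of chronology between diffeomorphic cobordisms (built out of merges $\chcob_{cba}(-,-)$, horizontal products $W'\bullet W$, and vertical products $W'\circ W$) with the \emph{same} source and target; by \cref{prop:locvertchange} any two such are homotopic, so the total $\imath(H)$-contributions on both sides are equal. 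For the $\lambda_R$-part, I would carry out the same kind of bookkeeping as in the proof of \cref{prop:assoccompbeta}: expanding each $\beta_2,\beta_2',\beta_2''$ according to the pictorial recipe in~\cref{eq:pictdefbeta}, substituting \cref{lem:horcompdeg} whenever a term like $\deg(1_{\bar c}(\chcob'\bullet \chcob)1_a)$ appears so that all degrees can be expressed as sums of $\deg(1_{\bar c}\chcob 1_b)$, $\deg(1_{\bar b}\chcob 1_a)$ and an $s_{cba}(-,-)$ correction, and then using bilinearity of $\lambda_R$ to match the terms on the two sides.

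The main obstacle is purely combinatorial: keeping the many $\lambda_R$-factors straight. The four shift pieces $(\chcob_2', v_2'), (\chcob_2, v_2), (\chcob_1', v_1'), (\chcob_1, v_1)$ and the two source degrees $(t',p'),(t,p)$ produce on the order of a dozen $\lambda_R$-contributions per side, and the natural way to organize them is to group them by the pair $(x,y)$ occurring in $\lambda_R(x,y)$, with $x$ and $y$ each ranging over $\{v_1,v_1',v_2,v_2',p,p',\deg(\chcob_i),\deg(\chcob_i'),s_{cba}(t,t'),\ldots\}$, and show that for each such pair the total exponents on the two sides coincide. The extra factor $\lambda_R(v_2, v_1')$ appearing in $\interCompMap{\cdot}{\cdot}{\cdot}{\cdot}$ is exactly what is needed to swap the vertical orderings of the $v_2$ and $v_1'$ shifts, in the same spirit as the $\lambda_R$-factor $\beta_2'$ in~\cref{eq:pictdefbeta} swaps $\chcob_2$ past $v_1$. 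Once every term is accounted for, the equality follows.
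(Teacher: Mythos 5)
Your proposal matches the paper's own argument. The paper reduces to \cref{prop:assoccompbeta} for the $\beta$-compatibility, dispatches \cref{eq:shiftingsystvcomp2} by linearity of $\lambda_R$, and says \cref{eq:shiftingsystvcomp} "can be proven using similar arguments as in \cref{prop:assoccompbeta}" — which, unpacked, is exactly your two-part split into an $\imath(H)$-contribution handled by \cref{prop:locvertchange} and a $\lambda_R$-bookkeeping contribution handled by bilinearity and \cref{lem:horcompdeg}.
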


\begin{proof}
By \cref{prop:assoccompbeta} we already know the underlying shifting system is compatible with $\assoc$ through $\beta$. Thus, we only need to verify that \cref{eq:shiftingsystvcomp} and \cref{eq:shiftingsystvcomp2} both hold. The second one is straightforward using the linearity of $\lambda_R$. The first one can be proven using similar arguments as in \cref{prop:assoccompbeta}. We leave the details to the reader. 
\end{proof}

\subsection{Changes of chronology}

Consider a change of chronology $H : \chcob \Rightarrow \chcob'$. We can extend it trivially to a change of chronology ${}_bH_a: (1_{\bar b} \chcob 1_a) \Rightarrow (1_{\bar b} \chcob' 1_a) $. Therefore, we obtain a natural transformation $\shiftFunct{H} :  \shiftFunct{\chcob} \Rightarrow \shiftFunct{\chcob'}$ as $\Mod^{\gradC}$-functors, given by 
\[
\shiftFunct{H}(M) : \shiftFunct{\chcob}(M)  \rightarrow  \shiftFunct{\chcob'}(M), \quad \shiftFunct{\chcob}(m) \mapsto \shiftFunct{H}(|m|) \shiftFunct{\chcob'}(m),
\]
where $\shiftFunct{H}(|m|) :=\imath({}_bH_a)^{-1}$, 
for all $M \in \Mod^{\gradC}$ and $m \in M_{g : a \rightarrow b}$.

\begin{prop}\label{prop:nattrfromchangeofch}
For $M  \in \Bimod^\gradC(A_2,A_1)$, the map $\shiftFunct{H}(M) $ defined above is a map of bimodules. In particular, $\shiftFunct{H}$ gives a natural transformation of $ \Bimod^\gradC(A_2,A_1)$-functors.
\end{prop}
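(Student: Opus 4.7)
The plan is to reduce the bimodule compatibility of $\shiftFunct{H}(M)$ to an equality of $\beta_1$-scalars in the compatibility maps, which is then established via a prism argument using \cref{prop:locvertchange}. Naturality in $M$ is immediate, since the grading shift functors act as the identity on morphisms, and the right $A_1$-action case is handled by a symmetric argument.

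For $y \in A_2$ homogeneous and $m \in M$, unfolding both sides of the left-action compatibility using \cref{defn:shiftedbim} gives
\begin{align*}
\shiftFunct{H}(M)(y \cdot \shiftFunct{\chcob}(m)) &= \compMap{\neutralElement}{\chcob}(|y|, |m|) \, \shiftFunct{H}(|y \cdot m|) \, \shiftFunct{\chcob'}(y \cdot m), \\
y \cdot \shiftFunct{H}(M)(\shiftFunct{\chcob}(m)) &= \shiftFunct{H}(|m|) \, \compMap{\neutralElement}{\chcob'}(|y|, |m|) \, \shiftFunct{\chcob'}(y \cdot m).
\end{align*}
Since $A_2$ is supported on $\shiftCenter{S}(\gradC)$, the degree $|y|$ is an endomorphism of the target of $|m|$, so $|y \cdot m|$ shares its source and target endpoints $a, b$ in $\gradC$ with $|m|$. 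Hence $\shiftFunct{H}(|m|) = \imath({}_b H_a)^{-1} = \shiftFunct{H}(|y \cdot m|)$, and by commutativity of $R$ the problem reduces to $\compMap{\neutralElement}{\chcob}(|y|, |m|) = \compMap{\neutralElement}{\chcob'}(|y|, |m|)$.

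In the decomposition $\beta_1 \beta_2 \beta_2' \beta_2''$ of the compatibility map given by \cref{eq:pictdefbeta}, the scalars $\beta_2, \beta_2', \beta_2''$ depend only on the endpoints $t, t'$, the shift $v$, and $\deg(1_{\bar b} \chcob 1_a)$. A change of chronology $H$ preserves all these data, so only $\beta_1$ can vary, and the question further reduces to $\beta_1(\chcob) = \beta_1(\chcob')$.

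For this last step, I extend $H$ trivially by identity cobordisms to obtain two changes of chronology $\tilde H_1$ and $\tilde H_2$ connecting the $\chcob$-versions to the $\chcob'$-versions of the two composite cobordisms appearing in the defining square \cref{eq:diagbeta1} of $\beta_1$, namely on the edges $1_{\bar c}(\un_{\id_n} \bullet \chcob) 1_a$ and $1_{\bar c} \un_{\id_n} 1_{b\bar b} \chcob 1_a$ respectively. The two composite 2-morphisms from the source cobordism of $\beta_1(\chcob)$ to the target cobordism of $\beta_1(\chcob')$---going through $\beta_1(\chcob)$ then $\tilde H_1$ versus $\tilde H_2$ then $\beta_1(\chcob')$---are both locally vertical changes of chronology with matching endpoints, hence homotopic by \cref{prop:locvertchange}. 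Taking $\imath$ yields the relation $\imath(\tilde H_1) \cdot \beta_1(\chcob) = \beta_1(\chcob') \cdot \imath(\tilde H_2)$ in the commutative ring $R$. The main delicate point is verifying $\imath(\tilde H_1) = \imath(\tilde H_2)$, which reduces to the observation that both extensions are obtained from $H$ by padding with identity cobordisms in spatially disjoint cylinders, and that such padding contributes no extra elementary changes from \cref{fig:chchange}; both scalars therefore equal $\imath(H)$, and cancellation yields $\beta_1(\chcob) = \beta_1(\chcob')$, completing the reduction.
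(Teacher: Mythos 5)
Your proof is correct and rests on the same two pillars as the paper's: the prism argument via \cref{prop:locvertchange} applied to the defining square \cref{eq:diagbeta1}, and the observation that only $\beta_1$ can vary among the factors of the compatibility map. The paper, however, proves a strictly stronger statement -- that the square
\begin{equation*}
\begin{tikzcd}
\shiftFunct{\chcob_2}(M') \otimes \shiftFunct{\chcob_1}(M) \ar{r}{\compMap{\chcob_2}{\chcob_1}} \ar[swap]{d}{\shiftFunct{H_2} \otimes \shiftFunct{H_1}}
&
\shiftFunct{\chcob_2 \bullet \chcob_1} (M' \otimes M) \ar{d}{\shiftFunct{H_2 \bullet H_1}}
\\
\shiftFunct{\chcob_2'}(M') \otimes \shiftFunct{\chcob_1'}(M)  \ar[swap]{r}{\compMap{\chcob_2'}{\chcob_1'}}
&
\shiftFunct{\chcob_2' \bullet \chcob_1'}(M' \otimes M)
\end{tikzcd}
\end{equation*}
commutes for arbitrary $H_1, H_2$ -- and the bimodule claim is then the special case $\chcob_2 = \un_{1_n}$, $H_2 = \id$. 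The reason you can simplify further and assert the flat equality $\compMap{\neutralElement}{\chcob} = \compMap{\neutralElement}{\chcob'}$, rather than the paper's ratio identity between $\compMap{\chcob_2'}{\chcob_1'}$ and $\compMap{\chcob_2}{\chcob_1}$, is precisely that in the bimodule setting $|y|$ is an endomorphism, so $c=b$; this collapses $\imath({}_c(H_2)_b\otimes {}_b(H_1)_a)$ and $\imath({}_c(H_2\bullet H_1)_a)$ to the single value $\imath({}_bH_a)$. You do note $c=b$ earlier when identifying $\shiftFunct{H}(|m|)$ with $\shiftFunct{H}(|y\cdot m|)$, but it would be worth invoking it again at the final step, since it -- not the bare fact that padding introduces no new elementary moves -- is what guarantees $\imath(\tilde H_1) = \imath(\tilde H_2)$: the $\imath$-value of an elementary move from \cref{fig:chchange} depends on whether the critical points it reorders are merges or splits \emph{after closing up}, so what really matters is that the closure $1_{\bar b}(-)1_a$ around $\chcob$ is the same in both contexts. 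Also, ``both scalars therefore equal $\imath(H)$'' should read $\imath({}_bH_a)$, since $\imath$ is not defined on the change of chronology of a cobordism with corners without a choice of closure. With these two precisions your proof is complete; the paper opts for the stronger form presumably because the bifunctor compatibility is what is actually invoked later (e.g.\ inside the proof of \cref{prop:comptangles}).
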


\begin{proof}
We will show a slightly stronger statement, that is the following diagram commutes:
\[
\begin{tikzcd}
\shiftFunct{\chcob_2}(M') \otimes \shiftFunct{\chcob_1}(M) \ar{r}{\beta_{\chcob_2\chcob_1}} \ar[swap]{d}{\shiftFunct{H_2} \otimes \shiftFunct{H_1}}
& 
\shiftFunct{\chcob_2 \bullet \chcob_1} (M' \otimes M) \ar{d}{\shiftFunct{H_2 \bullet H_1}}
\\
\shiftFunct{\chcob_2'}(M') \otimes \shiftFunct{\chcob_1'}(M)  \ar[swap]{r}{\compMap{\chcob_2'}{\chcob_1'}}
&
\shiftFunct{\chcob_2' \bullet \chcob_1'}(M' \otimes M)
\end{tikzcd}
\]
for all changes of chronologies $H_i : \chcob_i \rightarrow \chcob_i'$.
Take $m' \in M', m \in M$ with $|m'| \in \Hom_{\gradC}(b,c)$ and $|m| \in \Hom_{\gradC}(a,b)$. 
The only part of $\compMap{\chcob_2'}{\chcob_1'}(|m'|,|m|)$ that could differ from $\compMap{\chcob_2}{\chcob_1}(|m'|,|m|)$ is $\beta_1$.  Thus, we have
\begin{align*}
&\compMap{\chcob_2'}{\chcob_1'}(|m'|,|m|) 
\imath\bigl({{}_c(H_2\otimes H_1)}_a:  1_{\bar c} (\chcob_2 \otimes \chcob 1)_a \Rightarrow 1_{\bar c} (\chcob_2' \otimes\chcob') 1_a\bigr)
\\
&\quad =
\compMap{\chcob_2}{\chcob_1}(|m'|,|m|)
\imath\bigl({{}_c(H_2)}_b\otimes {{}_b(H_1)}_a : 1_{\bar c} \chcob_2 1_{b \bar b} \chcob 1_{a} \Rightarrow 1_{\bar c} \chcob_2'1_{b \bar b} \chcob_1' 1_{a} \bigr),
\end{align*}
by applying the change of chronologies on \cref{eq:diagbeta1}. 
But we also have
\begin{align*}
\imath\bigl({}_c{(H_2)}_b \otimes {}_b{(H_1)}_a \bigr)^{-1} &= \imath\bigl({}_c{(H_2)}_b\bigr)^{-1}\imath\bigl({}_b{(H_1)}_a\bigr)^{-1} =  \shiftFunct{H_2}\otimes \shiftFunct{H_1} (|m'| \otimes |m|), \\
\imath\bigl({}_c{(H_2\bullet H_1)}_a\bigr)^{-1} &= \shiftFunct{H_2 \bullet H_1}(|m'||m|),
\end{align*}
concluding the proof. 
\end{proof}

\begin{prop}
We have
\[
\shiftFunct{H'} \circ \shiftFunct{H} \cong \shiftFunct{H' \star H},
\]
and
\[
\shiftFunct{H'} \otimes \shiftFunct{H} \cong \shiftFunct{H' \circ H},
\]
for any pair of  changes of chronology $H : \chcob \Rightarrow \chcob'$ and $H' : \chcob' \Rightarrow \chcob''$.
\end{prop}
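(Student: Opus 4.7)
The plan is to reduce both isomorphisms to the multiplicativity property of $\imath$ under vertical ($\star$) and horizontal ($\circ$) composition of changes of chronology recalled in Section~\ref{sec:chcob}, namely $\imath(H' \star H) = \imath(H')\imath(H)$ and $\imath(H' \circ H) = \imath(H')\imath(H)$. Since $\shiftFunct{H}(M)$ is completely determined by the scalar $\imath({}_bH_a)^{-1}$ that it attaches to each homogeneous element, it suffices to track these scalars on both sides of each claim.

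For the first isomorphism, I would fix $M \in \Bimod^\gradC(A_2,A_1)$ and a homogeneous $m \in M_g$ with $g \in \Hom_\gradC(a,b)$. By definition, the vertical composite $\shiftFunct{H'}(M)\,\shiftFunct{H}(M)$ sends $\shiftFunct{\chcob}(m)$ to $\imath({}_bH'_a)^{-1}\imath({}_bH_a)^{-1}\shiftFunct{\chcob''}(m)$. Using that extension by identity cylinders is compatible with vertical composition of changes of chronology, so that ${}_b(H' \star H)_a = {}_bH'_a \star {}_bH_a$, the multiplicativity of $\imath$ under $\star$ rewrites this scalar as $\imath({}_b(H' \star H)_a)^{-1}$, which is precisely the scalar defining $\shiftFunct{H' \star H}(M)$. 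The bimodule compatibility is inherited from \cref{prop:nattrfromchangeofch}, since composites of bimodule maps are bimodule maps.

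For the second isomorphism, most of the work is already done inside the proof of \cref{prop:nattrfromchangeofch}: the slightly stronger commutative square established there shows that $\shiftFunct{(-)}$ transforms horizontal composition of changes of chronology into the tensor product of natural transformations (via the canonical isomorphism $\compMap{}{}$). I would conclude by tracking scalars: the tensor $\shiftFunct{H'} \otimes \shiftFunct{H}$ produces the scalar $\imath({}_cH'_b)^{-1}\imath({}_bH_a)^{-1}$ on a homogeneous pair $\shiftFunct{\chcob'}(m') \otimes \shiftFunct{\chcob}(m)$, and multiplicativity of $\imath$ under $\circ$, together with ${}_c(H' \circ H)_a = {}_cH'_b \circ {}_bH_a$, identifies this with $\imath({}_c(H' \circ H)_a)^{-1}$, matching $\shiftFunct{H' \circ H}$.

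The one subtle point, and the main obstacle, is verifying that the extensions ${}_bH_a$ themselves behave multiplicatively under the two compositions. The vertical identity ${}_b(H' \star H)_a = {}_bH'_a \star {}_bH_a$ holds on the nose by construction of the trivial extension. The horizontal identity may only hold up to a locally vertical change of chronology fixing the endpoints, in which case I would invoke \cref{prop:locvertchange} to conclude that the two locally vertical variants are equivalent and hence have the same value under $\imath$. Once these small bookkeeping lemmas are in place, both isomorphisms follow directly.
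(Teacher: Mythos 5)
Your proposal is correct and uses exactly the same key ingredient as the paper's proof, namely the multiplicativity of the monoid map $\imath$ under both vertical and horizontal composition of changes of chronology, transported through the trivial extension ${}_bH_a$; the paper's proof is simply the one-line chain $\imath({}_b(H')_a \star {}_bH_a) = \imath({}_b(H')_a)\imath({}_bH_a) = \imath({}_b(H')_a \circ {}_bH_a)$. Your extra caution about whether ${}_c(H' \circ H)_a$ equals ${}_cH'_b \circ {}_bH_a$ on the nose is not strictly needed (gluing identity cylinders is compatible with both compositions by construction), but the rest of the bookkeeping you carry out — including the appeal to \cref{prop:nattrfromchangeofch} for the bimodule compatibility — is a sound elaboration of what the paper leaves implicit.
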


\begin{proof}
It follows immediately from $\imath({}_b(H')_a \star {}_bH_a) = \imath({}_b(H')_a)  \imath({}_bH_a) = \imath({}_b(H')_a \circ {}_bH_a)$.
\end{proof}

Take a cobordism with corners $\chcob : t \rightarrow t'$. 
Consider a minimal cobordism $\widehat \chcob : t \rightarrow t'$. We define 
\[
\chdefect{\chcob} := \left( \frac{ \eulerChar(\widehat \chcob) - \eulerChar(\chcob)}{2} , \frac{ \eulerChar(\widehat \chcob) - \eulerChar(\chcob)}{2}\right) \in \bZ\times\bZ,
\]
where $\eulerChar$ is the Euler characteristics. 

\begin{prop}\label{prop:shiftsimplification}
Let $\chcob : t \rightarrow t'$ be a cobordism. Let $\widehat \chcob : t \rightarrow t'$ be a minimal corbordism. 
There is a natural isomorphism
\[
\shiftFunct{\chcob^v} \cong \shiftFunct{\widehat \chcob^{v+\chdefect{\chcob}}}, 
\]
for all $v \in \bZ^2$.
\end{prop}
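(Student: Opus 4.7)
The plan is to reduce $\chcob$ to its minimal model $\widehat \chcob$ through a sequence of handle cancellations, each implemented by combining changes of chronology (\cref{prop:nattrfromchangeofch}) with the neck-cutting and sphere relations in $\dotchcobcat$ (see \cref{eq:dotcob}), and to extract from this reduction the scalar that defines the natural isomorphism.

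First I would verify that the two shifts act identically on morphisms of $\gradC$, i.e.\ that
\[
p + v + \deg(1_{\bar b}\chcob 1_a) = p + v + \chdefect{\chcob} + \deg(1_{\bar b}\widehat\chcob 1_a)
\]
for every $(\red t, p) \in \shiftDom[ba]{\chcob^v}$. This reduces to comparing $\deg(\chcob) - \deg(\widehat\chcob)$ with $\tfrac{1}{2}(\eulerChar(\widehat\chcob) - \eulerChar(\chcob))$. The comparison is a direct count: $\chcob$ differs from $\widehat\chcob$ only by the insertion of canceling pairs of critical points, where each extra merge--split handle pair shifts $\deg$ by $(-1,-1)$ and $\eulerChar$ by $-2$, and each extra birth--death sphere pair shifts $\deg$ by $(+1,+1)$ and $\eulerChar$ by $+2$. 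In both cases the two contributions match in magnitude, yielding the required identity.

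Since the underlying degree maps now coincide, the two shifts agree as endofunctors of $\Mod^\gradC$. To promote this to a natural isomorphism in $\Bimod^\gradC(A_2,A_1)$, I must compare the compatibility maps $\compMap{\chcob^v}{-}$ and $\compMap{\widehat\chcob^{v+\chdefect{\chcob}}}{-}$ from \cref{prop:assoccompbeta}. Their $\beta_2$, $\beta_2'$, $\beta_2''$ components agree once the degree identity above is used, so the only discrepancy lives in the scalar $\beta_1$ produced by the diagram \cref{eq:diagbeta1}. I would define the natural transformation $\chtrans{\chcob}$ by sending each homogeneous $m$ to $\chtrans{\chcob}(|m|) \cdot m$, where $\chtrans{\chcob}(g) \in R^\times$ is the scalar obtained by evaluating a locally vertical comparison between the two $\beta_1$-diagrams. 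Concretely, one fills the difference using the identifications of $\chcob$ and $\widehat\chcob$ in $\dotchcobcat$: merge--split cancellations are resolved by the neck-cutting relation in \cref{eq:dotcob} (contributing a dot--bubble pair whose $\imath$-value is computable), while birth--death cancellations are handled by the dotted-sphere evaluation.

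The main obstacle is coherence: different sequences of cancellations or different choices of locally vertical changes of chronology could a priori yield different scalars $\chtrans{\chcob}(g)$. Independence of these choices will follow from \cref{prop:locvertchange}, which ensures uniqueness of the relevant changes of chronology up to homotopy, together with the well-definedness of $\tqft_\bullet$ on $\dotchcobcat$, which guarantees that the dot/neck-cutting moves compose consistently. Once $\chtrans{\chcob}$ is shown to be well-defined, the verification that $\chtrans{\chcob}$ assembles into a natural transformation between shift functors on $\Bimod^\gradC(A_2,A_1)$ follows exactly as in the proof of \cref{prop:nattrfromchangeofch}: one checks compatibility with the bimodule structure by closing up a diagram of the form there, using \cref{eq:shiftingsystcompdiag} to absorb the scalar into the appropriate $\beta$-map. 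Invertibility of $\chtrans{\chcob}(g)$ in $R$ (it is a monomial in $X, Y, Z^{\pm 1}$) then makes $\chtrans{\chcob}$ a natural isomorphism.
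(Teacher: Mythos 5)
There is a genuine gap. Your plan to ``reduce $\chcob$ to $\widehat\chcob$ through handle cancellations implemented by changes of chronology combined with the neck-cutting and sphere relations'' conflates two incompatible operations. Changes of chronology are diffeotopies of $\bR^2\times I$, so they preserve the topological type of the embedded surface; since $\chcob$ and $\widehat\chcob$ have different genus, no change of chronology connects them, and they are not equal (even up to a scalar) in $\linchcobcat$. The neck-cutting and dotted-sphere relations of \cref{eq:dotcob} are relations imposed in the quotient $\dotchcobcat$ tailored to the TQFT $\tqft_\bullet$; they do not carry an $\imath$-value and they play no role in the definition of $\shiftFunct{\chcob^v}$, which is built purely combinatorially from degrees of cobordisms and the compatibility maps $\compMap{}{}$. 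So the scalar $\chtrans{\chcob}(g)$ you want to extract from these relations is not well-defined in the framework.

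The paper avoids this by first normalizing within the diffeomorphism class: it chooses a change of chronology $H : \chcob \Rightarrow \chcob'$ where $\chcob'$ is $\widehat\chcob$ with all excess tubes stacked at the top. Then $\shiftFunct{H}$ from \cref{prop:nattrfromchangeofch} gives the natural isomorphism $\shiftFunct{\chcob^v} \Rightarrow \shiftFunct{\chcob'^v}$. The second step is the observation that a top tube is a split spawning an isolated circle followed by the merge absorbing it, so by \cref{eq:cobcommute1} it commutes past all other critical points and contributes exactly the same degree and compatibility data as a pure $\bZ\times\bZ$-shift; this yields the literal equality $\shiftFunct{\chcob'^v} = \shiftFunct{\widehat\chcob^{v+\chdefect{\chcob}}}$, and composing with $\shiftFunct{H}$ finishes the proof. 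Your direct comparison of $\shiftFunct{\chcob^v}$ and $\shiftFunct{\widehat\chcob^{v+\chdefect{\chcob}}}$ also contains a concrete error: the compatibility factor $\beta_2 = \lambda_R\bigl(s_{cba}(t_2',t_1'), v_2+v_1\bigr)$ depends on the $\bZ\times\bZ$-shift, which differs by $\chdefect{\chcob}$ between the two sides, so contrary to your claim the discrepancy is not confined to $\beta_1$, and a scalar built only from a $\beta_1$-comparison would fail to intertwine the bimodule actions.
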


\begin{proof}
If $\chcob$ is not minimal, it means that there is a change of chronology $H : \chcob \Rightarrow \chcob'$ where $\chcob'$ is obtained from $\widehat \chcob$ by attaching tubes in the top region. These tubes locally look like:
\begin{align*}
\tikzdiagc[scale=.35]{
	%
	\draw (0,0) .. controls (1.5,0) and (2,1) .. (.5,1); 
	\draw (4,0) .. controls (2.5,0) and (3,1) .. (4.5,1); 
	\filldraw[fill=white, draw=white] (.5,.55)  rectangle  (4,2);
	\draw[dashed] (0,0) .. controls (1.5,0) and (2,1) .. (.5,1); 
	\draw[dashed] (4,0) .. controls (2.5,0) and (3,1) .. (4.5,1); 
	\draw (1.375,.5) .. controls (1.375,1.5) and (3.125,1.5) .. (3.125,.5);
	%
	%
	%
	%
	%
	%
	\draw[yshift=1cm]  (0,2) .. controls (1.5,2) and (2,3) .. (.5,3); 
	\draw[yshift=1cm]  (4,2) .. controls (2.5,2) and (3,3) .. (4.5,3); 
	\draw[yshift=1cm]  (1.375,2.5) .. controls (1.375,1.5) and (3.125,1.5) .. (3.125,2.5);
	%
	%
	%
	%
	\draw (0,0) -- (0,3);
	\draw (.5,3.25) -- (.5,4);
	\draw[dashed] (.5,1) -- (.5,4);
	\draw (4,0) -- (4,3);
	\draw (4.5,1) -- (4.5,4);
}
&&
\text{or}
&&
\tikzdiagc[scale=.35]{
	\draw (0,0) .. controls (1.5,0) and (2,1) .. (.5,1); 
	\draw (4,0) .. controls (2.5,0) and (3,1) .. (4.5,1); 
	\filldraw[fill=white, draw=white] (.5,.55)  rectangle  (4.5,2);
	\draw[dashed] (0,0) .. controls (1.5,0) and (2,1) .. (.5,1); 
	\draw[dashed] (4,0) .. controls (2.5,0) and (3,1) .. (4.5,1); 
	\draw (0,0) -- (0,4);
	\draw[dashed] (.5,1) -- (.5,5);
	\draw (1.375,.5) -- (1.375,2.5);
	\draw (3.125,.5) -- (3.125,2.5);
	\draw (.5,4) -- (.5,5) -- (4.5,5) -- (4.5,4);
	%
	%
	%
	%
	\draw (1.375,2.5) .. controls (1.375,3.5) and (3.125,3.5) .. (3.125,2.5);
	%
	\draw (0,2) -- (0,4) -- (4,4);
	%
	%
	%
	%
	\draw (8,1) -- (8.5,1) -- (8.5,5);
	%
	%
	%
	\draw (5.375,2.5) .. controls (5.375,1.5) and (7.125,1.5) .. (7.125,2.5);
	\draw[dashed] (4.5,5) -- (4.5,1) -- (8.5,1) -- (8.5,3);
	\draw (4,4) -- (4,0) -- (8,0) -- (8,2);
	\draw (4,4) .. controls (5.5,4) and (6,5) .. (4.5,5); 
	\draw (8,4) .. controls (6.5,4) and (7,5) .. (8.5,5); 
	\draw (8,2) -- (8,4);
	\draw (8.5,3) -- (8.5,5);
	\draw (5.375,2.5) -- (5.375,4.5);
	\draw (7.125,2.5) -- (7.125,4.5);
}
\end{align*}
and we can always do a change of chronology so that we keep only the one on the right side.  
Note that in this case, the first saddle point is a split that spawns an isolated circle component, and the second one a merge. Therefore, the tube part  behaves like a degree shift by $(-1,-1)$ in $\bZ\times\bZ$ thanks to \cref{eq:cobcommute1}. 
Thus, we have a natural isomorphism given by the identity $\shiftFunct{\chcob'^v} \cong \shiftFunct{\widehat \chcob^{v+\chdefect{\chcob}}}$. 
Composition with the natural isomorphism $\shiftFunct{H}$ gives the desired isomorphism.  
\end{proof}

Note that, in particular, for any pair $(\chcob : t \rightarrow t', v)$, it means $\shiftFunct{\chcob^v}$ admits locally a left inverse (up to natural isomorphism) given by
\[
(\shiftFunct{\chcob^v})^{-1} := \shiftFunct{\overline \chcob^{(-v-\chdefect{\overline \chcob \circ \chcob})}},
\]
where $\overline \chcob$ is obtained from $\chcob$ by taking the mirror image with respect to the horizontal plane. Indeed, by \cref{prop:shiftsimplification}, we have
\[
(\shiftFunct{\chcob^v}^{-1})  \circ \shiftFunct{\chcob^v} \cong \shiftFunct{\un_{t}}.
\]
For example we have $\bigl(\shiftFunct{\tikzdiagh[scale=.2]{
	\draw (.5,1) -- (.5,3) -- (4.5,3) -- (4.5,1);
	\draw (0,0) .. controls (1.5,0) and (2,1) .. (.5,1); 
	\draw (4,0) .. controls (2.5,0) and (3,1) .. (4.5,1); 
	\filldraw[fill=white, draw=white] (.5,.55)  rectangle  (4,2);
	\draw[dotted] (0,0) .. controls (1.5,0) and (2,1) .. (.5,1); 
	\draw[dotted] (4,0) .. controls (2.5,0) and (3,1) .. (4.5,1); 
	\draw (1.375,.5) .. controls (1.375,1.5) and (3.125,1.5) .. (3.125,.5);
	%
	\draw (0,0) -- (0,2) -- (4,2) -- (4,0);
}^{(v_2,v_1)}}\bigr)^{-1} = \shiftFunct{{\tikzdiagh[scale=.2]{
	\draw (.5,3) -- (.5,1) -- (4.5,1) -- (4.5,3);
	\filldraw [fill=white, draw=white] (0,0) rectangle (4,2); 
	\draw (0,2) .. controls (1.5,2) and (2,3) .. (.5,3); 
	\draw (4,2) .. controls (2.5,2) and (3,3) .. (4.5,3); 
	\draw (1.375,2.5) .. controls (1.375,1.5) and (3.125,1.5) .. (3.125,2.5);
	%
	\draw (0,2) -- (0,0) -- (4,0) -- (4,2);
}}^{(1-v_2,1-v_1)}}$.

\begin{prop}\label{prop:removeloops}
Let $\chcob : t \rightarrow t'$ be a cobordism. Suppose $t$ contains a free loop $S$. Then there is a natural isomorphism
\[
\shiftFunct{\chcob^{(v_2,v_1)}} \cong \shiftFunct{{\chcob'}^{(v_2-1,v_1)}},
\quad
m \mapsto \lambda_R((1,0), \deg(1_{\bar b}W 1_a)) m,
\]
for $|m| \in \Hom_\gradC(a,b)$, and 
where $\chcob' : t \setminus S \rightarrow t'$ is given by gluing a birth under the free loop in $\chcob$. 
Similarly if $t'$ contains a free loop we have   $\shiftFunct{\chcob^{v_2,v_1}} \cong \shiftFunct{{\chcob''}^{(v_2,v_1-1)}}$ where $\chcob''$ is given by gluing a positive death, and the isomorphism is given by the identity map. 
\end{prop}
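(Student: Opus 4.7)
The plan is to realize the isomorphism by factoring $\chcob'$ as a vertical composition in $\chcobcat$ and invoking the compatibility structure of the $\gradC$-shifting 2-system developed in \cref{sec:2shiftingsystem}. Specifically, I would write $\chcob' = \chcob \circ (\un_{t \setminus S} \sqcup \mathrm{b}_S)$, where $\mathrm{b}_S : \emptyset \to S$ is the standard birth of the free loop $S$, so that $\chcob'$ coincides with $\chcob$ with a birth glued at the bottom as in the proposition's hypothesis. Since the birth has degree $(1, 0)$, in the monoid $I$ one has $(\chcob')^{(v_2 - 1, v_1)} = \chcob^{(v_2, v_1)} \circ (\un \sqcup \mathrm{b}_S)^{(-1, 0)}$, and the vertical compatibility map thus produces a natural isomorphism
\[
\shiftFunct{\chcob^{(v_2, v_1)}} \circ \shiftFunct{(\un \sqcup \mathrm{b}_S)^{(-1, 0)}} \xrightarrow{\sim} \shiftFunct{(\chcob')^{(v_2 - 1, v_1)}}
\]
given on homogeneous elements by multiplication by
$\vCompMap[ba]{\chcob^{(v_2, v_1)}}{(\un \sqcup \mathrm{b}_S)^{(-1, 0)}}(|m|) = \lambda_R(\deg(1_{\bar b}\chcob 1_a), (-1, 0)) = \lambda_R((1, 0), \deg(1_{\bar b}\chcob 1_a))$, using the bilinearity of $\lambda_R$ and the identity $\lambda_R^{-1}(x, y) = \lambda_R(y, x)$.

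The remaining step is to verify that $\shiftFunct{(\un \sqcup \mathrm{b}_S)^{(-1, 0)}}$ is naturally isomorphic to the identity endofunctor on $\Bimod^\gradC(A_2, A_1)$ via the identity map on underlying elements. On $\gradC$-degrees this is immediate because the birth-degree $(1, 0)$ cancels the $-(1, 0)$ part of the chosen $\bZ^2$-shift and $\red{t \setminus S} = \red t$. For the bimodule structure one must show $\compMap{\neutralElement}{(\un \sqcup \mathrm{b}_S)^{(-1, 0)}}(|y|, |m|) = 1$. Unfolding the formula with $\chcob_2 = \un_{1_n}$ and $\chcob_1 = \un \sqcup \mathrm{b}_S$, the factors $\beta_2'$ and $\beta_2''$ are trivially $1$, and $\beta_2 = \lambda_R(s_{bba}(1_n, \red t), (-1, 0))$ is cancelled exactly by $\beta_1$, computed from the change of chronology commuting the birth past the surgery $\chcob_{bba}(1_n, \red t)$: the linearization rule \cref{eq:cobcommute1} gives $\beta_1 = \lambda_R(s_{bba}(1_n, \red t), (1, 0))$, the inverse of $\beta_2$. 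The right $A_1$-action is checked analogously.

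Composing the two natural isomorphisms then produces the stated map $m \mapsto \lambda_R((1, 0), \deg(1_{\bar b}\chcob 1_a)) m$, which is automatically natural in $M$ since the scalar depends only on the borders $a$ and $b$. The symmetric case where $t'$ contains a free loop proceeds via the mirror decomposition $\chcob'' = (\un_{t' \setminus S} \sqcup \mathrm{d}_S) \circ \chcob$, with $\mathrm{d}_S : S \to \emptyset$ a positive death; the parallel computation now yields the identity scalar, essentially because the death sits above $\chcob$ rather than below, so the $\bZ^2$-shift can be pushed entirely to the death factor, where the relevant $\vCompMap$ becomes trivial. The main technical subtlety throughout is the careful bookkeeping of the commutation factor from \cref{eq:cobcommute1} between the inserted birth (or death) and the surgery cobordism: this is the unique place where a non-trivial scalar enters, and its cancellation with $\beta_2$ is precisely what identifies $\shiftFunct{(\un \sqcup \mathrm{b}_S)^{(-1, 0)}}$ (respectively $\shiftFunct{(\un \sqcup \mathrm{d}_S)^{(0, -1)}}$) with the identity functor.
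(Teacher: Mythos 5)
Your argument is correct and captures the same underlying idea as the paper's (brief) proof---commuting the degree-$(1,0)$ birth from below $\chcob$ to the top where it is absorbed into the $\bZ^2$-shift---but you organize it more formally through the vertical compatibility structure $(\circ,\gamma)$ rather than reasoning directly from the pictorial definition of $\beta$ in \eqref{eq:pictdefbeta} as the paper does. The key reduction, namely that it suffices to show $\shiftFunct{(\un\sqcup\mathrm{b}_S)^{(-1,0)}}$ is isomorphic to the identity endofunctor via the identity map (which you verify by cancelling $\beta_1$ against $\beta_2$ through \eqref{eq:cobcommute1}), and then to apply $\vCompMap[ba]{\chcob^{(v_2,v_1)}}{(\un\sqcup\mathrm{b}_S)^{(-1,0)}}$, is a clean way to package what the paper asserts informally, and the two routes give the same scalar $\lambda_R((1,0),\deg(1_{\bar b}\chcob 1_a))$.

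One point to tighten: in the death case, the literal mirror of your first computation, i.e.\ factoring $\chcob''^{(v_2,v_1-1)} = (\un\sqcup\mathrm{d}_S)^{(0,-1)}\circ\chcob^{(v_2,v_1)}$ and applying $\vCompMap$, gives the scalar $\lambda_R\bigl(\deg(1_{\bar b}(\un\sqcup\mathrm{d}_S)1_a),(v_2,v_1)\bigr)=\lambda_R\bigl((0,1),(v_2,v_1)\bigr)$, which is not $1$ in general. You do hint at the right fix (``the $\bZ^2$-shift can be pushed entirely to the death factor''), but this really means replacing the decomposition by $\chcob''^{(v_2,v_1-1)} = (\un\sqcup\mathrm{d}_S)^{(v_2,v_1-1)}\circ\chcob^{(0,0)}$ together with $\chcob^{(v_2,v_1)}=\un_{t'}^{(v_2,v_1)}\circ\chcob^{(0,0)}$, then checking separately that $\shiftFunct{\un_{t'}^{(v_2,v_1)}}\cong\shiftFunct{(\un\sqcup\mathrm{d}_S)^{(v_2,v_1-1)}}$ via the identity; with $v_i=(0,0)$ on the $\chcob$-factor, both relevant $\vCompMap$'s become trivial and the iso is indeed the identity. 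Spelling this out would close the only loose end in an otherwise correct argument.
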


\begin{proof}
Gluing a birth change the degree of the cobordism $1_{\bar b} W 1_a$ by $(1,0)$ for all $a,b \in B^\bullet$. Because we added this element at the bottom and the compatibility maps are computed with the idea that the $\bZ^2$-grading shift $v$ is at the top of the cobordism (see \cref{eq:pictdefbeta}), we need to compensate by $\lambda_R((1,0), \deg(1_{\bar b}W 1_a))$. 
\end{proof}

\subsection{$\bZ\times\bZ$-grading shift}

We define the $\bZ\times\bZ$-grading shift functor:
\[
(-)\{v_2, v_1\} : \Mod^\gradC \rightarrow \Mod^\gradC,
\]
by setting
\[
M\{v_2, v_1\} := \bigoplus_{t \in B_\bullet^\bullet} \shiftFunct{\un_{t}^{(v_2,v_1)}}(M).
\]
It takes an element of degree $(t, (p_2,p_1))$ to an element of degree $(t,(p_2+v_2,p_1+v_1))$. 
When we write $\shiftFunct{\chcob^v} (M) \{v_1, v_2\}$, we will mean we first shift by $\chcob^v$ then by $(v_2,v_1)$, i.e. $\bigl(\shiftFunct{\chcob^v}(M)\bigr) \{v_2, v_1\}$. 
The $\bZ\times\bZ$-grading shift functors define a $\bZ\times\bZ$-grading on $\Bimod^\gradC(A_2,A_1)$.

\subsection{$\gradC$-graded commutativity} \label{sec:Gcommutativity}
We equip the $\gradC$-grading shifting 2-system $S$ with a commutativity system $\Tau$ given by all pairs $\{ ((W_2'^{v_2'},W_1'^{v_1'}),(W_2^{v_2},W_1^{v_1})) \}$ whenever there is a locally vertical change of chronology $H : W_2' \circ W_1' \Rightarrow W_2 \circ W_1$ and $v_2' = v_1, v_1' = v_2$. In that case, we put 
\[
\commutMap[ba]{W_2'^{v_2'}}{W_1'^{v_1'}}{W_2^{v_2}}{W_1^{v_1}} := \imath({}_bH_a)^{-1} \lambda_R(v_1,v_2).
\]

\begin{prop}
The commutativity system $\Tau$ defined above is compatible with the $\gradC$-grading shifting 2-system $S$, also defined above, through $\tau$.
\end{prop}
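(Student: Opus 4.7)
The plan is to verify the two compatibility equations \eqref{eq:compCommut} and \eqref{eq:compCommut2} by splitting each scalar appearing in them into a ``chronological'' factor (a value of $\imath$ applied to some locally vertical change of chronology) and a ``combinatorial'' factor (a product of $\lambda_R$-terms), and then checking each factor separately. Concretely, unfolding the definitions, every term $\compMap{j'}{i'}(\cdot,\cdot)$, $\interCompMap{j'_*}{i'_*}{j_*}{i_*}(\cdot)$, and $\commutMap{j'_*}{i'_*}{j_*}{i_*}(\cdot)$ appearing in the equations factors as a value $\imath(H)^{\pm 1}$ of a locally vertical change of chronology determined by the relevant surgery or interchange, multiplied by an explicit monomial in the entries of the bilinear form $\lambda_R$ coming from the $\bZ^2$-shifts and degree bookkeeping already used in \cref{prop:assoccompbeta}.

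For the chronological part of \eqref{eq:compCommut}, I will build a single ``large'' cobordism $W$ obtained as the right-then-left juxtaposition of $W_2'\circ W_1'$ with $W_2\circ W_1$, and observe that both sides of the equation describe the $\imath$-values of two locally vertical changes of chronology with the same source and target, namely the two ways of reshuffling $(W_2'^{v_2'}\circ W_1'^{v_1'})\bullet(W_2^{v_2}\circ W_1^{v_1})$ into $(W_2'^{v_2'}\bullet W_2^{v_2})\circ(W_1'^{v_1'}\bullet W_1^{v_1})$ and back. By \cref{prop:locvertchange}, any two locally vertical changes of chronology with the same endpoints are homotopic, so their $\imath$-values coincide, and the multiplicativity $\imath(H'\star H)=\imath(H')\imath(H)$, $\imath(H'\circ H)=\imath(H')\imath(H)$ then equates the chronological factors on the two sides. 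This is the same mechanism already used for $\assoc_1$ and $\beta_1$ in the proof of \cref{prop:assoccompbeta,prop:nattrfromchangeofch}.

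For the $\lambda_R$-part, bilinearity of $\lambda_R$ reduces the verification to a straightforward (if tedious) bookkeeping exercise: each factor is of the form $\lambda_R(s_{\bullet\bullet\bullet}(\cdot,\cdot)\text{ or degree term},\, v_k\text{ or shift term})$ with known entries, and \cref{lem:splitcounting} together with \cref{lem:horcompdeg} ensures that the sums of degrees appearing on both sides agree after collection. The identity \eqref{eq:compCommut2} is simpler: since $I_\id$ consists of identity cobordisms with zero shift, the terms $\interCompMap{\id}{j}{i}{\id}$ and $\interCompMap{j}{\id}{\id}{i}$ reduce to $\imath$ of the locally vertical interchange between $(\un\bullet W_i^{v_i})\circ(W_j^{v_j}\bullet\un)$ and $(W_j^{v_j}\bullet\un)\circ(\un\bullet W_i^{v_i})$, combined with $\lambda_R$-swaps, and by construction $\commutMap{\id\bullet i}{j\bullet\id}{j\bullet\id}{\id\bullet i}$ is defined exactly as the ratio of those two interchange values.

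The main obstacle will be the combinatorial bookkeeping of $\lambda_R$-factors: $\compMap{j\circ i}{\cdot}$ is defined in two steps (first by the chronological cobordism, then by the $v$-shift, cf.\ \cref{eq:pictdefbeta}), so expanding $\compMap{j'\circ i'}{j\circ i}$ in \eqref{eq:compCommut} produces $\lambda_R$-terms involving sums $v_2+v_1$ and degrees of composed cobordisms; these have to be reorganized using $\lambda_R(s_{cba}(t_2',t_1'),v)\,\lambda_R(|W_2\bullet W_1|,\cdot) = \lambda_R(|W_2|+|W_1|,\cdot)\,\lambda_R(s_{cba}(t_2,t_1),\cdot)$ (from \cref{lem:horcompdeg}) and similar identities, in a fashion entirely parallel to the manipulations between equations \eqref{eq:compatleft1}--\eqref{eq:compatright2} of \cref{prop:assoccompbeta}. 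Once these are carried out, both sides match, establishing the compatibility.
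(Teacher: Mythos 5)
Your proposal is correct and follows essentially the same route as the paper's proof: both establish~\eqref{eq:compCommut} by observing that the two sides are values of~$\imath$ on locally vertical changes of chronology with the same source and target, invoking \cref{prop:locvertchange} and the multiplicativity of~$\imath$, and handle the residual $\lambda_R$-factors by bilinearity; and both reduce~\eqref{eq:compCommut2} to the commutation rules~\eqref{eq:cobcommute1}--\eqref{eq:cobcommute2}. The only difference is presentational: the paper normalizes by assuming $v_2'=v_1'=v_2=v_1$ and drawing one diagram (deferring the general $\lambda_R$-bookkeeping with the remark that it ``can be added without much effort''), whereas you propose carrying out that bookkeeping explicitly in parallel with the computation in \cref{prop:assoccompbeta}, which is fine but not strictly shorter.
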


\begin{proof}
For the sake of simplicity, we assume that $v_2'=v_1'=v_2=v_1$. They can be added in the proof without much effort. 
Then, we have that \cref{eq:compCommut} holds from the commutativity of the following diagram:
\[
\begin{tikzcd}[column sep = 10ex]
\tikzdiag{0}{
	\draw (0,1) node[below]{$m_2$}
		--
		(0,1.75) node[near end,rrect]{$i_2'$}
		--
		(0,2.5) node[near end,rrect]{$j_2'$}
		.. controls (0,3) and (.5,3) ..
		(.5,3.5) -- (.5,3.5);
	\draw (1,-1) node[below]{$m_1$}
		--
		(1,-.25) node[near end,rrect]{$i_1'$}
		-- 
		(1,.5) node[near end,rrect]{$j_1'$}
		-- 
		(1,2.5)
		.. controls (1,3) and (.5,3) ..
		(.5,3.5);
}
\ar{r}{\compMap{j_2' \circ i_2'}{j_1' \circ i_1'}}
\ar[swap]{d}{\commutMap{j'_2}{i'_2}{j_2}{i_2} . \commutMap{j'_1}{i'_1}{j_1}{i_1}}
&
 \tikzdiag{0}{
	\draw (0,-1.5) node[below]{$m_2$}
		.. controls (0,-1) and (.5,-1) ..
		(.5,-.5)  
		--
		(.5,.25 )node[midway,rrect]{$1 \bullet i_1'$}
		--
		(.5,1) node[midway,rrect]{$1 \bullet j_1'$}
		--
		(.5,1.75)  node[midway,rrect]{$i_2' \bullet 1$}
		--
		(.5,2.5) node[midway,rrect]{$j_2'\bullet 1$}
		-- 
		(.5,2.75);
	\draw (1,-2) node[below]{$m_1$} 
		--
		(1,-1.5)
		.. controls (1,-1) and (.5,-1) ..
		(.5,-.5);
}
\ar{r}{\interCompMap{j_2'}{i_2'}{j_1'}{i_1'}}
&
 \tikzdiag{0}{
	\draw (0,-1.5) node[below]{$m_2$}
		.. controls (0,-1) and (.5,-1) ..
		(.5,-.5)  
		--
		(.5,.25 )node[midway,rrect]{$1 \bullet i_1'$}
		--
		(.5,1) node[midway,rrect]{$i_2' \bullet 1$}
		--
		(.5,1.75)  node[midway,rrect]{$1 \bullet j_1'$}
		--
		(.5,2.5) node[midway,rrect]{$j_2'\bullet 1$}
		-- 
		(.5,2.75);
	\draw (1,-2) node[below]{$m_1$} 
		--
		(1,-1.5)
		.. controls (1,-1) and (.5,-1) ..
		(.5,-.5);
}
\ar{d}{\commutMap{j'_2 \bullet j_1'}{i'_2 \bullet i'_1}{j_2 \bullet j_1}{i_2 \bullet i_1}}
\\
\tikzdiag{0}{
	\draw (0,1) node[below]{$m_2$}
		--
		(0,1.75) node[near end,rrect]{$i_2$}
		--
		(0,2.5) node[near end,rrect]{$j_2$}
		.. controls (0,3) and (.5,3) ..
		(.5,3.5) -- (.5,3.5);
	\draw (1,-1) node[below]{$m_1$}
		--
		(1,-.25) node[near end,rrect]{$i_1$}
		-- 
		(1,.5) node[near end,rrect]{$j_1$}
		-- 
		(1,2.5)
		.. controls (1,3) and (.5,3) ..
		(.5,3.5);
}
\ar[swap]{r}{\compMap{j_2 \circ i_2}{j_1 \circ i_1}}
&
 \tikzdiag{0}{
	\draw (0,-1.5) node[below]{$m_2$}
		.. controls (0,-1) and (.5,-1) ..
		(.5,-.5)  
		--
		(.5,.25 )node[midway,rrect]{$1 \bullet i_1$}
		--
		(.5,1) node[midway,rrect]{$1 \bullet j_1$}
		--
		(.5,1.75)  node[midway,rrect]{$i_2 \bullet 1$}
		--
		(.5,2.5) node[midway,rrect]{$j_2\bullet 1$}
		-- 
		(.5,2.75);
	\draw (1,-2) node[below]{$m_1$} 
		--
		(1,-1.5)
		.. controls (1,-1) and (.5,-1) ..
		(.5,-.5);
}
\ar[swap]{r}{\interCompMap{j_2}{i_2}{j_1}{i_1}}
&
 \tikzdiag{0}{
	\draw (0,-1.5) node[below]{$m_2$}
		.. controls (0,-1) and (.5,-1) ..
		(.5,-.5)  
		--
		(.5,.25 )node[midway,rrect]{$1 \bullet i_1$}
		--
		(.5,1) node[midway,rrect]{$i_2 \bullet 1$}
		--
		(.5,1.75)  node[midway,rrect]{$1 \bullet j_1$}
		--
		(.5,2.5) node[midway,rrect]{$j_2\bullet 1$}
		-- 
		(.5,2.75);
	\draw (1,-2) node[below]{$m_1$} 
		--
		(1,-1.5)
		.. controls (1,-1) and (.5,-1) ..
		(.5,-.5);
}
\end{tikzcd}
\]
where a box with label $i$ represent the cobordism $W_i$. The commutativity comes from the fact the two path are given by applying $\imath$ on locally vertical changes of chronology with same outputs.
Also, \cref{eq:compCommut2} holds thanks to \cref{eq:cobcommute1} and \cref{eq:cobcommute2}. 
\end{proof}

\subsection{Symmetric structure}

The monoidal category $\Mod^\gradC$ admits a symmetry
\[
\tau_{M,N} : M \otimes N \rightarrow M \otimes N, \quad \tau_{M,N}(m \otimes n) := \lambda_R(|m|_R,|n|_R)  n \otimes m,
\]
where $|m|_R = p \in \bZ \times \bZ$ if $|m| = (t,p) \in \Hom_\gradC$. 

\begin{prop}
The symmetry $\tau_{M,N}$ turns $\Mod^\gradC$ into a symmetric monoidal category.
\end{prop}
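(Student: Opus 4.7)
The plan is to verify the three hallmarks of a symmetric monoidal structure on $\Mod^\gradC$: that $\tau_{M,N}$ (which I read as $\tau_{M,N}\colon M \otimes N \to N \otimes M$) is a natural, graded isomorphism, that $\tau_{N,M} \circ \tau_{M,N} = \id$, and that $\tau$ satisfies the hexagon axiom with respect to the associator $\assoc$ from \cref{prop:gradCassoc}.

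First I would handle well-definedness and naturality. Since the scalar $\lambda_R(|m|_R, |n|_R)$ depends only on the $\bZ\times\bZ$-parts of the degrees, it is constant on each graded piece of $M \otimes N$, so $\tau$ restricts to a $\Bbbk$-linear map between the appropriate graded summands. Naturality is then automatic: a graded morphism preserves the $\bZ\times\bZ$-parts, so the scalar commutes with the morphism. Involutivity reduces to the identity
\[
\lambda_R\bigl((a',b'),(a,b)\bigr) \cdot \lambda_R\bigl((a,b),(a',b')\bigr)
= X^{2a'a} Y^{2b'b} Z^{(a'b-b'a)+(ab'-ba')} = 1,
\]
which holds in $R$ since $X^2 = Y^2 = 1$ and the $Z$-exponents cancel.

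The substantive content is the hexagon identity, say in the form
\[
\tau_{M'' \otimes M', M}
= \assoc \circ (1 \otimes \tau_{M',M}) \circ \assoc^{-1} \circ (\tau_{M'',M} \otimes 1) \circ \assoc.
\]
Evaluating the left-hand side on $(m'' \otimes m') \otimes m$ produces the scalar $\lambda_R(|m''|_R + |m'|_R, |m|_R)$, which by bilinearity of $\lambda_R$ factors as $\lambda_R(|m''|_R,|m|_R)\cdot\lambda_R(|m'|_R,|m|_R)$, matching the two separate applications of $\tau$ on the right. The remaining scalars on both sides come from the associator contributions $\assoc = \assoc_1 \assoc_2$. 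The $\assoc_2$-part, defined by $\lambda_R(s_{cba}(t',t), p'')$, recombines by bilinearity of $\lambda_R$ together with \cref{lem:splitcounting}, exactly as in the proof of \cref{prop:gradCassoc}. For the $\assoc_1$-part, one compares the two locally vertical changes of chronology arising on each side of the hexagon and invokes \cref{prop:locvertchange} to conclude that they are homotopic as framed diffeotopies, hence produce the same monomial under $\imath$.

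The main obstacle will be the $\assoc_1$-calculation, where one must exhibit both sides of the hexagon as boundaries of the same cube of locally vertical changes of chronology between canonical surgery cobordisms $\chcob_{cba}(t',t)$, so that \cref{prop:locvertchange} forces the two scalars to coincide. Once this is set up, the proof reduces to bookkeeping with $\lambda_R$ via its bilinearity, much in the spirit of the $\assoc_2$-calculation of \cref{prop:gradCassoc}.
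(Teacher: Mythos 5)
Your approach matches what the paper intends: its own proof simply pictures $\tau$ and reduces the verification to the machinery of \cref{prop:gradCassoc} (bilinearity of $\lambda_R$, \cref{lem:splitcounting}, \cref{prop:locvertchange}), which is exactly what you invoke for the hexagon, and your involutivity check $\lambda_R(v,w)\lambda_R(w,v)=X^{2a'a}Y^{2b'b}Z^{0}=1$ is correct. One bookkeeping caveat worth flagging: since composition in $\gradC$ adds the surgery degree, $|m''\otimes m'|_R = |m''|_R + |m'|_R + s_{cba}(t'',t')$, so $\tau_{M''\otimes M',M}$ actually produces the scalar $\lambda_R(|m''|_R,|m|_R)\lambda_R(|m'|_R,|m|_R)\lambda_R(s_{cba}(t'',t'),|m|_R)$ rather than just the first two factors — the extra $\lambda_R(s_{cba}(t'',t'),|m|_R)$ term lives in the left-hand side of the hexagon, not in an associator; it is, however, precisely of the form that the $\assoc_2$-contributions on the right-hand side cancel, so your account closes, but the phrasing ``the remaining scalars on both sides come from the associator contributions'' slightly misattributes its origin.
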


\begin{proof}
The proof use similar arguments as in \cref{prop:gradCassoc}, picturing the symmetry as
\[
 \tikzdiag{0}{
	\draw (0,-1.5) node[below]{$m$}
		.. controls (0,-1) and (.5,-1) ..
		(.5,-.5)  
		--
		(.5,-.25);
	\draw (1,-2) node[below]{$n$} 
		--
		(1,-1.5)
		.. controls (1,-1) and (.5,-1) ..
		(.5,-.5);
}
\ \xrightarrow{\ \tau_{M,N}\ }\ 
 \tikzdiag{0}{
	\draw (1,-3) node[below]{$n$}
		--
		(1,-2.5)
			.. controls (1,-2) and (0,-2) .. 
		(0,-1.5)
		.. controls (0,-1) and (.5,-1) ..
		(.5,-.5)  
		--
		(.5,-.25);
	\draw (0,-2.5) node[below]{$m$} 
			.. controls (0,-2) and (1,-2) ..
		(1,-1.5)
		.. controls (1,-1) and (.5,-1) ..
		(.5,-.5);
}
\ = \ 
\tikzdiag{0}{
	\draw (0,-2) node[below]{$n$}
		--
		(0,-1.5)
		.. controls (0,-1) and (.5,-1) ..
		(.5,-.5)  
		--
		(.5,-.25);
	\draw (1,-1.5) node[below]{$m$} 
		--
		(1,-1.5)
		.. controls (1,-1) and (.5,-1) ..
		(.5,-.5);
}
\ = \lambda_R(|m|_R, |n|_R) \ 
 \tikzdiag{0}{
	\draw (0,-1.5) node[below]{$n$}
		.. controls (0,-1) and (.5,-1) ..
		(.5,-.5)  
		--
		(.5,-.25);
	\draw (1,-2) node[below]{$m$} 
		--
		(1,-1.5)
		.. controls (1,-1) and (.5,-1) ..
		(.5,-.5);
}
\]
We leave the details to the reader.
\end{proof}

Whenever we have a symmetric monoidal category $(\cM, \otimes, \tau)$ , it makes sense to define the notion of center of a monoid object $A \in \cM$ (and it can be made more general, see \cite{centeralgebra}). Consider $\cZ(A)$ the category of morphisms $y : Y \rightarrow A$ in $\cM$ such that the diagram
\[
\begin{tikzcd}[row sep=2ex]
Y \otimes A 
\ar{r}{y \otimes 1}
\ar[swap]{dd}{\tau_{Y,A}}
& 
A \otimes A 
\ar{dr}{\mu}
&
\\
&& A
\\
A \otimes Y 
\ar[swap]{r}{1 \otimes y}
&
 A \otimes A
 \ar[swap]{ur}{\mu}
 &
\end{tikzcd}
\]
commutes. The \emph{center} of $A$ is the terminal object $Z(A) \rightarrow A \in \cZ(A)$. Note that, by universal property of the terminal object in $\cZ(A)$, $Z(A)$ is a commutative monoid object in $\cM$. 
Concretely, in the case of $\Mod^\cC$, it means the center of a $\gradC$-graded algebra $A$ is given by
\[
Z(A) := \{ z \in A | \mu(z \otimes x) =  \mu(\tau_{Z(A),A}(z\otimes x)), \forall x \in A \},
\]
as usual. In the case of $\gradC$, it translates to $
Z(A) := \{ z \in A | zx = \lambda_R(|z|_R, |x|_R) xz, \forall x \in A \}.$




\section{Tangle invariant}\label{sec:tanglehomology}

We first show that $H^n$ is a $\gradC$-graded algebra. Then, we explain how to adapt the construction from~\cite{khovanovHn} to our case. 

\subsection{$\gradC$-graded arc algebra and arc bimodules}

Recall the space $\tqft(t)$ from~\cref{sec:arcspaces}. We can think of it as an object in  $\Mod^\gradC$ where 
\[
\deg_{\gradC}(m) := \bigl( \red{t}, \deg_R(m) \bigr) \in \Hom_\gradC(a,b),
\]
for $m \in {_b}\tqft(t){_a}$.
Then, we have that the composition map $\mu[t',t]$ preserves the $\gradC$-grading. 

\begin{lem}\label{lem:muassoc}
We have
\[
\mu[t''t',t]\bigl( \mu[t'',t'](z,y) , x \bigr) = \assoc(|z|,|y|,|x|) \mu[t'',t't]\bigl(z, \mu[t',t](y,x)\bigr),
\]
for all $x \in \tqft(t), y \in \tqft(t'), z \in \tqft(t'')$. 
\end{lem}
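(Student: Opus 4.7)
The plan is to prove the identity by direct computation, tracing both sides through the definition of $\mu$ as a composition in the chronological TQFT $\tqft$.

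First I would unwind the definitions on both sides. By the defining commutative square of $\mu_{cba}[t',t]$, together with the fact that $\tqft$ is monoidal (sending disjoint union to tensor product and composition to composition), both sides can be rewritten as $\tqft$ applied to a single composite cobordism, evaluated at $z \otimes y \otimes x$. Explicitly,
\[
\mu[t''t',t]\bigl(\mu[t'',t'](z,y),x\bigr) = \tqft\bigl(\chcob_{dba}(t''t',t) \circ (\chcob_{dcb}(t'',t') \otimes 1_{ba}(t))\bigr)(z \otimes y \otimes x),
\]
and analogously the RHS becomes $\tqft\bigl(\chcob_{dca}(t'',t't) \circ (1_{dc}(t'') \otimes \chcob_{cba}(t',t))\bigr)(z \otimes y \otimes x)$. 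This is exactly the two corners of the diagram \eqref{eq:lambda1diag} that defines $\alpha_1$.

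Next I would invoke the definition of $\alpha_1$: by \cref{prop:locvertchange}, there is a unique-up-to-homotopy locally vertical change of chronology $H$ between these two cobordisms, and $\alpha_1 := \imath(H)$. Applying the defining relation $W' = \imath(H)W$ of $\linchcobcat$ and pushing through $\tqft$ gives the factor $\alpha_1$ between the two $\tqft$-computations above.

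The remaining $\alpha_2$ factor comes from the commutativity relation \eqref{eq:cobcommute1} applied to the ``left'' saddle region (where $\chcob_{dcb}$ and $\chcob_{dca}$ live) versus the ``middle'' saddle region (where $\chcob_{cba}$ lives, of degree $s_{cba}(t',t)$), together with the $R$-degree $p''$ carried by $z$. This is precisely the third step in the picture description of $\alpha$ after \cref{prop:gradCassoc}: moving $(t'',p'')$ ``above'' the merge of $(t',p')$ and $(t,p)$ in the chronology produces the scalar $\lambda_R(s_{cba}(t',t),p'') = \alpha_2$ by the bilinearity relation \eqref{eq:cobcommute1}.

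The hard part will be the bookkeeping: getting the monoidal convention of $\chcobcat$ (right-then-left juxtaposition, with left saddles pushed to the top) to align with the order in which $\mu$ nests its arguments, and showing that the leftover discrepancy between the naive $\tqft$-composition and the cobordism appearing in the definition of $\alpha_1$ is exactly the $\lambda_R$-factor $\alpha_2$. Once this is done, combining the $\alpha_1$ from the change of chronology with the $\alpha_2$ from the commutation gives $\alpha(|z|,|y|,|x|) = \alpha_1\alpha_2$, as required.
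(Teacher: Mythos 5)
You follow the same route as the paper's own one-line proof: unwind $\mu$ as $\tqft(\chcob_{\bullet})$ via the defining square of $\mu_{cba}$, and then match the result against the definition of $\alpha = \alpha_1 \alpha_2$. Your treatment of $\alpha_1$ is correct — identifying the two nestings with $\tqft$ applied to the two paths of diagram \eqref{eq:lambda1diag}, then invoking \cref{prop:locvertchange} and the $\linchcobcat$ relation $W' = \imath(H)W$.

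There is, however, a tension in how you present $\alpha_2$. Having asserted that \emph{both} sides literally equal $\tqft$ of a single composite cobordism evaluated at $z \otimes y \otimes x$, $R$-linearity of $\tqft$ on $\linchcobcat$ together with $W_{\text{top}} = \imath(H)\, W_{\text{bot}}$ would force $\text{LHS} = \alpha_1 \cdot \text{RHS}$ exactly, leaving no room for a further factor. So $\alpha_2$ cannot appear as a separate step after $\alpha_1$; it must be a correction hidden in the identification of $\mu(\mu(z,y),x)$ with $\tqft(W_{\text{top}})(z \otimes y \otimes x)$ in the first place, and you should say so explicitly rather than presenting them sequentially. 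Moreover your first reading of $\alpha_2$ — applying \eqref{eq:cobcommute1} to ``the left saddle region versus the middle saddle region'' — is not quite right: that commutation of $\chcob$-regions is exactly what the locally vertical change of chronology defining $\alpha_1$ accounts for, so this would double-count. Your second reading is the accurate one: as in the pictorial description of $\alpha$ following its definition, the cost comes from sliding the strand labelled $(t'',p'')$ past $\chcob_{cba}(t',t)$ in the chronology, giving $\lambda_R(s_{cba}(t',t),p'')$; \eqref{eq:cobcommute1} is the right mechanism, but applied to the degree $p''$ carried by $z$ against the middle saddle, not to the two saddle regions against each other. Your closing paragraph, which flags ``the leftover discrepancy between the naive $\tqft$-composition and $\alpha_1$'' as the hard bookkeeping, is an honest acknowledgment of exactly this point — the paper also elides it — but the paragraph explaining $\alpha_2$ should be rewritten to make clear that this discrepancy is a correction to the claim that LHS $= \tqft(W_{\text{top}})(z\otimes y\otimes x)$, not a separate factor arising after $\alpha_1$.
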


\begin{proof}
This is immediate by construction of $\assoc$ in $\gradC$ and the fact that $\mu[t',t]$ is constructed using $\tqft(W_{cba}(t',t))$. 
\end{proof}

\begin{prop}\label{prop:Hnalgebra}
 $H^n$  is a unital, associative $\gradC$-graded $R$-algebra with units given by ${_a}1_a$. 
\end{prop}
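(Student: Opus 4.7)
The proof naturally splits into four steps, mirroring the definition of a $\cC$-graded algebra.

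\textbf{Step 1: Grading.} The space $H^n = \bigoplus_{a,b \in B^n} {}_b\tqft(1_n){}_a$ is a $\gradC$-graded $R$-module where an element $m \in {}_b\tqft(1_n){}_a$ has degree $(1_n, \deg_R(m)) \in \Hom_{\gradC}(a,b)$. Since $1_n$ has no free loops, $\widehat{1_n} = 1_n$, so this is well-defined.

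\textbf{Step 2: Graded multiplication.} The product $\mu := \mu[1_n,1_n]$ is zero on ${}_c\tqft(1_n){}_{b'} \otimes {}_b\tqft(1_n){}_a$ whenever $b \ne b'$, and otherwise lands in ${}_c\tqft(1_n){}_a$. Since $\mu_{cba}$ is defined as the composite $\tqft(\chcob_{cba}(1_n,1_n))$ after the canonical identifications, it shifts the $R$-degree by $\deg(\chcob_{cba}(1_n,1_n)) = s_{cba}(1_n,1_n)$. This matches the formula $(1_n,p') \circ (1_n,p) = (1_n, p+p'+s_{cba}(1_n,1_n))$ in $\gradC$, so $\mu$ preserves the $\gradC$-grading.

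\textbf{Step 3: Associativity.} This is immediate from \cref{lem:muassoc} applied to $t = t' = t'' = 1_n$: the equation there is precisely the associativity axiom for a $\gradC$-graded algebra, with the scalar $\assoc(|z|,|y|,|x|)$ computed in $\gradC$.

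\textbf{Step 4: Units.} For each $a \in B^n$ the space ${}_a\tqft(1_n){}_a = \tqft(\bar a a) \cong A^{\otimes n}$ (one tensor factor per circle of $\bar a a$), and we set ${}_a1_a := v_+^{\otimes n}$. Its $R$-degree is $(n,0)$, so its $\gradC$-degree is $\id_a = (1_n,(n,0))$. For $x \in {}_b\tqft(1_n){}_a$, the product $\mu({}_b1_b, x)$ is $\tqft(\chcob_{bba}(1_n,1_n))$ applied (through the symmetric monoidal identification of $\tqft(\bar b b) \otimes_R \tqft(\bar b a)$ with $\tqft(\bar b b \bar b a)$) to $v_+^{\otimes n} \otimes x$. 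The cobordism $\chcob_{bba}(1_n,1_n)$ consists of $n$ merge saddles, each fed on one side by one $v_+$ from ${}_b1_b$; by the formula for $\tqft$ on the merge, each such merge acts as identity on the other input. Hence $\mu({}_b1_b,x) = c(|x|) \cdot x$ for a scalar $c(|x|) \in R^\times$ arising from the $\tau$-swaps and the chronology on $\chcob_{bba}(1_n,1_n)$. The analogous analysis for $\mu(x,{}_a1_a)$ gives $\mu(x,{}_a1_a) = c'(|x|) \cdot x$.

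\textbf{Main obstacle.} What remains is to verify $c(|x|) = \lambda(\id_b,|x|)$ and $c'(|x|) = \rho(|x|,\id_a)$. Unpacking, $\lambda(\id_b,|x|) = \assoc(\id_b,\id_b,|x|)^{-1} = \assoc_1^{-1}\assoc_2^{-1}$. The $\assoc_1$ contribution is $\imath$ of a locally vertical change of chronology between two composites of $\chcob_{bba}(1_n,1_n)$'s and the identity cobordism, which by \cref{prop:locvertchange} is determined uniquely (up to homotopy), and one checks it cancels the chronological reordering hidden inside the TQFT computation of $\mu({}_b1_b,x)$. The $\assoc_2$ contribution $\lambda_R((-n,0),(n,0)) = X^{-n^2}$ should match the $\lambda_R$-factor produced by commuting $v_+^{\otimes n}$ past $x$ via $\tau$ in the identification $\tqft(\bar b b)\otimes_R \tqft(\bar b a) \simeq \tqft(\bar b b\bar b a)$. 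Both matchings are instances of general bookkeeping that is enforced automatically once $\alpha$ is confirmed to be a 3-cocycle (\cref{prop:gradCassoc}); concretely, the identity \eqref{eq:unitorsrel} ensures we only need to pin down $\lambda$ (equivalently $\rho$) on one side and the rest is forced.
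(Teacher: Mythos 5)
Steps 1--3 of your proposal match the paper: gradedness of $\mu$, and associativity via \cref{lem:muassoc}. The genuine work is in Step~4, and that is where there is a gap. Your description of $\mu({}_b1_b,x)$ as $n$ merges each absorbing a $v_+$ (hence acting as the identity via $\tqft$ of a merge), producing a scalar $c(|x|)$, is the right starting point. What is not right is the claim that the required identities $c(|x|)=\lambda(\id_b,|x|)$ and $c'(|x|)=\rho(|x|,\id_a)$ are \textit{automatically} enforced by the 3-cocycle property of $\assoc$ together with the unitor identity \eqref{eq:unitorsrel}. The cocycle condition $d\assoc=1$ makes $(\Mod^\gradC,\otimes,\assoc,\lambda,\rho)$ a coherent monoidal category, and \eqref{eq:unitorsrel} is a relation between the two unitors of that category; neither fact says anything about whether a \emph{particular} monoid $H^n$ in that category satisfies the unit axioms. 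Unitality of $H^n$ is an extra condition on the explicit multiplication $\mu=\tqft(\chcob_{cba})$ that has to be checked directly.

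The paper closes this by two short explicit computations rather than by general nonsense. On the one hand, it evaluates the associator on the relevant paths using the definition $\assoc=\assoc_1\assoc_2$, noting that the surgery cobordisms $\chcob_{cba}(1_n,1_n)$ consist only of merges and that $\id_a=(1_{|a|},(|a|,0))$, yielding $\assoc(\id_b,\id_b,|m|)=X^{|b|}X^{|b|}=1$ and $\assoc(|m|,\id_a,\id_a)=X^{|a|}\lambda_R((-|a|,0),p)$. On the other hand, it computes $\mu({}_b1_b,m)=m$ and $\mu(m,{}_a1_a)=\lambda_R(p,(|a|,0))X^{|a|}m$ using \eqref{eq:chcobrelbirthmerge} (birth followed by merge is the identity, with all saddles oriented the same way) plus \eqref{eq:cobcommute2} for the reordering. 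One then checks directly that these two pairs agree, using $X^2=1$ so that $\lambda_R(p,(|a|,0))X^{|a|}=X^{|a|}\lambda_R((-|a|,0),p)$. Your proposal stops short of either computation, so the matching of scalars---which is the content of the unital axiom here---remains unverified.
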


\begin{proof}
The associativity is givenb by \cref{lem:muassoc}.  

For the unitality, we compute for all  $m \in \tqft(t)$ with $|m| = (t,p) \in \Hom_\gradC(a,b)$ that
\begin{align*}
\assoc(\id_b,\id_b,|c|) &= X^{|b|} X^{|b|} = 1, \\
\assoc(|m|,\id_a,\id_a) &= X^{|a|} \lambda_R((-|a|,0),p),
\end{align*} 
since the cobordisms involved consist only of merges, recalling that $\id_a = (1_{|a|}, (|a|,0))$. 
Furthermore, since all saddle points are oriented upward, we obtain that 
\begin{align*}
\mu[1_{|b|},t]({_b}1_b,m) &= m, 
\\
\mu[t,1_{|a|}]m,({_a}1_a) &= \lambda_R(p,(|a|,0)) X^{|a|}, 
\end{align*}
thanks to \cref{eq:chcobrelbirthmerge}. 
This is best explained by the following pictures:
\[
 \tikzdiag{0}{
	\draw (0,-1.5) node[below]{$\id_b$}
		.. controls (0,-1) and (.5,-1) ..
		(.5,-.5)  
		--
		(.5,0);
	\draw (1,-2) node[below]{$(t,p)$} 
		--
		(1,-1.5)
		.. controls (1,-1) and (.5,-1) ..
		(.5,-.5);
	\draw[<-] (.25,-1.25) -- (.75,-1.25);
}
\ = \ 
 \tikzdiag{0}{
 	\draw (0,-2) node[below]{$(t,p)$}  -- (0,0);
 }
\]
and
\[ 
\tikzdiag{0}{
	\draw (0,-1.5) node[below]{$(t,p)$}
		.. controls (0,-1) and (.5,-1) ..
		(.5,-.5)  
		--
		(.5,0);
	\draw (1,-2) node[below]{$\id_a$} 
		--
		(1,-1.5)
		.. controls (1,-1) and (.5,-1) ..
		(.5,-.5);
	\draw[<-] (.25,-1.25) -- (.75,-1.25);
}
\ = \ 
\lambda_R(p, (|a|,0))
\tikzdiag{0}{
	\draw (0,-2) node[below]{$(t,p)$}
		--
		(0,-1.5)
		.. controls (0,-1) and (.5,-1) ..
		(.5,-.5)  
		--
		(.5,0);
	\draw (1,-1.5) node[below]{$\id_a$} 
		--
		(1,-1.5)
		.. controls (1,-1) and (.5,-1) ..
		(.5,-.5);
	\draw[<-] (.25,-1.25) -- (.75,-1.25);
}
\ = \ 
X^{|a|}
\lambda_R(p, (|a|,0))
 \tikzdiag{0}{
 	\draw (0,-2) node[below]{$(t,p)$}  -- (0,0);
 }
\]
We conclude that $H^n$ is unital. 
\end{proof}

\begin{rem}
When specializing $X=Y=Z=1$ or $X=Z=1$ and $Y = -1$, the algebra $H^n$ coincides with the usual Khovanov arc algebra~\cite{khovanovHn} or the odd one~\cite{naissevaz}. 
Moreover, the center $Z(H^n)$ then coincides with usual notion of center or the notion of odd center from~\cite{naissevaz}. 
\end{rem}

For $t \in B_n^m$, the composition maps $\mu[1_m, t]$ and $\mu[t,1_n]$ turn $\tqft(t)$ into a $H^m$-$H^n$-bimodule in $\Mod^\gradC$, by the same arguments as in the proof of \cref{prop:Hnalgebra}.
Moreover, any cobordism with corners $\chcob : t \rightarrow t'$ induces a graded map 
\[
\shiftFunct{\chcob}(\tqft(t)) \xrightarrow{\tqft(\chcob)} \tqft(t').
\]
 Let $\otimes_n$ denotes the ($\gradC$-graded) tensor product $\otimes_{H^n}$. 

\begin{prop}\label{prop:projbimod}
The $H^m$-$H^n$-bimodule $\tqft(t)$ is projective as left $H^m$-module and as right $H^n$-module.
\end{prop}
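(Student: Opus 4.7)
The plan is to show projectivity on each side by decomposing $\tqft(t)$ along the "outer" index and identifying each summand, up to $\gradC$-grading shift, with a direct summand of a free module.

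First I would fix $a \in B^n$ and consider ${}_\bullet \tqft(t)_a := \bigoplus_{b \in B^m} {}_b\tqft(t)_a$, which is a left $H^m$-submodule of $\tqft(t)$ (since the multiplication $\mu[1_m, t]$ preserves the $a$-index). The key geometric observation is that $ta$ is a 1-manifold whose top boundary is the $2m$ top endpoints of $t$; removing its free loops gives a crossingless matching $\widehat{ta} \in B^m$, while leaving $k := k(t,a) \geq 0$ free loops $L$. By monoidality of $\tqft$ this gives an $R$-linear identification
\[
\tqft(\bar b t a) \cong \tqft(\bar b \widehat{ta}) \otimes_R A^{\otimes k} = {}_bH^m{}_{\widehat{ta}} \otimes_R A^{\otimes k}.
\]

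Next I would verify this identification is compatible with the left $H^m$-action. The action of $y \in {}_cH^m{}_b$ on ${}_b\tqft(t)_a$ is implemented by $\tqft(\chcob_{cba}(1_m, t))$, whose surgery arcs lie entirely in the $\bar b b$ portion of $\bar c b \bar b \widehat{ta} \sqcup L$ and are disjoint from the loops $L$. Hence the cobordism splits as $\chcob' \sqcup \un_L$ where $\chcob'$ is the surgery cobordism computing the multiplication $H^m \otimes {}_\bullet H^m{}_{\widehat{ta}} \to {}_\bullet H^m{}_{\widehat{ta}}$, and $\un_L$ is the identity cylinder on the loops. Applying $\tqft$ and using \cref{prop:removeloops} to absorb the loop contribution into a $\bZ\times\bZ$-shift plus a shift by a (birth-filling) cobordism $\chcob_{t,a}$, I get an isomorphism of left $H^m$-modules (in $\BIMOD^\gradC$, that is up to an appropriate $\gradC$-shifting functor)
\[
{}_\bullet \tqft(t)_a \;\cong\; \shiftFunct{\chcob_{t,a}^{v}} \bigl( H^m \cdot e_{\widehat{ta}} \otimes_R A^{\otimes k} \bigr),
\]
where $e_{\widehat{ta}} := {}_{\widehat{ta}}1_{\widehat{ta}}$ and $v \in \bZ^2$ accounts for the loop degree shifts. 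Since $A^{\otimes k}$ is free of rank $2^k$ over $R$, the right-hand side is a direct sum of $2^k$ shifted copies of $H^m \cdot e_{\widehat{ta}}$; the latter is a direct summand of ${}_\bullet H^m$ cut out by the idempotent $e_{\widehat{ta}}$, hence projective.

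Summing over $a \in B^n$, the left $H^m$-module $\tqft(t) = \bigoplus_a {}_\bullet \tqft(t)_a$ is a direct sum of projectives, hence projective. For the right $H^n$-module structure I would run the symmetric argument: fix $b \in B^m$, consider the 1-manifold $\bar b t$ with reduced matching $\widehat{\bar b t} \in B^n$ and $l$ free loops, and identify ${}_b\tqft(t)_\bullet \cong \shiftFunct{\chcob}(e_{\widehat{\bar b t}} \cdot H^n \otimes_R A^{\otimes l})$ as a right $H^n$-module up to grading shift.

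The main obstacle is the third step: because $H^m$ is only quasi-associative and the associator $\assoc$ is non-trivial in $\gradC$, the identification ${}_\bullet\tqft(t)_a \cong \shiftFunct{\chcob_{t,a}^v}(H^m \cdot e_{\widehat{ta}} \otimes A^{\otimes k})$ has to be carried out inside $\BIMOD^\gradC(H^m, I_\gradC)$ using the shifting $2$-system of \cref{sec:gradCat}, and one must check that the compatibility scalars $\compMap{}{}$ coming from the splitting of the surgery cobordism into $\chcob' \sqcup \un_L$ exactly match the ones produced by \cref{prop:removeloops}. Once that bookkeeping is done, projectivity follows formally from the projectivity of $H^m \cdot e_{\widehat{ta}}$ as a summand of $H^m$.
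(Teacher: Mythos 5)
Your proof is correct and follows exactly the approach the paper intends: the paper's own ``proof'' is a one-line reference to~\cite[Proposition~3]{khovanovHn}, leaving the details to the reader, and your argument is precisely Khovanov's decomposition $\tqft(\bar b t a) \cong {}_bH^m{}_{\red{ta}} \otimes_R A^{\otimes k}$, adapted to the $\gradC$-graded setting via the shifting system of \cref{sec:gradCat} and \cref{prop:removeloops}. You have also correctly identified the only nontrivial bookkeeping (matching the compatibility scalars $\compMap{}{}$ with those of \cref{prop:removeloops}), which is the content the paper chose to omit.
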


\begin{proof}
The proof is essentially the same as in \cite[Proposition 3]{khovanovHn}. We leave the details to the reader. 
\end{proof}

\begin{prop}\label{prop:flattanglecompiso}
For $t' \in B_n^{m'}$ and $t \in B_m^n$ we have an isomorphism
\[
\tqft(t') \otimes_n \tqft(t) \cong \tqft(t't),
\]
induced by $\mu[t',t]$. 
\end{prop}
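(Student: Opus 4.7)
The plan is to proceed in two stages. In the first stage, I would show that the composition map $\mu[t', t]$ is $H^n$-balanced, so that it factors through the quasi-associative tensor product to give a map
$$\bar\mu \colon \tqft(t') \otimes_n \tqft(t) \longrightarrow \tqft(t't).$$
In the second stage, I would show that $\bar\mu$ is an isomorphism of $H^{m'}$-$H^m$-bimodules.

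For the first stage, the required balanced relation
$$\mu[t', t](x \cdot h, y) = \assoc(|x|, |h|, |y|)\, \mu[t', t](x, h \cdot y)$$
for $x \in \tqft(t')$, $h \in H^n$, $y \in \tqft(t)$ is exactly \cref{lem:muassoc} specialized to $(t'', t', t) \mapsto (t', 1_n, t)$: the geometric compositions $t' \cdot 1_n$ and $1_n \cdot t$ reduce to $t'$ and $t$ respectively, while $\mu[t', 1_n]$ and $\mu[1_n, t]$ realize the right and left actions of $H^n$. This is precisely the defining relation of $\otimes_n$ from \cref{sec:gradcat}, so $\mu[t', t]$ descends to $\bar\mu$. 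Bilinearity of $\bar\mu$ with respect to the outer $H^{m'}$- and $H^m$-actions follows from two further specializations of \cref{lem:muassoc}.

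For the second stage, I would adapt the proof of \cite[Proposition 3]{khovanovHn} to the quasi-associative setting. Both sides decompose block-wise over $(c, a) \in B^{m'} \times B^m$: on the $(c,a)$-block, $\bar\mu$ is induced by the chronological TQFT applied to the canonical surgery cobordisms $\chcob_{cba}(t', t)$, which perform $n$ saddle moves collapsing the circles in $b\bar b$. Using projectivity of $\tqft(t)$ as a left $H^n$-module (\cref{prop:projbimod}), one reduces to the case $\tqft(t) = H^n$, where $\bar\mu$ becomes the right-unit isomorphism $\tqft(t') \otimes_n H^n \cong \tqft(t')$. The latter is the unitality of the right $H^n$-action, verified by the same style of computation as in the proof of \cref{prop:Hnalgebra}.

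The main obstacle will be tracking the scalars in $R$ produced by the associator $\assoc$, the compatibility maps $\beta$, and the shifting-2-system data throughout the reduction, and in particular checking that the identification of ${}_c\tqft(t't){}_a$ with the appropriate quotient of $\bigoplus_b \tqft(\bar c t' b \bar b t a)$ respects these scalars. The quasi-associative framework of \cref{sec:gradcat} and the specific grading category $\gradC$ of \cref{sec:gradCat} were designed exactly to absorb these scalars: the 3-cocycle property of $\assoc$ (\cref{prop:gradCassoc}) and the compatibility of the shifting system (\cref{prop:assoccompbeta}) guarantee that the scalars cohere at every reduction step. Once this bookkeeping is set up — a computation of the same flavor as, but more elaborate than, the unitality check in \cref{prop:Hnalgebra} — the rest of the argument is a direct transcription of the classical even case.
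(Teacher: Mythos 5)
Your first stage (factoring $\mu[t',t]$ through the coequalizer via \cref{lem:muassoc}, specializing the middle slot to $1_n$) is exactly the paper's argument, and correct. The paper then disposes of the second stage by citing \cite[Theorem 1]{khovanovHn} directly: once the coequalizer factorization is set up, the block-by-block verification that $\bar\mu$ is an isomorphism on each ${}_c(\cdot)_a$ piece is a direct combinatorial argument (surjectivity via a one-sided inverse built from birth cobordisms over a canonical intermediate matching, followed by identifying the kernel with the balancing relations), and this argument is insensitive to whether $H^n$ is strictly or only quasi-associative.

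The route you propose for the second stage has a genuine gap. You want to ``reduce to the case $\tqft(t) = H^n$'' using projectivity of $\tqft(t)$ as a left $H^n$-module (\cref{prop:projbimod}). But this reduction is not available: the target $\tqft(t't)$ is a geometric invariant of the flat tangle $t't$, and the map $\bar\mu$ is built from the surgery cobordisms $\chcob_{cba}(t',t)$, both of which depend on the tangle $t$ itself, not merely on the abstract left $H^n$-module $\tqft(t)$. Projectivity gives an algebraic splitting of $\tqft(t)$ into shifted copies of column modules of $H^n$, but that splitting carries no geometric realization making ``$\tqft(t) = H^n$'' correspond to ``$t = 1_n$'', and there is no corresponding decomposition of $\tqft(t't)$ compatible with $\bar\mu$ to transport the isomorphism summand by summand. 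In effect you would need the conclusion of the proposition in order to set up the reduction. If you try to repair this by decomposing $t$ into elementary cups and caps and inducting on length, you end up rederiving the direct Khovanov argument, which is what the paper's citation intends. Your remarks about the 3-cocycle and the compatibility maps absorbing the scalars are correct in spirit and are indeed what justifies the ``exactly the same as Khovanov'' claim; the issue is solely with the projectivity reduction.
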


\begin{proof}
We first note that $\mu[t',t] : \tqft(t') \otimes_R \tqft(t)$ induces a map 
\[
\tqft(t') \otimes_n \tqft(t) \rightarrow \tqft(t't),
\]
by the universal property of the coequalizer, since by \cref{lem:muassoc} we have
\[
\mu[t',t] (m' \cdot x, m) = \assoc(|m'|,|x|,|m|) \mu[t',t](m', x \cdot m).
\]
The remaining of the proof is exactly the same as in~\cite[Theorem 1]{khovanovHn}. 
\end{proof}

\begin{lem}\label{lem:flattanglecompisoHex}
The following diagram
\[
\begin{tikzcd}
\bigl( \tqft(t'') \otimes_H \tqft(t') \bigr) \otimes_H \tqft(t) 
\ar{r}{\assoc}
\ar[swap]{d}{\mu[t'',t'] \otimes 1}
&
\tqft(t'') \otimes_H \bigl( \tqft(t') \otimes_H \tqft(t) \bigr)
\ar{d}{1 \otimes \mu[t',t]}
\\
\tqft(t'' t') \otimes_H \tqft(t)
\ar[swap]{d}{\mu[t''t', t]}
&
\tqft(t'') \otimes_H \tqft(t' t)
\ar{d}{\mu[t'',t't]}
\\
\tqft(t'' t' t)
\ar[equals]{r}
&
\tqft(t'' t' t)
\end{tikzcd}
\]
commutes for all $t'',t',t \in B_\bullet^\bullet$.
\end{lem}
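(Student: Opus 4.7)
The plan is to chase an element $(z \otimes y) \otimes x \in (\tqft(t'') \otimes_H \tqft(t')) \otimes_H \tqft(t)$ around the diagram and verify that the two paths produce the same output, using \cref{lem:muassoc} as the key input. Since both $\mu[-,-]$ and the associator $\assoc$ of $\Mod^\gradC$ are $R$-linear and well-defined on $\otimes_H$ by the universal property of the coequalizer (as established in the proof of \cref{prop:flattanglecompiso}), it suffices to check commutativity on pure tensors of homogeneous elements.

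First, following the left-then-down path: applying $\mu[t'',t'] \otimes 1$ sends $(z \otimes y) \otimes x$ to $\mu[t'',t'](z,y) \otimes x$, and then $\mu[t''t',t]$ produces $\mu[t''t',t]\bigl(\mu[t'',t'](z,y), x\bigr)$. Following the right-then-down path: the associator $\assoc$ sends $(z \otimes y) \otimes x$ to $\assoc(|z|,|y|,|x|)\, z \otimes (y \otimes x)$ by the very definition of the coherence isomorphism in $\Mod^\gradC$; then $1 \otimes \mu[t',t]$ yields $\assoc(|z|,|y|,|x|)\, z \otimes \mu[t',t](y,x)$, and finally $\mu[t'',t't]$ produces $\assoc(|z|,|y|,|x|)\, \mu[t'',t't]\bigl(z,\mu[t',t](y,x)\bigr)$.

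These two expressions are equal exactly by \cref{lem:muassoc}, which asserts
\[
\mu[t''t',t]\bigl(\mu[t'',t'](z,y),x\bigr) = \assoc(|z|,|y|,|x|)\,\mu[t'',t't]\bigl(z,\mu[t',t](y,x)\bigr).
\]
The only subtle point is verifying that the scalar $\assoc(|z|,|y|,|x|)$ produced by the coherence isomorphism of $\Mod^\gradC$ is the same as the scalar $\assoc_1 \assoc_2$ appearing in \cref{lem:muassoc}, but this is immediate because both sides are computed using the same associator on $\gradC$: the coherence isomorphism in $\Mod^\gradC$ is defined via $\assoc$, and \cref{lem:muassoc} is proved using the same $\assoc$ that governs the canonical surgery cobordisms underlying $\mu$. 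I do not anticipate a serious obstacle here; the statement is essentially a reformulation of \cref{lem:muassoc} in the language of $\otimes_H$ via the identification of $\tqft(t') \otimes_H \tqft(t)$ with $\tqft(t't)$ from \cref{prop:flattanglecompiso}.
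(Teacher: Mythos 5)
Your argument is correct and is essentially the same as the paper's, which simply notes that the commutativity is immediate from the definitions of $\assoc$ and $\mu[t',t]$ — i.e.\ from \cref{lem:muassoc}. You have just unwound that one-line proof into an explicit element chase, together with the (correct) observation that all maps descend to $\otimes_H$ by the universal property of the coequalizer as in \cref{prop:flattanglecompiso}.
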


\begin{proof}
Immediate by definition of $\assoc$ and $\mu[t',t]$. 
\end{proof}

\subsection{Tangle resolution}\label{sec:tangleres}

An $(n,m)$-tangle $T$ is a tangle in $\bR^2 \times I$ connecting $2m$ points on the bottom to $2n$-points on the top. A plane diagram of such a tangle is a generic projection of the tangle on $\bR \times I$, marking the order of superposition in the crossings. 

Given a crossing in a plane diagram of a tangle, one can resolve it in to possible ways:
\[
\begin{tikzcd}
&
\ 
\tikzdiagc
{
	\draw(0,0) -- (1,1);
	\fill[fill=white] (.5,.5) circle (.15);
	\draw(0,1) -- (1,0);
	\node at (.5,-.5) {ith-crossing};
}
\ 
\ar[swap]{dl}{\xi_i = 0}
\ar{dr}{\xi_i = 1}
& 
\\
\tikzdiagc
{
	\draw (2.5,-1.5) .. controls (2.75,-1.75) and (3.25,-1.75) .. (3.5,-1.5);
	\draw (2.5,-2.5) .. controls  (2.75,-2.25) and (3.25,-2.25) .. (3.5,-2.5);
	\node at (3,-3) {0-resolution};
}
&&
\tikzdiagc
{
	\draw (-1.5,-1.5) .. controls (-1.75,-1.75) and (-1.75,-2.25) .. (-1.5,-2.5);
	\draw (-2.5,-1.5) .. controls (-2.25,-1.75) and (-2.25,-2.25) ..  (-2.5,-2.5);
	\node at (-2,-3) {1-resolution};
}
\end{tikzcd}
\]
A resolution of a plan diagram is given by resolving all its crossings.
It yields a flat tangle in $\tangleSpace_m^n$.  
Suppose $T$ has $k$ crossings, that we order by reading $T$ from bottom to top (we can suppose there are no pair of crossings at the same height). 
For each $\xi = (\xi_k, \dots, \xi_1) \in \{0,1\}^k$, we write $T_\xi \in \tangleSpace_m^n$ for the resolution of $T$ given by resolving the $i$-th crossing as given by $\xi_i$. We also write $\und 0 := (0, \dots, 0) \in \{0,1\}^k$ and $ \und 1 := (1, \dots, 1) \in \{0,1\}^k$.

\smallskip

We can suppose all crossings in $T$ are pivoted to look like above (or mirror), and we associate to them an arrow pointing upward or leftward:
\begin{align*}
\tikzdiagc
{
	\draw(0,0) -- (1,1);
	\fill[fill=white] (.5,.5) circle (.15);
	\draw(0,1) -- (1,0);
	\draw[->] (.5,.15) -- (.5,.85);
}
&&
\text{or}
&&
\tikzdiagc
{
	\draw(0,1) -- (1,0);
	\fill[fill=white] (.5,.5) circle (.15);
	\draw(0,0) -- (1,1);
	%
	\draw[<-] (.15,.5) -- (.85,.5);
}
\end{align*}
For each $\xi_i = 0$ in $\xi$, we write $\xi+i := (\xi_k, \dots, \xi_{i+1}, 1, \xi_{i-1}, \dots, \xi_1)$. We construct a chronological cobordism $\chcob_{\xi,i} : T_\xi \rightarrow T_{\xi+i}$ by putting a saddle above the $0$-resolution of the $i$-th crossing, with orientation given by the arrow on the crossing. 
This defines a map:
\[
\shiftFunct{\chcob_{\xi,i}} \bigl( \tqft(T_\xi) \bigr)  \xrightarrow{\tqft(\chcob_{\xi,i})} \tqft(T_{\xi+i}).
\]
Note that we need to shift $\tqft(T_\xi)$ in order to get a graded map of bimodules. 

\smallskip

Let $|\xi| = \sum_{i=0}^k \xi_i$ be the weight of $\xi$. 
For each $\xi$, we define recursively a cobordism
\[
\chcob_\xi :=  \chcob_{\xi+\ell} \circ  \chcob_{\xi,\ell} ,
\]
where $\ell$ is the lowest integer such that $\xi_\ell = 0$, and $\chcob_{\und 1}$ is the identity. 
In other words, there is a unique (non-chronological) cobordism from $T_\xi$ to $T_\und 1$ given changing all $0$ to $1$, and we give it a chronology by stacking the saddles in the order given by reading 
the crossings in $T$ from bottom to top. 
\smallskip

We write 
\begin{align*}
C(T)_r &:= \bigoplus_{|\xi| = r} C(T)_\xi[r],
&
C(T)_\xi &:= \shiftFunct{\chcob_\xi} \bigl( \tqft(T_{\xi})\bigr),
\end{align*}
where we recall $[r] = [1]^r$ is a shift up by $r$ units in the homological degree. 
For each $\xi_j = 0$, we consider the change of chronology
\[
H_{\xi,j} :  \chcob_{\xi} \Rightarrow  \chcob_{\xi+j} \circ \chcob_{\xi,j},
\]
that consists in taking the saddle above the $j$-th crossing and pushing it to the bottom. 
This allows us to build a graded map of bimodules
\[
d_{\xi,j} :=  \tqft(\chcob_{\xi,j}) \circ \shiftFunct{H_{\xi,j}}\bigl(\tqft(T_{\xi})\bigr)  : C(T)_\xi \rightarrow C(T)_{\xi+j}. 
\]

\begin{lem}
The diagram 
\[
\begin{tikzcd}
&
C(T)_{\xi+i}
 \ar{dr}{d_{\xi+i, j}}&
\\
C(T)_{\xi}
\ar{ur}{d_{\xi,i}}
\ar[swap]{dr}{d_{\xi,j}}
&&
C(T)_{\xi+i+j}
\\
&
C(T)_{\xi+j} \ar[swap]{ur}{d_{\xi+j,i}}&
\end{tikzcd}
\]
commutes 
for all $\xi$ and $i,j$ such that $\xi_i=\xi_j=0$. 
\end{lem}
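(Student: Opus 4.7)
The strategy is to evaluate both composites on a homogeneous element $x \in {}_b\tqft(T_\xi){}_a$ representing an element of $C(T)_\xi$, and show that each reduces to the same scalar multiple of $\tqft$ applied to a single chronological cobordism $T_\xi \to T_{\xi+i+j}$.

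Unpacking $d_{\xi,i} = \tqft(\chcob_{\xi,i}) \circ \shiftFunct{H_{\xi,i}}(\tqft(T_\xi))$ and using \cref{prop:nattrfromchangeofch} to express $\shiftFunct{H_{\xi,i}}$ as multiplication by the scalar $\imath\bigl({}_b(H_{\xi,i})_a\bigr)^{-1}$, together with the functoriality of $\tqft$ and the multiplicativity of $\imath$ under both vertical and horizontal composition of changes of chronology, the top composite takes the form
\[
d_{\xi+i,j}\circ d_{\xi,i}(x) \;=\; \imath\bigl({}_b(\mathcal{H}^{ij})_a\bigr)^{-1}\cdot\tqft(\chcob_{\xi+i,j}\circ\chcob_{\xi,i})(x),
\]
where $\mathcal{H}^{ij}\colon\chcob_\xi\Rightarrow\chcob_{\xi+i+j}\circ\chcob_{\xi+i,j}\circ\chcob_{\xi,i}$ is the vertical composition of $H_{\xi,i}$ with the horizontal composition of $H_{\xi+i,j}$ and the identity change of chronology on $\chcob_{\xi,i}$. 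A symmetric formula holds for the bottom composite with $\mathcal{H}^{ji}$ and the opposite-order saddle composite.

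The two saddle composites $\chcob_{\xi+i,j}\circ\chcob_{\xi,i}$ and $\chcob_{\xi+j,i}\circ\chcob_{\xi,j}$ share the same underlying surface---two saddles attached at the horizontally disjoint $i$-th and $j$-th crossings of $T$---but with opposite height orderings of their critical points. Let $K$ be the locally vertical change of chronology swapping the two saddles; denoting by $|s_i|,|s_j|\in\{(-1,0),(0,-1)\}$ their $\bZ\times\bZ$-degrees, the $\linchcobcat$ commutation relations \cref{eq:cobcommute1}--\cref{eq:cobcommute2} give
\[
\chcob_{\xi+i,j}\circ\chcob_{\xi,i} \;=\; \lambda_R(|s_j|,|s_i|)\cdot\chcob_{\xi+j,i}\circ\chcob_{\xi,j}
\]
in $\linchcobcat$, so $\imath({}_bK_a)=\lambda_R(|s_j|,|s_i|)^{-1}$ since whiskering by $1_{\bar b},1_a$ preserves the saddle degrees.

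Finally, horizontally composing $K$ with the identity change on $\chcob_{\xi+i+j}$ and vertically with $\mathcal{H}^{ij}$ produces a locally vertical change of chronology from $\chcob_\xi$ with the same target as $\mathcal{H}^{ji}$. By \cref{prop:locvertchange} these two changes are homotopic, hence $\imath\bigl({}_b(\mathcal{H}^{ji})_a\bigr) = \imath({}_bK_a)\cdot\imath\bigl({}_b(\mathcal{H}^{ij})_a\bigr)$. Substituting this into the bottom composite and the $\linchcobcat$ identity into the top, both evaluate to
\[
\lambda_R(|s_j|,|s_i|)\cdot\imath\bigl({}_b(\mathcal{H}^{ij})_a\bigr)^{-1}\cdot\tqft(\chcob_{\xi+j,i}\circ\chcob_{\xi,j})(x),
\]
establishing commutativity. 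The main delicate step is organizing $\mathcal{H}^{ij}$ and $\mathcal{H}^{ji}$ as iterated horizontal/vertical compositions of the elementary $H$'s so that $\imath$-multiplicativity applies cleanly; once this bookkeeping is in place, the cancellation between $\imath({}_bK_a)$ and the $\lambda_R$ factor is tautological, being built into the very construction of $\linchcobcat$ from $\imath$.
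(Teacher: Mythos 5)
Your proof is correct and follows essentially the same strategy as the paper: reduce each composite to $\tqft$ of a surgery cobordism pre-composed with a change-of-chronology shift, and invoke \cref{prop:locvertchange} to show the accumulated locally vertical changes of chronology from $\chcob_\xi$ to itself are homotopic to the identity. The paper phrases this entirely at the level of the natural transformations $\shiftFunct{H}$, $\shiftFunct{H'}$, $\shiftFunct{H''}$ without ever introducing the explicit scalar $\lambda_R(|s_j|,|s_i|)$; your version unpacks those shift functors into $\imath$-scalars and verifies the cancellation by hand, which — as you yourself observe — is a tautological recovery of the consistency built into $\linchcobcat$, so it adds bookkeeping but no new idea.
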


\begin{proof}
We have
\begin{align*}
d_{\xi+i,j} \circ d_{\xi,i} &=  \tqft(\chcob_{\xi+i,j} \circ \chcob_{\xi,i}) \circ \shiftFunct{H}(T_\xi),
\end{align*}
where
$
H : \chcob_{\xi} \Rightarrow \chcob_{\xi+i+j} \circ \chcob_{\xi+i,j} \circ \chcob_{\xi, i}
$
is a locally vertical change of chronology. 
Moreover, we have
\[
\tqft(\chcob_{\xi+i,j} \circ \chcob_{\xi,i})  = \tqft(\chcob_{\xi+j,i} \circ \chcob_{\xi,j})  \circ \shiftFunct{H'}(T_\xi),
\]
where
$
H' :  \chcob_{\xi+i+j} \circ \chcob_{\xi+j,i} \circ \chcob_{\xi, i} \Rightarrow \chcob_{\xi+i+j} \circ \chcob_{\xi+j,i} \circ \chcob_{\xi,j}.
$
Thus, 
\begin{align*}
d_{\xi+i,j} \circ d_{\xi,i} &= \tqft(\chcob_{\xi+j,i} \circ \chcob_{\xi,j}) \circ \shiftFunct{H'}(T_\xi) \circ \shiftFunct{H}(T_\xi)
\\
&= 
d_{\xi+j,i} \circ d_{\xi,j} \circ \shiftFunct{H''}(T_\xi) \circ \shiftFunct{H'}(T_\xi) \circ \shiftFunct{H}(T_\xi),
\end{align*}
for $H'' : \chcob_{\xi+i+j} \circ \chcob_{\xi+i,j} \circ \chcob_{\xi, i} \Rightarrow \chcob_{\xi}$. Since $H'' \star H' \star H : \chcob_\xi \Rightarrow \chcob_\xi$, by \cref{prop:locvertchange} it is homotopic to the identity change of chronology, and we obtain 
\[
 \shiftFunct{H''}(T_\xi) \circ \shiftFunct{H'}(T_\xi) \circ \shiftFunct{H}(T_\xi) = \shiftFunct{H'' \star H' \star H}(T_\xi) = \shiftFunct{\id_{\chcob_\xi}}(T_\xi) = \id,
 \]
  so that $d_{\xi+i,j} \circ d_{\xi,i} = d_{\xi+j,i} \circ d_{\xi,j}$.
\end{proof}

\begin{rem}
This is the first difference that appears between the usual construction in \cite{ORS, putyra14} and our framework of $\gradC$-graded modules. Indeed, in the references, they obtain maps that are commutative only up to some sign (or scalar) assignment, that is shown afterward to always exists. With our approach, the grading shift already assign the correct scalar to each map in order to have a commutative cube of resolutions. 
\end{rem}

Let $p(\xi,j) := \#\{ k \geq \ell > j | \xi_\ell = 1\}$ for all $\xi$ and $1 \leq j \leq k$.  
We construct a map
\[
d_r  : T_r \rightarrow T_{r+1},
\]
given by
\[
d_r|_{C(T)_{\xi}} :C(T)_{\xi} \rightarrow \bigoplus_{\{j | \xi_j = 0\}} C(T)_{\xi+j} \subset T_{r+1}, \qquad  d_r|_{C(T)_{\xi}}  := \sum_{\{j | \xi_j = 0\}} (-1)^{p(\xi,j)} d_{\xi,j},
\]
for all $|\xi| = r$.  Note that $d_{r+1} \circ d_r = 0$. We write
\[
\tqft(T) := \bigl(\bigoplus_{r=0}^{k} C(T)_r, d = \sum_r d_r\bigr) \in \Bimoddg^\gradC(H^n, H^m).
\]

\begin{rem}
Note that if $T$ is made of a single crossing, then 
\[
\tqft(T) \cong \cone\left(\shiftFunct{\chcob} \bigl(\tqft(T_{0})\bigr) \xrightarrow{\tqft(\chcob)} \tqft(T_1)\right)[1],
\]
where $\chcob$ is the saddle from the $0$-resolution of the crossing to its $1$-resolution. 
\end{rem}

\subsection{Composition of tangles}

The dg-bimodule $\tqft(T)$ is nicely behaved with respect to composition of tangles. 

\begin{prop}\label{prop:comptangles}
Let $T'$ be an $(m',n)$-tangle and $T$ an $(n,m)$-tangle. 
There is an isomorphism
\[
\tqft(T') \otimes_{n} \tqft(T) \xrightarrow{\simeq} \tqft(T'T),
\] 
induced by the composition maps $\mu[T'_{\xi'},T_\xi]$. 
\end{prop}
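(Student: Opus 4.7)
The plan is to prove the isomorphism summand by summand on the level of the underlying bigraded bimodules, then show that the assembled map commutes with the differentials. Since the crossings of $T'T$ are naturally the disjoint union of those of $T'$ and $T$, the resolutions of $T'T$ are exactly pairs $(\xi',\xi) \in \{0,1\}^{k'} \times \{0,1\}^k$, with $(T'T)_{(\xi',\xi)} = T'_{\xi'} T_\xi$ as flat tangles, and $|(\xi',\xi)| = |\xi'|+|\xi|$. Ordering the crossings of $T'T$ by reading from bottom to top places those of $T$ before those of $T'$, so the sign exponent splits as $p_{T'T}((\xi',\xi),j) = p_{T'}(\xi',j)$ for a crossing in $T'$, and $p_{T'T}((\xi',\xi),i) = p_T(\xi,i) + (k' - |\xi'|)$ for a crossing in $T$.

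First, I would identify each summand. Using \cref{prop:flattanglecompiso} together with the canonical isomorphism $\compMap{\chcob_{\xi'}}{\chcob_\xi} : \shiftFunct{\chcob_{\xi'}}(\tqft(T'_{\xi'})) \otimes_n \shiftFunct{\chcob_{\xi}}(\tqft(T_\xi)) \xrightarrow{\simeq} \shiftFunct{\chcob_{\xi'} \bullet \chcob_\xi}\bigl(\tqft(T'_{\xi'}) \otimes_n \tqft(T_\xi)\bigr)$ from \cref{sec:shiftingbimod}, I obtain a graded isomorphism
\[
\Phi_{\xi',\xi} : C(T')_{\xi'} \otimes_n C(T)_\xi \xrightarrow{\simeq} \shiftFunct{\chcob_{\xi'} \bullet \chcob_\xi}\bigl(\tqft(T'_{\xi'}T_\xi)\bigr).
\]
The cobordism $\chcob_{\xi'} \bullet \chcob_\xi : T'_{\xi'}T_\xi \to T'_{\und 1}T_{\und 1}$ differs from the canonical cobordism $\chcob_{(\xi',\xi)}$ used in the construction of $\tqft(T'T)$ only by reordering saddles (those for $T'$ are stacked above those for $T$ in both, by construction); the unique locally vertical change of chronology identifying them, via \cref{prop:nattrfromchangeofch} and \cref{prop:locvertchange}, supplies an equality $\shiftFunct{\chcob_{\xi'} \bullet \chcob_\xi} \cong \shiftFunct{\chcob_{(\xi',\xi)}}$ that lands us in $C(T'T)_{(\xi',\xi)}$. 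Assembling over all $(\xi',\xi)$ yields a graded isomorphism of bimodules $\Phi : \tqft(T') \otimes_n \tqft(T) \to \tqft(T'T)$.

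Next I would verify that $\Phi$ is a chain map. The differential on the tensor product is $d_{T'} \otimes 1 + (-1)^{\deg_h(-)} \cdot \commutMap{\cdot}{\cdot}{\cdot}{\cdot}\,(1 \otimes d_T)$ (with Koszul sign shifted by the commutativity scalar coming from \cref{sec:Gcommutativity}). On each summand, the component of $d_{T'T}$ coming from resolving a $T'$-crossing $j$ is, by construction, $(-1)^{p_{T'T}((\xi',\xi),j)} d_{(\xi',\xi),j} = (-1)^{p_{T'}(\xi',j)} d_{\xi',j} \otimes 1$ after applying $\Phi$, because the surgery cobordism for the $j$-th $T'$-crossing of $T'_{\xi'}T_\xi$ is the juxtaposition $\chcob_{\xi',j} \bullet \un_{T_\xi}$, and its image under $\tqft$ factors through $\Phi_{\xi'+j,\xi}$ by \cref{lem:flattanglecompisoHex} applied to the three flat tangles meeting at the surgery. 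Similarly for a $T$-crossing $i$, the $(k'-|\xi'|)$ contribution to $p_{T'T}$ is exactly the Koszul/$\commutMap{}{}{}{}$ shift needed.

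The main obstacle is the sign/scalar bookkeeping in this last step. Two kinds of extra scalars can appear: (i) the $\lambda_R$ factors from the grading shift commutativity system of \cref{sec:Gcommutativity}, when transposing $\chcob_{\xi',j}$ past $\un_{T_\xi}$ (or $\un_{T'_{\xi'}}$ past $\chcob_{\xi,i}$), and (ii) the chronology defects $\imath(H)$ from the locally vertical identifications of $\chcob_{\xi'}\bullet\chcob_\xi$ with $\chcob_{(\xi',\xi)}$ and of $\chcob_{(\xi',\xi),\star}$ with $\chcob_{\xi',j}\bullet\un$ or $\un\bullet\chcob_{\xi,i}$. Both types are governed by \cref{prop:assoccompbeta}, \cref{prop:nattrfromchangeofch} and the compatibility diagrams \cref{eq:shiftingsystcompdiag} and \cref{eq:shiftingsystvcompdiag}; the key observation is that when restricting to identity cobordisms on one factor, $\compMap{\chcob}{\un}$ and $\compMap{\un}{\chcob}$ are both trivial on the $\chcob$-factor (since identity cobordisms contribute no $\lambda_R$ outside of the explicit $\chdefect{}$), so all the extra scalars collapse and precisely the homological Koszul sign $(-1)^{|\xi'|}$ survives, matching $p_{T'T}((\xi',\xi),i) - p_T(\xi,i) = k' - |\xi'|$ modulo $2$. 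Once this verification is complete, $\Phi$ is a chain isomorphism.
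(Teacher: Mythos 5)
Your decomposition of the isomorphism is exactly the paper's: you identify each summand using $\compMap{\chcob_{\xi'}}{\chcob_\xi}$ followed by $\mu[T'_{\xi'},T_\xi]$, invoke \cref{prop:flattanglecompiso}, identify $\chcob_{\xi'}\bullet\chcob_\xi$ with $\chcob_{\xi'\sqcup\xi}$, and then claim the map intertwines the differentials. Up to this point you match the paper. The gap is in the verification step, which you flag as "the main obstacle" and then dispatch with an argument that does not work.

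There are two specific problems. First, your formula $p_{T'T}((\xi',\xi),i)=p_T(\xi,i)+(k'-|\xi'|)$ is wrong. Since $p(\xi,j)$ counts indices $\ell>j$ with $\xi_\ell=1$ and the $T'$-crossings sit above all $T$-crossings, the correct statement is $p_{T'T}((\xi',\xi),i)=p_T(\xi,i)+|\xi'|$, which indeed matches the Koszul sign $(-1)^{|\xi'|}$ coming from $\deg_h(m')=|\xi'|$; but $k'-|\xi'|$ is not congruent to $|\xi'|$ mod $2$ unless $k'$ is even, so as written the signs do not close up. Second, and more seriously, the claim that "identity cobordisms contribute no $\lambda_R$" and therefore "all the extra scalars collapse" is not true. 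The compatibility map $\compMap{\chcob_{\xi',i}}{1}$ has a nontrivial $\beta_1$ factor (a change-of-chronology scalar $\imath(H)$), and for the $T$-crossing square the map $\compMap{1_{T'_{\xi'}}}{\chcob_{\xi,j}}$ even has a nontrivial $\beta_2''=\lambda_R\bigl(p',\deg(1_{\bar b}\chcob_{\xi,j}1_a)\bigr)$ factor. These do not simply vanish; in the paper's proof they appear as the defects of the faces $f_{22}$ and $f_{42}$ and are shown to cancel against each other because $\chcob_{\xi'\sqcup\xi,i+k}=\chcob_{\xi',i}\bullet 1_{T_\xi}$, while the $\lambda_R(p',\cdot)$ contributions are matched using bilinearity and \cref{lem:horcompdeg}. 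You cannot avoid tracking these cancellations; a face-by-face decomposition of the two squares in \cref{eq:diagscomptangles}, as the paper does, is what the argument actually requires.
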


\begin{proof}
Let $k$ and $k'$ be the number of crossings in $T$ and $T'$ respectively. Take $\xi \in \{0,1\}^k$ and $\xi' \in \{0,1\}^{k'}$. Consider the sequence of isomorphisms
\begin{align*}
C(T')_{\xi'} \otimes_n C(T)_{\xi} 
&= 
\shiftFunct{\chcob_{\xi'}}\bigl(  \tqft(T'_{\xi'})\bigr) \otimes_n \shiftFunct{\chcob_{\xi}}\bigl(  \tqft(T_{\xi})\bigr) 
\xrightarrow{\compMap{\chcob_{\xi'}}{\chcob_{\xi}}}
 \shiftFunct{\chcob_{\xi'} \bullet \chcob_{\xi}}\bigl(\tqft(T'_{\xi'}) \otimes_n \tqft(T_\xi)\bigr) 
 \\
&\xrightarrow{{\mu[T'_{\xi'}, T_\xi]}}
 \shiftFunct{\chcob_{\xi'} \bullet \chcob_\xi}\bigl(\tqft(T'_{\xi'}T_\xi)\bigr) 
 \cong 
 \shiftFunct{\chcob_{\xi' \sqcup \xi}}\bigl(\tqft(T'_{\xi'}T_\xi)\bigr) 
  = C(T'T)_{\xi' \sqcup \xi},
\end{align*}
where the last isomorphism comes from the fact that $\chcob_{\xi'}\bullet \chcob_\xi \cong \chcob_{\xi' \sqcup \xi}$. 
It is an isomorphism thanks to \cref{prop:flattanglecompiso}. 
Thus, we only need to show that both diagrams
\begin{align}\label{eq:diagscomptangles}
\begin{tikzcd}[ampersand replacement=\&]
C(T')_{\xi'} \otimes_n C(T)_\xi 
\ar{r}{\mu}
\ar[swap]{d}{d_{\xi',i} \otimes 1}
\& 
C(T'T)_{\xi'\sqcup\xi}
\ar{d}{d_{\xi' \sqcup \xi, i+k}}
\\
C(T')_{\xi'+i} \otimes_n C(T)_\xi 
\ar[swap]{r}{\mu}
\& 
C(T'T)_{\xi'\sqcup\xi+(i+k)}
\end{tikzcd}
&&
\begin{tikzcd}[ampersand replacement=\&]
C(T')_{\xi'} \otimes_n C(T)_\xi 
\ar{r}{\mu}
\ar[swap]{d} {1 \otimes d_{\xi,j}}
\& 
C(T'T)_{\xi'\sqcup\xi}
\ar{d} {d_{\xi' \sqcup \xi, j}}
\\
C(T')_{\xi'} \otimes_n C(T)_{\xi+j} 
\ar[swap]{r}{\mu}
\& 
C(T'T)_{\xi'\sqcup\xi+j}
\end{tikzcd}
\end{align}
commute.

In this proof, we write:
\begin{align*}
C_0& := \tqft(T_\xi), 
&
C'_0 &:= \tqft(T'_{\xi'}), 
&
C_0'' &:= \tqft(T'_{\xi'} T_\xi),
&
C_1' &:= \tqft(T_{\xi'+i}),
&
C_1'' &:= \tqft(T'_{\xi'+i}T_\xi),
\\
\shiftFunct{0} &:= \shiftFunct{\chcob_\xi},
&
\shiftFunct{0}' &:= \shiftFunct{\chcob_{\xi'}},
&
\shiftFunct{0}'' &:= \shiftFunct{\chcob_{\xi' \sqcup \xi}},
&
\shiftFunct{1}' &:= \shiftFunct{\chcob_{\xi'+i}},
&
\shiftFunct{1}'' &:=\shiftFunct{\chcob_{(\xi' + i) \sqcup \xi}},
\end{align*}
and also:
\begin{align*}
\shiftFunct{i}' &:= \shiftFunct{\chcob_{\xi',i}},
&
\tqft_i' &:= \tqft(\chcob_{\xi',i}),
&
\tqft_{i+k}'' &:= \tqft(\chcob_{\xi' \sqcup \xi, i+k}).
\end{align*}
We also write $\shiftFunct{1}' \bullet \shiftFunct{0} := \shiftFunct{\chcob_{\xi'+i} \bullet \chcob_\xi}$ etc. 
Note that $\shiftFunct{0}''  = \shiftFunct{0}'\bullet \shiftFunct{0}$ and $\shiftFunct{1}'' = \shiftFunct{1}' \bullet \shiftFunct{0}$. 

Then, the diagram on the left of \cref{eq:diagscomptangles} factorizes as:
\[
\begin{tikzcd}[row sep=8ex, column sep = 7ex]
\shiftFunct{0}'(C'_0) \otimes_n \shiftFunct{0}(C_0)
\ar{r}{\shiftFunct{H_{\xi',i}} \otimes 1}
\ar[swap]{dd}{\compMap{\chcob_{\xi'}}{\chcob_\xi}}
&
\shiftFunct{1}' \circ \shiftFunct{i}' ( C_0') \otimes_n  \shiftFunct{0}(C_0)
\ar{r}{\tqft_i' \otimes 1}
\ar[swap]{d}{\compMap{\chcob_{\xi'+i}}{\chcob_{\xi}}}
\ar[phantom]{dr}{f_{12}}
&
\shiftFunct{1}'(C_1') \otimes_n \shiftFunct{0}(C_0)
\ar{d}{\compMap{\chcob_{\xi'+i}}{\chcob_\xi}}
\\
{} \ar[phantom]{r}{f_{11}}
&
\shiftFunct{1}' \bullet \shiftFunct{0}\bigl( \shiftFunct{i}'(C_0') \otimes_n C_0) \bigr)
\ar{r}{\tqft_i' \otimes 1}
\ar[swap]{d}{\compMap{\chcob_{\xi',i}}{1}} 
\ar[phantom]{dr}{f_{22}}
&
\shiftFunct{1}' \bullet \shiftFunct{0}\bigl( C_1' \otimes_n C_0 \bigr)
\ar[equals]{d}
\\
\shiftFunct{0}' \bullet \shiftFunct{0}(C_0' \otimes_n C_0)
\ar{r}{\shiftFunct{H}}
\ar[equals]{d}
\ar[phantom]{dr}{f_{31}}
&
(\shiftFunct{1}' \bullet \shiftFunct{0})\circ(\shiftFunct{i}' \bullet \shiftFunct{1_{T_\xi}}) \bigl(C_0' \otimes_n C_0 \bigr)
\ar{r}{\tqft_i' \otimes 1}
\ar[equals]{d}
\ar[phantom]{dr}{f_{32}}
&
\shiftFunct{1}' \bullet \shiftFunct{0}\bigl( C_1' \otimes_n C_0 \bigr)
\ar[equals]{d}
\\
\shiftFunct{0}''(C_0' \otimes_n C_0)
\ar{r}{\shiftFunct{H}}
\ar[swap]{d}{\mu}
\ar[phantom]{dr}{f_{41}}
&
\shiftFunct{1}'' \circ (\shiftFunct{i}' \bullet  \shiftFunct{1_{T_\xi}})\bigl(C_0' \otimes_n C_0 \bigr)
\ar{r}{\tqft_i' \otimes 1}
\ar[swap]{d}{\mu}
\ar[phantom]{dr}{f_{42}}
&
\shiftFunct{1}''\bigl(C_1' \otimes _n C_0\bigr)
\ar{d}{\mu}
\\
\shiftFunct{0}''(C_0'')
\ar{r}{\shiftFunct{H}}
&
\shiftFunct{1}''\circ(\shiftFunct{i}' \bullet  \shiftFunct{1_{T_\xi}})(C_0'')
\ar{r}{\tqft_{i+k}''}
&
\shiftFunct{1}''(C_1'')
\end{tikzcd}
\]
where $H : \chcob_{\xi'} \bullet \chcob_\xi \Rightarrow (\chcob_{\xi'+i}\bullet\chcob_{\xi})\circ(\chcob_{\xi',i}\bullet 1)$ is a locally vertical change of chronology. We claim the exterior square commutes. 
We can suppose we are applying the maps on some element $x' \otimes x$ with $|x'| = (T'_{\xi'}, p') \in \Hom_\gradC(b,c)$ and $|x| =  (T_\xi, p) \in \Hom_{\gradC}(a,b)$. We compute each face of the diagram:
\begin{itemize}
\item For the face $f_{11}$, we first observe that the contribution from $\beta_1$ in the definition of $\compMap{\chcob_{\xi'}}{\chcob_\xi}$ together with $\shiftFunct{H}$ computes 
$\imath\bigl( \chcob_{cba}({T'_{\und 1},T_{\und 1}}) \circ (1_{\bar c}\chcob_{\xi'}1_b \otimes  1_{\bar b}\chcob_{\xi} 1_a) \Rightarrow  (1_{\bar c}\chcob_{\xi'+i} \bullet \chcob_{\xi} 1_a) \circ (1_{\bar c}\chcob_{\xi',i} \bullet 1_{T_\xi} 1_a) \circ \chcob_{cba}({T'_{\xi'}, T_\xi})\bigr))$. 
Moreover, the contribution from $\beta_1$ in $\compMap{\chcob_{\xi'+i}}{\chcob_\xi}$ and in $\compMap{\chcob_{\xi',i}}{1}$ together with $\shiftFunct{H_{\xi',i}}$ compute the same change of chronology. Therefore only remains the contribution from $\beta_2, \beta_2'$ and $\beta_2''$. Both $\beta_2$ and $\beta_2'$ are zero in all cases, and $\beta_2''$ is zero for  $\compMap{\chcob_{\xi',i}}{1}$. Then, we observe that the contribution of $\beta_2''$ is the same in both $\compMap{\chcob_{\xi'+i}}{\chcob_\xi}$ and $\compMap{\chcob_{\xi'}}{\chcob_\xi}$, being $\lambda_R(p', \deg(1_{\bar b}\chcob_{\xi} 1_a)$. Thus, $f_{11}$ commutes. 
\item The face $f_{12}$ commutes by functoriality of the bifunctor $\compMap{\chcob_{\xi'+i}}{\chcob_\xi}$. 
\item The face $f_{22}$ commutes up to the factor 
\begin{align*}
&\compMap{\chcob_{\xi',i}}{1}(|x'|,|x|)^{-1} \\
&= \imath\bigl(  (1_{\bar c}\chcob_{\xi',i} \bullet 1_{T_\xi} 1_a) \circ \chcob_{cba}(T'_{\xi'},T_\xi) \Rightarrow \chcob_{cba}(T'_{\xi'+i},T_\xi) \circ (1_{\bar c}\chcob_{\xi',i} 1_b \otimes 1_{\bar b} 1_{T_\xi} 1_a) \bigr)^{-1},
\end{align*}
in the sense that the top path equal the bottom path times this factor.
\item The faces $f_{31}$ and $f_{32}$ trivially commute. 
\item The face $f_{41}$ commutes by naturality of $\shiftFunct{H}$. 
\item The face $f_{42}$ commutes up to the factor 
\[
\imath\bigl(  1_{\bar c}\chcob_{\xi' \sqcup \xi, i+k} 1_a \circ \chcob_{cba}(T'_{\xi'}, T_\xi)  \Rightarrow \chcob_{cba}(T'_{\xi'+i}, T_\xi) \circ (1_{\bar c} \chcob_{\xi',i} 1_b \otimes 1_{\bar b} 1_{T_\xi} 1_a) \bigr),
\]
since both path are related by this locally vertical change of chronology. 
\end{itemize}
Since $\chcob_{\xi' \sqcup \xi, i+k} = \chcob_{\xi',i} \bullet 1_{T_\xi}$, we obtain that the contributiuon of $f_{42}$ cancels with the one of $f_{22}$. All other faces being commutative, we conclude the outer square commutes. 

The commutativity of the second square follows from a similar argument, with a few additional subtleties. 
First, the computation of $f_{11}$ differs a little bit, but is still commutative thanks to the fact that:
\begin{align*}
\lambda_{R}(p', \deg(1_{\bar b} \chcob_{\xi} 1_a )) &= \lambda_R(p', \deg(1_{\bar b} \chcob_{\xi+j} 1_a ) + \deg(1_{\bar b} \chcob_{\xi,j} 1_a ))  
\\
&= \lambda_R(p', \deg(1_{\bar b} \chcob_{\xi+j} 1_a ))   \lambda_R(p', \deg(1_{\bar b} \chcob_{\xi,j} 1_a )).
\end{align*}
Secondly, for $f_{22}$, we obtain
\begin{align*}
\compMap{1_{T'_{\xi'}}}{\chcob_{\xi,j}} =
&\imath( H'' ) \lambda_R(p', \deg(1_{\bar b}\chcob_{\xi,j}1_a)),
\end{align*}
for some change of chronology $H'' :   (1_{\bar c}1_{T'_{\xi'}} \bullet \chcob_{\xi,j}1_a) \circ \chcob_{cba}(T'_{\xi'},T_\xi) \Rightarrow \chcob_{cba}(T'_{\xi'},T_{\xi+j}) \circ (1_{\bar c}1_{T'_{\xi'}} 1_b \otimes 1_{\bar b} \chcob_{\xi,j} 1_a)$. The face $f_{42}$ is computed as
\[
 \tikzdiag{0}{
	\draw (0,-.25) node[below]{$x'$}
		--
		(0,-.25)
		.. controls (0,.25) and (.5,.25) ..
		(.5,.75);
	\draw (1,-1.25) node[below]{$x$} 
		--
		(1,-.75)  node[near end,rrect]{$\chcob_{\xi,j}$}
		--
		(1,-.25) 
		.. controls (1,.25) and (.5,.25) ..
		(.5,.75);
}
\ = \lambda_R(p', \deg(1_{\bar b}\chcob_{\xi,j}1_a)) \ 
 \tikzdiag{0}{
	\draw (0,-.75) node[below]{$x'$}
		--
		(0,-.25)
		.. controls (0,.25) and (.5,.25) ..
		(.5,.75) ;
	\draw (1,-1.25) node[below]{$x$} 
		--
		(1,-1)
		--
		(1,-.25)  node[near end,rrect]{$\chcob_{\xi,j}$}
		.. controls (1,.25) and (.5,.25) ..
		(.5,.75);
}
\ =  \imath(H'') \lambda_R(p', \deg(1_{\bar b}\chcob_{\xi,j}1_a))' \
 \tikzdiag{0}{
	\draw (0,-1.5) node[below]{$x'$}
		.. controls (0,-1) and (.5,-1) ..
		(.5,-.5)  
		--
		(.5,.75) node[midway,rrect]{$1_{T_{\xi}}\bullet\chcob_{\xi,j}$};
	\draw (1,-2) node[below]{$x$} 
		--
		(1,-1.5)
		.. controls (1,-1) and (.5,-1) ..
		(.5,-.5);
}
\]
thus giving a factor cancelling with the one of $f_{22}$. This ends the proof. 
\end{proof}

\subsection{Reidemeister moves}

Write $\otimes_H := \otimes_{\bigoplus_{n\geq 0} H^n}$. 
Let $T$ be an oriented $(n,m)$-tangle. We can decomposes it as a composition of elementary tangles $T = T_{k} \cdots T_1$. If $T_i$ is a cup or cap elementary tangle, then we put $\kh(T_i) := \tqft(T_i)$. If $T_i$ is a crossing, then we associate to it the dg-bimodule given by the following rule:
\begin{align*}
\kh\left( 
\tikzdiagh[scale=.75]
{
	\draw[dotted] (.5,.5) circle(0.707);
	\draw[->](0,0) -- (1,1);
	\fill[fill=white] (.5,.5) circle (.15);
	\draw[->](1,0) -- (0,1);
}
 \right) 
\ &:= \ 
\cone\left( 
 (\shiftFunct{ \tikzdiagh[scale=.2]{
	\draw (.5,1) -- (.5,3) -- (4.5,3) -- (4.5,1);
	\draw (0,0) .. controls (1.5,0) and (2,1) .. (.5,1); 
	\draw (4,0) .. controls (2.5,0) and (3,1) .. (4.5,1); 
	\filldraw[fill=white, draw=white] (.5,.55)  rectangle  (4,2);
	\draw[dotted] (0,0) .. controls (1.5,0) and (2,1) .. (.5,1); 
	\draw[dotted] (4,0) .. controls (2.5,0) and (3,1) .. (4.5,1); 
	\draw (1.375,.5) .. controls (1.375,1.5) and (3.125,1.5) .. (3.125,.5);
	%
	\draw (0,0) -- (0,2) -- (4,2) -- (4,0);
}})
\tqft\left(
\tikzdiagh[scale=.75]
{
	\draw[dotted] (3,-2) circle(0.707);
	\draw (2.5,-1.5) .. controls (2.75,-1.75) and (3.25,-1.75) .. (3.5,-1.5);
	\draw (2.5,-2.5) .. controls  (2.75,-2.25) and (3.25,-2.25) .. (3.5,-2.5);
}
  \right) 
\xrightarrow{\tqft\bigl( \ \tikzdiagh[scale=.2]{
	\draw (.5,1) -- (.5,3) -- (4.5,3) -- (4.5,1);
	\draw (0,0) .. controls (1.5,0) and (2,1) .. (.5,1); 
	\draw (4,0) .. controls (2.5,0) and (3,1) .. (4.5,1); 
	\filldraw[fill=white, draw=white] (.5,.55)  rectangle  (4,2);
	\draw[dotted] (0,0) .. controls (1.5,0) and (2,1) .. (.5,1); 
	\draw[dotted] (4,0) .. controls (2.5,0) and (3,1) .. (4.5,1); 
	\draw (1.375,.5) .. controls (1.375,1.5) and (3.125,1.5) .. (3.125,.5);
	%
	\draw (0,0) -- (0,2) -- (4,2) -- (4,0);
} \ \bigr)}
\tqft\left(
\tikzdiagh[scale=.75]
{
	\draw[dotted] (-2,-2) circle(0.707);
	\draw (-1.5,-1.5) .. controls (-1.75,-1.75) and (-1.75,-2.25) .. (-1.5,-2.5);
	\draw (-2.5,-1.5) .. controls (-2.25,-1.75) and (-2.25,-2.25) ..  (-2.5,-2.5);
}  
 \right)
\right) \{-1,0\},
\\
\kh\left(
\tikzdiagh[scale=.75]
{
	\draw[dotted] (.5,.5) circle(0.707);
	\draw[->](1,0) -- (0,1);
	\fill[fill=white] (.5,.5) circle (.15);
	\draw[->](0,0) -- (1,1);
}
 \right) 
\ &:= \ 
\cone\left( 
\tqft\left(
\tikzdiagh[scale=.75]
{
	\draw[dotted] (-2,-2) circle(0.707);
	\draw (-1.5,-1.5) .. controls (-1.75,-1.75) and (-1.75,-2.25) .. (-1.5,-2.5);
	\draw (-2.5,-1.5) .. controls (-2.25,-1.75) and (-2.25,-2.25) ..  (-2.5,-2.5);
} 
  \right) 
\xrightarrow{
\tqft\bigl( \ \tikzdiagh[scale=.2]{
	\draw (.5,3) -- (.5,1) -- (4.5,1) -- (4.5,3);
	\filldraw [fill=white, draw=white] (0,0) rectangle (4,2); 
	\draw (0,2) .. controls (1.5,2) and (2,3) .. (.5,3); 
	\draw (4,2) .. controls (2.5,2) and (3,3) .. (4.5,3); 
	\draw (1.375,2.5) .. controls (1.375,1.5) and (3.125,1.5) .. (3.125,2.5);
	%
	\draw (0,2) -- (0,0) -- (4,0) -- (4,2);
} \ \bigr)
\circ \shiftFunct{H}}
\bigl(
\shiftFunct{\tikzdiagh[scale=.2]{
	\draw (.5,3) -- (.5,1) -- (4.5,1) -- (4.5,3);
	\filldraw [fill=white, draw=white] (0,0) rectangle (4,2); 
	\draw (0,2) .. controls (1.5,2) and (2,3) .. (.5,3); 
	\draw (4,2) .. controls (2.5,2) and (3,3) .. (4.5,3); 
	\draw (1.375,2.5) .. controls (1.375,1.5) and (3.125,1.5) .. (3.125,2.5);
	%
	\draw (0,2) -- (0,0) -- (4,0) -- (4,2);
}} 
\bigr)^{-1}
\tqft\left(
\tikzdiagh[scale=.75]
{
	\draw[dotted] (3,-2) circle(0.707);
	\draw (2.5,-1.5) .. controls (2.75,-1.75) and (3.25,-1.75) .. (3.5,-1.5);
	\draw (2.5,-2.5) .. controls  (2.75,-2.25) and (3.25,-2.25) .. (3.5,-2.5);
}
\right)
\right)[1]\{0,1\},
\end{align*}
where $\shiftFunct{H} : \id \Rightarrow \bigl(
\shiftFunct{\tikzdiagh[scale=.2]{
	\draw (.5,3) -- (.5,1) -- (4.5,1) -- (4.5,3);
	\filldraw [fill=white, draw=white] (0,0) rectangle (4,2); 
	\draw (0,2) .. controls (1.5,2) and (2,3) .. (.5,3); 
	\draw (4,2) .. controls (2.5,2) and (3,3) .. (4.5,3); 
	\draw (1.375,2.5) .. controls (1.375,1.5) and (3.125,1.5) .. (3.125,2.5);
	%
	\draw (0,2) -- (0,0) -- (4,0) -- (4,2);
}} 
\bigr)^{-1} \circ \shiftFunct{\tikzdiagh[scale=.2]{
	\draw (.5,3) -- (.5,1) -- (4.5,1) -- (4.5,3);
	\filldraw [fill=white, draw=white] (0,0) rectangle (4,2); 
	\draw (0,2) .. controls (1.5,2) and (2,3) .. (.5,3); 
	\draw (4,2) .. controls (2.5,2) and (3,3) .. (4.5,3); 
	\draw (1.375,2.5) .. controls (1.375,1.5) and (3.125,1.5) .. (3.125,2.5);
	%
	\draw (0,2) -- (0,0) -- (4,0) -- (4,2);
}}$.
Then, we define
\[
\kh(T) := \kh(T_k) \otimes_H \cdots \otimes_H \kh(T_1).
\]

\begin{prop}
For each $T$, there exists a shifting functor $\shiftFunct{\chcob^v}$  and an $\ell \in \bZ$ such that $\shiftFunct{\chcob^v}(\kh(T))[\ell] \cong \tqft(T)$.
\end{prop}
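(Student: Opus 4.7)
The natural approach is induction on the number $k$ of crossings in the given decomposition $T = T_k \cdots T_1$. The base case $k=0$ is immediate from iterated application of \cref{prop:flattanglecompiso} (or \cref{prop:comptangles}): for a composition of cup/cap generators, $\kh(T) = \tqft(T_k) \otimes_H \cdots \otimes_H \tqft(T_1) \cong \tqft(T)$, with trivial shift and $\ell = 0$.

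For the inductive step, I would pick a crossing $T_j$ in the decomposition and use the fact that $\kh(T_j)$ is by definition (up to a $\shiftFunct{\chcob^v}[\ell]$) the mapping cone of the saddle map $\tqft(\chcob) : \shiftFunct{\chcob}(\tqft(T_{j,0})) \to \tqft(T_{j,1})$ between the TQFTs of the two resolutions of the crossing. Since the arc bimodules $\tqft(t)$ are projective as left and right $H^n$-modules by \cref{prop:projbimod}, the tensor product $\kh(T_k \cdots T_{j+1}) \otimes_H (-) \otimes_H \kh(T_{j-1} \cdots T_1)$ commutes with mapping cones. Applying this and the natural isomorphisms $\compMap{\chcob_2}{\chcob_1}$ to push the shift functors out of the tensor product, we realize $\kh(T)$ as the cone of a map between two objects, each of which now corresponds to a decomposition with $k-1$ crossings. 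By induction, each summand becomes a shifted $\tqft$ of the partial resolution at crossing $j$, with a single overall shift $\shiftFunct{\chcob^v}[\ell]$.

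Iterating this expansion across all crossings in the chosen order (say bottom-to-top), the right-hand side of the isomorphism expands into an iterated cone indexed by $\xi \in \{0,1\}^k$, where the $\xi$-component is isomorphic (via repeated use of \cref{prop:comptangles}) to $\tqft(T_\xi)$ shifted by the horizontal composition $\shiftFunct{\chcob_{\xi'} \bullet \chcob_\xi \bullet \cdots}$ of the individual crossing shifts. The cube differentials coming from the iterated cone construction contribute the tensor products of individual saddle maps $\tqft(\chcob_{\xi,j})$ with identities, together with the usual Koszul sign $(-1)^{p(\xi,j)}$ from nesting cones in the homological order of the crossings. Finally, the composite of all the individual shifts $\shiftFunct{\chcob_{j}^{v_j}}$ at each crossing is naturally isomorphic (by \cref{prop:shiftsimplification} and horizontal composability of the grading shifts) to a single $\shiftFunct{\chcob^v}$ indexed by one global chronological cobordism, and the homological shifts stack to a single integer $\ell$.

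The main obstacle will be verifying that the cube differentials so obtained coincide with the $d_{\xi,j}$ of \cref{sec:tangleres}: both are assembled from the same elementary saddles $\tqft(\chcob_{\xi,j})$, but the precise intertwining of shift functors with the tensor product via $\compMap{-}{-}$, $\vCompMap{-}{-}$ and the naturality isomorphisms $\shiftFunct{H}$ (for locally vertical changes of chronology) introduces scalars in $R$ that must match exactly. This is essentially the same bookkeeping that was already carried out in the proof of \cref{prop:comptangles}, comparing the two possible paths across each elementary square of the commutative cube; the cocycle condition (\cref{prop:gradCassoc}) and \cref{prop:locvertchange} ensure these scalars align. Signs from the cone construction match the explicit $(-1)^{p(\xi,j)}$ in the definition of $d_r$ for the same combinatorial reason as in classical Khovanov homology. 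Once these compatibilities are checked one crossing at a time, the full isomorphism follows.
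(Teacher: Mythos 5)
Your overall strategy is sound and would reach the isomorphism, but it takes a considerably longer route than the paper. The paper's proof is a short two-step argument: first, for each \emph{elementary} tangle $T_i$, one checks directly that $\kh(T_i)$ matches $\tqft(T_i)$ up to a shift $\shiftFunct{\chcob^{v}}[\ell]$ --- trivially for cups and caps (where $\kh(T_i) := \tqft(T_i)$ by definition), and for crossings straight from the cone definitions, with the twist by $\shiftFunct{H}$ in the negative-crossing case exhibited by a small commutative square; second, since $\kh(T)$ is \emph{defined} as $\kh(T_k) \otimes_H \cdots \otimes_H \kh(T_1)$ and $\tqft$ is multiplicative over composition by \cref{prop:comptangles}, one collects the individual shifts through the tensor product using the compatibility maps $\compMap{-}{-}$ (and \cref{prop:shiftsimplification} to merge them into a single $\shiftFunct{\chcob^v}$). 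Your induction-on-crossings expansion into the iterated cone indexed by $\xi \in \{0,1\}^k$, followed by a componentwise comparison with the $d_{\xi,j}$, is in effect re-deriving exactly the bookkeeping that \cref{prop:comptangles} already carries out; you recognize this yourself at the end, but the cleaner move is to invoke that proposition wholesale rather than re-run it crossing by crossing. One small imprecision: you do not need \cref{prop:projbimod} to commute $\kh(T_k \cdots T_{j+1}) \otimes_H (-) \otimes_H \kh(T_{j-1}\cdots T_1)$ past a mapping cone. Cones are built from direct sums with a modified differential, and tensoring against a fixed bimodule is additive, so the tensor of a cone is automatically the cone of the tensored map (up to the Koszul sign). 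Projectivity of the arc bimodules is what makes these tensors exact, and that is relevant to the Reidemeister lemmas in $\cD^\gradC$, but not to this structural identification, which is an honest isomorphism of dg-bimodules.
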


\begin{proof}
If $T$ is an elementary tangle then we have 
\begin{align*}
\kh\left( 
\tikzdiagh[scale=.75]
{
	\draw[dotted] (.5,.5) circle(0.707);
	\draw[->](0,0)  -- (1,1)  ;
	\fill[fill=white] (.5,.5) circle (.15);
	\draw[->](1,0) -- (0,1);
}
 \right) 
 \{1,0\}
 &\cong 
 \tqft\left( 
\tikzdiagh[scale=.75]
{
	\draw[dotted] (.5,.5) circle(0.707);
	\draw[->](0,0) -- (1,1);
	\fill[fill=white] (.5,.5) circle (.15);
	\draw[->](1,0) -- (0,1);
}
 \right) 
 ,
 &
\bigl(
\shiftFunct{\tikzdiagh[scale=.2]{
	\draw (.5,3) -- (.5,1) -- (4.5,1) -- (4.5,3);
	\filldraw [fill=white, draw=white] (0,0) rectangle (4,2); 
	\draw (0,2) .. controls (1.5,2) and (2,3) .. (.5,3); 
	\draw (4,2) .. controls (2.5,2) and (3,3) .. (4.5,3); 
	\draw (1.375,2.5) .. controls (1.375,1.5) and (3.125,1.5) .. (3.125,2.5);
	%
	\draw (0,2) -- (0,0) -- (4,0) -- (4,2);
}} 
\bigr)
 \kh\left(
\tikzdiagh[scale=.75]
{
	\draw[dotted] (.5,.5) circle(0.707);
	\draw[->](1,0) -- (0,1);
	\fill[fill=white] (.5,.5) circle (.15);
	\draw[->](0,0) -- (1,1);
}
 \right) 
 [-1]\{0,-1\}
 &\cong
 \tqft\left(
\tikzdiagh[scale=.75]
{
	\draw[dotted] (.5,.5) circle(0.707);
	\draw[->](1,0) -- (0,1);
	\fill[fill=white] (.5,.5) circle (.15);
	\draw[->](0,0) -- (1,1);
}
 \right). 
\end{align*}
Note that we obtain the second isomorphism thanks to the fact we twist the operator$\tqft\bigl( \ \tikzdiagh[scale=.2]{
	\draw (.5,3) -- (.5,1) -- (4.5,1) -- (4.5,3);
	\filldraw [fill=white, draw=white] (0,0) rectangle (4,2); 
	\draw (0,2) .. controls (1.5,2) and (2,3) .. (.5,3); 
	\draw (4,2) .. controls (2.5,2) and (3,3) .. (4.5,3); 
	\draw (1.375,2.5) .. controls (1.375,1.5) and (3.125,1.5) .. (3.125,2.5);
	%
	\draw (0,2) -- (0,0) -- (4,0) -- (4,2);
} \ \bigr)$ by $\shiftFunct{H}$, so that the following diagram commutes:
\[
\begin{tikzcd}[column sep = 16ex]
\bigl(\shiftFunct{\tikzdiagh[scale=.2]{
	\draw (.5,3) -- (.5,1) -- (4.5,1) -- (4.5,3);
	\filldraw [fill=white, draw=white] (0,0) rectangle (4,2); 
	\draw (0,2) .. controls (1.5,2) and (2,3) .. (.5,3); 
	\draw (4,2) .. controls (2.5,2) and (3,3) .. (4.5,3); 
	\draw (1.375,2.5) .. controls (1.375,1.5) and (3.125,1.5) .. (3.125,2.5);
	%
	\draw (0,2) -- (0,0) -- (4,0) -- (4,2);
}} \bigr)
\tqft\left(
\tikzdiagh[scale=.75]
{
	\draw[dotted] (-2,-2) circle(0.707);
	\draw (-1.5,-1.5) .. controls (-1.75,-1.75) and (-1.75,-2.25) .. (-1.5,-2.5);
	\draw (-2.5,-1.5) .. controls (-2.25,-1.75) and (-2.25,-2.25) ..  (-2.5,-2.5);
} 
  \right) 
  \ar[dash]{r}{\tqft\bigl( \ \tikzdiagh[scale=.2]{
	\draw (.5,3) -- (.5,1) -- (4.5,1) -- (4.5,3);
	\filldraw [fill=white, draw=white] (0,0) rectangle (4,2); 
	\draw (0,2) .. controls (1.5,2) and (2,3) .. (.5,3); 
	\draw (4,2) .. controls (2.5,2) and (3,3) .. (4.5,3); 
	\draw (1.375,2.5) .. controls (1.375,1.5) and (3.125,1.5) .. (3.125,2.5);
	%
	\draw (0,2) -- (0,0) -- (4,0) -- (4,2);
} \ \bigr)
\circ \shiftFunct{H}}
  \ar{r}
&
\bigl( \shiftFunct{\tikzdiagh[scale=.2]{
	\draw (.5,3) -- (.5,1) -- (4.5,1) -- (4.5,3);
	\filldraw [fill=white, draw=white] (0,0) rectangle (4,2); 
	\draw (0,2) .. controls (1.5,2) and (2,3) .. (.5,3); 
	\draw (4,2) .. controls (2.5,2) and (3,3) .. (4.5,3); 
	\draw (1.375,2.5) .. controls (1.375,1.5) and (3.125,1.5) .. (3.125,2.5);
	%
	\draw (0,2) -- (0,0) -- (4,0) -- (4,2);
}} \bigr)
\circ
\bigl(
\shiftFunct{\tikzdiagh[scale=.2]{
	\draw (.5,3) -- (.5,1) -- (4.5,1) -- (4.5,3);
	\filldraw [fill=white, draw=white] (0,0) rectangle (4,2); 
	\draw (0,2) .. controls (1.5,2) and (2,3) .. (.5,3); 
	\draw (4,2) .. controls (2.5,2) and (3,3) .. (4.5,3); 
	\draw (1.375,2.5) .. controls (1.375,1.5) and (3.125,1.5) .. (3.125,2.5);
	%
	\draw (0,2) -- (0,0) -- (4,0) -- (4,2);
}} 
\bigr)^{-1}
\tqft\left(
\tikzdiagh[scale=.75]
{
	\draw[dotted] (3,-2) circle(0.707);
	\draw (2.5,-1.5) .. controls (2.75,-1.75) and (3.25,-1.75) .. (3.5,-1.5);
	\draw (2.5,-2.5) .. controls  (2.75,-2.25) and (3.25,-2.25) .. (3.5,-2.5);
}
\right)
\\
\bigl(\shiftFunct{\tikzdiagh[scale=.2]{
	\draw (.5,3) -- (.5,1) -- (4.5,1) -- (4.5,3);
	\filldraw [fill=white, draw=white] (0,0) rectangle (4,2); 
	\draw (0,2) .. controls (1.5,2) and (2,3) .. (.5,3); 
	\draw (4,2) .. controls (2.5,2) and (3,3) .. (4.5,3); 
	\draw (1.375,2.5) .. controls (1.375,1.5) and (3.125,1.5) .. (3.125,2.5);
	%
	\draw (0,2) -- (0,0) -- (4,0) -- (4,2);
}} \bigr)
\tqft\left(
\tikzdiagh[scale=.75]
{
	\draw[dotted] (-2,-2) circle(0.707);
	\draw (-1.5,-1.5) .. controls (-1.75,-1.75) and (-1.75,-2.25) .. (-1.5,-2.5);
	\draw (-2.5,-1.5) .. controls (-2.25,-1.75) and (-2.25,-2.25) ..  (-2.5,-2.5);
} 
  \right) 
  \ar{u}{1}
  \ar[dash,swap]{r}{\tqft\bigl( \ \tikzdiagh[scale=.2]{
	\draw (.5,3) -- (.5,1) -- (4.5,1) -- (4.5,3);
	\filldraw [fill=white, draw=white] (0,0) rectangle (4,2); 
	\draw (0,2) .. controls (1.5,2) and (2,3) .. (.5,3); 
	\draw (4,2) .. controls (2.5,2) and (3,3) .. (4.5,3); 
	\draw (1.375,2.5) .. controls (1.375,1.5) and (3.125,1.5) .. (3.125,2.5);
	%
	\draw (0,2) -- (0,0) -- (4,0) -- (4,2);
} \ \bigr)}
  \ar{r}
&
\tqft\left(
\tikzdiagh[scale=.75]
{
	\draw[dotted] (3,-2) circle(0.707);
	\draw (2.5,-1.5) .. controls (2.75,-1.75) and (3.25,-1.75) .. (3.5,-1.5);
	\draw (2.5,-2.5) .. controls  (2.75,-2.25) and (3.25,-2.25) .. (3.5,-2.5);
}
\right)
\ar[swap]{u}{\shiftFunct{H}}
\end{tikzcd}
\]
Then, the general result follows from \cref{prop:comptangles}.
\end{proof}

\begin{lem}
Consider two (plane diagrams of) oriented $(n,m)$-tangles $T$  and $T'$ such that $T'$ is obtained from $T$ by a planar isotopy. Then $\kh(T') \cong \kh(T)$ in  $\cD^\gradC({H^n}, H^m)$.
\end{lem}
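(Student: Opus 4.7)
The plan is to decompose an arbitrary planar isotopy into a finite sequence of elementary local moves and to verify that each such move induces an isomorphism in $\cD^\gradC(H^n,H^m)$. By standard Cerf-theoretic arguments, any planar isotopy between two oriented tangle diagrams in general position can be realized as a composition of: (i) ambient isotopies that preserve the chosen decomposition $T = T_k \cdots T_1$ into elementary tangles (cups, caps, crossings) and the vertical order of their critical points; (ii) exchanges of the height order of two elementary pieces that are at horizontally disjoint positions (``distant exchange''); and (iii) re-orderings of the crossings used to build the cube of resolutions.

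For move (i), the complex $\kh(T)$ is unchanged on the nose. Indeed, flat tangles in $B_n^m$ are defined up to ambient isotopy, so each resolution $T_\xi$ is literally the same element of $B^\bullet_\bullet$; moreover the cobordisms $\chcob_\xi$ and $\chcob_{\xi,i}$ depend only on the combinatorial data of $T$ and the height order of its crossings, so all shifts, differentials and compatibility factors match. For move (ii), if $T_i$ and $T_{i+1}$ are elementary tangles supported on disjoint horizontal intervals, then one can slide $T_{i+1}$ below $T_i$. At the level of flat tangles this is a tautology ($T_iT_{i+1} = T_{i+1}T_i$ in $B_\bullet^\bullet$), and at the level of complexes \cref{prop:comptangles} gives $\kh(T_i) \otimes_H \kh(T_{i+1}) \cong \tqft(T_i) \otimes_H \tqft(T_{i+1}) \cong \tqft(T_iT_{i+1})$ together with its mirror version, and the resulting isomorphism is induced by the Gray-monoidal symmetry built into $\linchcobcat$. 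The relevant reordering of saddle points in $\chcob_{\xi}$ is effected by a locally vertical change of chronology, hence is unique up to homotopy by \cref{prop:locvertchange}, and the scalars introduced exactly cancel against the compatibility maps $\beta, \gamma, \Xi$ and $\tau$ constructed in \cref{sec:gradCat}.

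For move (iii), observe that the definition of $\tqft(T)$ (and hence of $\kh(T)$) appears to depend on the chosen bottom-to-top ordering of the crossings, which enters the definition of the chronological cobordism $\chcob_\xi$ and of the sign $(-1)^{p(\xi,j)}$ in the differential. I would show that a transposition of two adjacent crossings yields an isomorphic complex: the two possible chronologies for $\chcob_\xi$ are related by a locally vertical change of chronology $H$, and the natural transformation $\shiftFunct{H}$ of \cref{prop:nattrfromchangeofch} assembles (after the standard sign adjustment of $p(\xi,j)$) into an isomorphism of complexes of $H^n$-$H^m$-bimodules. Composing these elementary isomorphisms gives the required $\kh(T') \cong \kh(T)$.

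The main obstacle will be the bookkeeping for move (iii): one must verify that the scalars produced by $\shiftFunct{H}$ together with the swap of signs in $p(\xi,j)$ combine coherently with the compatibility data $(\beta, \gamma, \Xi, \tau)$ so as to yield a genuine chain map, not merely a map of underlying graded bimodules. The uniqueness-up-to-homotopy of locally vertical changes of chronology (\cref{prop:locvertchange}) is the crucial tool; it guarantees that all the cocycles produced by the different sequences of elementary moves agree with the 3-cocycle $\assoc$ of the grading category $\gradC$, and hence that the resulting isomorphism is well defined in $\cD^\gradC(H^n, H^m)$.
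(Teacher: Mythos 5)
Your decomposition of an arbitrary planar isotopy into local moves, while structurally analogous to the paper's approach (both proceed by reduction to elementary moves and appeal to \cref{prop:locvertchange} plus the compatibility data of \cref{sec:gradCat}), is missing a case that the paper handles explicitly and that does not fall under any of your three headings. Specifically, a planar isotopy can slide a crossing past a nearby cup or cap, rotating it from the ``arrow pointing up'' to the ``arrow pointing left'' normalization (or vice versa). This is the paper's second case. Such a move does not preserve the decomposition into elementary tangles (so it is not your (i)), does not commute two horizontally disjoint pieces (so it is not your (ii)), and does not reorder the heights of crossings (so it is not your (iii)). Nevertheless it changes the framing of the saddle $\chcob_{\xi,i}$ and hence, by the rule in \cref{eq:reverseorientation}, multiplies the induced map by $X$ or $Y$. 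The paper treats this by an explicit family of changes of chronology $H_\xi : \chcob_\xi \Rightarrow \chcob'_\xi$ (reversing the saddle orientation above the affected crossing), showing the resulting natural transformations $\shiftFunct{H_\xi}$ assemble into a chain isomorphism. Without this case your Cerf-theoretic list is incomplete, and a proof that only covers your (i)--(iii) would not establish the lemma.

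A second, smaller issue: your claim in (i) that when the decomposition and height order of critical points are preserved ``the complex $\kh(T)$ is unchanged on the nose'' is not quite correct as stated, precisely because the crossing normalization (upward vs.\ leftward arrow) is extra data beyond the underlying diagram; two decomposition-preserving isotopies can still change this data. Folding that possibility into (i) collapses it into the case described above, which then must be handled nontrivially. Your case (iii) does match the paper's first case (transposing the height order of adjacent crossings via $\shiftFunct{H}$ for a locally vertical change of chronology), and that part of the proposal is sound. Your case (ii) is largely redundant: if both pieces are crossings it reduces to (iii); otherwise the issue is absorbed by the identification of flat tangles up to ambient isotopy, and \cref{prop:comptangles} plays no essential role here.
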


\begin{proof}
Suppose that we exchange the $i$-th crossing with the $(i+1)$-th one in $T$ to obtain $T'$. 
Take $\xi' := \sigma	_i \xi$ where $\sigma_i \in S_k$ is the simple transposition that acts by exchanging $\xi_i$ with $\xi_{i+1}$ in $\xi$. Then, $T'_{\xi'}$ is equivalent to $T_\xi$. 
Thus, we have
\[
\shiftFunct{H} : C(T)_\xi = \shiftFunct{\chcob_\xi} \bigl(\tqft(T_{\xi})\bigr) \xrightarrow{\simeq}  \shiftFunct{\chcob'_{\xi'}} \bigl(\tqft(T'_{\xi'})\bigr) = C(T')_{\xi'},
\]
where $H : \chcob_{\xi} \Rightarrow \chcob'_{\xi'}$ is a locally vertical change of chronology. For $\xi_j = 0$ and $j' := \sigma_i(j)$, consider the following diagram:
\[
\begin{tikzcd}
C(T)_\xi
\ar{rr}{d_{\xi,j}}
\ar{dr}{\shiftFunct{H_{\xi,j}}}
\ar[swap]{ddd}{\shiftFunct{H}}
 &
 &
  C(T)_{\xi+j}
  \ar{ddd}{\shiftFunct{H'}}
\\
&
\shiftFunct{\chcob_{\xi+j}} \circ \shiftFunct{\chcob_{\xi,j}}(\tqft(T_\xi))
\ar{ur}{\tqft(\chcob_{\xi,j})}
\ar{d}{\shiftFunct{H'}}
&
\\
&
\shiftFunct{\chcob'_{\xi'+j'}} \circ \shiftFunct{\chcob'_{\xi', j'}} (\tqft(T'_{\xi'}))
\ar{dr}{\tqft(\chcob'_{\xi',j'})}
&
\\
C(T')_{\xi'} 
\ar{ur}{\shiftFunct{H'_{\xi',j'}}}
\ar[swap]{rr}{d'_{\xi',j'}}
&
&
C(T')_{\xi'+j'},
\end{tikzcd}
\]
where $H' : \chcob_{\xi+j} \Rightarrow \chcob'_{\xi'+j'}$ is a locally vertical change of chronology. The left part commutes thanks to \cref{prop:locvertchange}. The upper and lower part commute by definition of the differential. The right part commutes since $\chcob_{\xi,j}$ is equivalent to $\chcob'_{\xi',j'}$. Thus, the whole diagram commutes, and $\tqft(T') \cong \tqft(T)$. By consequence, $\kh(T') \cong \kh(T)$.  

Suppose that $T'$ is obtained from $T$ by the following local planar isotopy:
\[
\tikzdiagh[scale=.75]
{
	\draw[dotted] (.5,.5) circle(0.707);
	\draw[->] (.5,.5-.707) .. controls (.5,.5-.35) and (.1,.5-.35) ..(.1,.5)
		.. controls (.1,.5+.35) and (.5,.5+.35) .. (.5,.5+.707);
	\fill[fill=white] (.225,.275) circle (.1);
	\draw[->](0,0) .. controls (.25,.5) and (.75,.5) ..  (1,0);
}
\rightarrow
\tikzdiagh[scale=.75,xscale=-1]
{
	\draw[dotted] (.5,.5) circle(0.707);
	\draw[->] (.5,.5-.707) .. controls (.5,.5-.35) and (.1,.5-.35) ..(.1,.5)
		.. controls (.1,.5+.35) and (.5,.5+.35) .. (.5,.5+.707);
	\fill[fill=white] (.225,.275) circle (.1);
	\draw[<-](0,0) .. controls (.25,.5) and (.75,.5) ..  (1,0);
}
\]
Suppose the crossing in the pictured disk is the $i$-th one. Then we have $T_\xi \cong T'_{\xi}$ for all $\xi$. Moreover, for $\xi_i = 0$, there is a change of chronology $H_\xi : \chcob_{\xi} \Rightarrow \chcob'_\xi$ for all $\xi$, given by reversing the orientation of the saddle above the $i$-th crossing. For $\xi_i = 1$ we have an equivalence $\chcob_{\xi} \cong \chcob'_{\xi'}$. Similarly, there is $H_{\chcob_{\xi,j}} : \chcob_{\xi,j} \Rightarrow \chcob'_{\xi,j}$, which is trivial whenever $j \neq i$.  Then, consider the following diagram: 
\[
\begin{tikzcd}
C(T_\xi)
\ar{rr}{d_{\xi,j}}
\ar{dr}{\shiftFunct{H_{\xi,j}}}
\ar[swap]{ddd}{\shiftFunct{H_\xi}}
 &
 &
  C(T_{\xi+j})
  \ar{ddd}{\shiftFunct{H_{\xi+j}}}
\\
&
\shiftFunct{\chcob_{\xi+j}} \circ \shiftFunct{\chcob_{\xi,j}}(\tqft(T_\xi))
\ar{ur}{\tqft(\chcob_{\xi,j})}
\ar{d}{\shiftFunct{H_{\xi+j} \circ H_{\chcob_{\xi,j}}}}
&
\\
&
\shiftFunct{\chcob'_{\xi+j}} \circ \shiftFunct{\chcob'_{\xi, j}} (\tqft(T'_{\xi}))
\ar{dr}{\tqft(\chcob'_{\xi,j})}
&
\\
C(T'_{\xi}) 
\ar{ur}{\shiftFunct{H'_{\xi,j}}}
\ar[swap]{rr}{d'_{\xi,j}}
&
&
C(T'_{\xi+j}),
\end{tikzcd}
\]
It commutes for $j \neq i$ for the same reasons as above. For $j = i$, it also commutes since $\tqft(\chcob'_{\xi+j}) \circ \shiftFunct{H_{\chcob_{\xi,j}}} = \tqft(\chcob_{\xi,j})$. 
The other cases are similar, finishing the proof. 
\end{proof}

\begin{lem}\label{lem:RI}
Consider two oriented $(n,m)$-tangles $T$  and $T'$. Suppose $T$ differs from $T'$ only in a small region where we have 
\begin{align*}
T &\supset 
\tikzdiagh[scale=.75]
{
	\draw[dotted] (.5,.5) circle(0.707);
	\draw(1,0) .. controls (.5,0) and (-.25,1) .. (.5,1)  ;
	\fill[fill=white] (.5,.25) circle (.15);
	\draw(0,0) .. controls (.5,0) and (1.25,1) .. (.5,1)  ;
} 
  & &\text{and} &
T' &\supset 
\tikzdiagh[scale=.75]
{
	\draw[dotted] (.5,.5) circle(0.707);
	\draw(1,0) .. controls (.5,0) and (1.25,1) .. (.5,1)  ;
	\draw(0,0) .. controls (.5,0) and (-.25,1) .. (.5,1)  ;
}
\end{align*}
Then, there is an isomorphism $\kh(T) \cong \kh(T')$ in $\cD^\gradC(H^n, H^m)$.
\end{lem}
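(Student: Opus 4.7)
The plan is to establish the Reidemeister~I invariance by analyzing $\kh(T)$ as a mapping cone and splitting off a contractible summand. First I would resolve the unique crossing of the curl in $T$. Up to the planar isotopy invariance of the preceding lemma, one resolution of this crossing reproduces the diagram $T'$, while the other reproduces $T'$ with a small disjoint free loop attached next to the strand that carried the curl. Consequently, unfolding the definition of $\kh$ on the crossing and applying \cref{prop:comptangles}, we may write $\kh(T)$ (up to the explicit homological and $\bZ\times\bZ$-shifts in the definition of $\kh$ of a crossing) as the mapping cone of a saddle-induced map
\[
\shiftFunct{\chcob^{v}}\bigl(\tqft(T')\bigr) \xrightarrow{\ \tqft(\chcob)\ } \tqft(T'),
\]
where $\chcob$ is the saddle merging the free loop into the strand, and where we have used \cref{prop:removeloops} to rewrite $\tqft$ of the $0$- or $1$-resolution containing the free loop as $\tqft(T')$ with a grading shift by $(1,0)$ or $(0,-1)$.

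Next I would exploit the explicit formulas for the chronological TQFT $\tqft$ from \cref{sec:oddfunctor} together with the isomorphism $\tqft(\text{loop}) = A = Rv_+ \oplus Rv_-$. Splitting the source along this decomposition, the saddle map becomes, up to invertible scalars in $R$ coming from the chosen chronology and the compatibility maps, the merge $A \otimes (-) \to (-)$: it sends the $Rv_+$-summand isomorphically onto $\tqft(T')$ (after the appropriate shift) and annihilates the $Rv_-$-summand. Thus the mapping cone splits as an orthogonal direct sum of two subcomplexes, the first being the cone of an isomorphism (hence contractible via the standard Gaussian-elimination homotopy), and the second consisting of the $Rv_-$-summand in a single homological degree with zero differential. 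A $\gradC$-graded Gaussian-elimination argument, phrased in the category $\cKOM^\gradC(H^n, H^m)$ of \cref{sec:gradeddg} using the $\gradC$-graded homotopies and shifts, shows that this splitting is realized by an actual map in $\cD^\gradC(H^n,H^m)$.

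It then remains to check that after cancellation of the contractible piece, the surviving $Rv_-$-piece matches $\tqft(T') \cong \kh(T')$ on the nose, including the homological and $\bZ\times\bZ$-shifts. For this I would add up the shifts: the external $\{-1,0\}$ (resp.\ $[1]\{0,1\}$) from the definition of $\kh$ of a positive (resp.\ negative) crossing, the internal shift $\shiftFunct{\chcob^{v}}$ with $v$ coming from \cref{prop:shiftsimplification} applied to the saddle, and the shift by $(1,0)$ or $(0,-1)$ from \cref{prop:removeloops}. The degrees of $v_\pm$ being $\deg_R(v_+)=(1,0)$ and $\deg_R(v_-)=(0,-1)$ produce exactly the compensating terms needed for the surviving summand to match $\kh(T')$.

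The main obstacle will be the scalar bookkeeping: each RI configuration (positive vs.\ negative crossing, and the two orientation choices for the arc carrying the curl) produces a different combination of $X$, $Y$, $Z$ factors arising from $\assoc$, the compatibility maps $\compMap{}{}$, the $\vCompMap{}{}$ and $\interCompMap{}{}{}{}$ of the shifting $2$-system, and the scalars $\imath(H)$ attached to the locally vertical changes of chronology needed to align the saddle with the merge at the chosen free loop. Because all these scalars lie in $R^\times$, the Gaussian-elimination argument still produces an isomorphism in $\cD^\gradC$; still, one must verify case by case that the residual scalar absorbed into the identification is well-defined (independent of the homotopy class of change of chronology used) by invoking \cref{prop:locvertchange} and the naturality of $\shiftFunct{H}$ established in \cref{prop:nattrfromchangeofch}. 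Once a single RI case is handled this way, the remaining cases follow by symmetric arguments after reversing orientations, for which the scalars $X$ and $Y$ in \cref{eq:reverseorientation} provide the necessary units.
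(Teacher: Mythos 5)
Your route---Gaussian elimination along the splitting $A = Rv_+\oplus Rv_-$ of the circle factor---is a genuine alternative to the paper's argument. The paper instead decomposes $T = T_2 T_C T_1$, reduces to comparing the local pieces $\kh(T_C)$ and $\kh(T_I)$ via the gluing isomorphism (\cref{prop:comptangles}) and cofibrancy (\cref{prop:projbimod}), writes $\kh(T_C)$ as an explicit cone, exhibits a concrete vertical chain map from $\kh(T_I)$ into that cone, and then cites \cite[Lemma 7.3]{putyra14} for the fact that this map is a strong homotopy retract; the only new content to check is that the chain map is graded. Your version is self-contained: it essentially reproves Putyra's lemma inside $\cKOM^\gradC$, which is perfectly legitimate but means you take on the burden of setting up the $\gradC$-graded Gaussian elimination carefully (the homotopy in the cancellation carries a nontrivial $\gradC$-degree, so one has to verify it lives in $\BIMOD^\gradC$ with the right shift).

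Two details in your sketch should be repaired. First, you invoke \cref{prop:removeloops} to absorb the free loop into a $\bZ\times\bZ$-shift, and then in the very next step split ``the source along the decomposition $A = Rv_+\oplus Rv_-$''---but after removing the loop there is no $A$-factor left to decompose. Either keep the loop and split $\tqft(T'\sqcup\circ)\cong A\otimes\tqft(T')$ explicitly, or remove it via \cref{prop:removeloops} and argue directly on the shifted copy of $\tqft(T')$, but not both. Second, the merge $m: A\otimes A\to A$ does \emph{not} annihilate $Rv_-\otimes A$; one has $m(v_-\otimes v_+) = XZ\,v_-$. It is only its restriction to $Rv_+\otimes A$ that is an isomorphism, and that is all Gaussian elimination needs, but your stated description of the surviving summand is then imprecise. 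Note also that for the RI configuration of the present lemma the cone map is a split (a loop is created), not a merge; the merge appears in \cref{lem:RIb}. None of these is fatal, but each would need correcting for the proof to go through as written.
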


\begin{proof}
We can decompose $T$  such that $T = T_2 T_C T_1$ and $T' = T_2 T_I T_1$ where 
$T_C = \tikzdiagh[scale=.75]
{
	\draw[dotted] (.5,.5) circle(0.707);
	\draw(1,0) .. controls (.5,0) and (-.25,1) .. (.5,1)  ;
	\fill[fill=white] (.5,.25) circle (.15);
	\draw(0,0) .. controls (.5,0) and (1.25,1) .. (.5,1)  ;
} $
 and
  $T_I = \tikzdiagh[scale=.75]
{
	\draw[dotted] (.5,.5) circle(0.707);
	\draw(1,0) .. controls (.5,0) and (1.25,1) .. (.5,1)  ;
	\draw(0,0) .. controls (.5,0) and (-.25,1) .. (.5,1)  ;
} $. 
 We will show that $T_C \cong T_I$ and the result will from fact that $\kh(T_i)$ is cofibrant both as left and as right module, thanks to \cref{prop:projbimod}.

No matter the orientation on $T$, we have
\[
\kh
\tikzdiagh[scale=.75]
{
	\draw[dotted] (.5,.5) circle(0.707);
	\draw(1,0) .. controls (.5,0) and (-.25,1) .. (.5,1)  ;
	\fill[fill=white] (.5,.25) circle (.15);
	\draw(0,0) .. controls (.5,0) and (1.25,1) .. (.5,1)  ;
}
\cong \cone\left(
(\shiftFunct{\tikzdiagh[scale=.2]{
	\begin{scope}
	\clip (-1,.5) rectangle (1,-.25); 
	\draw (0,0) .. controls (-1.5,0) and (-1,1) .. (.5,1); 
	\end{scope}
	\begin{scope}
	\draw  (4,1) -- (4.5,1) -- (4.5,3);
	\end{scope}
	%
	%
	\draw (0,2) .. controls (1.5,2) and (2,3) .. (.5,3); 
	\draw (4,2) .. controls (2.5,2) and (3,3) .. (4.5,3); 
	\draw (1.375,2.5) .. controls (1.375,1.5) and (3.125,1.5) .. (3.125,2.5);
	%
	\draw (0,0) -- (4,0) -- (4,2);
	\draw (0,2) .. controls (-1.5,2) and (-1,3) .. (.5,3); 
	\draw (-.875,.5) -- (-.875,2.5);
}})
\tqft
 \tikzdiagh[scale=.75]
{
	\draw[dotted] (.5,.5) circle(0.707);
	\draw(1,0) .. controls (.5,0) and (1.25,1) .. (.5,1)  ;
	\draw(0,0) .. controls (.5,0) and (-.25,1) .. (.5,1)  ;
}
\xrightarrow{\tqft\bigl( \ \tikzdiagh[scale=.2]{
	\begin{scope}
	\clip (-1,.5) rectangle (1,-.25); 
	\draw (0,0) .. controls (-1.5,0) and (-1,1) .. (.5,1); 
	\end{scope}
	\begin{scope}
	\draw  (4,1) -- (4.5,1) -- (4.5,3);
	\end{scope}
	%
	%
	\draw (0,2) .. controls (1.5,2) and (2,3) .. (.5,3); 
	\draw (4,2) .. controls (2.5,2) and (3,3) .. (4.5,3); 
	\draw (1.375,2.5) .. controls (1.375,1.5) and (3.125,1.5) .. (3.125,2.5);
	%
	\draw (0,0) -- (4,0) -- (4,2);
	\draw (0,2) .. controls (-1.5,2) and (-1,3) .. (.5,3); 
	\draw (-.875,.5) -- (-.875,2.5);
} \ \bigr)}
\tqft 
 \tikzdiagh[scale=.75]
{
	\draw[dotted] (.5,.5) circle(0.707);
	\draw(1,0) .. controls (.75,0) and (.75,.25) .. (.5,.25)  ;
	\draw(0,0) .. controls (.25,0) and (.25,.25) .. (.5,.25)  ;
	\draw (.5,.75) circle (.25);
} 
 \right)\{-1,0 \}.
\]
Then, as in \cite[Lemma 7.3]{putyra14}, we have a quasi-isomorphism $\kh(T_C) \xrightarrow{\simeq} \kh(T_I)$ given by the vertical map in the following diagram:
\[
\begin{tikzcd}[column sep=14ex]
\\
(\shiftFunct{\tikzdiagh[scale=.2]{
	\begin{scope}
	\clip (-1,.5) rectangle (1,-.25); 
	\draw (0,0) .. controls (-1.5,0) and (-1,1) .. (.5,1); 
	\end{scope}
	\begin{scope}
	\draw  (4,1) -- (4.5,1) -- (4.5,3);
	\end{scope}
	%
	%
	\draw (0,2) .. controls (1.5,2) and (2,3) .. (.5,3); 
	\draw (4,2) .. controls (2.5,2) and (3,3) .. (4.5,3); 
	\draw (1.375,2.5) .. controls (1.375,1.5) and (3.125,1.5) .. (3.125,2.5);
	%
	\draw (0,0) -- (4,0) -- (4,2);
	\draw (0,2) .. controls (-1.5,2) and (-1,3) .. (.5,3); 
	\draw (-.875,.5) -- (-.875,2.5);
}})
 \tqft \tikzdiagh[scale=.75]
{
	\draw[dotted] (.5,.5) circle(0.707);
	\draw(1,0) .. controls (.5,0) and (1.25,1) .. (.5,1)  ;
	\draw(0,0) .. controls (.5,0) and (-.25,1) .. (.5,1)  ;
} 
\{-1,0\}
\ar[dash]{r}{\tqft\bigl( \ \tikzdiagh[scale=.2]{
	\begin{scope}
	\clip (-1,.5) rectangle (1,-.25); 
	\draw (0,0) .. controls (-1.5,0) and (-1,1) .. (.5,1); 
	\end{scope}
	\begin{scope}
	\draw  (4,1) -- (4.5,1) -- (4.5,3);
	\end{scope}
	%
	%
	\draw (0,2) .. controls (1.5,2) and (2,3) .. (.5,3); 
	\draw (4,2) .. controls (2.5,2) and (3,3) .. (4.5,3); 
	\draw (1.375,2.5) .. controls (1.375,1.5) and (3.125,1.5) .. (3.125,2.5);
	%
	\draw (0,0) -- (4,0) -- (4,2);
	\draw (0,2) .. controls (-1.5,2) and (-1,3) .. (.5,3); 
	\draw (-.875,.5) -- (-.875,2.5);
} \ \bigr)} 
\ar{r}
&
\tqft 
 \tikzdiagh[scale=.75]
{
	\draw[dotted] (.5,.5) circle(0.707);
	\draw(1,0) .. controls (.75,0) and (.75,.25) .. (.5,.25)  ;
	\draw(0,0) .. controls (.25,0) and (.25,.25) .. (.5,.25)  ;
	\draw (.5,.75) circle (.25);
} \{-1,0\}
\\
0 \ar{r} 
\ar{u}
&
 \tqft \tikzdiagh[scale=.75]
{
	\draw[dotted] (.5,.5) circle(0.707);
	\draw(1,0) .. controls (.5,0) and (1.25,1) .. (.5,1)  ;
	\draw(0,0) .. controls (.5,0) and (-.25,1) .. (.5,1)  ;
}
\ar{u}
\ar[dash,swap]{u}{\tqft\bigl( \ \tikzdiagh[scale=.2]{
	\begin{scope}
	\clip (-1,.5) rectangle (1,-.25); 
	\draw (0,0) .. controls (-1.5,0) and (-1,1) .. (.5,1); 
	\end{scope}
	\begin{scope}
	\draw  (4,1) -- (4.5,1) -- (4.5,4);
	\end{scope}
	%
	%
	\draw (0,3) .. controls (1.5,3) and (2,4) .. (.5,4); 
	\draw (4,3) .. controls (2.5,3) and (3,4) .. (4.5,4); 
	%
	%
	\draw (0,0) -- (4,0) -- (4,3);
	\draw (0,3) .. controls (-1.5,3) and (-1,4) .. (.5,4); 
	%
	\draw (-.875,.5) .. controls (-.875,1.5) and (3.125,1.5) .. (3.125,3.5);
	\draw (-.875,3.5) .. controls (-.875,1.5) and (1.375,1.5) .. (1.375,3.5);
} \ \bigr)}
\end{tikzcd}
\]
 Note that the vertical map is graded. 
\end{proof}

\begin{lem}\label{lem:RIb}
Consider two oriented $(n,m)$-tangles $T$  and $T'$. Suppose $T$ differs from $T'$ only in a small region where we have 
\begin{align*}
T &\supset 
\tikzdiagh[scale=.75]
{
	\draw[dotted] (.5,.5) circle(0.707);
	\draw(0,0) .. controls (.5,0) and (1.25,1) .. (.5,1)  ;
	\fill[fill=white] (.5,.25) circle (.15);
	\draw(1,0) .. controls (.5,0) and (-.25,1) .. (.5,1)  ;
} 
 & &\text{and} &
T' &\supset 
\tikzdiagh[scale=.75]
{
	\draw[dotted] (.5,.5) circle(0.707);
	\draw(1,0) .. controls (.5,0) and (1.25,1) .. (.5,1)  ;
	\draw(0,0) .. controls (.5,0) and (-.25,1) .. (.5,1)  ;
} 
\end{align*}
Then, there is an isomorphism $\kh(T) \cong \kh(T')$ in $\cD^\gradC(H^n, H^m)$.
\end{lem}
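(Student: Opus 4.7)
We mirror the strategy of \cref{lem:RI}. Writing $T = T_2 T_{C'} T_1$ and $T' = T_2 T_I T_1$ with $T_{C'}$ the pictured curl and $T_I$ the identity strand, and using cofibrancy of $\kh(T_i)$ on both sides (\cref{prop:projbimod} combined with \cref{prop:comptangles}), it suffices to show $\kh(T_{C'}) \cong \kh(T_I)$ in $\cD^\gradC(H^n, H^m)$.

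Since the crossing in $T_{C'}$ has the sign opposite to the one in $T_C$ of \cref{lem:RI}, we now use the \emph{second} cone formula in the definition of $\kh$ on a crossing:
\[
\kh(T_{C'}) \cong \cone\!\Bigl( \tqft(T_{C',0}) \xrightarrow{\tqft(\chcob)\circ \shiftFunct{H}} (\shiftFunct{\chcob})^{-1} \tqft(T_{C',1}) \Bigr)[1]\{0,1\},
\]
in which one of $T_{C',0}, T_{C',1}$ is the identity strand $T_I$ and the other is $T_I$ with a free loop attached, related by a saddle cobordism $\chcob$. The inverse shift $(\shiftFunct{\chcob})^{-1}$ is well-defined via \cref{prop:shiftsimplification}.

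We then construct an explicit quasi-isomorphism between $\kh(T_{C'})$ and $\kh(T_I) = \tqft(T_I)$ (concentrated in homological degree $0$) using a two-row diagram analogous to the one in the proof of \cref{lem:RI}: one vertical arrow is the relevant birth or merge saddle supplied by the chronological TQFT, and the other is zero. The explicit formulas for $\tqft$ on these elementary cobordisms, together with the dot relations~\eqref{eq:dotcob} and the loop-removal isomorphism of \cref{prop:removeloops}, show that the mapping cone of the resulting chain map is acyclic, dualizing the argument of \cite[Lemma 7.3]{putyra14}.

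The main obstacle is the grading bookkeeping: one has to verify that the vertical map is graded of the correct degree after accounting for the twist $\shiftFunct{H}$, the inverse shift $(\shiftFunct{\chcob})^{-1}$, and the homological shift $[1]\{0,1\}$. This relies on the compatibility maps $\beta$, $\gamma$, $\Xi$, and $\tau$ from \cref{sec:gradCat} and \cref{sec:Gcommutativity}, which were designed precisely to make such calculations work; once everything balances, the quasi-isomorphism property follows as in \cref{lem:RI}.
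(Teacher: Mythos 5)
Your proposal is correct and follows essentially the same approach as the paper's: reduce to the local curl versus cup via cofibrancy and the gluing property, write $\kh$ of the negative curl via the second cone formula, build a two-row chain map to $\tqft(T_I)$ analogous to the one in \cref{lem:RI}, appeal to \cite[Lemma 7.3]{putyra14} for the quasi-isomorphism property, and check the grading. The paper simply displays the explicit diagram and cobordism labelling and remarks that it is graded, which is the content of your final paragraph.
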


\begin{proof}
The proof is similar to the one of \cref{lem:RI} except that
\[
\kh
\tikzdiagh[scale=.75]
{
	\draw[dotted] (.5,.5) circle(0.707);
	\draw(0,0) .. controls (.5,0) and (1.25,1) .. (.5,1)  ;
	\fill[fill=white] (.5,.25) circle (.15);
	\draw(1,0) .. controls (.5,0) and (-.25,1) .. (.5,1)  ;
}
\cong \cone\left(
\tqft
 \tikzdiagh[scale=.75]
{
	\draw[dotted] (.5,.5) circle(0.707);
	\draw(1,0) .. controls (.75,0) and (.75,.25) .. (.5,.25)  ;
	\draw(0,0) .. controls (.25,0) and (.25,.25) .. (.5,.25)  ;
	\draw (.5,.75) circle (.25);
} 
\xrightarrow{\tqft\bigl( \ \tikzdiagh[scale=.2]{
	\draw  (.5,3) -- (4.5,3) -- (4.5,1);
	\begin{scope}
		\clip (-1,.5) rectangle (1,-.5);
		\draw (0,0) .. controls (-1.5,0) and (-1,1) .. (.5,1); 
	\end{scope}
	\draw (0,0) .. controls (1.5,0) and (2,1) .. (.5,1); 
	\draw (4,0) .. controls (2.5,0) and (3,1) .. (4.5,1); 
	\filldraw[fill=white, draw=white] (.5,.55)  rectangle  (4,2);
	\draw[dotted] (0,0) .. controls (1.5,0) and (2,1) .. (.5,1); 
	\draw[dotted] (4,0) .. controls (2.5,0) and (3,1) .. (4.5,1); 
	\draw[dotted] (0,0) .. controls (-1.5,0) and (-1,1) .. (.5,1); 
	\draw (1.375,.5) .. controls (1.375,1.5) and (3.125,1.5) .. (3.125,.5);
	%
	\draw (0,2) -- (4,2) -- (4,0);
	\draw (0,2) .. controls (-1.5,2) and (-1,3) .. (.5,3); 
	\draw (-.875,.5) -- (-.875,2.5);
}\ \bigr) \circ \shiftFunct{H}}
\bigl(\shiftFunct{\tikzdiagh[scale=.2]{
	\draw  (.5,3) -- (4.5,3) -- (4.5,1);
	\begin{scope}
		\clip (-1,.5) rectangle (1,-.5);
		\draw (0,0) .. controls (-1.5,0) and (-1,1) .. (.5,1); 
	\end{scope}
	\draw (0,0) .. controls (1.5,0) and (2,1) .. (.5,1); 
	\draw (4,0) .. controls (2.5,0) and (3,1) .. (4.5,1); 
	\filldraw[fill=white, draw=white] (.5,.55)  rectangle  (4,2);
	\draw[dotted] (0,0) .. controls (1.5,0) and (2,1) .. (.5,1); 
	\draw[dotted] (4,0) .. controls (2.5,0) and (3,1) .. (4.5,1); 
	\draw[dotted] (0,0) .. controls (-1.5,0) and (-1,1) .. (.5,1); 
	\draw (1.375,.5) .. controls (1.375,1.5) and (3.125,1.5) .. (3.125,.5);
	%
	\draw (0,2) -- (4,2) -- (4,0);
	\draw (0,2) .. controls (-1.5,2) and (-1,3) .. (.5,3); 
	\draw (-.875,.5) -- (-.875,2.5);
}}\bigr)^{-1}
\tqft 
 \tikzdiagh[scale=.75]
{
	\draw[dotted] (.5,.5) circle(0.707);
	\draw(1,0) .. controls (.5,0) and (1.25,1) .. (.5,1)  ;
	\draw(0,0) .. controls (.5,0) and (-.25,1) .. (.5,1)  ;
}
 \right)[1]\{0,1\},
\]
and the quasi-isomorphism looks like
\[
\begin{tikzcd}[column sep=14ex]
\tqft 
 \tikzdiagh[scale=.75]
{
	\draw[dotted] (.5,.5) circle(0.707);
	\draw(1,0) .. controls (.5,0) and (1.25,1) .. (.5,1)  ;
	\draw(0,0) .. controls (.5,0) and (-.25,1) .. (.5,1)  ;
}
\ar{r}
& 
0
\\
\tqft
 \tikzdiagh[scale=.75]
{
	\draw[dotted] (.5,.5) circle(0.707);
	\draw(1,0) .. controls (.75,0) and (.75,.25) .. (.5,.25)  ;
	\draw(0,0) .. controls (.25,0) and (.25,.25) .. (.5,.25)  ;
	\draw (.5,.75) circle (.25);
} 
\{0,1\}
\ar[swap,dash]{r}{\tqft\bigl( \ \tikzdiagh[scale=.2]{
	\draw  (.5,3) -- (4.5,3) -- (4.5,1);
	\begin{scope}
		\clip (-1,.5) rectangle (1,-.5);
		\draw (0,0) .. controls (-1.5,0) and (-1,1) .. (.5,1); 
	\end{scope}
	\draw (0,0) .. controls (1.5,0) and (2,1) .. (.5,1); 
	\draw (4,0) .. controls (2.5,0) and (3,1) .. (4.5,1); 
	\filldraw[fill=white, draw=white] (.5,.55)  rectangle  (4,2);
	\draw[dotted] (0,0) .. controls (1.5,0) and (2,1) .. (.5,1); 
	\draw[dotted] (4,0) .. controls (2.5,0) and (3,1) .. (4.5,1); 
	\draw[dotted] (0,0) .. controls (-1.5,0) and (-1,1) .. (.5,1); 
	\draw (1.375,.5) .. controls (1.375,1.5) and (3.125,1.5) .. (3.125,.5);
	%
	\draw (0,2) -- (4,2) -- (4,0);
	\draw (0,2) .. controls (-1.5,2) and (-1,3) .. (.5,3); 
	\draw (-.875,.5) -- (-.875,2.5);
}\ \bigr)  \circ \shiftFunct{H}}
\ar{r}
 \ar{u}
\ar[dash]{u}{\tqft\bigl( \ \tikzdiagh[scale=.2]{
	\draw  (.5,4) -- (4.5,4) -- (4.5,1);
	\begin{scope}
		\clip (-1,.5) rectangle (1,-.5);
		\draw (0,0) .. controls (-1.5,0) and (-1,1) .. (.5,1); 
	\end{scope}
	\draw (0,0) .. controls (1.5,0) and (2,1) .. (.5,1); 
	\draw (4,0) .. controls (2.5,0) and (3,1) .. (4.5,1); 
	\filldraw[fill=white, draw=white] (.5,.55)  rectangle  (4,3);
	\draw[dotted] (0,0) .. controls (1.5,0) and (2,1) .. (.5,1); 
	\draw[dotted] (4,0) .. controls (2.5,0) and (3,1) .. (4.5,1); 
	\draw[dotted] (0,0) .. controls (-1.5,0) and (-1,1) .. (.5,1); 
	%
	%
	\draw (0,3) -- (4,3) -- (4,0);
	\draw (0,3) .. controls (-1.5,3) and (-1,4) .. (.5,4); 
	\draw (-.875,3.5) .. controls (-.875,2.5) and (3.125,2.5) .. (3.125,.5);
	\draw (-.875,.5) .. controls (-.875,2) and (1.375,2) .. (1.375,.5);
} \ \bigr)}
&
\bigl(\shiftFunct{\tikzdiagh[scale=.2]{
	\draw  (.5,3) -- (4.5,3) -- (4.5,1);
	\begin{scope}
		\clip (-1,.5) rectangle (1,-.5);
		\draw (0,0) .. controls (-1.5,0) and (-1,1) .. (.5,1); 
	\end{scope}
	\draw (0,0) .. controls (1.5,0) and (2,1) .. (.5,1); 
	\draw (4,0) .. controls (2.5,0) and (3,1) .. (4.5,1); 
	\filldraw[fill=white, draw=white] (.5,.55)  rectangle  (4,2);
	\draw[dotted] (0,0) .. controls (1.5,0) and (2,1) .. (.5,1); 
	\draw[dotted] (4,0) .. controls (2.5,0) and (3,1) .. (4.5,1); 
	\draw[dotted] (0,0) .. controls (-1.5,0) and (-1,1) .. (.5,1); 
	\draw (1.375,.5) .. controls (1.375,1.5) and (3.125,1.5) .. (3.125,.5);
	%
	\draw (0,2) -- (4,2) -- (4,0);
	\draw (0,2) .. controls (-1.5,2) and (-1,3) .. (.5,3); 
	\draw (-.875,.5) -- (-.875,2.5);
}}\bigr)^{-1}
\tqft 
 \tikzdiagh[scale=.75]
{
	\draw[dotted] (.5,.5) circle(0.707);
	\draw(1,0) .. controls (.5,0) and (1.25,1) .. (.5,1)  ;
	\draw(0,0) .. controls (.5,0) and (-.25,1) .. (.5,1)  ;
}
\{0,1\}
\ar{u}
\end{tikzcd}
\]
which is again graded.
\end{proof}

\begin{lem}\label{lem:RII}
Let $T$ and $T'$ be two oriented $(n,m)$-tangles such that $T'$ differs from $T$ only in a small region where
\begin{align*}
T &\supset 
\tikzdiagh[scale=.75]
{
	\draw[dotted] (.5,.5) circle(0.707);
	\draw[->] (0,0) .. controls (1,.25) and (1,.75) .. (0,1); 
	\fill[fill=white] (.5,.25) circle (.15);
	\fill[fill=white] (.5,.75) circle (.15);
	\draw[->] (1,0) .. controls (0,.25) and (0,.75) .. (1,1);
} 
  & &\text{and} &
T' &\supset 
\tikzdiagh[scale=.75]
{
	\draw[dotted] (.5,.5) circle(0.707);
	\draw[->] (0,0) .. controls (.25,.25) and (.25,.75) .. (0,1); 
	\draw[->] (1,0) .. controls (.75,.25) and (.75,.75) .. (1,1);
}
\end{align*}
Then, there is an isomorphism $ T  \cong  T' \{-1,1\}$ in $\cD^\gradC(H^n, H^m)$.
\end{lem}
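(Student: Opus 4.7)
The plan is to reduce the statement to a local calculation on the two-crossing R2 region and then perform a Gaussian elimination on the resulting cube of resolutions.

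First, decompose $T = T_2 T_{\mathrm{R2}} T_1$ and $T' = T_2 T_\mathrm{id} T_1$, where $T_{\mathrm{R2}}$ is the two-crossing region depicted in the statement and $T_\mathrm{id}$ denotes the identity tangle on two strands. By \cref{prop:comptangles} one has $\kh(T) \cong \kh(T_2) \otimes_H \kh(T_{\mathrm{R2}}) \otimes_H \kh(T_1)$ and analogously for $\kh(T')$, while each factor $\kh(T_i)$ is cofibrant as a left and as a right $H$-module thanks to \cref{prop:projbimod}. Hence it suffices to establish the local equivalence $\kh(T_{\mathrm{R2}}) \cong \tqft(T_\mathrm{id})\{-1,1\}$ in $\cD^\gradC(H^1, H^1)$.

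Unfolding the definition, $\kh(T_{\mathrm{R2}})$ is the total complex of a square bicomplex whose four vertices are shifted TQFTs of the flat resolutions $T_{\mathrm{R2},\xi}$, $\xi \in \{0,1\}^2$. A direct inspection of the cube identifies $\red{T_{\mathrm{R2},(1,1)}} = T_\mathrm{id}$, $\red{T_{\mathrm{R2},(0,1)}} = \red{T_{\mathrm{R2},(1,0)}} = t_\cap$ (the cap-plus-cup tangle), and $T_{\mathrm{R2},(0,0)} = t_\cap \sqcup \bigcirc$. Using \cref{prop:removeloops,prop:shiftsimplification} to rewrite each $\shiftFunct{\chcob_\xi}$ as a product of standard $\{v_2, v_1\}$-shifts yields an explicit bicomplex in which the differentials are either the merge $\tqft(\text{merge}) : A \otimes \tqft(t_\cap) \to \tqft(t_\cap)$ of the free circle with an adjacent arc of $t_\cap$, or the saddle zipping $t_\cap$ up into two parallel strands.

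Then one applies Gaussian elimination twice in $\cD^\gradC(H^1, H^1)$. From the TQFT formulas of \cref{sec:oddfunctor}, the restriction of $\tqft(\text{merge})$ to the $Rv_+$-summand of $A$ is the identity on $\tqft(t_\cap)$, producing a contractible direct summand that may be discarded. A second elimination along the zip-up saddle applied to the surviving $Rv_-$-summand removes another contractible pair, leaving a single copy of $\tqft(T_\mathrm{id})$. A careful bookkeeping of the homological shifts from the two cones and the $\bZ \times \bZ$-shifts arising from \cref{prop:shiftsimplification,prop:removeloops} identifies this remaining summand as $\tqft(T_\mathrm{id})\{-1,1\}$.

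The main obstacle is the $\gradC$-graded bookkeeping: each rewriting of $\shiftFunct{\chcob^v}$ as a standard $\{v_2, v_1\}$-shift introduces a scalar coming from \cref{prop:nattrfromchangeofch}, and each Gaussian elimination must cohere with the compatibility maps $\beta$ and the commutativity data $\tau$ from \cref{sec:Gcommutativity}. Structurally the argument is the same as the one proving R2-invariance in \cite{khovanovHn} and its chronological refinement in \cite{putyra14}; the new content is verifying that the associator $\assoc$ contributes only coherent scalars, which follows from the 3-cocycle condition of \cref{prop:gradCassoc} combined with the shifting-2-system compatibility of \cref{prop:assoccompbeta}.
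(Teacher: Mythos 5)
Your proposal is correct and takes essentially the same route as the paper. The paper writes down the same shift-normalized four-vertex cube, constructs a chain map from $\tqft(T_{\mathrm{id}})\{-1,1\}$ into its middle degree (the identity into the parallel-strand vertex plus a double-saddle correction into the bubble vertex), and cites \cite[Lemma 7.5]{putyra14} for the fact that this inclusion is a strong deformation retract, which is exactly the two-step Gaussian elimination you carry out explicitly (with a minor misnomer in the second step: you eliminate along the $v_-$-component of the co-merge from the degree-$(-1)$ cup-cap vertex into the bubble vertex, not along the zip-up saddle into the parallel-strand vertex).
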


\begin{proof}
Using \cref{prop:removeloops}, we obtain that $\kh(\tikzdiagh[scale=.75]
{
	\draw[dotted] (.5,.5) circle(0.707);
	\draw[->] (0,0) .. controls (1,.25) and (1,.75) .. (0,1); 
	\fill[fill=white] (.5,.25) circle (.15);
	\fill[fill=white] (.5,.75) circle (.15);
	\draw[->] (1,0) .. controls (0,.25) and (0,.75) .. (1,1);
} )$ looks like the first row in the diagram below, and we construct a graded map $\kh(T') \rightarrow \kh(T)$ by the vertical arrows:
\[
\begin{tikzcd}[row sep=-2ex, column sep = 4ex]
&
\tqft\tikzdiagh[scale=.75]
{
	\draw[dotted] (.5,.5) circle(0.707);
	\draw (0,0) .. controls (.25,.25) and (.25,.75) .. (0,1); 
	\draw (1,0) .. controls (.75,.25) and (.75,.75) .. (1,1);
}\{-1,1\} 
\ar{dr}
& 
\\
\bigl(\shiftFunct{\tikzdiagh[scale=.2]{
	\draw (.5,1) -- (.5,3) -- (4.5,3) -- (4.5,1);
	\draw (0,0) .. controls (1.5,0) and (2,1) .. (.5,1); 
	\draw (4,0) .. controls (2.5,0) and (3,1) .. (4.5,1); 
	\filldraw[fill=white, draw=white] (.5,.55)  rectangle  (4,2);
	\draw[dotted] (0,0) .. controls (1.5,0) and (2,1) .. (.5,1); 
	\draw[dotted] (4,0) .. controls (2.5,0) and (3,1) .. (4.5,1); 
	\draw (1.375,.5) .. controls (1.375,1.5) and (3.125,1.5) .. (3.125,.5);
	%
	\draw (0,0) -- (0,2) -- (4,2) -- (4,0);
}}^{(-1,1)} \bigr)
\tqft \tikzdiagh[scale=.75]
{
	\draw[dotted] (3,-2) circle(0.707);
	\draw (2.5,-1.5) .. controls (2.75,-1.75) and (3.25,-1.75) .. (3.5,-1.5);
	\draw (2.5,-2.5) .. controls  (2.75,-2.25) and (3.25,-2.25) .. (3.5,-2.5);
}
\ar{ur} 
\ar{dr}
&&
\bigl(\shiftFunct{\tikzdiagh[scale=.2]{
	\draw (.5,1) -- (.5,3) -- (4.5,3) -- (4.5,1);
	\draw (0,0) .. controls (1.5,0) and (2,1) .. (.5,1); 
	\draw (4,0) .. controls (2.5,0) and (3,1) .. (4.5,1); 
	\filldraw[fill=white, draw=white] (.5,.55)  rectangle  (4,2);
	\draw[dotted] (0,0) .. controls (1.5,0) and (2,1) .. (.5,1); 
	\draw[dotted] (4,0) .. controls (2.5,0) and (3,1) .. (4.5,1); 
	\draw (1.375,.5) .. controls (1.375,1.5) and (3.125,1.5) .. (3.125,.5);
	%
	\draw (0,0) -- (0,2) -- (4,2) -- (4,0);
}}^{(0,2)} \bigr)
\tqft \tikzdiagh[scale=.75]
{
	\draw[dotted] (3,-2) circle(0.707);
	\draw (2.5,-1.5) .. controls (2.75,-1.75) and (3.25,-1.75) .. (3.5,-1.5);
	\draw (2.5,-2.5) .. controls  (2.75,-2.25) and (3.25,-2.25) .. (3.5,-2.5);
}
\\
&
\bigl(\shiftFunct{\tikzdiagh[scale=.2]{
	\draw (.5,1) -- (.5,3) -- (4.5,3) -- (4.5,1);
	\draw (0,0) .. controls (1.5,0) and (2,1) .. (.5,1); 
	\draw (4,0) .. controls (2.5,0) and (3,1) .. (4.5,1); 
	\filldraw[fill=white, draw=white] (.5,.55)  rectangle  (4,2);
	\draw[dotted] (0,0) .. controls (1.5,0) and (2,1) .. (.5,1); 
	\draw[dotted] (4,0) .. controls (2.5,0) and (3,1) .. (4.5,1); 
	\draw (1.375,.5) .. controls (1.375,1.5) and (3.125,1.5) .. (3.125,.5);
	%
	\draw (0,0) -- (0,2) -- (4,2) -- (4,0);
}}^{(-1,2)} \bigr)
\tqft \tikzdiagh[scale=.75]
{
	\draw[dotted] (.5,.5) circle(0.707);
	\draw (0,0) .. controls (.25,.25) and (.75,.25) .. (1,0); 
	\draw (.5,.5) circle (.15);
	\draw (0,1) .. controls (.25,.75) and (.75,.75) .. (1,1);
}
\ar{ur}
&
\\[10ex]
0 \ar{r} 
 \ar{uu}
 & 
\tqft\tikzdiagh[scale=.75]
{
	\draw[dotted] (.5,.5) circle(0.707);
	\draw (0,0) .. controls (.25,.25) and (.25,.75) .. (0,1); 
	\draw (1,0) .. controls (.75,.25) and (.75,.75) .. (1,1);
}\{-1,1\} 
\ar{u}
\ar[dash,swap]{u}{-\tqft \tikzdiagh[scale=.2]{
	\draw (.5,4) -- (.5,1) -- (6.5,1) -- (6.5,4);
	\filldraw [fill=white, draw=white] (0,0) rectangle (6,3); 
	\draw (0,3) .. controls (1.5,3) and (2,4) .. (.5,4); 
	\draw (6,3) .. controls (4.5,3) and (5,4) .. (6.5,4); 
	\draw (3,3) .. controls (4.5,3) and (5,4) .. (3.5,4); 
	\draw (3,3) .. controls (1.5,3) and (2,4) .. (3.5,4); 
	\draw (2.125,3.5) .. controls (2.125,1.5) and (4.375,1.5) .. (4.375,3.5);
	\draw (1.375,3.5) .. controls (1.375,.5) and (5.125,.5) .. (5.125,3.5);
	\draw (0,3) -- (0,0) -- (6,0) -- (6,3);
} }
\ar[bend left=60,pos=.35]{uuu}{1}
\ar{r}
 &
  0
 \ar{uu}
\end{tikzcd}
\]
where the middle part is in homological degree zero. Up to composing the vertical maps with a change of chronology, we obtain a commutative diagram. 
By \cite[Lemma 7.5]{putyra14}, we know the resulting vertical maps yield a quasi-isomorphism. 
\end{proof}

\begin{lem}\label{lem:RIIb}
Let $T$ and $T'$ be two oriented $(n,m)$-tangles such that $T'$ differs from $T$ only in a small region where
\begin{align*}
T &\supset 
\tikzdiagh[scale=.75]
{
	\draw[dotted] (.5,.5) circle(0.707);
	\draw[->] (0,0) .. controls (1,.25) and (1,.75) .. (0,1); 
	\fill[fill=white] (.5,.25) circle (.15);
	\fill[fill=white] (.5,.75) circle (.15);
	\draw[<-] (1,0) .. controls (0,.25) and (0,.75) .. (1,1);
} 
  & &\text{and} &
T' &\supset 
\tikzdiagh[scale=.75]
{
	\draw[dotted] (.5,.5) circle(0.707);
	\draw[->] (0,0) .. controls (.25,.25) and (.25,.75) .. (0,1); 
	\draw[<-] (1,0) .. controls (.75,.25) and (.75,.75) .. (1,1);
}
\end{align*}
Then, there is an isomorphism 
$T \cong  
\bigl(
\shiftFunct{\tikzdiagh[scale=.2]{
	\draw (.5,3) -- (.5,1) -- (4.5,1) -- (4.5,3);
	\filldraw [fill=white, draw=white] (0,0) rectangle (4,2); 
	\draw (0,2) .. controls (1.5,2) and (2,3) .. (.5,3); 
	\draw (4,2) .. controls (2.5,2) and (3,3) .. (4.5,3); 
	\draw (1.375,2.5) .. controls (1.375,1.5) and (3.125,1.5) .. (3.125,2.5);
	%
	\draw (0,2) -- (0,0) -- (4,0) -- (4,2);
}^{(0,1)}} 
\bigr) T' $ 
in $\cD^\gradC(H^n, H^m)$.
\end{lem}

\begin{proof}
We obtain that $\kh(\tikzdiagh[scale=.75]
{
	\draw[dotted] (.5,.5) circle(0.707);
	\draw[->] (0,0) .. controls (1,.25) and (1,.75) .. (0,1); 
	\fill[fill=white] (.5,.25) circle (.15);
	\fill[fill=white] (.5,.75) circle (.15);
	\draw[<-] (1,0) .. controls (0,.25) and (0,.75) .. (1,1);
} )$ looks like
\[
\begin{tikzcd}[row sep=-2ex, column sep = 4ex]
&
\bigl(\shiftFunct{\tikzdiagh[scale=.2]{
	\draw (.5,3) -- (.5,1) -- (4.5,1) -- (4.5,3);
	\filldraw [fill=white, draw=white] (0,0) rectangle (4,2); 
	\draw (0,2) .. controls (1.5,2) and (2,3) .. (.5,3); 
	\draw (4,2) .. controls (2.5,2) and (3,3) .. (4.5,3); 
	\draw (1.375,2.5) .. controls (1.375,1.5) and (3.125,1.5) .. (3.125,2.5);
	%
	\draw (0,2) -- (0,0) -- (4,0) -- (4,2);
}}^{(0,1)} \bigr)
\tqft\tikzdiagh[scale=.75]
{
	\draw[dotted] (.5,.5) circle(0.707);
	\draw (0,0) .. controls (.25,.25) and (.25,.75) .. (0,1); 
	\draw (1,0) .. controls (.75,.25) and (.75,.75) .. (1,1);
}
\ar{dr}
& 
\\
\tqft \tikzdiagh[scale=.75]
{
	\draw[dotted] (3,-2) circle(0.707);
	\draw (2.5,-1.5) .. controls (2.75,-1.75) and (3.25,-1.75) .. (3.5,-1.5);
	\draw (2.5,-2.5) .. controls  (2.75,-2.25) and (3.25,-2.25) .. (3.5,-2.5);
}
\{-1,0\}
\ar{ur} 
\ar{dr}
&&
\tqft \tikzdiagh[scale=.75]
{
	\draw[dotted] (3,-2) circle(0.707);
	\draw (2.5,-1.5) .. controls (2.75,-1.75) and (3.25,-1.75) .. (3.5,-1.5);
	\draw (2.5,-2.5) .. controls  (2.75,-2.25) and (3.25,-2.25) .. (3.5,-2.5);
}
\{0,1\}
\\
&
\tqft \tikzdiagh[scale=.75]
{
	\draw[dotted] (.5,.5) circle(0.707);
	\draw (0,0) .. controls (.25,.25) and (.75,.25) .. (1,0); 
	\draw (.5,.5) circle (.15);
	\draw (0,1) .. controls (.25,.75) and (.75,.75) .. (1,1);
}
\{-1,1\}
\ar{ur}
&
\end{tikzcd}
\]
where the middle part is in homological degree zero. Then, we verify we can use a similar underlying quasi-isomorphism as in \cref{lem:RII}, and it is graded. 
\end{proof}

\begin{lem}\label{lem:RIII}
Let $T$ and $T'$ be two oriented $(n,m)$-tangles such that $T'$ differs from $T$ only in a small region where
\begin{align*}
T &\supset 
\tikzdiagh[scale=.75]
{
	\draw[dotted] (.5,.5) circle(0.707);
	\draw (.5,.5-.707) .. controls (.5,.5-.35) and (.1,.5-.35) ..(.1,.5)
		.. controls (.1,.5+.35) and (.5,.5+.35) .. (.5,.5+.707);
	\fill[fill=white] (.25,.75) circle (.1);
	\fill[fill=white] (.25,.25) circle (.1);
	\draw(1,0) -- (0,1);
	\fill[fill=white] (.5,.5) circle (.1);
	\draw(0,0) -- (1,1);
}
 & &\text{and} &
T' &\supset 
\tikzdiagh[scale=.75,xscale=-1]
{
	\draw[dotted] (.5,.5) circle(0.707);
	\draw (.5,.5-.707) .. controls (.5,.5-.35) and (.1,.5-.35) ..(.1,.5)
		.. controls (.1,.5+.35) and (.5,.5+.35) .. (.5,.5+.707);
	\fill[fill=white] (.25,.75) circle (.1);
	\fill[fill=white] (.25,.25) circle (.1);
	\draw(1,0) -- (0,1);
	\fill[fill=white] (.5,.5) circle (.1);
	\draw(0,0) -- (1,1);
}
\end{align*}
Then, there is an isomorphism $ T \cong  T'$ in $\cD^\gradC(H^n, H^m)$.
\end{lem}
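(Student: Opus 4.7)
The strategy will be to reduce the Reidemeister III invariance to Reidemeister II invariance, following the classical Bar--Natan-style argument adapted to our chronological setting. First, as in the proof of \cref{lem:RI}, I would decompose $T = T_2 T_R T_1$ and $T' = T_2 T'_R T_1$, where $T_R$ and $T'_R$ are the local three-crossing tangles pictured in the statement. Since $\kh(T_i)$ is cofibrant both as a left and right module (by \cref{prop:projbimod}), composition with $\kh(T_1)$ and $\kh(T_2)$ preserves quasi-isomorphisms in $\cD^\gradC$ via \cref{prop:comptangles}, so it suffices to establish the equivalence $\kh(T_R) \cong \kh(T'_R)$.

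Next, I would pick one distinguished crossing in each of $T_R$ and $T'_R$---say the one bounded by the other two. Resolving this crossing expresses $\kh(T_R)$ as an iterated mapping cone
\[
\kh(T_R) \;\cong\; \cone\bigl( \shiftFunct{\chcob}(\kh(T_R^0)) \xrightarrow{\ f\ } \kh(T_R^1) \bigr)[\epsilon]\{\cdots\},
\]
where $T_R^0$ and $T_R^1$ denote the two resolutions of the chosen crossing and $\chcob$ is the saddle cobordism between them, with shifts depending on the crossing sign. The same is done for $\kh(T'_R)$. Crucially, the two-crossing tangle $T_R^1$ contains a strand passing over a bigon that can be removed by \cref{lem:RII} or \cref{lem:RIIb}, and this removal produces a tangle isotopic to $T'^1_R$ (or to $T'^0_R$, depending on the case); likewise for the $0$-resolutions. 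Thus via the RII quasi-isomorphisms already constructed, one obtains a commuting square whose rows are the defining triangles of the two cones.

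The plan is then to verify that the square
\[
\begin{tikzcd}[column sep=8ex]
\shiftFunct{\chcob}(\kh(T_R^0)) \ar{r}{f} \ar{d}{\simeq} & \kh(T_R^1) \ar{d}{\simeq} \\
\shiftFunct{\chcob'}(\kh(T_R'^{\,0})) \ar{r}{f'} & \kh(T_R'^{\,1})
\end{tikzcd}
\]
commutes in the $\gradC$-graded homotopy category, where the vertical arrows are the quasi-isomorphisms from \cref{lem:RII}/\cref{lem:RIIb} combined with a planar isotopy. Commutativity of the outer cone will then yield a homotopy equivalence of the cones, proving $\kh(T_R) \cong \kh(T'_R)$. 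The compatibility of the shifts $\shiftFunct{\chcob}$ with the RII quasi-isomorphisms follows from the naturality of $\shiftFunct{H}$ (see \cref{prop:nattrfromchangeofch}) applied to the locally vertical change of chronology interchanging the order of the relevant saddles, plus the simplification results \cref{prop:shiftsimplification} and \cref{prop:removeloops}.

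The main obstacle will be the bookkeeping of scalars: unlike the even case, every saddle permutation introduces compatibility factors involving the maps $\compMap{}{}, \vCompMap{}{}, \interCompMap{}{}{}{}$ and the cocycle $\assoc$, and one must verify that the natural transformation arising from the sequence of locally vertical changes of chronology has trivial $\imath$-value---equivalently, that the two locally vertical changes of chronology from $T_R$'s cube to $T'_R$'s cube have homotopic source and target data so that \cref{prop:locvertchange} applies. Equivalently, one must exhibit an explicit locally vertical change of chronology between the two chronological cobordisms built from the resolution cubes of $T_R$ and $T'_R$ via the RIII move, and argue using \cref{prop:locvertchange} together with the $\cC$-graded commutativity framework of \cref{sec:Gcommutativity} that the resulting scalar is exactly $1$. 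Given that all Reidemeister-II-induced maps in the proofs of \cref{lem:RII}/\cref{lem:RIIb} were shown to be graded (i.e. carry the correct $\gradC$-degree after the stated shifts), no further degree correction should be needed beyond what is already built into the mapping cones.
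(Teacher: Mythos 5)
Your proposal follows essentially the same strategy as the paper's proof. Both reduce to the local tangles via cofibrancy of $\kh(T_i)$, resolve the crossing where the two transverse strands meet each other (the one "bounded by" the understrand in $T$, resp.\ overstrand in $T'$), express $\kh$ of the local tangle as a mapping cone, apply \cref{lem:RII}/\cref{lem:RIIb} to kill the bigon appearing in one of the resolutions, and then compare the resulting simplified cones. The paper arranges the final comparison as two maps $F_L$ and $F_R$ emanating from a common simplified source into the two remaining resolutions (rather than the square you draw), and concludes by observing that after normalizing by changes of chronology the underlying cobordisms agree, so the comparison isomorphism $f$ makes the diagram commute; this is the same scalar bookkeeping you describe, resolved by the uniqueness of locally vertical changes of chronology (\cref{prop:locvertchange}) rather than by a term-by-term check of $\compMap{}{}$, $\vCompMap{}{}$, $\interCompMap{}{}{}{}$. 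One small inaccuracy: you suggest applying RII on both the $0$- and $1$-resolutions, but the paper only needs to simplify the $1$-resolution (where the bigon sits); the $0$-resolutions of the two tangles are already related by the "obvious graded isomorphism" $f$ without any RII move, and trying to apply RII to the $0$-resolution would be vacuous. This does not affect the correctness of your overall plan.
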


\begin{proof}
We will suppose all strands are oriented upward, the other cases being similar. Then, we have
\begin{align*}
\kh \tikzdiagh[scale=.75]
{
	\draw[dotted] (.5,.5) circle(0.707);
	\draw[->] (.5,.5-.707) .. controls (.5,.5-.35) and (.1,.5-.35) ..(.1,.5)
		.. controls (.1,.5+.35) and (.5,.5+.35) .. (.5,.5+.707);
	\fill[fill=white] (.25,.75) circle (.1);
	\fill[fill=white] (.25,.25) circle (.1);
	\draw[->](1,0) -- (0,1);
	\fill[fill=white] (.5,.5) circle (.1);
	\draw[->](0,0) -- (1,1);
}
&\cong
\cone\left(
\tqft \tikzdiagh[scale=.75]
{
	\draw[dotted] (.5,.5) circle(0.707);
	\draw[->] (.5,.5-.707) .. controls (.5,.5-.35) and (.1,.5-.35) ..(.1,.5)
		.. controls (.1,.5+.35) and (.5,.5+.35) .. (.5,.5+.707);
	\fill[fill=white] (.25,.75) circle (.1);
	\fill[fill=white] (.25,.25) circle (.1);
	\draw[->](1,0) .. controls (.75,.25) and (.75,.75) .. (1,1);
	\draw[->](0,0) .. controls (.4,.25) and (.4,.75) .. (0,1);
}
\xrightarrow{\ \tqft\bigl( \ \tikzdiagh[scale=.2]{
	\draw (.5,3) -- (.5,1) -- (4.5,1) -- (4.5,3);
	\filldraw [fill=white, draw=white] (0,0) rectangle (4,2); 
	\draw (0,2) .. controls (1.5,2) and (2,3) .. (.5,3); 
	\draw (4,2) .. controls (2.5,2) and (3,3) .. (4.5,3); 
	\draw (1.375,2.5) .. controls (1.375,1.5) and (3.125,1.5) .. (3.125,2.5);
	%
	\draw (0,2) -- (0,0) -- (4,0) -- (4,2);
	\filldraw [fill=white, draw = black] (-.5,-1) rectangle (3.5,1);
} \ \bigr )  \circ \shiftFunct{H} \ } 
(\shiftFunct{\tikzdiagh[scale=.2]{
	\draw (.5,3) -- (.5,1) -- (4.5,1) -- (4.5,3);
	\filldraw [fill=white, draw=white] (0,0) rectangle (4,2); 
	\draw (0,2) .. controls (1.5,2) and (2,3) .. (.5,3); 
	\draw (4,2) .. controls (2.5,2) and (3,3) .. (4.5,3); 
	\draw (1.375,2.5) .. controls (1.375,1.5) and (3.125,1.5) .. (3.125,2.5);
	%
	\draw (0,2) -- (0,0) -- (4,0) -- (4,2);
	\filldraw [fill=white, draw = black] (-.5,-1) rectangle (3.5,1);
} })^{-1}
\tqft \tikzdiagh[scale=.75]
{
	\draw[dotted] (.5,.5) circle(0.707);
	\draw[->] (.5,.5-.707) .. controls (.5,.5-.35) and (.1,.5-.35) ..(.1,.5)
		.. controls (.1,.5+.35) and (.5,.5+.35) .. (.5,.5+.707);
	\fill[fill=white] (.25,.75) circle (.1);
	\fill[fill=white] (.25,.25) circle (.1);
	\draw[->](0,0) .. controls (.25,.35) and (.75,.35) .. (1,0);
	\draw[<-](0,1) .. controls (.25,.65) and (.75,.65) .. (1,1);
}
\right)[1]\{0,1\},
\\
\kh \tikzdiagh[scale=.75]
{
	\draw[->][dotted] (.5,.5) circle(0.707);
	\draw[->] (.5,.5-.707) .. controls (.5,.5-.35) and (.9,.5-.35) ..(.9,.5)
		.. controls (.9,.5+.35) and (.5,.5+.35) .. (.5,.5+.707);
	\fill[fill=white] (.75,.75) circle (.1);
	\fill[fill=white] (.75,.25) circle (.1);
	\draw[->](1,0) -- (0,1);
	\fill[fill=white] (.5,.5) circle (.1);
	\draw[->](0,0) -- (1,1);
}
&\cong
\cone\left(
\tqft \tikzdiagh[xscale=-.75,yscale=.75]
{
	\draw[dotted] (.5,.5) circle(0.707);
	\draw[->] (.5,.5-.707) .. controls (.5,.5-.35) and (.1,.5-.35) ..(.1,.5)
		.. controls (.1,.5+.35) and (.5,.5+.35) .. (.5,.5+.707);
	\fill[fill=white] (.25,.75) circle (.1);
	\fill[fill=white] (.25,.25) circle (.1);
	\draw[->](1,0) .. controls (.75,.25) and (.75,.75) .. (1,1);
	\draw[->](0,0) .. controls (.4,.25) and (.4,.75) .. (0,1);
}
\xrightarrow{\ \tqft\bigl( \ \tikzdiagh[scale=.2]{
	\filldraw [fill=white, draw = black] (1,2) rectangle (5,4);
	\filldraw [fill=white, draw = white] (0,2) rectangle (4.5,3);
	\draw (.5,3) -- (.5,1) -- (4.5,1) -- (4.5,3);
	\filldraw [fill=white, draw=white] (0,0) rectangle (4,2); 
	\draw (0,2) .. controls (1.5,2) and (2,3) .. (.5,3); 
	\draw (4,2) .. controls (2.5,2) and (3,3) .. (4.5,3); 
	\draw (1.375,2.5) .. controls (1.375,1.5) and (3.125,1.5) .. (3.125,2.5);
	%
	\draw (0,2) -- (0,0) -- (4,0) -- (4,2);
} \ \bigr )  \circ \shiftFunct{H'} \ } 
(\shiftFunct{ })^{-1}
\tqft \tikzdiagh[xscale=-.75,yscale=.75]
{
	\draw[dotted] (.5,.5) circle(0.707);
	\draw[->] (.5,.5-.707) .. controls (.5,.5-.35) and (.1,.5-.35) ..(.1,.5)
		.. controls (.1,.5+.35) and (.5,.5+.35) .. (.5,.5+.707);
	\fill[fill=white] (.25,.75) circle (.1);
	\fill[fill=white] (.25,.25) circle (.1);
	\draw[->](0,0) .. controls (.25,.35) and (.75,.35) .. (1,0);
	\draw[<-](0,1) .. controls (.25,.65) and (.75,.65) .. (1,1);
}
\right)[1]\{0,1\}.
\end{align*}
By \cref{lem:RII} and \cref{lem:RIIb}, we obtain 
\begin{align*}
\kh \tikzdiagh[scale=.75]
{
	\draw[dotted] (.5,.5) circle(0.707);
	\draw[->] (.5,.5-.707) .. controls (.5,.5-.35) and (.1,.5-.35) ..(.1,.5)
		.. controls (.1,.5+.35) and (.5,.5+.35) .. (.5,.5+.707);
	\fill[fill=white] (.25,.75) circle (.1);
	\fill[fill=white] (.25,.25) circle (.1);
	\draw[->](1,0) -- (0,1);
	\fill[fill=white] (.5,.5) circle (.1);
	\draw[->](0,0) -- (1,1);
}
&\cong
\cone\left(
\tqft \tikzdiagh[scale=.75]
{
	\draw[dotted] (.5,.5) circle(0.707);
	\draw[->] (.5,.5-.707) --(.5,.5+.707);
	\fill[fill=white] (.25,.75) circle (.1);
	\fill[fill=white] (.25,.25) circle (.1);
	\draw[->](1,0) .. controls (.75,.25) and (.75,.75) .. (1,1);
	\draw[->](0,0) .. controls (.25,.25) and (.25,.75) .. (0,1);
}
\{-1, 1\}
\xrightarrow{F_L}
(\shiftFunct{\tikzdiagh[scale=.2]{
	\draw (.5,3) -- (.5,1) -- (4.5,1) -- (4.5,3);
	\filldraw [fill=white, draw=white] (0,0) rectangle (4,2); 
	\draw (0,2) .. controls (1.5,2) and (2,3) .. (.5,3); 
	\draw (4,2) .. controls (2.5,2) and (3,3) .. (4.5,3); 
	\draw (1.375,2.5) .. controls (1.375,1.5) and (3.125,1.5) .. (3.125,2.5);
	%
	\draw (0,2) -- (0,0) -- (4,0) -- (4,2);
	\filldraw [fill=white, draw = black] (-.5,-1) rectangle (3.5,1);
} })^{-1}
\tqft \tikzdiagh[scale=.75]
{
	\draw[dotted] (.5,.5) circle(0.707);
	\draw[->] (.5,.5-.707) .. controls (.5,.5-.35) and (.1,.5-.35) ..(.1,.5)
		.. controls (.1,.5+.35) and (.5,.5+.35) .. (.5,.5+.707);
	\fill[fill=white] (.25,.75) circle (.1);
	\fill[fill=white] (.25,.25) circle (.1);
	\draw[->](0,0) .. controls (.25,.35) and (.75,.35) .. (1,0);
	\draw[<-](0,1) .. controls (.25,.65) and (.75,.65) .. (1,1);
}
\right)[1]\{0,1\},
\\
\kh \tikzdiagh[scale=.75]
{
	\draw[->][dotted] (.5,.5) circle(0.707);
	\draw[->] (.5,.5-.707) .. controls (.5,.5-.35) and (.9,.5-.35) ..(.9,.5)
		.. controls (.9,.5+.35) and (.5,.5+.35) .. (.5,.5+.707);
	\fill[fill=white] (.75,.75) circle (.1);
	\fill[fill=white] (.75,.25) circle (.1);
	\draw[->](1,0) -- (0,1);
	\fill[fill=white] (.5,.5) circle (.1);
	\draw[->](0,0) -- (1,1);
}
&\cong
\cone\left(
\tqft \tikzdiagh[scale=.75]
{
	\draw[dotted] (.5,.5) circle(0.707);
	\draw[->] (.5,.5-.707) --(.5,.5+.707);
	\fill[fill=white] (.25,.75) circle (.1);
	\fill[fill=white] (.25,.25) circle (.1);
	\draw[->](1,0) .. controls (.75,.25) and (.75,.75) .. (1,1);
	\draw[->](0,0) .. controls (.25,.25) and (.25,.75) .. (0,1);
}
 \{-1,1\}
\xrightarrow{F_R}
(\shiftFunct{ })^{-1}
\tqft \tikzdiagh[xscale=-.75,yscale=.75]
{
	\draw[dotted] (.5,.5) circle(0.707);
	\draw[->] (.5,.5-.707) .. controls (.5,.5-.35) and (.1,.5-.35) ..(.1,.5)
		.. controls (.1,.5+.35) and (.5,.5+.35) .. (.5,.5+.707);
	\fill[fill=white] (.25,.75) circle (.1);
	\fill[fill=white] (.25,.25) circle (.1);
	\draw[->](0,0) .. controls (.25,.35) and (.75,.35) .. (1,0);
	\draw[<-](0,1) .. controls (.25,.65) and (.75,.65) .. (1,1);
}
\right)[1]\{0,1\},
\end{align*}
for some maps $F_L$ and $F_R$ obtained by composition. 

Consider the following diagram
\[
\begin{tikzcd}[row sep=-2ex,column sep= 6ex]
&
(\shiftFunct{\tikzdiagh[scale=.2]{
	\draw (.5,3) -- (.5,1) -- (4.5,1) -- (4.5,3);
	\filldraw [fill=white, draw=white] (0,0) rectangle (4,2); 
	\draw (0,2) .. controls (1.5,2) and (2,3) .. (.5,3); 
	\draw (4,2) .. controls (2.5,2) and (3,3) .. (4.5,3); 
	\draw (1.375,2.5) .. controls (1.375,1.5) and (3.125,1.5) .. (3.125,2.5);
	%
	\draw (0,2) -- (0,0) -- (4,0) -- (4,2);
	\filldraw [fill=white, draw = black] (-.5,-1) rectangle (3.5,1);
} })^{-1}
\tqft \tikzdiagh[scale=.75]
{
	\draw[dotted] (.5,.5) circle(0.707);
	\draw[->] (.5,.5-.707) .. controls (.5,.5-.35) and (.1,.5-.35) ..(.1,.5)
		.. controls (.1,.5+.35) and (.5,.5+.35) .. (.5,.5+.707);
	\fill[fill=white] (.25,.75) circle (.1);
	\fill[fill=white] (.25,.25) circle (.1);
	\draw[->](0,0) .. controls (.25,.35) and (.75,.35) .. (1,0);
	\draw[<-](0,1) .. controls (.25,.65) and (.75,.65) .. (1,1);
}
\ar{dd}{f}
\\
\tqft \tikzdiagh[scale=.75]
{
	\draw[dotted] (.5,.5) circle(0.707);
	\draw[->] (.5,.5-.707) --(.5,.5+.707);
	\fill[fill=white] (.25,.75) circle (.1);
	\fill[fill=white] (.25,.25) circle (.1);
	\draw[->](1,0) .. controls (.75,.25) and (.75,.75) .. (1,1);
	\draw[->](0,0) .. controls (.25,.25) and (.25,.75) .. (0,1);
}
 \{-1,1\}
 \ar{ur}{F_L}
 \ar[swap]{dr}{F_R}
 &
\\
&
(\shiftFunct{ })^{-1}
\tqft \tikzdiagh[xscale=-.75,yscale=.75]
{
	\draw[dotted] (.5,.5) circle(0.707);
	\draw[->] (.5,.5-.707) .. controls (.5,.5-.35) and (.1,.5-.35) ..(.1,.5)
		.. controls (.1,.5+.35) and (.5,.5+.35) .. (.5,.5+.707);
	\fill[fill=white] (.25,.75) circle (.1);
	\fill[fill=white] (.25,.25) circle (.1);
	\draw[->](0,0) .. controls (.25,.35) and (.75,.35) .. (1,0);
	\draw[<-](0,1) .. controls (.25,.65) and (.75,.65) .. (1,1);
}
\end{tikzcd}
\]
where $f$ is the obvious graded isomorphism obtained after decomposing 
$(\shiftFunct{\tikzdiagh[scale=.2]{
	\draw (.5,3) -- (.5,1) -- (4.5,1) -- (4.5,3);
	\filldraw [fill=white, draw=white] (0,0) rectangle (4,2); 
	\draw (0,2) .. controls (1.5,2) and (2,3) .. (.5,3); 
	\draw (4,2) .. controls (2.5,2) and (3,3) .. (4.5,3); 
	\draw (1.375,2.5) .. controls (1.375,1.5) and (3.125,1.5) .. (3.125,2.5);
	%
	\draw (0,2) -- (0,0) -- (4,0) -- (4,2);
	\filldraw [fill=white, draw = black] (-.5,-1) rectangle (3.5,1);
} })^{-1}
\tqft \tikzdiagh[scale=.75]
{
	\draw[dotted] (.5,.5) circle(0.707);
	\draw[->] (.5,.5-.707) .. controls (.5,.5-.35) and (.1,.5-.35) ..(.1,.5)
		.. controls (.1,.5+.35) and (.5,.5+.35) .. (.5,.5+.707);
	\fill[fill=white] (.25,.75) circle (.1);
	\fill[fill=white] (.25,.25) circle (.1);
	\draw[->](0,0) .. controls (.25,.35) and (.75,.35) .. (1,0);
	\draw[<-](0,1) .. controls (.25,.65) and (.75,.65) .. (1,1);
}$ 
and
$(\shiftFunct{ })^{-1}
\tqft \tikzdiagh[xscale=-.75,yscale=.75]
{
	\draw[dotted] (.5,.5) circle(0.707);
	\draw[->] (.5,.5-.707) .. controls (.5,.5-.35) and (.1,.5-.35) ..(.1,.5)
		.. controls (.1,.5+.35) and (.5,.5+.35) .. (.5,.5+.707);
	\fill[fill=white] (.25,.75) circle (.1);
	\fill[fill=white] (.25,.25) circle (.1);
	\draw[->](0,0) .. controls (.25,.35) and (.75,.35) .. (1,0);
	\draw[<-](0,1) .. controls (.25,.65) and (.75,.65) .. (1,1);
}$, 
and applying the corresponding changes of chronology. 
We observe that the diagram is commutative, since the underlying maps are given by the same cobordisms, which can be thought as all being normalized through changes of chronology. This implies that $\kh(T) \cong \kh(T')$. 
\end{proof}

\begin{thm}\label{thm:isotanglesG}
If $T$ and $T'$ are isotopic $(n,m)$-tangles, then there exists a shifting functor $\shiftFunct{\chcob^v}$ such that there is an isomorphism
\[
\shiftFunct{\chcob^v} \bigl( \kh(T) \bigr) \xrightarrow{\simeq} \kh(T'),
\]
in $\cD^\gradC(H^n, H^m)$.
\end{thm}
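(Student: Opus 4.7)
The plan is to reduce to the elementary case by invoking Reidemeister's theorem for tangles: any two isotopic plane diagrams of an $(n,m)$-tangle are related by a finite sequence of planar isotopies together with Reidemeister moves of types I, II, III (and their mirror/oriented variants), each of which is performed inside a small disk leaving the rest of the diagram untouched. Lemmas handling each of these local moves have already been established (planar isotopy, \cref{lem:RI}, \cref{lem:RIb}, \cref{lem:RII}, \cref{lem:RIIb}, \cref{lem:RIII}). Each such lemma produces a quasi-isomorphism of the form $\shiftFunct{\chcob_k^{v_k}}(\kh(T_{\text{loc}})) \xrightarrow{\sim} \kh(T'_{\text{loc}})$ inside the small disk, for some explicit local cobordism $\chcob_k$ and shift $v_k \in \bZ\times\bZ$ determined by the type of move.

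The first step is to promote each local quasi-isomorphism to a global one. Writing $T = T_2\, T_{\text{loc}}\, T_1$ and $T' = T_2\, T'_{\text{loc}}\, T_1$ for the tangles before and after a single move, we have by definition
\[
\kh(T) \cong \kh(T_2) \otimes_H \kh(T_{\text{loc}}) \otimes_H \kh(T_1).
\]
By \cref{prop:projbimod}, each $\tqft(t)$ is projective as a left and right $H^\bullet$-module, and $\kh(T_i)$ is an iterated cone of such, hence cofibrant in both module structures; thus tensoring with $\kh(T_1)$ and $\kh(T_2)$ preserves quasi-isomorphisms in $\cD^\gradC$. Applying the local isomorphism inside the tensor product gives
\[
\kh(T) \xrightarrow{\sim} \kh(T_2) \otimes_H \shiftFunct{\chcob_k^{v_k}}\bigl(\kh(T'_{\text{loc}})\bigr) \otimes_H \kh(T_1).
\]
Using the natural isomorphisms $\compMap{}{}$ of \cref{sec:shiftingbimod} we pull the shifting functor out of the tensor product, obtaining a shift by the right-then-left juxtaposition $\un_{T_2} \bullet \chcob_k \bullet \un_{T_1}$ and $\bZ\times\bZ$-part $v_k$, so that
\[
\kh(T) \xrightarrow{\sim} \shiftFunct{(\un_{T_2}\bullet\chcob_k\bullet\un_{T_1})^{v_k}}\bigl(\kh(T')\bigr)
\quad\text{in } \cD^\gradC(H^n,H^m).
\]

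The second step is to iterate. If the isotopy from $T$ to $T'$ is given by a sequence of moves $T = T^{(0)} \rightsquigarrow T^{(1)} \rightsquigarrow \cdots \rightsquigarrow T^{(N)} = T'$, applying the previous step at each stage yields cobordisms $\chcob^{(i)}$ and shifts $v^{(i)}$ with quasi-isomorphisms
\[
\kh(T^{(i-1)}) \xrightarrow{\sim} \shiftFunct{(\chcob^{(i)})^{v^{(i)}}}\bigl(\kh(T^{(i)})\bigr).
\]
Composing these quasi-isomorphisms and using the natural isomorphisms $\vCompMap{j}{i}$ from the $\gradC$-shifting 2-system to collapse the vertical composition
\[
\shiftFunct{(\chcob^{(N)})^{v^{(N)}}} \circ \cdots \circ \shiftFunct{(\chcob^{(1)})^{v^{(1)}}} \;\cong\; \shiftFunct{(\chcob^{(N)}\circ\cdots\circ\chcob^{(1)})^{v^{(1)}+\cdots+v^{(N)}}},
\]
we obtain a single shift $\shiftFunct{\chcob^v}$ realizing the desired quasi-isomorphism.

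The main technical obstacle is bookkeeping: one must check that each pull-out through the tensor product indeed uses the compatibility maps of the $\gradC$-shifting 2-system coherently, so that the horizontal and vertical composites of the local shifting functors are canonically identified with the global shift $\shiftFunct{\chcob^v}$, and moreover that the resulting quasi-isomorphism is a well-defined morphism in $\cD^\gradC$ (not merely in the underlying homotopy category of bimodules). Coherence of horizontal composition is handled by \cref{prop:assoccompbeta}, vertical composition by the $\gradC$-shifting 2-system, and invariance of the composed shift under the locally vertical changes of chronology relating different orderings of the moves follows from \cref{prop:locvertchange} together with \cref{prop:nattrfromchangeofch}. The orientation-dependent shifts coming from \cref{lem:RI}--\cref{lem:RIIb} may be absorbed into $v$, and the RIII-invariance (up to the fixed shift computed in \cref{lem:RIII}) ensures no further ambiguity arises.
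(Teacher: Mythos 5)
Your proposal is correct and follows the same route as the paper: reduce to the Reidemeister-move lemmas and compose the resulting isomorphisms using the $\gradC$-shifting 2-system; the paper's own proof is simply a one-line citation of \cref{lem:RI}, \cref{lem:RIb}, \cref{lem:RII}, \cref{lem:RIIb}, and \cref{lem:RIII}. One small note: those lemmas are already stated globally for $(n,m)$-tangles (the cofibrancy/globalization argument you sketch lives inside their proofs, not in the proof of the theorem), so your first step reproves something already at hand, but the rest of the bookkeeping — pulling shifts through $\otimes_H$ via $\compMap{}{}$ and collapsing vertical composites via $\vCompMap{}{}$ — is exactly what the paper leaves implicit.
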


\begin{proof}
This follows immediatly from \cref{lem:RI}, \cref{lem:RIb}, \cref{lem:RII}, \cref{lem:RIIb}, and \cref{lem:RIII}.
\end{proof}

\subsection{Tangle invariant}

Consider the \emph{degree collapsing map} $\kappa : \Hom_\gradC \rightarrow \bZ$ given by
\[
\kappa(t,(p_2,p_1)) = p_2 + p_1
\]
for $(t,(p_2,p_1)) \in \Hom_\gradC(a,b)$.

For $t \in B_n^m$, let $\tqft_q(t)$ be the $\bZ \times \gradC$-graded space given by the same elements as $\tqft(t)$ but where $m \in \tqft_q(t)$ has degree
\[
\deg_{\bZ\times\gradC}(m) := (\kappa(\deg_\gradC(m)) + n, \deg_\gradC(m)).
\]
We denote $\deg_q(m)$ for the $\bZ$-degree of $m$, and we refer to it as \emph{quantum degree}.

Note that for a cobordism $\chcob : t \rightarrow t'$, then $\tqft(\chcob) : \tqft_q(t) \rightarrow \tqft_q(t')$ is homogenenous with respect to the quantum degree, and is of degree
\[
\deg_q(\tqft(\chcob)) = \{ \text{\#births} + \text{\#deaths} - \text{\#saddles} \}.
\]
 Moreover, the composition map $\mu[t,t']$ preserves the quantum degree for all $t,t' \in B_\bullet^\bullet$. 

We extend the shifting functor $\shiftFunct{\chcob^{(v_2,v_1)}}$ to a shifting functor for the $\bZ \times \gradC$-grading by setting 
\[
\deg_q\bigl( \shiftFunct{\chcob^{(v_2,v_1)}}(m) ) := \deg_q(m) + v_2 + v_1 + \deg_q(\tqft(\chcob)),
\]
for all $m \in M \in \Mod^{\bZ\times\gradC}$. 
All the results in \cref{sec:gradCat} extend directly to the $\bZ \times \gradC$-case, using the same compatibility maps, in the sense that all isomorphisms involved are graded with respect to the quantum degree. 

All this means $H^n$ is $\bZ$-graded, and $\tqft_q(t)$ is a $\bZ$-graded $H^m$-$H^n$-bimodule (in the $\gradC$-graded sense). Furthermore, the differential $d_{\xi,j}$ of \cref{sec:tangleres} also preserves the quantum grading. Thus, we can define a $\tqft_q(T)$ in a similar fashion, using $\tqft_q(t)$ instead of $\tqft(t)$, and it is a $\bZ$-graded dg-bimodule. We define similarly $\kh_q(T)$. 

Consider the additive subcategory $\BIMOD_{q}(H^m, H^n) \subset \BIMOD^{\gradC}(H^m, H^n)$ with objects being $\bZ \times \gradC$-bimodules and maps are homogeneous with respect to the $\gradC$-grading and preserve the quantum grading. Let $\cKOM_{q}(H^m, H^n)$ be the corresponding homotopy category. We can think of $\kh_q(T)$ as an object of $\cKOM_{q}(H^m, H^n)$. 

\begin{rem}
Note that since we consider maps that do not preserve the $\gradC$-grading, it means the homology of an object in $\cKOM_{q}(H^m, H^n)$ is a $\bZ \times \bZ$-graded (for the homological and the quantum grading) $R$-module. 
\end{rem}

\begin{thm}[\emph{Gluing property}]
Let $T'$ be an oriented $(m',n)$-tangle and $T$ an oriented $(n,m)$-tangle with compatible orientations. There is an isomorphism
\[
\kh_q(T') \otimes_n \kh_q(T) \cong \kh_q(T'T),
\]
in $\cKOM_{q}(H^m, H^n)$.
\end{thm}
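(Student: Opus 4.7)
The plan is to reduce the gluing property for $\kh_q$ to the analogous statement for $\tqft_q$ established in \cref{prop:comptangles}, using the fact that $\kh_q(T)$ and $\tqft_q(T)$ are related by a combination of chronological-cobordism shift functors $\shiftFunct{\chcob^v}$ and a homological shift. Since these shifts are invertible up to natural isomorphism (via the inverses constructed after \cref{prop:shiftsimplification}) and interact with the tensor product through the compatibility maps $\compMap{}{}$ of the $\gradC$-shifting 2-system from \cref{sec:gradcat}, it suffices to verify that all these shifts combine additively when $T$ and $T'$ are glued, and that the remaining content is captured by \cref{prop:comptangles}.

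First I would spell out the explicit relation $\shiftFunct{{\chcob'}^{v'}}(\kh_q(T'))[\ell'] \cong \tqft_q(T')$ and its analogue for $T$, and observe by inspection of the definition of $\kh_q$ on elementary pieces in \cref{sec:tanglehomology} that the corresponding data for $T'T$ can be chosen to be $(\chcob' \bullet \chcob,\, v'+v,\, \ell'+\ell)$, where $\bullet$ is right-then-left horizontal juxtaposition. Next, \cref{prop:comptangles} provides an isomorphism $\tqft_q(T') \otimes_n \tqft_q(T) \xrightarrow{\simeq} \tqft_q(T'T)$, and this involves the ordinary (not derived) tensor product because each flat tangle bimodule $\tqft(t)$ is cofibrant as both a left and right $H^n$-module by \cref{prop:projbimod}. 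Finally, applying the natural isomorphism $\compMap{{\chcob'}^{v'}}{\chcob^v}$ coming from the $\gradC$-shifting 2-system yields
\[
\shiftFunct{{\chcob'}^{v'}}\bigl(\kh_q(T')\bigr) \otimes_n \shiftFunct{\chcob^v}\bigl(\kh_q(T)\bigr) \cong \shiftFunct{(\chcob' \bullet \chcob)^{v'+v}}\bigl(\kh_q(T') \otimes_n \kh_q(T)\bigr),
\]
and combining these identifications and then inverting the common shift produces the desired isomorphism in $\cKOM_q$.

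The main obstacle will be checking that the quantum grading is preserved throughout, and that the chain of identifications is compatible with both the homological differentials of the cones defining $\kh_q$ on crossings and the $\gradC$-graded associator $\assoc$. In particular, one must verify that the $\shiftFunct{H}$ twists appearing in the cones for negative crossings behave correctly under tensor product with another complex; by \cref{prop:nattrfromchangeofch} this reduces to the commutativity of a square involving $\compMap{{\chcob'}^{v'}}{\chcob^v}$ and $\shiftFunct{H_2 \bullet H_1}$. The careful tracking of the compatibility maps and of the locally vertical changes of chronology $\chcob_{\xi'} \bullet \chcob_{\xi} \Rightarrow \chcob_{\xi' \sqcup \xi}$ is essentially the same face-by-face bookkeeping performed in the proof of \cref{prop:comptangles}, now carried over to the iterated tensor products that appear in the definition of $\kh_q$, with the orientation conventions ensuring that the signs $(-1)^{p(\xi,j)}$ combine correctly under concatenation of crossing orderings.
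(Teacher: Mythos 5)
Your proof is correct and follows the same route as the paper, but makes explicit what the paper only gestures at. The paper's own proof is a one-liner citing \cref{prop:comptangles} together with quantum-degree preservation of the maps $\mu[t',t]$; you unpack this by going through the shift relation $\shiftFunct{\chcob^v}(\kh_q(T))[\ell] \cong \tqft_q(T)$, verifying the additivity $(\chcob'\bullet\chcob,\, v'+v,\, \ell'+\ell)$ under gluing, transferring the $\tqft_q$ gluing isomorphism across the compatibility maps $\compMap{}{}$, and then cancelling the common shift in $\cKOM_q$. This is the same core ingredient (\cref{prop:comptangles}) and the same supporting structure; the extra detail about $\shiftFunct{H}$-twists in the cones and the signs $(-1)^{p(\xi,j)}$ is worth having written out, since the paper leaves it implicit.
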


\begin{proof}
This follows from \cref{prop:comptangles} together with the fact that the composition maps $\mu[t',t]$ preserve the quantum degree. 
\end{proof}

\begin{thm}[\emph{Invariant property}]
If $T$ and $T'$ are isotopic $(n,m)$-tangles, then there is an isomorphism
\[
\kh_q(T) \xrightarrow{\simeq} \kh_q(T'),
\]
in $\cKOM_{q}(H^m, H^n)$. 
\end{thm}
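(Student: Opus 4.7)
My plan is to upgrade \cref{thm:isotanglesG} from $\cD^\gradC$ to the quantum homotopy category by showing that the shifting functor $\shiftFunct{\chcob^v}$ becomes trivial on quantum degree and that the constructed quasi-isomorphisms are already honest homotopy equivalences. The first point rests on the observation that for any chronological cobordism $\chcob$ one has
\[
\deg_q(\tqft(\chcob)) = \#\text{births}+\#\text{deaths}-\#\text{saddles} = \kappa(\deg(\chcob)),
\]
so the shift $\shiftFunct{\chcob^{(v_2,v_1)}}$ alters quantum degree by $v_1+v_2+\kappa(\deg(\chcob))$. One therefore revisits each Reidemeister lemma and checks this combination vanishes: in \cref{lem:RII} the $\{-1,1\}$ gives $-1+1=0$; in the R1 cases of \cref{lem:RI} and \cref{lem:RIb} the cone-shift $\{-1,0\}$ or $\{0,1\}$ cancels exactly against the Euler-characteristic contribution of the cap/cup cobordism (this is precisely why the normalizations $\{-1,0\}$ and $[1]\{0,1\}$ were chosen in the definition of $\kh$ on positive and negative crossings); in \cref{lem:RIIb} the implicit shift combines with the saddle contribution analogously; and in \cref{lem:RIII} both sides share the same underlying cone, so no verification is needed.

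For the upgrade from derived to homotopy category, note that by \cref{prop:projbimod} every $C(T)_\xi = \shiftFunct{\chcob_\xi}(\tqft(T_\xi))$ is projective as a left $H^n$-module and as a right $H^m$-module, so $\kh_q(T)$ is a bounded complex of bi-projective bimodules. The maps constructed in the proofs of \cref{lem:RI}--\cref{lem:RIII} are not abstract but explicit: they are Gaussian-elimination / deformation-retract data (as in Lemmas~7.3 and 7.5 of \cite{putyra14}) whose homotopy inverses can be written down directly from the mapping cone presentations, so they give genuine homotopy equivalences. Assembling these via the preceding gluing theorem to localize each Reidemeister move inside $T = T_k \cdots T_1$, and concatenating, yields the chain of homotopy equivalences $\kh_q(T) \cong \cdots \cong \kh_q(T')$ in $\cKOM_q(H^m, H^n)$ corresponding to any chosen factorization of the isotopy into planar isotopies and R1/R2/R3 moves.

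The main obstacle I expect is bookkeeping. Tracking the quantum grading through the combined $\shiftFunct{\chcob^v}$-shifts, cone shifts, and compatibility maps $\beta, \gamma, \Xi$ from \cref{sec:gradCat} is delicate, especially around the extra births and deaths introduced when pushing saddles vertically through the surgery cobordisms $\chcob_{cba}(t',t)$ and when simplifying non-minimal cobordisms via \cref{prop:shiftsimplification,prop:removeloops}. Ensuring that the orientation data of the tangle is used consistently—so that the writhe contributions from R1 moves cancel against the cobordism-degree contributions from the introduced cap/cup—is the subtle point that promotes the ``up to shift'' statement of \cref{thm:isotanglesG} to a true isomorphism at the level of $\cKOM_q$.
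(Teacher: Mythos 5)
Your proof matches the paper's: both rely on \cite[\S 7]{putyra14} for the fact that the maps of \cref{lem:RI}--\cref{lem:RIII} are strong homotopy retracts and both reduce the remaining work to verifying that the grading shifts in those lemmas preserve the quantum degree, which you carry out in the same way (trivially for RI, RIb, RIII and via the cancellation $-1+1=0$ together with the split-cobordism contribution for RII, RIIb). The extra bi-projectivity remark via \cref{prop:projbimod} is a sound alternative route to the homotopy-equivalence upgrade but is not needed once the Putyra citation is invoked.
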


\begin{proof}
By \cite[\S 7]{putyra14}, we know that the maps in \cref{lem:RI}, \cref{lem:RIb}, \cref{lem:RII}, \cref{lem:RIIb}, and \cref{lem:RIII} are homotopy equivalences (they are actually parts of strong homotopy retract, see \cite[Definition 7.2]{putyra14}). Thus, we only need to verify that they are all graded with respect to the quantum degree. It is trivial for the Reidemeister moves I and III. For the Reidemeister II move, we verify that applying the shifting functor $\{-1,1\}$ is invariant for the quantum grading, and so is 
$\bigl(
\shiftFunct{\tikzdiagh[scale=.2]{
	\draw (.5,3) -- (.5,1) -- (4.5,1) -- (4.5,3);
	\filldraw [fill=white, draw=white] (0,0) rectangle (4,2); 
	\draw (0,2) .. controls (1.5,2) and (2,3) .. (.5,3); 
	\draw (4,2) .. controls (2.5,2) and (3,3) .. (4.5,3); 
	\draw (1.375,2.5) .. controls (1.375,1.5) and (3.125,1.5) .. (3.125,2.5);
	%
	\draw (0,2) -- (0,0) -- (4,0) -- (4,2);
}^{(0,1)}} 
\bigr)$.
\end{proof}

\begin{prop}\label{prop:recoverKh}
If $T$ is a link, then $H(\kh_q(T))$ is isomorphic to the covering Khovanov homology $H^{cov}(T)$ of $T$, as constructed in \cite{putyra14}. 
\end{prop}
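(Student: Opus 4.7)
The plan is to unwind all the definitions in the case where $T$ is a $(0,0)$-tangle and verify that $\kh_q(T)$ coincides on the nose, as a chain complex of $R$-modules, with the complex that computes covering Khovanov homology in \cite{putyra14}. Since a link has no endpoints, we have $B^0 = \{\emptyset\}$ and $H^0 \cong R$, so the category $\Bimod^{\gradC}(H^0,H^0)$ reduces to a category of $R$-modules (concentrated in degrees of the form $(1_0, p)$ in $\gradC$), and the associator $\assoc$ and compatibility maps $\beta$, $\gamma$, $\Xi$, $\tau$ all become trivial (they only carry nontrivial information once composable nonempty flat tangles are involved). In particular, the quasi-associativity disappears and we are back in an honest monoidal setting.

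Next I would identify the pieces of the cube. For each resolution $T_\xi \in B_0^0$, the space ${}_\emptyset \tqft(T_\xi){}_\emptyset = \tqft(T_\xi) \cong A^{\otimes c(\xi)}$, where $c(\xi)$ is the number of components of $T_\xi$; the only effect of the quantum-grading convention $\deg_q(m) = \kappa(\deg_\gradC(m))+n$ (with $n=0$) is to recover precisely the $(1,0),(0,-1)$ bidegree convention for $v_\pm$ used in \cite{putyra14}. The shifting functor $\shiftFunct{\chcob_\xi}$ applied to $\tqft(T_\xi)$ then shifts the quantum degree by the number of saddles, births, and deaths in $\chcob_\xi$ — which is exactly the height/homological shift built into Putyra's cube, composed with the $[r]$ already present in the definition of $C(T)_r$. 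I would check this by inspecting $\deg_q(\tqft(\chcob))$ and comparing with the grading shifts on vertices of the Khovanov cube from \cite[\S6]{putyra14}.

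Then I would compare differentials. Our edge map $d_{\xi,j} = \tqft(\chcob_{\xi,j})\circ \shiftFunct{H_{\xi,j}}(\tqft(T_\xi))$ is precisely the chronological saddle cobordism attached above the $j$-th crossing, transported from the top of $\chcob_\xi$ to its natural position via the locally vertical change of chronology $H_{\xi,j}$. By \cref{prop:locvertchange}, the scalar $\shiftFunct{H_{\xi,j}}$ is exactly the product of monomials in $X,Y,Z$ prescribed by \cref{fig:chchange} to bring the canonical ``saddle-at-the-top'' chronology back to the chronology of $\chcob_\xi$. This is exactly the coherent choice of scalars made in \cite{putyra14} to define the covering differential. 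Combined with the signs $(-1)^{p(\xi,j)}$, which match the edge-sign convention of the Khovanov cube, we obtain edge-for-edge agreement of differentials.

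Finally, anticommutativity of each square of the cube has been proved (for us via \cref{prop:locvertchange}), so no extra sign correction is needed and $\kh_q(T)$ equals $\CSeq(T)$ of \cite{putyra14} on the nose; hence their homologies agree. The main obstacle is purely bookkeeping: making sure that the $\lambda_R$-factors introduced by the compatibility maps $\compMap{}{}$ (in particular the factors $\beta_2''$ coming from the position of the $\bZ^2$-shift) do not contribute for $a=b=c=\emptyset$, and that the quantum-degree correction $+n=0$ really gives the same normalization for $v_+$ as in \cite{putyra14}. Both checks are straightforward once one observes that $s_{cba}(t',t)=0$ whenever $b=\emptyset$, so all the associator and compatibility data trivializes on links.
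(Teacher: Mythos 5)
Your reduction to the case of a link (a $(0,0)$-tangle), where $H^0 \cong R$ and the quasi-associativity data trivializes because $s_{cba}$ vanishes when $b=\emptyset$, is correct, and so is the identification of vertex spaces $\tqft(T_\xi)\cong A^{\otimes c(\xi)}$ together with the quantum-degree normalization. The gap is in your comparison of differentials. You assert that the scalar $\shiftFunct{H_{\xi,j}}$ produced by the change of chronology coincides \emph{exactly} with the scalars defining Putyra's covering differential, and conclude that $\kh_q(T)$ equals Putyra's complex on the nose. This is neither needed nor established: a sign assignment in the sense of \cite[\S 5]{putyra14} is not canonical, and there is no a priori reason for the chronology transports used here (pushing a saddle to the bottom of $\chcob_\xi$ via a locally vertical diffeotopy) to agree edge-by-edge with whatever scalars Putyra uses. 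The paper's argument is deliberately weaker and correct: if one strips away the change-of-chronology factor, the two cubes have the same underlying edges, so the two constructions differ only by multiplying each edge by an invertible scalar in $R$; since both cubes are anticommutative, these scalars form a sign assignment; and Putyra's Lemma 5.7 — the uniqueness-up-to-isomorphism of sign assignments — then gives an isomorphism of complexes, hence of homologies. Replacing your claim of exact agreement by an invocation of that lemma would close the gap; as written, the step where you conclude that no correction is needed is not justified.
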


\begin{proof} 
The cube of resolution we obtain is the same as the one constructed in~\cite[\S 5]{putyra14},
 up to multiplying each edge by an invertible scalar (if we forget the change of chronology part in the definition of $d_{\xi,i}$, we obtain the same cube). 
 Since the cube we have is anticommutive, it means the scalars we get from the changes of chronology yield a sign assignment (in the sense of \cite[\S 5]{putyra14}). 
 Thus, by \cite[Lemma 5.7]{putyra14}, we conclude our cube is isomorphic to the anticommutative cube of \cite[\S 5]{putyra14}. By consequence, $H(\kh_q(T)) \cong H^{cov}(T)$. 
\end{proof}

In particular, it means that if we take $X=Y=Z=1$ (before computing homology), we recover the usual Khovanov homology of the tangle \cite{khovanovHn}, and if we take $X=Z=1$ and $Y=-1$, we obtain a tangle version of odd Khovanov homology \cite{ORS}, thanks to \cite[Proposition 10.8]{putyra14}.



\section{A chronological half 2-Kac--Moody}\label{sec:2action}

We generalize the level 2 cyclotomic odd half 2-Kac--Moody from~\cite{pedrodd} to one that corresponds with the covering TQFT of \cite{putyra14}. 
Then, we construct a graded 2-action from it on $\BIMOD_q(H^\bullet, H^\bullet)$ similar the one constructed by Brundan--Stroppel~\cite{brundanstroppel3} in the even case. 

\subsection{Chronological cyclotomic half 2-Kac--Moody}

Let $\Bbbk$ be a unital, commutative ring. Fix some natural integer $n > 0$. 
Fix a set of type $A_{n+1}$ simple roots $\simpleRootsSet := \{\alpha_1, \dots, \alpha_n \}$. 
We consider $\mathfrak{gl}_{n+1}$-type weights of the form $\bw = (w_0, \dots, w_{n})$ where $w_i \in \{0,1,2\}$. For $1 \leq i \leq n$, we say there is an arrow
\[
\F_i : \bw \rightarrow \bw',
\]
whenever $w'_{i-1} = w_i-1$, $w'_{i} = w_i+1$  and $w'_j = w_i$ for $j \neq i-1,i$. Also, there is an arrow
\[
\F_i^{(2)} : \bw \rightarrow \bw'',
\]
whenever there are arrows $\F_i : \bw \rightarrow \bw'$ and $\F_i : \bw' \rightarrow \bw''$. These arrows can be composed in the obvious way (but $\F_i\F_i \neq \F_i^{(2)}$). Define
\[
\CSeq := \bigl\{i_r^{(\varepsilon_r)} \cdots i_1^{(\varepsilon_1)} | \varepsilon_\ell \in \{1,2\}, i_\ell \in \{1, \dots, n\} \bigr\}.
\]
To $\bi = i_r^{(\varepsilon_r)} \cdots i_1^{(\varepsilon_1)} \in \CSeq$ we assign a composition of arrows $\F_\bi := \F_{i_r}^{(\varepsilon_r)} \cdots \F_{i_1}^{(\varepsilon_1)}$, where $\F_i^{(1)} := \F_i$. 

We also define a map
\[
p : \simpleRootsSet \times \simpleRootsSet \rightarrow \bZ \times \bZ,
\quad
p(\alpha_i, \alpha_j) := 
\begin{cases}
0, & \text{if $|i-j| > 1$}, \\
(-1,0), & \text{if $j = i +1$}, \\
(0,-1), &\text{if $j = i - 1$}, \\
(1,1),& \text{if $j = i$}.
\end{cases}
\]
In order to shorten notations, we will simply write $p_{ij} := p(\alpha_i, \alpha_j)$. 

\begin{defn}
Let $\cR_0$ be the $R$-algebra generated by string diagrams where there can be simple and double strands, and they both carry a label in $\simpleRootsSet$. Simple strands can also carry decorations in the form of dots and a double strand can split in two strands and vice versa (preserving the labels). 
These diagrams are equipped with a height function such that there can not be two critical points (i.e. crossing, dot or splitter) at the same height. 
 Composition of a diagram $D'$ with a diagram $D$ is given by putting $D'$ on top of $D$, if the labels and the strands corresponds (i.e. we cannot connect a double strand with a single strand) and zero otherwise. Furthermore, these diagrams are $\bZ \times \bZ$-graded where the generators are:
\begin{align*}
\deg_R\left( 
\tikzdiagc{
	\draw (0,0) node[below]{\small $\alpha_i$} -- (0,1) node[midway, tikzdot]{};
}
\right) &= (-1,-1),
&
\deg_R\left(
\tikzdiagc{
	\draw[ds] (0,0) node[below]{\small $\alpha_i$}  -- (0,.5);
	\draw (-.03,.5) .. controls (-.03,.75) and (-.5,.75) .. (-.5,1);
	\draw (.03,.5) .. controls (.03,.75) and (.5,.75) .. (.5,1);
}
\right) &= (1,0),
&
\deg_R\left(
\tikzdiagc[yscale=-1]{
	\draw[ds] (0,0) -- (0,.5);
	\draw (-.03,.5) .. controls (-.03,.75) and (-.5,.75) .. (-.5,1) node[below]{\small $\alpha_i$} ;
	\draw (.03,.5) .. controls (.03,.75) and (.5,.75) .. (.5,1) node[below]{\small $\alpha_i$} ;
}
\right) &= (0,1),
\end{align*}
\begin{align*}
\deg_R\left(
\tikzdiagc{
	\draw (0,0) node[below]{\small $\alpha_i$} .. controls (0,.5) and (1,.5) .. (1,1);
	\draw (1,0) node[below]{\small $\alpha_j$} .. controls (1,.5) and (0,.5) .. (0,1);
}
\right) &= p_{ij},
&
\deg_R\left(
\tikzdiagc{
	\draw[ds] (0,0) node[below]{\small $\alpha_i$} .. controls (0,.5) and (1,.5) .. (1,1);
	\draw (1,0) node[below]{\small $\alpha_j$} .. controls (1,.5) and (0,.5) .. (0,1);
}
\right) &= 2p_{ij},
\\
\deg_R\left(
\tikzdiagc{
	\draw (0,0) node[below]{\small $\alpha_i$} .. controls (0,.5) and (1,.5) .. (1,1);
	\draw[ds] (1,0) node[below]{\small $\alpha_j$} .. controls (1,.5) and (0,.5) .. (0,1);
}
\right) &= 2p_{ij},
&
\deg_R\left(
\tikzdiagc{
	\draw[ds] (0,0) node[below]{\small $\alpha_i$} .. controls (0,.5) and (1,.5) .. (1,1);
	\draw[ds] (1,0) node[below]{\small $\alpha_j$} .. controls (1,.5) and (0,.5) .. (0,1);
}
\right) &= 4p_{ij}.
\end{align*}
We consider these diagrams modulo \emph{graded planar isotopy}, meaning that we can apply ambient isotopies on the diagram at the cost of multiplying by a scalar whenever we exchange the height of critical points:
\[
\tikzdiagc{
	\node at(.5,.25) {\small $\dots$};
	\node at(.5,1.75) {\small $\dots$};
	\draw (0,0) -- (0,2);
	\draw (1,0) -- (1,2);
	\filldraw [fill=white, draw=black,rounded corners] (-.25,.375) rectangle (1.25,.875) node[midway] { $W'$};
}
\dots
\tikzdiagc{
	\node at(.5,.25) {\small $\dots$};
	\node at(.5,1.75) {\small $\dots$};
	\draw (0,0) -- (0,2);
	\draw (1,0) -- (1,2);
	\filldraw [fill=white, draw=black,rounded corners] (-.25,1.125) rectangle (1.25,1.625) node[midway] { $W$};
}
\ = \lambda_R(|W|,|W'|) \ 
\tikzdiagc{
	\node at(.5,.25) {\small $\dots$};
	\node at(.5,1.75) {\small $\dots$};
	\draw (0,0) -- (0,2);
	\draw (1,0) -- (1,2);
	\filldraw [fill=white, draw=black,rounded corners] (-.25,1.125) rectangle (1.25,1.625) node[midway] { $W$};
}
\dots
\tikzdiagc{
	\node at(.5,.25) {\small $\dots$};
	\node at(.5,1.75) {\small $\dots$};
	\draw (0,0) -- (0,2);
	\draw (1,0) -- (1,2);
	\filldraw [fill=white, draw=black,rounded corners] (-.25,.375) rectangle (1.25,.875) node[midway] { $W'$};
}
\]
Finally, let $\cR$ be the quotient of $\cR_0$ by the following local relations:
\begin{align}
\label{eq:doubleR}
\tikzdiagd{
	\draw (0,0) node[below]{\small $\alpha_i$} -- (0,1) node[pos=.33,tikzdot]{} node[pos=.66,tikzdot]{};
}
\ &= 0,
&
\tikzdiagd[yscale=.5]{
	\draw (0,0) node[below]{\small $\alpha_i$} .. controls (0,.5) and (1,.5) .. (1,1) .. controls (1,1.5) and (0,1.5) .. (0,2);
	\draw (1,0) node[below]{\small $\alpha_j$} .. controls (1,.5) and (0,.5) .. (0,1) .. controls (0,1.5) and (1,1.5) .. (1,2);
}
\ &= 
\begin{cases}
\  0 & \text{if $i=j$,} 
\\
-XYZ\ 
\tikzdiagd{
	\draw (0,0) node[below]{\small $\alpha_i$} -- (0,1) node[midway,tikzdot]{};
	\draw (1,0) node[below]{\small $\alpha_{i{+}1}$} -- (1,1) ;
}
\ + XYZ \ 
\tikzdiagd{
	\draw (0,0) node[below]{\small $\alpha_i$} -- (0,1);
	\draw (1,0) node[below]{\small $\alpha_{i{+}1}$} -- (1,1)  node[midway,tikzdot]{};
}
&\text{if $j = i+1$,}
\\
YZ^2\ 
\tikzdiagd{
	\draw (0,0) node[below]{\small $\alpha_i$} -- (0,1) node[midway,tikzdot]{};
	\draw (1,0) node[below]{\small $\alpha_{i{-}1}$} -- (1,1);
}
\ - YZ^2 \ 
\tikzdiagd{
	\draw (0,0) node[below]{\small $\alpha_i$} -- (0,1);
	\draw (1,0) node[below]{\small $\alpha_{i{-}1}$} -- (1,1) node[midway,tikzdot]{};
}
&\text{if $j = i -1$,}
\\
\ 
\tikzdiagd{
	\draw (0,0) node[below]{\small $\alpha_i$} -- (0,1);
	\draw (1,0) node[below]{\small $\alpha_j$} -- (1,1);
}
&\text{otherwise,}
\end{cases}
\end{align}
\begin{align}\label{eq:nilHecke}
\tikzdiagd{
	\draw (0,0) node[below]{\small $\alpha_i$} .. controls (0,.5) and (1,.5) .. (1,1);
	\draw (1,0) node[below]{\small $\alpha_i$} .. controls (1,.5) and (0,.5) .. (0,1) node[near end, tikzdot]{};
}
\ -XY \ 
\tikzdiagd{
	\draw (0,0) node[below]{\small $\alpha_i$} .. controls (0,.5) and (1,.5) .. (1,1);
	\draw (1,0) node[below]{\small $\alpha_i$} .. controls (1,.5) and (0,.5) .. (0,1) node[near start, tikzdot]{};
}
\ = \ 
\tikzdiagd{
	\draw (0,0) node[below]{\small $\alpha_i$} -- (0,1);
	\draw (1,0) node[below]{\small $\alpha_i$}  -- (1,1);
}
\ = \ 
\tikzdiagd{
	\draw (0,0) node[below]{\small $\alpha_i$} .. controls (0,.5) and (1,.5) .. (1,1)node[near start, tikzdot]{};
	\draw (1,0) node[below]{\small $\alpha_i$} .. controls (1,.5) and (0,.5) .. (0,1) ;
}
\ -XY \ 
\tikzdiagd{
	\draw (0,0) node[below]{\small $\alpha_i$} .. controls (0,.5) and (1,.5) .. (1,1)node[near end, tikzdot]{};
	\draw (1,0) node[below]{\small $\alpha_i$} .. controls (1,.5) and (0,.5) .. (0,1) ;
}
\end{align}
\begin{align}\label{eq:dotcommutescrossing}
\begin{split}
\tikzdiagd{
	\draw (0,0) node[below]{\small $\alpha_i$} .. controls (0,.5) and (1,.5) .. (1,1);
	\draw (1,0) node[below]{\small $\alpha_j$} .. controls (1,.5) and (0,.5) .. (0,1) node[near end, tikzdot]{};
}
\ &= \lambda_R((-1,-1), p_{ij})  \ 
\tikzdiagd{
	\draw (0,0) node[below]{\small $\alpha_i$} .. controls (0,.5) and (1,.5) .. (1,1);
	\draw (1,0) node[below]{\small $\alpha_j$} .. controls (1,.5) and (0,.5) .. (0,1) node[near start, tikzdot]{};
}
\\
\tikzdiagd{
	\draw (0,0) node[below]{\small $\alpha_i$} .. controls (0,.5) and (1,.5) .. (1,1) node[near start, tikzdot]{};
	\draw (1,0) node[below]{\small $\alpha_j$} .. controls (1,.5) and (0,.5) .. (0,1);
}
\ &= \lambda_R(p_{ij}, (-1,-1))  \ 
\tikzdiagd{
	\draw (0,0) node[below]{\small $\alpha_i$} .. controls (0,.5) and (1,.5) .. (1,1)  node[near end, tikzdot]{};
	\draw (1,0) node[below]{\small $\alpha_j$} .. controls (1,.5) and (0,.5) .. (0,1);
}
\end{split}
\end{align}
\begin{align}\label{eq:crosssymiszero}
\tikzdiagd{
	\draw (0,0) node[below]{\small $\alpha_i$} .. controls (0,.5) and (1,.5) .. (1,1) ;
	\draw (1,0) node[below]{\small $\alpha_{i\pm 1}$} .. controls (1,.5) and (0,.5) .. (0,1) node[near end, tikzdot]{};
}
\ + XY\  
\tikzdiagd{
	\draw (0,0) node[below]{\small $\alpha_i$} .. controls (0,.5) and (1,.5) .. (1,1) node[near end, tikzdot]{};
	\draw (1,0) node[below]{\small $\alpha_{i\pm 1}$} .. controls (1,.5) and (0,.5) .. (0,1);
}
\ = 0,
\end{align}
\begin{align}  \label{eq:RiiiR3}
\tikzdiagd[scale=1.5]{
	\draw (0,0) node[below]{\small $\alpha_i$} .. controls (0,.5) and (1,.5) .. (1,1);
	\draw (.5,0) node[below]{\small $\alpha_i$} .. controls (.5,.25) and (0,.25) .. (0,.5) .. controls (0,.75) and (.5,.75) .. (.5,1);
	\draw (1,0) node[below]{\small $\alpha_i$} .. controls (1,.5) and (0,.5) .. (0,1);
}
\ &=   \ 
\tikzdiagd[scale=1.5]{
	\draw (0,0) node[below]{\small $\alpha_i$} .. controls (0,.5) and (1,.5) .. (1,1);
	\draw (.5,0) node[below]{\small $\alpha_i$} .. controls (.5,.25) and (1,.25) .. (1,.5) .. controls (1,.75) and (.5,.75) .. (.5,1);
	\draw (1,0) node[below]{\small $\alpha_i$} .. controls (1,.5) and (0,.5) .. (0,1);
}
&
&
\\
\label{eq:RijkR3}
\tikzdiagd[scale=1.5]{
	\draw (0,0) node[below]{\small $\alpha_i$} .. controls (0,.5) and (1,.5) .. (1,1);
	\draw (.5,0) node[below]{\small $\alpha_j$} .. controls (.5,.25) and (0,.25) .. (0,.5) .. controls (0,.75) and (.5,.75) .. (.5,1);
	\draw (1,0) node[below]{\small $\alpha_k$} .. controls (1,.5) and (0,.5) .. (0,1);
}
\ &= \lambda_R(p_{jk}, p_{ik}) \lambda_R(p_{jk}+p_{ik},p_{ij})  \ 
\tikzdiagd[scale=1.5]{
	\draw (0,0) node[below]{\small $\alpha_i$} .. controls (0,.5) and (1,.5) .. (1,1);
	\draw (.5,0) node[below]{\small $\alpha_j$} .. controls (.5,.25) and (1,.25) .. (1,.5) .. controls (1,.75) and (.5,.75) .. (.5,1);
	\draw (1,0) node[below]{\small $\alpha_k$} .. controls (1,.5) and (0,.5) .. (0,1);
}
&
&
\parbox{.25\textwidth}{ for $i\neq k \neq j \neq i$ \\or $|i - k| > 1$,}
\\
 \label{eq:RiijR3}
\tikzdiagd[scale=1.5]{
	\draw (0,0) node[below]{\small $\alpha_i$} .. controls (0,.5) and (1,.5) .. (1,1);
	\draw (.5,0) node[below]{\small $\alpha_j$} .. controls (.5,.25) and (0,.25) .. (0,.5) .. controls (0,.75) and (.5,.75) .. (.5,1);
	\draw (1,0) node[below]{\small $\alpha_k$} .. controls (1,.5) and (0,.5) .. (0,1);
}
\ &= X \lambda_R(p_{jk}, p_{ik}) \lambda_R(p_{jk}+p_{ik},p_{ij})  \ 
\tikzdiagd[scale=1.5]{
	\draw (0,0) node[below]{\small $\alpha_i$} .. controls (0,.5) and (1,.5) .. (1,1);
	\draw (.5,0) node[below]{\small $\alpha_j$} .. controls (.5,.25) and (1,.25) .. (1,.5) .. controls (1,.75) and (.5,.75) .. (.5,1);
	\draw (1,0) node[below]{\small $\alpha_k$} .. controls (1,.5) and (0,.5) .. (0,1);
}
&
&
\parbox{.25\textwidth}{ for $ i = j $ and $ k = \pm i$ \\or $j = k $ and $ i = k \pm 1$,}
\end{align}
\begin{align} \label{eq:RijiR3}
\begin{split}
-YZ^{-2} \tikzdiagd[scale=1.5]{
	\draw (0,0) node[below]{\small $\alpha_i$} .. controls (0,.5) and (1,.5) .. (1,1);
	\draw (.5,0) node[below]{\small $\alpha_{i{+}1}$} .. controls (.5,.25) and (0,.25) .. (0,.5) .. controls (0,.75) and (.5,.75) .. (.5,1);
	\draw (1,0) node[below]{\small $\alpha_i$} .. controls (1,.5) and (0,.5) .. (0,1);
}
\ + Z^{-1}\ 
\tikzdiagd[scale=1.5]{
	\draw (0,0) node[below]{\small $\alpha_i$} .. controls (0,.5) and (1,.5) .. (1,1);
	\draw (.5,0) node[below]{\small $\alpha_{i{+}1}$} .. controls (.5,.25) and (1,.25) .. (1,.5) .. controls (1,.75) and (.5,.75) .. (.5,1);
	\draw (1,0) node[below]{\small $\alpha_i$} .. controls (1,.5) and (0,.5) .. (0,1);
}
 &=  \ 
\tikzdiagd[xscale=.75,yscale=1.5]{
	\draw (0,0) node[below]{\small $\alpha_i$} -- (0,1);
	\draw (1,0) node[below]{\small $\alpha_{i{+}1}$} -- (1,1);
	\draw (2,0) node[below]{\small $\alpha_i$} -- (2,1);
}
\\
XYZ^{-1} \tikzdiagd[scale=1.5]{
	\draw (0,0) node[below]{\small $\alpha_i$} .. controls (0,.5) and (1,.5) .. (1,1);
	\draw (.5,0) node[below]{\small $\alpha_{i{-}1}$} .. controls (.5,.25) and (0,.25) .. (0,.5) .. controls (0,.75) and (.5,.75) .. (.5,1);
	\draw (1,0) node[below]{\small $\alpha_i$} .. controls (1,.5) and (0,.5) .. (0,1);
}
\ - XZ^{-2}\ 
\tikzdiagd[scale=1.5]{
	\draw (0,0) node[below]{\small $\alpha_i$} .. controls (0,.5) and (1,.5) .. (1,1);
	\draw (.5,0) node[below]{\small $\alpha_{i{-}1}$} .. controls (.5,.25) and (1,.25) .. (1,.5) .. controls (1,.75) and (.5,.75) .. (.5,1);
	\draw (1,0) node[below]{\small $\alpha_i$} .. controls (1,.5) and (0,.5) .. (0,1);
}
 &=  \ 
\tikzdiagd[xscale=.75,yscale=1.5]{
	\draw (0,0) node[below]{\small $\alpha_i$} -- (0,1);
	\draw (1,0) node[below]{\small $\alpha_{i{-}1}$} -- (1,1);
	\draw (2,0) node[below]{\small $\alpha_i$} -- (2,1);
}
\end{split}
\end{align}
\begin{align}\label{eq:doublesplitters}
\tikzdiagd{
	\draw[ds] (0,0) -- (0,.25);
	\draw (-.03,.25) .. controls (-.03,.5) and (-.5,.5) .. (-.5,.75);
	\draw (.03,.25) .. controls (.03,.5) and (.5,.5) .. (.5,.75);
	\draw[ds] (0,0) -- (0,-.25);
	\draw (-.03,-.25) .. controls (-.03,-.5) and (-.5,-.5) .. (-.5,-.75) node[below]{\small $\alpha_i$} ;
	\draw (.03,-.25) .. controls (.03,-.5) and (.5,-.5) .. (.5,-.75) node[below]{\small $\alpha_i$} ;
}
\ &= \ 
\tikzdiagd{
	\draw (0,0) node[below]{\small $\alpha_i$}   .. controls (0,.75) and (1,.75) .. (1,1.5);
	\draw (1,0) node[below]{\small $\alpha_i$}   .. controls (1,.75) and (0,.75) .. (0,1.5);
}
&
\tikzdiagd{
	\draw[ds] (0,0)  node[below]{\small $\alpha_i$}  -- (0,.25);
	\draw (-.03,.25) .. controls (-.03,.5) and (-.5,.5) .. (-.5,.75);
	\draw (.03,.25) .. controls (.03,.5) and (.5,.5) .. (.5,.75);
	\draw[ds] (0,1.5) -- (0,1.25);
	\draw (-.03,1.25) .. controls (-.03,1) and (-.5,1) .. (-.5,.75);
	\draw (.03,1.25) .. controls (.03,1) and (.5,1) .. (.5,.75);
}
\ &= 0,
\end{align}
\begin{align}
\tikzdiagd{
	\draw[ds] (0,0)  node[below]{\small $\alpha_i$}  -- (0,.25);
	\draw (-.03,.25) .. controls (-.03,.5) and (-.5,.5) .. (-.5,.75) node[tikzdot,pos=1]{};
	\draw (.03,.25) .. controls (.03,.5) and (.5,.5) .. (.5,.75);
	\draw[ds] (0,1.5) -- (0,1.25);
	\draw (-.03,1.25) .. controls (-.03,1) and (-.5,1) .. (-.5,.75);
	\draw (.03,1.25) .. controls (.03,1) and (.5,1) .. (.5,.75);
}
\ = \ 
\tikzdiagd{
	\draw[ds] (0,0)  node[below]{\small $\alpha_i$} -- (0,1.5)
}
\ = -XY \ 
\tikzdiagd{
	\draw[ds] (0,0)  node[below]{\small $\alpha_i$}  -- (0,.25);
	\draw (-.03,.25) .. controls (-.03,.5) and (-.5,.5) .. (-.5,.75);
	\draw (.03,.25) .. controls (.03,.5) and (.5,.5) .. (.5,.75) node[tikzdot,pos=1]{};
	\draw[ds] (0,1.5) -- (0,1.25);
	\draw (-.03,1.25) .. controls (-.03,1) and (-.5,1) .. (-.5,.75);
	\draw (.03,1.25) .. controls (.03,1) and (.5,1) .. (.5,.75);
}
&&
\tikzdiagd{
	\draw[ds] (0,0)  node[below]{\small $\alpha_i$}  -- (0,.25);
	\draw (-.03,.25) .. controls (-.03,.5) and (-.5,.5) .. (-.5,.75) 
		.. controls (-.5,1.125) and (.5,1.125) .. (.5,1.5);
	\draw (.03,.25) .. controls (.03,.5) and (.5,.5) .. (.5,.75)
		.. controls (.5,1.125) and (-.5,1.125) .. (-.5,1.5);
}
\ = 0 = \ 
\tikzdiagd[yscale=-1]{
	\draw[ds] (0,0)  -- (0,.25);
	\draw (-.03,.25) .. controls (-.03,.5) and (-.5,.5) .. (-.5,.75) 
		.. controls (-.5,1.125) and (.5,1.125) .. (.5,1.5)  node[below]{\small $\alpha_i$} ;
	\draw (.03,.25) .. controls (.03,.5) and (.5,.5) .. (.5,.75)
		.. controls (.5,1.125) and (-.5,1.125) .. (-.5,1.5)  node[below]{\small $\alpha_i$} ;
}
\end{align}
\begin{align}
\label{eq:splitcross}
\begin{split}
\tikzdiagd{
	\draw (0,0) node[below]{\small $\alpha_i$} .. controls (0,1) and (2,1) .. (2,2);
	\draw[ds] (2,0) node[below]{\small $\alpha_j$}  -- (.75,1.25);
	\draw (.75 + 0.02 ,1.25 + 0.02) .. controls (.5 + 0.02, 1.5 + 0.02) and (.75,1.75) .. (.5,2);
	\draw (.75 - 0.02,1.25 - 0.02) .. controls (.5 - 0.02,1.5 - 0.02) and (.25,1.25) .. (0,1.5);
}
\ &= \lambda_R\bigl( (1,0),  2 p_{ij} \bigr) \ 
\tikzdiagd{
	\draw (0,0) node[below]{\small $\alpha_i$} .. controls (0,1) and (2,1) .. (2,2);
	\draw[ds] (2,0) node[below]{\small $\alpha_j$}  -- (1.75,.25);
	\draw (1.75 + 0.02 ,.25 + 0.02) .. controls (1.5 + 0.02, .5 + 0.02) and (1.75,.75) .. (1.5,1) -- (.5,2);
	\draw (1.75 - 0.02,.25 - 0.02) .. controls (1.5 - 0.02,.5 - 0.02) and (1.25,.25) .. (1,.5) -- (0,1.5);
}
\\
\tikzdiagd{
	\draw[ds] (0,0) node[below]{\small $\alpha_i$} .. controls (0,1) and (2,1) .. (2,2);
	\draw[ds] (2,0) node[below]{\small $\alpha_j$}  -- (.75,1.25);
	\draw (.75 + 0.02 ,1.25 + 0.02) .. controls (.5 + 0.02, 1.5 + 0.02) and (.75,1.75) .. (.5,2);
	\draw (.75 - 0.02,1.25 - 0.02) .. controls (.5 - 0.02,1.5 - 0.02) and (.25,1.25) .. (0,1.5);
}
\ &= \lambda_R\bigl( (1,0),  4 p_{ij} \bigr) \ 
\tikzdiagd{
	\draw[ds] (0,0) node[below]{\small $\alpha_i$} .. controls (0,1) and (2,1) .. (2,2);
	\draw[ds] (2,0) node[below]{\small $\alpha_j$}  -- (1.75,.25);
	\draw (1.75 + 0.02 ,.25 + 0.02) .. controls (1.5 + 0.02, .5 + 0.02) and (1.75,.75) .. (1.5,1) -- (.5,2);
	\draw (1.75 - 0.02,.25 - 0.02) .. controls (1.5 - 0.02,.5 - 0.02) and (1.25,.25) .. (1,.5) -- (0,1.5);
}
\end{split}
\\
\label{eq:splitdcross}
\begin{split}
\tikzdiagd[scale=-1]{
	\draw (0,0) .. controls (0,1) and (2,1) .. (2,2) node[below]{\small $\alpha_i$};
	\draw[ds] (2,0) -- (1.75,.25);
	\draw (1.75 + 0.02 ,.25 + 0.02) .. controls (1.5 + 0.02, .5 + 0.02) and (1.75,.75) .. (1.5,1) -- (.5,2)  node[below]{\small $\alpha_j$} ;
	\draw (1.75 - 0.02,.25 - 0.02) .. controls (1.5 - 0.02,.5 - 0.02) and (1.25,.25) .. (1,.5) -- (0,1.5);
}
\ &= \lambda_R\bigl( (0,1),  2 p_{ij} \bigr) \ 
\tikzdiagd[scale=-1]{
	\draw (0,0)  .. controls (0,1) and (2,1) .. (2,2)node[below]{\small $\alpha_i$};
	\draw[ds] (2,0) -- (.75,1.25);
	\draw (.75 + 0.02 ,1.25 + 0.02) .. controls (.5 + 0.02, 1.5 + 0.02) and (.75,1.75) .. (.5,2) node[below]{\small $\alpha_j$} ;
	\draw (.75 - 0.02,1.25 - 0.02) .. controls (.5 - 0.02,1.5 - 0.02) and (.25,1.25) .. (0,1.5);
}
\\
\tikzdiagd[scale=-1]{
	\draw[ds] (0,0) .. controls (0,1) and (2,1) .. (2,2) node[below]{\small $\alpha_i$} ;
	\draw[ds] (2,0)   -- (1.75,.25);
	\draw (1.75 + 0.02 ,.25 + 0.02) .. controls (1.5 + 0.02, .5 + 0.02) and (1.75,.75) .. (1.5,1) -- (.5,2)  node[below]{\small $\alpha_j$};
	\draw (1.75 - 0.02,.25 - 0.02) .. controls (1.5 - 0.02,.5 - 0.02) and (1.25,.25) .. (1,.5) -- (0,1.5);
}
\ &= \lambda_R\bigl( (0,1),  4 p_{ij} \bigr) \ 
\tikzdiagd[scale=-1]{
	\draw[ds] (0,0).. controls (0,1) and (2,1) .. (2,2)  node[below]{\small $\alpha_i$} ;
	\draw[ds] (2,0)  -- (.75,1.25);
	\draw (.75 + 0.02 ,1.25 + 0.02) .. controls (.5 + 0.02, 1.5 + 0.02) and (.75,1.75) .. (.5,2)  node[below]{\small $\alpha_j$};
	\draw (.75 - 0.02,1.25 - 0.02) .. controls (.5 - 0.02,1.5 - 0.02) and (.25,1.25) .. (0,1.5);
}
\end{split}
\end{align}
and the mirror along the vertical axis of \cref{eq:splitcross} and \cref{eq:splitdcross}. 
\end{defn}

\begin{rem}
Since whenever there are two dots on the same strand it is zero, we also have that
\[
\tikzdiagd[xscale=.5]{
	\draw (0,0) node[below]{\small $\alpha_i$} -- (0,1);
	\draw (1,0) node[below]{\small $\alpha_i$} -- (1,1);
	\draw (2,0) node[below]{\small $\alpha_i$} -- (2,1);
}
\ = 0,
\] 
for all $\alpha_i \in \simpleRootsSet$, and similarly if we have two double strands, or a double strand next to a single strand. 
\end{rem}

The algebra $\cR$ also possesses a \emph{quantum  grading} given by collapsing the $R$-grading: $\deg_q(x) = p_2 + p_1$ whenever $\deg_R(x) = (p_2, p_1)$. 
For $\bi \in \CSeq$, we define the idempotent $1_\bi \in \cR$ given by  only vertical strands, with label determined by $i_\ell$ and it is a simple strand whenever $\varepsilon_\ell =1$ or double strand when $\varepsilon_\ell =2$. For example with $\bi = i^{(1)} j^{(1)} i^{(2)} k^{(1)}$ we have:
\[
1_\bi = \ 
\tikzdiagd[xscale=.5]{
	\draw (0,0) node[below]{\small $\alpha_i$} -- (0,1);
	\draw (1,0) node[below]{\small $\alpha_j$} -- (1,1);
	\draw[ds] (2,0) node[below]{\small $\alpha_i$} -- (2,1);
	\draw (3,0) node[below]{\small $\alpha_k$} -- (3,1);
}
\]

Consider a weight $\bw$ and $\bi,\bj \in \CSeq$ such that $\bw' \F_\bj \bw$ and $\bw' \F_\bi \bw$ for the same weight $\bw'$. 
Any diagram $D$ in $1_\bj R 1_\bi$ can be decomposed as a composition of diagrams $\bi_{r} D_r \bi_{r-1} D_{r-1} \bi_{r-2} \cdots \bi_1 D_1 \bi_0$ with $\bi_0 = \bi$ and $\bi_r = \bj$, and where there is exactly one generator (i.e. dot, splitter or crossing) in each $D_\ell$. We say that $D$ is an \emph{illegal diagram} for $\bw',\bw$ whenever there are no arrow $\bw' \F_{\bi_\ell} \bw$ for some $\ell$.  For example with $n=2$, the following diagram is illegal 
\[ 
\tikzdiagd[scale=1.5]{
	\draw (0,0) node[below]{\small $\alpha_1$} .. controls (0,.5) and (1,.5) .. (1,1);
	\draw (.5,0) node[below]{\small $\alpha_{2}$} .. controls (.5,.25) and (0,.25) .. (0,.5) .. controls (0,.75) and (.5,.75) .. (.5,1);
	\draw (1,0) node[below]{\small $\alpha_1$} .. controls (1,.5) and (0,.5) .. (0,1);
}
\]
for $\bw = (2,1,0)$ and $\bw' = (0,2,1)$, because there are no arrow $\bw'\F_2\F_1\F_1\bw$ (it factorizes through a weight $(3,0,0)$). 
Then, we set 
\[
\HOM(\F_\bi, \F_\bj) := 1_\bj R 1_\bi/I, \quad I := \bigl(\{ D \in  1_\bj R 1_\bi | \text{$D$ is illegal for $\bw, \bw'$} \}\bigr),
\]
where $\F_\bi,\F_\bj : \bw \rightarrow \bw'$. 

We define the category $\Hom(\bw, \bw')$ where objects are given by formal direct sums of formal $\bZ\times\bZ$-shifts of the compositions of the arrows $\F_i$, $\F_i^{(2)}$ and $\id_\bw$ going from $\bw$ to $\bw'$. The morphisms are matrices of elements in $\HOM(\F_\bi, \F_\bj)$ of quantum degree zero, with the composition given by multiplication in $\cR$.

\begin{defn}
Let $\oddKC$ be the 2-category with:
\begin{itemize}
\item objects are $(n+1)$-uples $\bw = (w_0, \dots, w_{n})$ where $w_i \in \{0,1,2\}$;
\item $\Hom_\oddKC(\bw, \bw')$ is given by $\Hom(\bw, \bw')$ as  above;
\item horizontal composition of 1-morphism is concatenation of arrows;
\item horizontal composition of 2-morphisms is given by right-then-left juxtaposition of diagrams:
\[
\tikzdiagh{
	\node at(.5,.1) {\small $\dots$};
	\node at(.5,.9) {\small $\dots$};
	\draw (0,0) -- (0,1);
	\draw (1,0) -- (1,1);
	\filldraw [fill=white, draw=black,rounded corners] (-.25,.25) rectangle (1.25,.75) node[midway] { $W'$};
}
\ \circ\ 
\tikzdiagh{
	\node at(.5,.1) {\small $\dots$};
	\node at(.5,.9) {\small $\dots$};
	\draw (0,0) -- (0,1);
	\draw (1,0) -- (1,1);
	\filldraw [fill=white, draw=black,rounded corners] (-.25,.25) rectangle (1.25,.75) node[midway] { $W$};
}
\ := \ 
\tikzdiagc{
	\node at(.5,.25) {\small $\dots$};
	\node at(.5,1.75) {\small $\dots$};
	\draw (0,0) -- (0,2);
	\draw (1,0) -- (1,2);
	\filldraw [fill=white, draw=black,rounded corners] (-.25,1.125) rectangle (1.25,1.625) node[midway] { $W'$};
}
\tikzdiagc{
	\node at(.5,.25) {\small $\dots$};
	\node at(.5,1.75) {\small $\dots$};
	\draw (0,0) -- (0,2);
	\draw (1,0) -- (1,2);
	\filldraw [fill=white, draw=black,rounded corners] (-.25,.375) rectangle (1.25,.875) node[midway] { $W$};
}
\in \Hom(\F_{\bi'}\F_{\bi}, \F_{\bj'}\F_{\bj}),
\]
for $W' \in \Hom(\F_{\bi'},\F_{\bj'}), W \in \Hom(\F_{\bi}, \F_{\bj})$. 
\end{itemize}
\end{defn}

As usual, specializing $X=Y=Z=1$ recovers a level 2 cyclotomic half 2-Kac--Moody algebra (as in~\cite{khovanovlauda2, brundanstroppel3}), and taking $Y=-1$ instead gives back the construction from~\cite{pedrodd}. 

\subsection{Ladder diagrams}

Given a weight $\bw = (w_0, \dots, w_n)$, we associate to it a string diagram with $n+1$ vertical strands that are labeled by the elements $w_i$. We picture these labels by writing a strand with label $0$ as dashed, label $1$ as solid, and label $2$ as double dashed. We refer to the dashed and double dashed strands as \emph{invisible}, and to the solid strands as \emph{visible}. For example, if we take $\bw = (0,1,1,2,0)$ we obtain:
\[
\tikzdiagd[xscale=1]{
	\draw[dt] (0,0) node[below]{\small $0$} -- (0,1);
	\draw[vt] (1,0) node[below]{\small $1$} -- (1,1);
	\draw[vt] (2,0) node[below]{\small $1$} -- (2,1);
	\draw[dds] (3,0) node[below]{\small $2$} -- (3,1);
	\draw[vt] (4,0) node[below]{\small $0$} -- (4,1);
}
\]
Then, to the arrow $\F_i : \bw \rightarrow \bw'$ we associate the following ladder diagram:
\[
\tikzdiagd[xscale=1.5]{
	\draw (0,0)node[below]{\small $w_0$} -- (0,1) node[above]{\small $w_0$};
	\node at(1,.5) {\small $\dots$};
	\draw (2,0)node[below]{\small $w_{i{-}1}$} -- (2,1)node[above]{\small $w_{i{-}1}$};
	\draw (3,0)node[below]{\small $w_{i}$} -- (3,1) node[above]{\small $w_i{-}1$};
	\draw[->,vt] (3,.5) -- (4,.5);
	\draw (4,0)node[below]{\small $w_{i{+}1}$} -- (4,1)node[above]{\small $w_{i{+}1}{+}1$};
	\draw (5,0)node[below]{\small $w_{i{+}2}$} -- (5,1)node[above]{\small $w_{i{+}2}$};
	\node at(6,.5) {\small $\dots$};
	\draw (7,0)node[below]{\small $w_{n}$} -- (7,1)node[above]{\small $w_n$};
}
\]
where we consider the rung as a solid line, and thus as visible.  
To the arrow $\F_i^{(2)} : \bw \rightarrow \bw'$ we associate
\[
\tikzdiagd[xscale=1.5]{
	\draw (0,0)node[below]{\small $w_0$} -- (0,1) node[above]{\small $w_0$};
	\node at(1,.5) {\small $\dots$};
	\draw (2,0)node[below]{\small $w_{i{-}1}$} -- (2,1)node[above]{\small $w_{i{-}1}$};
	\draw (3,0)node[below]{\small $w_{i}$} -- (3,1) node[above]{\small $w_i{-}2$};
	\draw[->,dds] (3,.5) -- (4,.5);
	\draw (4,0)node[below]{\small $w_{i{+}1}$} -- (4,1)node[above]{\small $w_{i{+}1}{+}2$};
	\draw (5,0)node[below]{\small $w_{i{+}2}$} -- (5,1)node[above]{\small $w_{i{+}2}$};
	\node at(6,.5) {\small $\dots$};
	\draw (7,0)node[below]{\small $w_{n}$} -- (7,1)node[above]{\small $w_n$};
}
\]
where the rung is double dashed, and thus invisible.  
The composition of arrows 
translates to stacking ladder diagram on top of each other, from bottom to top. Let $\ladder(\F_\bi)$ for $\bi \in \CSeq$ be the ladder diagram associated to $\F_\bi$. 

Because $w_i \in \{0,1,2\}$ and $w_i' \in \{0,1,2\}$, there are only 4 possibilities for what the ladder diagram of $\F_i : \bw \rightarrow \bw'$ looks like on the $i$-th and $(i+1)$-th strands:
\begin{align}\label{eq:allFi}
\tikzdiagc{
	\draw[vt] (0,0) -- (0,.5); \draw[dt] (0,.5) -- (0,1);
	\draw[->,vt] (0,.5) -- (1,.5);
	\draw[dt] (1,0) -- (1,.5); \draw[vt] (1,.5) -- (1,1);
}
&&
\tikzdiagc{
	\draw[vt] (0,0) -- (0,.5); \draw[dt] (0,.5) -- (0,1);
	\draw[->, vt] (0,.5) -- (1,.5);
	\draw[vt] (1,0) -- (1,.5); \draw[dds] (1,.5) -- (1,1);
}
&&
\tikzdiagc{
	\draw[dds] (0,0) -- (0,.5); \draw[vt] (0,.5) -- (0,1);
	\draw[->, vt] (0,.5) -- (1,.5);
	\draw[dt] (1,0) -- (1,.5); \draw[vt] (1,.5) -- (1,1);
}
&&
\tikzdiagc{
	\draw[dds] (0,0) -- (0,.5); \draw[vt] (0,.5) -- (0,1);
	\draw[->, vt] (0,.5) -- (1,.5);
	\draw[vt] (1,0) -- (1,.5); \draw[dds] (1,.5) -- (1,1);
}
\end{align}
For $\F_i^{(2)} : \bw \rightarrow \bw'$, there is only one possibility:
\[
\tikzdiagc{
	\draw[dds] (0,0) -- (0,.5); \draw[dt] (0,.5) -- (0,1);
	\draw[->, dds] (0,.5) -- (1,.5);
	\draw[dt] (1,0) -- (1,.5); \draw[dds] (1,.5) -- (1,1);
}
\]

\begin{exe}
Here is an example of a ladder diagram
\[
\tikzdiagh[yscale=.5]{
	\draw[dds] (0,0) -- (0,3);
	\draw[vt] (0,3) -- (0,4);
	\draw[dt] (0,4) -- (0,5);
	\draw[dds] (1,0) -- (1,2);
	\draw[dt] (1,2) -- (1,3);
	\draw[vt] (1,3) -- (1,4);
	\draw[dds] (1,4) -- (1,5);
	\draw[vt] (2,0) -- (2,1);
	\draw[dt] (2,1) -- (2,2);
	\draw[dds] (2,2) -- (2,5);
	\draw[vt] (3,0) -- (3,1);
	\draw[dds] (3,1) -- (3,5);
	\draw[vt,->] (2,1) -- (3,1);
	\draw[dds,->] (1,2) -- (2,2);
	\draw[vt,->] (0,3) -- (1,3);
	\draw[vt,->] (0,4) -- (1,4);
}
\]
associated to $\bw' \F_1 \F_1 \F_2^{(2)} \F_3 \bw$ where $\bw = (2,2,1,1)$ and $\bw' = (0,2,2,2)$.
\end{exe}

\subsection{The 2-functor}
Given a weight $\bw$ we write $s(\bw) := \#\{ w_i \in \bw | w_i = 1\}$. We consider the full sub-2-category $\oddKCeven$ of $\oddKC$ with objects $\bw$ being such that $s(\bw) \equiv 0 \mod 2$. 
We also consider the bicategory $\BIMODH$ where objects are $H^n$ for all $n \geq 0$, and $\Hom_{\BIMODH}(H^n, H^m) := \BIMOD_q(H^n, H^m)$. The horizontal composition is given by tensor product $\otimes_H$. 
We will construct a 2-action $\oddKCeven \rightarrow \BIMODH$, similar to the one that can be found in~\cite{brundanstroppel3} (when $X=Y=Z=1$ our 2-action coincides with the one in the reference). 

\begin{rem}
A construction of a generalized covering arc algebra (in the sense of \cite{chenkhovanov, stroppelalgebra}) is possible, allowing to act with the whole $\oddKC$ instead of $\oddKCeven$. For the sake of simplicity, we do not consider this case in this paper. 
\end{rem}

\smallskip

Because of \cref{eq:allFi}, we know that there is at most $2$ visible lines meeting at each vertex of $\ladder(\F_\bi)$ for all $\bi \in \CSeq$. In particular, it means we can turn it into a flat tangle by removing all the invisible lines and smoothing if necessary, so that we turn \eqref{eq:allFi} into:
\begin{align*}
\tikzdiagc{
	\draw (0,0) .. controls (0,.5) and (1,.5) .. (1,1);
}
&&
\tikzdiagc{
	\draw (0,0) .. controls (0,1) and (1,1) .. (1,0);	
}
&&
\tikzdiagc{
	\draw (0,1) .. controls (0,0) and (1,0) .. (1,1);
}
&&
\tikzdiagc{
	\draw (0,1) .. controls (0,.5) and (1,.5) .. (1,0);
}
\end{align*}
We write $\ladderToTangle(\F_\bi)$ for the corresponding flat tangle. 

\begin{exe}
For example, we have
\[
\tikzdiagh[yscale=.5]{
	\draw[dds] (0,0) -- (0,3);
	\draw[vt] (0,3) -- (0,4);
	\draw[dt] (0,4) -- (0,5);
	\draw[dds] (1,0) -- (1,2);
	\draw[dt] (1,2) -- (1,3);
	\draw[vt] (1,3) -- (1,4);
	\draw[dds] (1,4) -- (1,5);
	\draw[vt] (2,0) -- (2,1);
	\draw[dt] (2,1) -- (2,2);
	\draw[dds] (2,2) -- (2,5);
	\draw[vt] (3,0) -- (3,1);
	\draw[dds] (3,1) -- (3,5);
	\draw[vt,->] (2,1) -- (3,1);
	\draw[dds,->] (1,2) -- (2,2);
	\draw[vt,->] (0,3) -- (1,3);
	\draw[vt,->] (0,4) -- (1,4);
}
\ \xrightarrow{\ \ladderToTangle\ } \ 
\tikzdiagh[yscale=.5]{
	\draw[vt] (0,3.5) .. controls (0,4.5) and (1,4.5) .. (1,3.5) .. controls (1,2.5) and (0,2.5) .. (0,3.5);
	\draw[vt] (1,0) .. controls (1,1) and (2,1) .. (2,0);
}
\]
\end{exe}

 This allows us to define a $\bZ\times\bZ$-graded additive 1-map 
\[
\UtoBfunctor_0 : \oddKCeven \rightarrow \BIMODH,
\]
with $\bw \mapsto H^{s(\bw)/2}$, and $\F_\bi : \bw \rightarrow \bw'$ is sent to $\tqft(\ladderToTangle(\F_\bi))$. 

\begin{rem}
We use the term \emph{1-map} to emphasize the fact it is not a functor (the objects and 1-morphisms in $\BIMODH$ do not form a category since $\BIMODH$ is a bicategory). 
\end{rem}

 The idea is to look at $\tqft\bigl(\ladderToTangle(\F_\bi)\bigr)$, and  to assign an homogeneous map of bimodules (in the form of a cobordism) to each generating element of $\cR$, depending on the weight $\bw$. 
First, let us define $\Gamma_\bw(i) \in R$ as 
\begin{align*}
\Gamma_\bw(i) := (-XY)^{\# \{w_j \in \bw | w_j = 1 \text{ for } j \leq i \}},
\end{align*}
where 
the exponent is the number of visible lines on the left or equal to the $i$th vertical line in the ladder diagram associated to $\bw$. 
 Then, we define $\UtoBfunctor_0$ as follow, supposing all diagrams involved to be not illegal:
\begin{itemize}
\item To a dot on a strand labeled $\alpha_i$, 
we associate ${\Gamma_\bw(i)}$ times the cobordism given by the identity on $\ladder(\F_i)$ with a dot above the rung. For example: 
\begin{align*}
\UtoBfunctor_0 & 
\left(
\tikzdiagc{
	\draw (0,0) node[below]{\small $\alpha_i$} -- (0,1) node[midway, tikzdot]{};
}\ : \ 
\tikzdiagh{
	\draw[vt] (0,0) -- (0,.5); \draw[dt] (0,.5) -- (0,1);
	\draw[->,vt] (0,.5) -- (1,.5);
	\draw[dt] (1,0) -- (1,.5); \draw[vt] (1,.5) -- (1,1);
}\ \{-1,-1\}
\quad \rightarrow \quad 
\tikzdiagh{
	\draw[vt] (0,0) -- (0,.5); \draw[dt] (0,.5) -- (0,1);
	\draw[->,vt] (0,.5) -- (1,.5);
	\draw[dt] (1,0) -- (1,.5); \draw[vt] (1,.5) -- (1,1);
}
\right)
 \\
&:=
\Gamma_\bw(i) 
\tqft\left(  \ 
\tikzdiagh{
	\draw (0,1) .. controls (.25,1.25) and (.75,1-.25) .. (1,1);
	\draw (0,0) .. controls (.25,.25) and (.75,-.25) .. (1,0);
	\draw (0,0) -- (0,1);
	\draw (1,0) -- (1,1);
	\node[tikzdot]at(.5,.5){};
} \ 
 \right)
 : 
\tqft\left( \ 
\tikzdiagh{
	\draw[vt] (0,0) .. controls (0,.5) and (1,.5) .. (1,1);
} \ 
\right)
\rightarrow
\tqft\left( \ 
\tikzdiagh{
	\draw[vt] (0,0) .. controls (0,.5) and (1,.5) .. (1,1);
} \ 
\right)
\end{align*}
%
\item To the splitters we associate
\begin{align*}
\UtoBfunctor_0 & \left( \ 
\tikzdiagc{
	\draw[ds] (0,0) node[below]{\small $\alpha_i$}  -- (0,.5);
	\draw (-.03,.5) .. controls (-.03,.75) and (-.5,.75) .. (-.5,1);
	\draw (.03,.5) .. controls (.03,.75) and (.5,.75) .. (.5,1);
} \ : \ 
\tikzdiagh{
	\draw[dds] (0,-.25) -- (0,.5); \draw[dt] (0,.5) -- (0,1.25);
	\draw[->, dds] (0,.5) -- (1,.5);
	\draw[dt] (1,-.25) -- (1,.5); \draw[dds] (1,.5) -- (1,1.25);
}
\rightarrow
\tikzdiagh{
	\draw[dds] (0,-.25) -- (0,.25); 
	\draw [vt] (0,.25) -- (0,.75);
	\draw[dt] (0,.75) -- (0,1.25);
	\draw[->, vt] (0,.25) -- (1,.25);
	\draw[->, vt] (0,.75) -- (1,.75);
	\draw[dt] (1,-.25) -- (1,.25); 
	\draw [vt] (1,.25) -- (1,.75);
	\draw[dds] (1,.75) -- (1,1.25);
} \ 
\right)
\ := \ 
\tqft( \emptyset ) 
\xrightarrow{ \tqft\left( \  \ \right)}
\tqft\left( \ 
\tikzdiagh{
	\draw[vt] (0,.5) .. controls (0,1) and (1,1) .. (1,.5) .. controls (1,0) and (0,0) .. (0,.5);
} \ 
\right)
\intertext{and}
\UtoBfunctor_0 & \left( \ 
\tikzdiagc[yscale=-1]{
	\draw[ds] (0,0) -- (0,.5);
	\draw (-.03,.5) .. controls (-.03,.75) and (-.5,.75) .. (-.5,1) node[below]{\small $\alpha_i$} ;
	\draw (.03,.5) .. controls (.03,.75) and (.5,.75) .. (.5,1) node[below]{\small $\alpha_i$} ;
} \ : \ 
\tikzdiagh{
	\draw[dds] (0,-.25) -- (0,.25); 
	\draw [vt] (0,.25) -- (0,.75);
	\draw[dt] (0,.75) -- (0,1.25);
	\draw[->, vt] (0,.25) -- (1,.25);
	\draw[->, vt] (0,.75) -- (1,.75);
	\draw[dt] (1,-.25) -- (1,.25); 
	\draw [vt] (1,.25) -- (1,.75);
	\draw[dds] (1,.75) -- (1,1.25);
}
\rightarrow
\tikzdiagh{
	\draw[dds] (0,-.25) -- (0,.5); \draw[dt] (0,.5) -- (0,1.25);
	\draw[->, dds] (0,.5) -- (1,.5);
	\draw[dt] (1,-.25) -- (1,.5); \draw[dds] (1,.5) -- (1,1.25);
} \ 
\right)
\ := \ 
\tqft\left(  \ 
\tikzdiagh{
	\draw[vt] (0,.5) .. controls (0,1) and (1,1) .. (1,.5) .. controls (1,0) and (0,0) .. (0,.5);
} \ 
\right) 
\xrightarrow{ (-XY){\Gamma_\bw(i)}  \tqft\left( \  \ \right)}
\tqft\left(\emptyset \right)
\end{align*}
\item The crossings with at least a double strand are all sent to the identity cobordism. 
\item To a crossing between two (single) strands of the same color, we associate:
\begin{align*}
\UtoBfunctor_0 & \left( \ 
\tikzdiagc{
	\draw (0,0) node[below]{\small $\alpha_i$} .. controls (0,.5) and (1,.5) .. (1,1);
	\draw (1,0) node[below]{\small $\alpha_i$} .. controls (1,.5) and (0,.5) .. (0,1);
} \ : \ 
\tikzdiagh{
	\draw[dds] (0,-.25) -- (0,.25); 
	\draw [vt] (0,.25) -- (0,.75);
	\draw[dt] (0,.75) -- (0,1.25);
	\draw[->, vt] (0,.25) -- (1,.25);
	\draw[->, vt] (0,.75) -- (1,.75);
	\draw[dt] (1,-.25) -- (1,.25); 
	\draw [vt] (1,.25) -- (1,.75);
	\draw[dds] (1,.75) -- (1,1.25);
} \ 
\rightarrow
\tikzdiagh{
	\draw[dds] (0,-.25) -- (0,.25); 
	\draw [vt] (0,.25) -- (0,.75);
	\draw[dt] (0,.75) -- (0,1.25);
	\draw[->, vt] (0,.25) -- (1,.25);
	\draw[->, vt] (0,.75) -- (1,.75);
	\draw[dt] (1,-.25) -- (1,.25); 
	\draw [vt] (1,.25) -- (1,.75);
	\draw[dds] (1,.75) -- (1,1.25);
} \ 
\right)
\\
\ &:= \ 
(-XY){\Gamma_\bw(i)}  \tqft\left( \ \tikzdiagc[xscale=-.5,yscale=.5]{
	\draw (1,0) .. controls (1,1) and (2,1) .. (2,0);
	\draw (1,0) .. controls (1,-.25) and (2,-.25) .. (2,0);
	\draw[dashed] (1,0) .. controls (1,.25) and (2,.25) .. (2,0);
	\draw[->] (1.5,1.5) [partial ellipse=0:270:3ex and 1ex];
	\draw (1,4) .. controls (1,3) and (2,3) .. (2,4);
	\draw (1,4) .. controls (1,3.75) and (2,3.75) .. (2,4);
	\draw (1,4) .. controls (1,4.25) and (2,4.25) .. (2,4);
} \right)
: 
\tqft\left( \ 
\tikzdiagh{
	\draw[vt] (0,.5) .. controls (0,1) and (1,1) .. (1,.5) .. controls (1,0) and (0,0) .. (0,.5);
} \ 
\right)
\rightarrow
\tqft\left( \ 
\tikzdiagh{
	\draw[vt] (0,.5) .. controls (0,1) and (1,1) .. (1,.5) .. controls (1,0) and (0,0) .. (0,.5);
} \ 
\right)
\end{align*}
\item To a crossing between two strands with $|i-j| > 1$, we associate the identity cobordism, since at the level of the tangles we are exchanging distant rungs, and thus the source and target are equivalent tangles. 
\item To a crossing between two strands with $j = i - 1$ we associate:
\begin{align*}
\UtoBfunctor_0 & \left( \ 
\tikzdiagc{
	\draw (0,0) node[below]{\small $\alpha_i$} .. controls (0,.5) and (1,.5) .. (1,1);
	\draw[myblue] (1,0) node[below]{\small $\alpha_{i{-}1}$} .. controls (1,.5) and (0,.5) .. (0,1);
} \ : \ 
\tikzdiagh{
	\draw[vt] (0,0) -- (0,.5);
	\draw[dds] (0,.5) -- (0,1);
	\draw[vt] (0,1) -- (0,1.5);
	\draw[->,vt] (-1,.5) -- (0,.5);
	\draw[->,vt] (0,1) -- (1,1);
	\draw[dotted] (-1,0) -- (-1,1.5);
	\draw[dotted] (1,0) -- (1,1.5);
} \ 
\rightarrow
\tikzdiagh{
	\draw[vt] (0,0) -- (0,.5);
	\draw[dt] (0,.5) -- (0,1);
	\draw[vt] (0,1) -- (0,1.5);
	\draw[->,vt] (0,.5) -- (1,.5);
	\draw[->,vt] (-1,1) -- (0,1);
	\draw[dotted] (-1,0) -- (-1,1.5);
	\draw[dotted] (1,0) -- (1,1.5);
} \ 
\right)
\\
\ &\rightsquigarrow \ 
 \tqft\left( \ \tikzdiagh[scale=.2]{
	\draw (.5,1) -- (.5,3) -- (4.5,3) -- (4.5,1);
	\draw (0,0) .. controls (1.5,0) and (2,1) .. (.5,1); 
	\draw (4,0) .. controls (2.5,0) and (3,1) .. (4.5,1); 
	\filldraw[fill=white, draw=white] (.5,.55)  rectangle  (4,2);
	\draw[dotted] (0,0) .. controls (1.5,0) and (2,1) .. (.5,1); 
	\draw[dotted] (4,0) .. controls (2.5,0) and (3,1) .. (4.5,1); 
	\draw (1.375,.5) .. controls (1.375,1.5) and (3.125,1.5) .. (3.125,.5);
	%
	\draw (0,0) -- (0,2) -- (4,2) -- (4,0);
} \right)
: 
\tqft\left( \ 
\tikzdiagh{
	\draw (-1,.5) .. controls (0,.5) .. (0,0);
	\draw (0,1.5) .. controls (0,1) .. (1,1);
	\draw[->] (-.15,-.15+.75) -- (.15,.15+.75);
} \ 
\right)
\rightarrow
\tqft\left( \ 
\tikzdiagh{
	\draw (1,.5) .. controls (0,.5) .. (0,0);
	\draw (0,1.5) .. controls (0,1) .. (-1,1);
} \ 
\right)
\end{align*}
where  the lines we drawn as dotted could be either dashed, solid or double dashed (depending on the weight $\bw$). 
The arrow gives the orientation of the saddle point. 
Similarly, for $j = i + 1$ we put:
\begin{align*}
\UtoBfunctor_0 & \left( \ 
\tikzdiagc{
	\draw[myblue] (0,0) node[below]{\small $\alpha_i$} .. controls (0,.5) and (1,.5) .. (1,1);
	\draw (1,0) node[below]{\small $\alpha_{i{+}1}$} .. controls (1,.5) and (0,.5) .. (0,1);
} \ : \ 
\tikzdiagh{
	\draw[vt] (0,0) -- (0,.5);
	\draw[dt] (0,.5) -- (0,1);
	\draw[vt] (0,1) -- (0,1.5);
	\draw[->,vt] (0,.5) -- (1,.5);
	\draw[->,vt] (-1,1) -- (0,1);
	\draw[dotted] (-1,0) -- (-1,1.5);
	\draw[dotted] (1,0) -- (1,1.5);
} \ 
\rightarrow
\tikzdiagh{
	\draw[vt] (0,0) -- (0,.5);
	\draw[dds] (0,.5) -- (0,1);
	\draw[vt] (0,1) -- (0,1.5);
	\draw[->,vt] (-1,.5) -- (0,.5);
	\draw[->,vt] (0,1) -- (1,1);
	\draw[dotted] (-1,0) -- (-1,1.5);
	\draw[dotted] (1,0) -- (1,1.5);
} \ 
\right)
\\
\ &\rightsquigarrow \ 
(-XY){\Gamma_\bw(i)}  \tqft\left( \ \tikzdiagh[scale=.2]{
	\draw (.5,3) -- (.5,1) -- (4.5,1) -- (4.5,3);
	\filldraw [fill=white, draw=white] (0,0) rectangle (4,2); 
	\draw (0,2) .. controls (1.5,2) and (2,3) .. (.5,3); 
	\draw (4,2) .. controls (2.5,2) and (3,3) .. (4.5,3); 
	\draw (1.375,2.5) .. controls (1.375,1.5) and (3.125,1.5) .. (3.125,2.5);
	%
	\draw (0,2) -- (0,0) -- (4,0) -- (4,2);
} \right)
: 
\tqft\left( \ 
\tikzdiagh{
	\draw (1,.5) .. controls (0,.5) .. (0,0);
	\draw (0,1.5) .. controls (0,1) .. (-1,1);
	\draw[->] (.15,-.15+.75) -- (-.15,.15+.75);
} \ 
\right)
\rightarrow
\tqft\left( \ 
\tikzdiagh{
	\draw (-1,.5) .. controls (0,.5) .. (0,0);
	\draw (0,1.5) .. controls (0,1) .. (1,1);
} \ 
\right)
\end{align*}
\end{itemize}

However, there is an issue here if we want $\UtoBfunctor_0$ to be a map of $R$-algebra. Indeed, if we isotopy two distant crossings with labels $j = i \pm 1$, it will produce a global scalar given by the graded planar isotopy. On the other side, changing the chronology at the level of the cobordisms will be different, and it will depend on the closure of the tangle (the saddle point could be either a split or a merge). Hopefully, by introducing some grading shift on $\tqft(\ladderToTangle(\F_\bi))$, we can fix this. The idea is to observe that if we shift the target
\begin{align*}
\tqft\left(
\tikzdiagh[scale=.75]
{
	\draw[dotted] (3,-2) circle(0.707);
	\draw (2.5,-1.5) .. controls (2.75,-1.75) and (3.25,-1.75) .. (3.5,-1.5);
	\draw (2.5,-2.5) .. controls  (2.75,-2.25) and (3.25,-2.25) .. (3.5,-2.5);
}
  \right) 
\xrightarrow{\tqft\bigl( \ \tikzdiagh[scale=.2]{
	\draw (.5,1) -- (.5,3) -- (4.5,3) -- (4.5,1);
	\draw (0,0) .. controls (1.5,0) and (2,1) .. (.5,1); 
	\draw (4,0) .. controls (2.5,0) and (3,1) .. (4.5,1); 
	\filldraw[fill=white, draw=white] (.5,.55)  rectangle  (4,2);
	\draw[dotted] (0,0) .. controls (1.5,0) and (2,1) .. (.5,1); 
	\draw[dotted] (4,0) .. controls (2.5,0) and (3,1) .. (4.5,1); 
	\draw (1.375,.5) .. controls (1.375,1.5) and (3.125,1.5) .. (3.125,.5);
	%
	\draw (0,0) -- (0,2) -- (4,2) -- (4,0);
} \ \bigr)}
 (\shiftFunct{ \tikzdiagh[scale=.2]{
	\draw (.5,3) -- (.5,1) -- (4.5,1) -- (4.5,3);
	\filldraw [fill=white, draw=white] (0,0) rectangle (4,2); 
	\draw (0,2) .. controls (1.5,2) and (2,3) .. (.5,3); 
	\draw (4,2) .. controls (2.5,2) and (3,3) .. (4.5,3); 
	\draw (1.375,2.5) .. controls (1.375,1.5) and (3.125,1.5) .. (3.125,2.5);
	%
	\draw (0,2) -- (0,0) -- (4,0) -- (4,2);
}^{(1,0)}})
\tqft\left(
\tikzdiagh[scale=.75]
{
	\draw[dotted] (-2,-2) circle(0.707);
	\draw (-1.5,-1.5) .. controls (-1.75,-1.75) and (-1.75,-2.25) .. (-1.5,-2.5);
	\draw (-2.5,-1.5) .. controls (-2.25,-1.75) and (-2.25,-2.25) ..  (-2.5,-2.5);
}  
 \right)
\intertext{or the source}
\bigl(
\shiftFunct{\tikzdiagh[scale=.2]{
	\draw (.5,3) -- (.5,1) -- (4.5,1) -- (4.5,3);
	\filldraw [fill=white, draw=white] (0,0) rectangle (4,2); 
	\draw (0,2) .. controls (1.5,2) and (2,3) .. (.5,3); 
	\draw (4,2) .. controls (2.5,2) and (3,3) .. (4.5,3); 
	\draw (1.375,2.5) .. controls (1.375,1.5) and (3.125,1.5) .. (3.125,2.5);
	%
	\draw (0,2) -- (0,0) -- (4,0) -- (4,2);
}^{(1,0)}} 
\bigr)
\tqft\left(
\tikzdiagh[scale=.75]
{
	\draw[dotted] (-2,-2) circle(0.707);
	\draw (-1.5,-1.5) .. controls (-1.75,-1.75) and (-1.75,-2.25) .. (-1.5,-2.5);
	\draw (-2.5,-1.5) .. controls (-2.25,-1.75) and (-2.25,-2.25) ..  (-2.5,-2.5);
} 
  \right) 
\xrightarrow{\tqft\bigl( \ \tikzdiagh[scale=.2]{
	\draw (.5,3) -- (.5,1) -- (4.5,1) -- (4.5,3);
	\filldraw [fill=white, draw=white] (0,0) rectangle (4,2); 
	\draw (0,2) .. controls (1.5,2) and (2,3) .. (.5,3); 
	\draw (4,2) .. controls (2.5,2) and (3,3) .. (4.5,3); 
	\draw (1.375,2.5) .. controls (1.375,1.5) and (3.125,1.5) .. (3.125,2.5);
	%
	\draw (0,2) -- (0,0) -- (4,0) -- (4,2);
} \ \bigr)}
\tqft\left(
\tikzdiagh[scale=.75]
{
	\draw[dotted] (3,-2) circle(0.707);
	\draw (2.5,-1.5) .. controls (2.75,-1.75) and (3.25,-1.75) .. (3.5,-1.5);
	\draw (2.5,-2.5) .. controls  (2.75,-2.25) and (3.25,-2.25) .. (3.5,-2.5);
}
\right)
\end{align*}
then the first map is of degree $(0,-1)$, like a crossing for $j=i-1$, and the second map is of degree $(-1,0)$, like for $j=i+1$. Also, given a pair of maps $g : M' \rightarrow N', f : M \rightarrow N$ that are purely homogeneous and with degree that does not change the underlying tangle, we have that the following 2-diagram
\begin{equation}\label{eq:fotimesgradedcommutes}
\begin{tikzcd}
& 
N' \otimes_H M 
\ar{dr}{1 \otimes f}
&
\\
M' \otimes_H M  
\ar{ur}{g \otimes 1}
\ar[swap]{dr}{1 \otimes f}
&&
N' \otimes_H N
\\
&
M' \otimes_H N 
\ar[swap]{ur}{g \otimes 1}
 \ar[phantom]{uu}{\quad \rotatebox{90}{$\Rightarrow$}\ \lambda_R(|f|,|g|)}
 &
\end{tikzcd}
\end{equation}
commutes. 
 Moreover, the degree shift introduced preserve the quantum grading, so that the spaces are isomorphic to their shifted version in $\BIMOD_q(H^\bullet, H^\bullet)$. 

\subsubsection{Normalized tangle}

We give an algorithm to construct the shifted version of $\UtoBfunctor_0(\F_\bi)$.
Fix a ladder diagram $\ladder(\F_\bi : \bw \rightarrow \bw')$. 
Let $T_0 := \ladderToTangle(\F_\bi)$ and $\shiftFunct{0} := \id$. Let $k := 0$ be a counting variable (i.e. a variable that will increase at each step of the algorithm).   
We read $\ladder(\F_\bi)$ from bottom to top, decomposing it as $\ladder(\F_\bi) =  \bw_r \F_{i_r} \bw_{r-1} \cdots \bw_1 \F_{i_1} \bw_0$, where $\bw' = \bw_r$ and $\bw = \bw_0$ . Whenever we encounter  the following situation:
\begin{equation}\label{eq:normalization1}
\tikzdiagh{
	\draw[vt] (0,0) -- (0,.5);
	\draw[dt] (0,.5) -- (0,1);
	\draw[->,vt] (0,.5) -- (1,.5);
	\draw[dotted] (-1,0) -- (-1,1);
	\draw[dotted] (1,0) -- (1,1);
}
\end{equation}
i.e. $\bw_\ell \F_{i_\ell}\bw_{\ell-1}$ with $(\bw_{\ell-1})_{i_\ell} = 1$, 
we follow the dashed strand until we possibly encounter 
one of the two following situations:
\begin{align}\label{eq:normalization2}
\tikzdiagh{
	\draw[dt] (0,.5) -- (0,1);
	\draw[vt] (0,1) -- (0,1.5);
	\draw[->,vt] (-1,1) -- (0,1);
	\draw[dotted] (-1,.5) -- (-1,1.5);
	\draw[dotted] (1,.5) -- (1,1.5);
}
&&
\tikzdiagh{
	\draw[dt] (0,.5) -- (0,1);
	\draw[dds] (0,1) -- (0,1.5);
	\draw[->,dds] (-1,1) -- (0,1);
	\draw[dds] (-1,.5) -- (-1,1);
	\draw[dt] (-1,1) -- (-1,1.5);
	\draw[dotted] (1,.5) -- (1,1.5);
}
\end{align}
In the first case, 
we consider the chronological cobordism $\chcob_{k+1} : T_k \rightarrow T_{k+1}$ given by putting a saddle realizing a surgery over the arcs of the tangle diagrams corresponding to \eqref{eq:normalization1} and \eqref{eq:normalization2}, with orientation to the top left, and
we put
\begin{align*}
 \shiftFunct{k+1} &:=  \shiftFunct{\chcob_{k+1}^{(1,0)}} \circ \shiftFunct{k},
 &
 k &:= k+1.
\end{align*}
Then, we continue reading $\ladder(\F_\bi)$ from just above \eqref{eq:normalization1}, that is starting at $\F_{i_\ell +1}$. 
In the second case, we jump one case to the left and continue to follow the dashed strand, until we either attain the end of the ladder diagram or \cref{eq:normalization2}, repeating. 
When we reach the end of $\ladder(\F_\bi)$, we put $\shiftFunct{\F_\bi} := \shiftFunct{k}$. Finally, we define
\[
\UtoBfunctor(\F_\bi) :=  \shiftFunct{\F_\bi}\left( \UtoBfunctor_0(\F_\bi) \right).
\]

\begin{exe}
For example, we have
\[
\tikzdiagh[yscale=.5]{
	\draw[dds] (0,0) -- (0,3);
	\draw[vt] (0,3) -- (0,4);
	\draw[dt] (0,4) -- (0,5);
	\draw[dds] (1,0) -- (1,2);
	\draw[dt] (1,2) -- (1,3);
	\draw[vt] (1,3) -- (1,4);
	\draw[dds] (1,4) -- (1,5);
	\draw[vt] (2,0) -- (2,1);
	\draw[dt] (2,1) -- (2,2);
	\draw[dds] (2,2) -- (2,5);
	\draw[vt] (3,0) -- (3,1);
	\draw[dds] (3,1) -- (3,5);
	\draw[vt,->] (2,1) -- (3,1);
	\draw[dds,->] (1,2) -- (2,2);
	\draw[vt,->] (0,3) -- (1,3);
	\draw[vt,->] (0,4) -- (1,4);
}
\ \xmapsto{\ \UtoBfunctor\ } \ 
\bigl(\shiftFunct{\tikzdiagh[scale=.2]{
	\draw  (.5,3) -- (4.5,3) -- (4.5,1);
	\begin{scope}
		\clip (-1,.5) rectangle (1,-.5);
		\draw (0,0) .. controls (-1.5,0) and (-1,1) .. (.5,1); 
	\end{scope}
	\draw (0,0) .. controls (1.5,0) and (2,1) .. (.5,1); 
	\draw (4,0) .. controls (2.5,0) and (3,1) .. (4.5,1); 
	\filldraw[fill=white, draw=white] (.5,.55)  rectangle  (4,2);
	\draw[dotted] (0,0) .. controls (1.5,0) and (2,1) .. (.5,1); 
	\draw[dotted] (4,0) .. controls (2.5,0) and (3,1) .. (4.5,1); 
	\draw[dotted] (0,0) .. controls (-1.5,0) and (-1,1) .. (.5,1); 
	\draw (1.375,.5) .. controls (1.375,1.5) and (3.125,1.5) .. (3.125,.5);
	%
	\draw (0,2) -- (4,2) -- (4,0);
	\draw (0,2) .. controls (-1.5,2) and (-1,3) .. (.5,3); 
	\draw (-.875,.5) -- (-.875,2.5);
}^{(1,0)}}\bigr)
\tqft\left( \ 
\tikzdiagh[yscale=.5]{
	\draw[vt] (0,3.5) .. controls (0,4.5) and (1,4.5) .. (1,3.5) .. controls (1,2.5) and (0,2.5) .. (0,3.5);
	\draw[vt] (1,0) .. controls (1,1) and (2,1) .. (2,0);
}
 \ \right)
\]
\end{exe}

We define $\UtoBfunctor$ on objects as $\UtoBfunctor_0$. For the 2-morphisms, we need to throw in some extra changes of chronology in the recipe.  
First, we need to modify the 2-morphism for a crossing $\tau_{ij}$ with $|i-j| > 1$. Suppose $\F_\bj$ is obtained from $\F_\bi$ by exchanging $\F_{i_\ell}$ with $\F_{i_{\ell+1}}$ and $|i_{\ell} - i_{\ell+1}| > 1$.  In this situation, we put 
\begin{equation}\label{eq:FLbicrossdistant}
\UtoBfunctor
\left(
\tikzdiagc{
	\draw (0,0) node[below]{\small $\alpha_{i_{\ell{+}1}}$} .. controls (0,.5) and (1,.5) .. (1,1);
	\draw[myblue] (1,0) node[below]{\small $\alpha_{i_\ell}$} .. controls (1,.5) and (0,.5) .. (0,1);
} 
\right)
:= 
\shiftFunct{H_{\tau_{ij}}}(\UtoBfunctor(\F_\bi)) 
\UtoBfunctor_0\left(
\tikzdiagc{
	\draw (0,0) node[below]{\small $\alpha_{i_{\ell{+}1}}$} .. controls (0,.5) and (1,.5) .. (1,1);
	\draw[myblue] (1,0) node[below]{\small $\alpha_{i_\ell}$} .. controls (1,.5) and (0,.5) .. (0,1);
} 
\right),
\end{equation}
where $\shiftFunct{H_{\tau_{ij}}} : \shiftFunct{\F_\bi} \Rightarrow \shiftFunct{\F_\bj}$ is the natural transformation obtained from \cref{prop:nattrfromchangeofch}. 
Furthermore, we need to add some change of chronology for a crossing $\tau_{ij}$ with $j = i + 1$. 
Consider the natural isomorphism
$
\shiftFunct{H_{\tau_{ij}}} : \shiftFunct{F_\bi} \Rightarrow \shiftFunct{\F_\bj} \circ \shiftFunct{\tikzdiagh[scale=.2]{
	\draw (.5,3) -- (.5,1) -- (4.5,1) -- (4.5,3);
	\filldraw [fill=white, draw=white] (0,0) rectangle (4,2); 
	\draw (0,2) .. controls (1.5,2) and (2,3) .. (.5,3); 
	\draw (4,2) .. controls (2.5,2) and (3,3) .. (4.5,3); 
	\draw (1.375,2.5) .. controls (1.375,1.5) and (3.125,1.5) .. (3.125,2.5);
	%
	\draw (0,2) -- (0,0) -- (4,0) -- (4,2);
}^{(1,0)}}
$
given by change of chronology, and where the saddle corresponds with the saddle above the rungs associated to the strands involved in the crossing. 
 Then, we define $\UtoBfunctor(\tau_{ij})$ by making the following diagram commutes:
\[
\begin{tikzcd}
\shiftFunct{\F_\bi} \bigl( \UtoBfunctor_0(\F_\bi)  \bigr)
\ar{rr}{\UtoBfunctor(\tau_{ij})}
\ar[swap,"\sim"'{sloped}]{dr}{\shiftFunct{H_{\tau_{ij}}} }
&
&
\shiftFunct{\F_\bj} \bigl( \UtoBfunctor_0(\F_\bj) \bigr)
\\
&
\shiftFunct{\F_\bj} \circ  \shiftFunct{\tikzdiagh[scale=.2]{
	\draw (.5,3) -- (.5,1) -- (4.5,1) -- (4.5,3);
	\filldraw [fill=white, draw=white] (0,0) rectangle (4,2); 
	\draw (0,2) .. controls (1.5,2) and (2,3) .. (.5,3); 
	\draw (4,2) .. controls (2.5,2) and (3,3) .. (4.5,3); 
	\draw (1.375,2.5) .. controls (1.375,1.5) and (3.125,1.5) .. (3.125,2.5);
	%
	\draw (0,2) -- (0,0) -- (4,0) -- (4,2);
}^{(1,0)}}  \bigl( \UtoBfunctor_0(\F_\bi) \bigr)
\ar[swap]{ur}{\shiftFunct{\F_\bj} ( \UtoBfunctor_0(\tau_{ij}) )}
&
\end{tikzcd}
\]
where we recall that the grading shift functor twists the morphisms since they carry a non-trivial $\gradC$-degree (see \cref{sec:grshiftBIMOD} and \cref{sec:Gcommutativity}). 
Similarly, for $j = i-1$, we put
\begin{equation}\label{eq:defUtoBtauj-1}
\begin{tikzcd}[column sep = 20ex]
\shiftFunct{\F_\bi} \bigl( \UtoBfunctor_0(\F_\bi)  \bigr)
\ar{r}{\UtoBfunctor(\tau_{ij})}
\ar[swap,"\vsim"']{d}{\shiftFunct{H}  }
&
\shiftFunct{\F_\bj} \bigl( \UtoBfunctor_0(\F_\bj) \bigr)
\\
\shiftFunct{\F_\bi} \circ  \shiftFunct{\tikzdiagh[scale=.2]{
	\draw (.5,3) -- (.5,1) -- (4.5,1) -- (4.5,3);
	\filldraw [fill=white, draw=white] (0,0) rectangle (4,2); 
	\draw (0,2) .. controls (1.5,2) and (2,3) .. (.5,3); 
	\draw (4,2) .. controls (2.5,2) and (3,3) .. (4.5,3); 
	\draw (1.375,2.5) .. controls (1.375,1.5) and (3.125,1.5) .. (3.125,2.5);
	%
	\draw (0,2) -- (0,0) -- (4,0) -- (4,2);
}^{(1,0)}} \circ  \shiftFunct{\tikzdiagh[scale=.2]{
	\draw (.5,1) -- (.5,3) -- (4.5,3) -- (4.5,1);
	\draw (0,0) .. controls (1.5,0) and (2,1) .. (.5,1); 
	\draw (4,0) .. controls (2.5,0) and (3,1) .. (4.5,1); 
	\filldraw[fill=white, draw=white] (.5,.55)  rectangle  (4,2);
	\draw[dotted] (0,0) .. controls (1.5,0) and (2,1) .. (.5,1); 
	\draw[dotted] (4,0) .. controls (2.5,0) and (3,1) .. (4.5,1); 
	\draw (1.375,.5) .. controls (1.375,1.5) and (3.125,1.5) .. (3.125,.5);
	%
	\draw (0,0) -- (0,2) -- (4,2) -- (4,0);
}^{(0,1)}}   \bigl( \UtoBfunctor_0(\F_\bi) \bigr)
\ar[swap,dash]{r}{\shiftFunct{\F_\bi}  \circ  \shiftFunct{\tikzdiagh[scale=.2]{
	\draw (.5,3) -- (.5,1) -- (4.5,1) -- (4.5,3);
	\filldraw [fill=white, draw=white] (0,0) rectangle (4,2); 
	\draw (0,2) .. controls (1.5,2) and (2,3) .. (.5,3); 
	\draw (4,2) .. controls (2.5,2) and (3,3) .. (4.5,3); 
	\draw (1.375,2.5) .. controls (1.375,1.5) and (3.125,1.5) .. (3.125,2.5);
	%
	\draw (0,2) -- (0,0) -- (4,0) -- (4,2);
}^{(1,0)}}   ( \UtoBfunctor_0(\tau_{ij}) )  }
\ar{r}
&
\shiftFunct{\F_\bi} \circ  \shiftFunct{\tikzdiagh[scale=.2]{
	\draw (.5,3) -- (.5,1) -- (4.5,1) -- (4.5,3);
	\filldraw [fill=white, draw=white] (0,0) rectangle (4,2); 
	\draw (0,2) .. controls (1.5,2) and (2,3) .. (.5,3); 
	\draw (4,2) .. controls (2.5,2) and (3,3) .. (4.5,3); 
	\draw (1.375,2.5) .. controls (1.375,1.5) and (3.125,1.5) .. (3.125,2.5);
	%
	\draw (0,2) -- (0,0) -- (4,0) -- (4,2);
}^{(1,0)}}  \bigl( \UtoBfunctor_0(\F_\bj) \bigr)
\ar[swap,"\vsim"']{u}{\shiftFunct{H_{\tau_{ij}}}}
\end{tikzcd}
\end{equation}
where $
\shiftFunct{H_{\tau_{ji}}} : \shiftFunct{F_\bi} \circ \shiftFunct{\tikzdiagh[scale=.2]{
	\draw (.5,3) -- (.5,1) -- (4.5,1) -- (4.5,3);
	\filldraw [fill=white, draw=white] (0,0) rectangle (4,2); 
	\draw (0,2) .. controls (1.5,2) and (2,3) .. (.5,3); 
	\draw (4,2) .. controls (2.5,2) and (3,3) .. (4.5,3); 
	\draw (1.375,2.5) .. controls (1.375,1.5) and (3.125,1.5) .. (3.125,2.5);
	%
	\draw (0,2) -- (0,0) -- (4,0) -- (4,2);
}^{(1,0)}} \Rightarrow \shiftFunct{\F_\bj}, 
$
and $\shiftFunct{H} : \id \Rightarrow  \shiftFunct{\tikzdiagh[scale=.2]{
	\draw (.5,3) -- (.5,1) -- (4.5,1) -- (4.5,3);
	\filldraw [fill=white, draw=white] (0,0) rectangle (4,2); 
	\draw (0,2) .. controls (1.5,2) and (2,3) .. (.5,3); 
	\draw (4,2) .. controls (2.5,2) and (3,3) .. (4.5,3); 
	\draw (1.375,2.5) .. controls (1.375,1.5) and (3.125,1.5) .. (3.125,2.5);
	%
	\draw (0,2) -- (0,0) -- (4,0) -- (4,2);
}^{(1,0)}} \circ  \shiftFunct{\tikzdiagh[scale=.2]{
	\draw (.5,1) -- (.5,3) -- (4.5,3) -- (4.5,1);
	\draw (0,0) .. controls (1.5,0) and (2,1) .. (.5,1); 
	\draw (4,0) .. controls (2.5,0) and (3,1) .. (4.5,1); 
	\filldraw[fill=white, draw=white] (.5,.55)  rectangle  (4,2);
	\draw[dotted] (0,0) .. controls (1.5,0) and (2,1) .. (.5,1); 
	\draw[dotted] (4,0) .. controls (2.5,0) and (3,1) .. (4.5,1); 
	\draw (1.375,.5) .. controls (1.375,1.5) and (3.125,1.5) .. (3.125,.5);
	%
	\draw (0,0) -- (0,2) -- (4,2) -- (4,0);
}^{(0,1)}}$. 
If we consider a splitter $\gamma : \F_\bi \rightarrow \F_\bj$, then there exists a natural isomorphism $\shiftFunct{H_{\gamma}} :  \shiftFunct{\F_\bi} \Rightarrow \shiftFunct{\F_\bj}$ (because at least one of the potential saddles above the spawned rungs is a merge and annihilated with the degree shift $(1,0)$), and we put
\[
\begin{tikzcd}
\shiftFunct{\F_\bi} \bigl( \UtoBfunctor_0(\F_\bi)  \bigr)
\ar{rr}{\UtoBfunctor(\gamma)}
\ar[swap]{dr}{\shiftFunct{\F_\bi} ( \UtoBfunctor_0(\gamma) )  }
&
&
\shiftFunct{\F_\bj} \bigl( \UtoBfunctor_0(\F_\bj) \bigr)
\\
&
\shiftFunct{\F_\bi} \bigl( \UtoBfunctor_0(\F_\bj) \bigr)
\ar[swap,"\sim"'{sloped}]{ur}{\shiftFunct{H_{\gamma}}}
&
\end{tikzcd}
\]
Finally, for the other cases where $f : \F_\bi \rightarrow \F_\bi$, we define 
\[
\UtoBfunctor(f) := \shiftFunct{\F_\bi} \bigl( \UtoBfunctor_0(f) \bigr),
\]
where again the grading shift functor twists the morphism. 

\begin{exe} Let us do some detailed example of such a computation. 
Let $ n = 5$, $\bw = (2,1,1,1,1,0)$ and $\bi = 5412$. We have
\begin{align*}
\ladder(\F_\bi) &= \ 
\tikzdiagh[scale=.75, yscale=.5]{
	\draw[dds] (0,0) -- (0,2);
	\draw[vt] (0,2) -- (0,5);
	\draw[vt] (1,0) -- (1,1);
	\draw[dt] (1,1) -- (1,2);
	\draw[vt] (1,2) -- (1,5);
	\draw[vt] (2,0) -- (2,1);
	\draw[dds] (2,1) -- (2,5);
	\draw[vt] (3,0) -- (3,3);
	\draw[dt] (3,3) -- (3,5);
	\draw[vt] (4,0) -- (4,3);
	\draw[dds] (4,3) -- (4,4);
	\draw[vt] (4,4) -- (4,5);
	\draw[dt] (5,0) -- (5,4);
	\draw[vt] (5,4) -- (5,5);
	\draw[vt,->] (1,1) -- (2,1);
	\draw[vt,->] (0,2) -- (1,2);
	\draw[vt,->] (3,3) -- (4,3);
	\draw[vt,->] (4,4) -- (5,4);
}
&
\text{and}&
&
\ladderToTangle (\F_\bi) &\cong \ 
\tikzdiagh[scale=.75,yscale=1]{
	\draw (0,2) .. controls (0,1) and (1,1) .. (1,2);
	\draw (0,0) .. controls (0,1) and (1,1) .. (1,0);
	\draw (2,2) .. controls (2,1) and (3,1) .. (3,2);
	\draw (2,0) .. controls (2,1) and (3,1) .. (3,0);
}
\end{align*}
We want to apply $\UtoBfunctor(\tau_{12})$ and $\UtoBfunctor(\tau_{54})$ in the two possible order, and see what happens. In particular, we want to compare it to the equality $ \tau_{12} \tau_{54}= Z\tau_{54} \tau_{12} $ 
obtained from graded planar isotopy. 
We have
\[
\begin{tikzcd}[row sep = 3ex]
&
 \tqft \left( \tikzdiag[scale=.25]{-1.75ex}{
	\draw (0,0) .. controls (0,.5) and (.25,.5) .. (.25,1) .. controls (.25,1.5) and (0,1.5) .. (0,2);
	\draw (1,0) .. controls (1,.5) and (.75,.5) .. (.75,1) .. controls (.75,1.5) and (1,1.5) .. (1,2);
}\ \tikzdiag[scale=.25]{-1.75ex}{
	\draw (0,2) .. controls (0,1) and (1,1) .. (1,2);
	\draw (0,0) .. controls (0,1) and (1,1) .. (1,0);
} \right) 
\ar{dr}{\UtoBfunctor(\tau_{54})}
&
\\
\shiftFunct{\tikzdiag[scale=.25]{-1.75ex}{
	\draw (0,2) .. controls (0,1) and (1,1) .. (1,2);
	\draw (0,0) .. controls (0,1) and (1,1) .. (1,0);
	\draw[giga thick] (.5,.75) -- (.5,1.25);
}\ ^{(1,0)}} 
\tqft \left( \  \right) 
\ar{ur}{\UtoBfunctor(\tau_{12})} 
\ar[swap]{dr}{\UtoBfunctor(\tau_{54})}
&&
\shiftFunct{\ \tikzdiag[scale=.25]{-1.75ex}{
	\draw (0,0) .. controls (0,.5) and (.25,.5) .. (.25,1) .. controls (.25,1.5) and (0,1.5) .. (0,2);
	\draw (1,0) .. controls (1,.5) and (.75,.5) .. (.75,1) .. controls (.75,1.5) and (1,1.5) .. (1,2);
	\draw[giga thick] (.25,1) -- (.75,1);
}^{(1,0)}} 
\tqft \left( \  \right)
\\
&
\shiftFunct{\ ^{(1,0)}} 
\circ
\shiftFunct{\ ^{(1,0)}}
\tqft \left( \  \right) 
\ar[swap]{ur}{\UtoBfunctor(\tau_{12})}
&
\end{tikzcd}
\]
where the thick line represents a saddle cobordism as in \cref{fig:chchange}, and they are all oriented to the top or to the left. 
For the rest of the example, we suppose we have chosen an arbitrary closure of the tangles, so that we can make actual computations. 
We start by computing the maps on the top path. For $\tau_{12}$, we have $\shiftFunct{H_{\tau_{12}}} = 1$ and thus  
\[
\UtoBfunctor(\tau_{12}) : 
\shiftFunct{\ ^{(1,0)}} 
\tqft \left( \  \right) 
\xrightarrow{(-XY) \tqft\left(\  \right)}
 \tqft \left( \  \right).
\]
For $\tau_{54}$, we have again $\shiftFunct{H_{\tau_{54}}} = \id$, and thus we obtain
\[
\UtoBfunctor(\tau_{54}) : 
 \tqft \left( \  \right) 
\xrightarrow{ \shiftFunct{H} }
\shiftFunct{\ ^{(1,0)}} 
\circ
\shiftFunct{\ ^{(0,1)}} 
 \tqft \left( \  \right) 
\xrightarrow{ \shiftFunct{\ ^{(1,0)}} \tqft\left( \  \right) }
\shiftFunct{\ ^{(1,0)}} 
 \tqft \left( \  \right),
\]
where $\shiftFunct{H}$ is computed as the product of the scalars obtained by the sequence of changes of chronology:
\[
\tikzdiagc[scale=.35]{
	\draw (0,0) .. controls (1.5,0) and (2,1) .. (.5,1); 
	\draw (4,0) .. controls (2.5,0) and (3,1) .. (4.5,1); 
	\filldraw[fill=white, draw=white] (.5,.55)  rectangle  (4,2);
	\draw[dashed] (0,0) .. controls (1.5,0) and (2,1) .. (.5,1); 
	\draw[dashed] (4,0) .. controls (2.5,0) and (3,1) .. (4.5,1); 
	\draw (1.375,.5) -- (1.375,3.5);
	\draw[yshift=1cm]  (0,2) .. controls (1.5,2) and (2,3) .. (.5,3); 
	\draw[yshift=1cm]  (4,2) .. controls (2.5,2) and (3,3) .. (4.5,3); 
	%
	\draw (3.125,.5) -- (3.125,3.5);
	\draw (0,0) -- (0,3);
	\draw (.5,3.25) -- (.5,4);
	\draw[dashed] (.5,1) -- (.5,4);
	\draw (4,0) -- (4,3);
	\draw (4.5,1) -- (4.5,4);
}
\xrightarrow{1}
 \tikzdiagc[scale=.35]{
	\draw (0,0) .. controls (1.5,0) and (2,1) .. (.5,1); 
	\draw (4,0) .. controls (2.5,0) and (3,1) .. (4.5,1); 
	\filldraw[fill=white, draw=white] (.5,.55)  rectangle  (4.5,2);
	\draw[dashed] (0,0) .. controls (1.5,0) and (2,1) .. (.5,1); 
	\draw[dashed] (4,0) .. controls (2.5,0) and (3,1) .. (4.5,1); 
	\draw (0,0) -- (0,4);
	\draw[dashed] (.5,1) -- (.5,5);
	\draw (1.375,.5) -- (1.375,2.5);
	\draw (3.125,.5) -- (3.125,2.5);
	\draw (.5,4) -- (.5,5) -- (4.5,5) -- (4.5,4);
	\draw (1.375,2.5) .. controls (1.375,3.5) and (3.125,3.5) .. (3.125,2.5);
	\draw (0,2) -- (0,4) -- (4,4);
	\draw (8,1) -- (8.5,1) -- (8.5,5);
	\draw (5.375,2.5) .. controls (5.375,1.5) and (7.125,1.5) .. (7.125,2.5);
	\draw[dashed] (4.5,5) -- (4.5,1) -- (8.5,1) -- (8.5,3);
	\draw (4,4) -- (4,0) -- (8,0) -- (8,2);
	\draw (4,4) .. controls (5.5,4) and (6,5) .. (4.5,5); 
	\draw (8,4) .. controls (6.5,4) and (7,5) .. (8.5,5); 
	\draw (8,2) -- (8,4);
	\draw (8.5,3) -- (8.5,5);
	\draw (5.375,2.5) -- (5.375,4.5);
	\draw (7.125,2.5) -- (7.125,4.5);
}
^{(1,1)}
\xrightarrow{\lambda}
\tikzdiagc[scale=.35]{
	\draw (0,0) .. controls (1.5,0) and (2,1) .. (.5,1); 
	\draw (4,0) .. controls (2.5,0) and (3,1) .. (4.5,1); 
	\filldraw[fill=white, draw=white] (.5,.55)  rectangle  (4,2);
	\draw[dashed] (0,0) .. controls (1.5,0) and (2,1) .. (.5,1); 
	\draw[dashed] (4,0) .. controls (2.5,0) and (3,1) .. (4.5,1); 
	\draw (1.375,.5) .. controls (1.375,1.5) and (3.125,1.5) .. (3.125,.5);
	\draw[yshift=1cm]  (0,2) .. controls (1.5,2) and (2,3) .. (.5,3); 
	\draw[yshift=1cm]  (4,2) .. controls (2.5,2) and (3,3) .. (4.5,3); 
	\draw[yshift=1cm]  (1.375,2.5) .. controls (1.375,1.5) and (3.125,1.5) .. (3.125,2.5);
	\draw (0,0) -- (0,3);
	\draw (.5,3.25) -- (.5,4);
	\draw[dashed] (.5,1) -- (.5,4);
	\draw (4,0) -- (4,3);
	\draw (4.5,1) -- (4.5,4);
}^{(1,1)}
\xrightarrow{\lambda_R((0,1),C)}
 \tikzdiagc[scale=.35]{
	\draw (.5,3) -- (.5,1) -- (4.5,1) -- (4.5,3);
	\filldraw [fill=white, draw=white] (0,0) rectangle (4,2); 
	\draw (0,2) .. controls (1.5,2) and (2,3) .. (.5,3); 
	\draw (4,2) .. controls (2.5,2) and (3,3) .. (4.5,3); 
	\draw (1.375,2.5) .. controls (1.375,1.5) and (3.125,1.5) .. (3.125,2.5);
	\draw[dashed] (.5,3) -- (.5,1) -- (4.5,1) -- (4.5,3);
	\draw (0,2) -- (0,0) -- (4,0) -- (4,2);
}^{(1,0)}
\circ
\tikzdiagc[scale=.35]{
	\draw (.5,1) -- (.5,3) -- (4.5,3) -- (4.5,1);
	\draw (0,0) .. controls (1.5,0) and (2,1) .. (.5,1); 
	\draw (4,0) .. controls (2.5,0) and (3,1) .. (4.5,1); 
	\filldraw[fill=white, draw=white] (.5,.55)  rectangle  (4,2);
	\draw[dashed] (0,0) .. controls (1.5,0) and (2,1) .. (.5,1); 
	\draw[dashed] (4,0) .. controls (2.5,0) and (3,1) .. (4.5,1); 
	\draw (1.375,.5) .. controls (1.375,1.5) and (3.125,1.5) .. (3.125,.5);
	\draw[dashed] (.5,1) -- (.5,3) -- (4.5,3) -- (4.5,1);
	\draw (0,0) -- (0,2) -- (4,2) -- (4,0);
}^{(0,1)}
\]
where the saddle is above the pair of arcs at the right, and 
where $\lambda = 1$ and $C = (-1,0)$ if the saddle $\ $ is a merge, and $\lambda=Z$ and $C = (0,-1)$ if it is a split. These are computed using \cref{fig:chchange}. 
Then, note that $\ ^{(1,0)}$ has degree $((1,0)+C)$, thus
\[
 \shiftFunct{\ ^{(1,0)}} 
\tqft\left( \  \right)
 = \lambda_R\bigl((0,-1), (1,0)+C)\bigr) \tqft\left( \  \right).
\]
Putting all this together, we obtain
\[
\UtoBfunctor(\tau_{54}\tau_{12}) = \lambda (-XYZ)  \tqft\left(
 \  
\circ
 \  \right).
\]
Now, we do the bottom path. We start by $\tau_{54}$ giving
\begin{align*}
&
\UtoBfunctor(\tau_{54}) : 
\shiftFunct{\ ^{(1,0)}} 
\tqft \left( \  \right) 
\xrightarrow{\shiftFunct{H'}}
\shiftFunct{\ ^{(1,0)}} 
\circ
\shiftFunct{\ ^{(1,0)}} 
\circ
\shiftFunct{\ ^{(0,1)}} 
\tqft \left( \  \right) 
\\
&\xrightarrow{\shiftFunct{\ ^{(1,0)}} 
\circ
\shiftFunct{\ ^{(1,0)}} 
\tqft\left(  \  \right)} 
\shiftFunct{\ ^{(1,0)}} 
\circ
\shiftFunct{\ ^{(1,0)}} 
\tqft \left( \  \right)
\xrightarrow{\shiftFunct{H_{\tau_{54}}}}
\shiftFunct{\ ^{(1,0)}} 
\circ
\shiftFunct{\ ^{(1,0)}} 
\tqft \left( \  \right)
\end{align*}
We compute $\shiftFunct{H'}$ as we did for $\shiftFunct{H}$ yielding
\[
\shiftFunct{H'} = \lambda' \lambda_R((0,1), C'),
\]
where $\lambda' = 1, C' = (-1,0)$ if the saddle $\ $ is a merge, and $\lambda' = Z, C'=(0,-1)$ otherwise. Then, we have
\[
\shiftFunct{\ ^{(1,0)}} 
\circ
\shiftFunct{\ ^{(1,0)}} 
\tqft\left(  \  \right)
 = 
\lambda_R\bigl( (0,-1), C''+ C' + (2,0) \bigr)
\tqft\left(  \  \right),
\]
where $C'' = \deg\left( \ \right)$. 
Then, we compute $\shiftFunct{\tau_{54}}$ as
\[
\shiftFunct{\tau_{54}} = \lambda_R \bigl( (1,0), C) \circ \shiftFunct{H''} \circ \lambda_R\bigl( C''+ (1,0), (1,0) \bigr),
\]
where $\shiftFunct{H''}$ is the change of chronology exchanging the two saddles in $\ $.
For $\tau_{12}$ we have
\[
\UtoBfunctor(\tau_{12}) : 
\shiftFunct{\ ^{(1,0)}} 
\circ
\shiftFunct{\ ^{(1,0)}} 
\tqft \left( \  \right)
\xrightarrow{(-XY)
\shiftFunct{\ ^{(1,0)}} 
\tqft \left( \  \right)) }
\shiftFunct{\ ^{(1,0)}} 
\tqft \left( \  \right)
\]
where
\[
\shiftFunct{\ ^{(1,0)}} 
\tqft \left( \  \right) 
=
\lambda_R\bigl((-1,0),C+(1,0) \bigl)
\tqft \left( \  \right).
\]
Putting all these together, we obtain
\[
\UtoBfunctor(\tau_{12}\tau_{54}) =
\lambda'  \shiftFunct{H''} \lambda_R(C'', (1,1))  (-XYZ^{2})   \tqft\left(
 \  
\circ
 \  \right).
\]
In order to be able to compare the two results, we will now assume the tangle is closed by putting two arcs 
$\tikzdiag[scale=.25]{-1ex}{
	\draw (0,0) .. controls (0,-.5) and (1,-.5) .. (1,0);
	\draw (2,0) .. controls (2,-.5) and (3,-.5) .. (3,0);
}$
at the bottom and at the top. Then, we have
\[
\tqft\left(
 \  
\circ
 \  \right)
=
X
\tqft\left(
 \  
\circ
 \  \right),
\]
and $\lambda = Z$, $\lambda' = Z$, $C'' = (-1,0)$ and $\shiftFunct{H''} = Z$. Thus, we obtain $\UtoBfunctor(\tau_{12}\tau_{54}) = Z \UtoBfunctor(\tau_{54}\tau_{12})$, which agree as expected with $ \tau_{12} \tau_{54}= Z\tau_{54} \tau_{12} $. 
We leave as an exercise to the reader the computation of other cases. 
\end{exe}

\subsubsection{$\cR$-relations}

Fix two weights $\bw$ and $\bw'$ with $s(\bw) \equiv s(\bw') \equiv 0 \mod 2$. 

\begin{lem}\label{lem:onefunctorUtoB}
We have that
\[
\UtoBfunctor  : \Hom_\oddKC(\bw, \bw') \rightarrow \BIMOD_q(H^{s(\bw)/2}, H^{s(\bw')/2})
\]
is a well-defined 1-functor.
\end{lem}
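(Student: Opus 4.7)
The plan is to verify that $\UtoBfunctor$ preserves each of the defining relations of $\cR$, together with graded planar isotopy, so that it descends from the free diagrammatic algebra to the quotient. First I would check that $\UtoBfunctor$ is well-defined on the underlying hom spaces: the $\bZ\times\bZ$-degree shifts $\shiftFunct{\F_\bi}$ introduced in the normalization step keep $\UtoBfunctor(\F_\bi)$ at quantum degree zero (since the extra shift $\shiftFunct{^{(1,0)}}$ compensates exactly the degree $(-1,0)$ or $(0,-1)$ coming from each chronological saddle), so the image of a quantum degree zero generator of $\cR$ lands in $\BIMOD_q$. Illegal diagrams are automatically killed because $\UtoBfunctor_0$ is only defined when the source and target ladder diagrams make sense as flat tangles.

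Next I would handle graded planar isotopy. The crucial ingredient is the commutativity diagram~\eqref{eq:fotimesgradedcommutes}, which shows that exchanging the heights of two homogeneous maps in $\BIMOD_q$ produces exactly the scalar $\lambda_R(|f|,|g|)$. Since the image of each elementary generator has the correct $\bZ\times\bZ$-degree by construction (dots go to dots of degree $(-1,-1)$; splitters to cups/caps of degree $(1,0)$ or $(0,1)$; same-color crossings to positive-death tubes of degree $(1,1)$; adjacent-color crossings to saddles of the appropriate degree after the normalization), the scalars from $\lambda_R$ on the $\cR$-side agree with those coming from the chronological cobordism calculus on the $\BIMODH$-side.

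Then I would go through the local relations one at a time. The quick ones are: the double-dot relation follows from~\eqref{eq:dotcob}; the second double relation~\eqref{eq:doubleR} reduces to neck-cutting on a twice-dotted tube; the dot-crossing relations~\eqref{eq:dotcommutescrossing} and~\eqref{eq:crosssymiszero} reduce to graded planar isotopy together with the dot formulas; the splitter-splitter relations~\eqref{eq:doublesplitters} reduce to $\tqft$ applied to the identity cobordism on two nested circles, using \cref{prop:locvertchange}; the splitter-past-crossing relations~\eqref{eq:splitcross}, \eqref{eq:splitdcross} amount to a locally vertical change of chronology whose value is precisely the corresponding power of $\lambda_R$. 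The nil-Hecke relation~\eqref{eq:nilHecke} reduces, after unpacking the normalization, to the identity $\tqft(W \circ W) \circ \shiftFunct{H} = \pm XY \cdot \id$ for the dotted twice-saddle cobordism, which is a direct computation using~\eqref{eq:chcobrelbirthmerge} and~\eqref{eq:dotcob}. The main obstacle is the family of Reidemeister-III-type relations~\eqref{eq:RiiiR3}--\eqref{eq:RijiR3}: here I would pick a concrete flat-tangle closure of the relevant ladder, compute the two sides as compositions of saddles, dots and changes of chronology, and verify that the scalars obtained from the $\lambda_R$-factors in $\beta$, $\gamma$, $\Xi$, $\tau$ conspire to give exactly the factors $X$, $\pm Y Z^{-2}$, etc., predicted by the relations. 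By \cref{prop:locvertchange}, the two sides of each such relation are related by a unique (up to homotopy) locally vertical change of chronology, so the computation reduces to computing $\imath$ of an explicit product of elementary changes from~\cref{fig:chchange}. Once all relations are verified, functoriality of $\UtoBfunctor$ on compositions is automatic from the definitions of $\UtoBfunctor$ on the generating 2-morphisms via the normalizing changes of chronology $\shiftFunct{H_{\tau_{ij}}}$, $\shiftFunct{H_{\gamma}}$.
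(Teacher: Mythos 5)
Your overall strategy — verify each defining relation of $\cR$ together with graded planar isotopy — matches the paper's, but several load-bearing ingredients are missing or handled incorrectly.

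The most serious gap is that you treat illegal diagrams as a non-issue: you write that they are ``automatically killed because $\UtoBfunctor_0$ is only defined when the source and target ladder diagrams make sense.'' But $\HOM(\F_\bi,\F_\bj)$ is the quotient $1_\bj \cR 1_\bi/I$, where $I$ is generated by illegal diagrams, and the relations~\eqref{eq:doubleR}--\eqref{eq:RijiR3} freely mix legal and illegal diagrams. So the burden is not whether $\UtoBfunctor$ is defined on an illegal diagram, but whether any $R$-linear combination of \emph{legal} diagrams that happens to equal an illegal one (or zero) in $\cR$ is sent to the same thing. For instance, for~\eqref{eq:doubleR} with $j = i\pm 1$, when the left-hand two-crossing diagram is illegal the right-hand side is still a combination of legal dotted identities, and one must check that it evaluates to zero. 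The paper does exactly this, showing $\Gamma_\bw(i)=\Gamma_\bw(j)$ and $\UtoBfunctor_0(x_i)=\UtoBfunctor_0(x_j)$ in those cases, so that the two dotted terms cancel. Your proposal does not address this at all.

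A second gap: you propose to ``go through the local relations one at a time,'' but reducing the verification to local pieces is not free. Each generator of $\cR$ acts inside a tensor product $\UtoBfunctor(\F_{\bi_2}) \otimes_H \UtoBfunctor(\F_\bi) \otimes_H \UtoBfunctor(\F_{\bi_1})$ via $\UtoBfunctor(1)\otimes\UtoBfunctor(f)\otimes\UtoBfunctor(1)$, and to conclude that the relations propagate through the horizontal composition one needs the $\gradC$-graded naturality of the coherence isomorphism $\UtoBfunctor(\bj,\bi)\colon \UtoBfunctor(\F_\bj)\otimes_H\UtoBfunctor(\F_\bi)\xrightarrow{\simeq}\UtoBfunctor(\F_\bj\F_\bi)$ — that is \cref{lem:FLijisnatural} plus \cref{prop:fotimesgcommutes}. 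You invoke~\eqref{eq:fotimesgradedcommutes} for planar isotopy but never cite or establish the naturality that makes local checking legitimate.

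Finally, two organizational points that the paper relies on and that you miss: relations~\eqref{eq:RiiiR3} and~\eqref{eq:RiijR3} are in fact trivial because both sides factorize through invalid weights — they are not ``the main obstacle.'' And the paper observes (citing Vaz) that one only needs to verify~\eqref{eq:doubleR} through~\eqref{eq:doublesplitters}, with the remaining relations~\eqref{eq:splitcross}--\eqref{eq:splitdcross} following formally; you instead propose to check them directly, which is not wrong but misses a useful shortcut. You also propose to ``pick a concrete flat-tangle closure'' for the R-III computation; the actual scalars ($\lambda$, $C$, etc.) in the paper's computation do depend on whether a given saddle is a merge or a split after closing, and it is only because of the $\gradC$-shift normalization that the final answer is closure-independent — this would need to be argued, not assumed.
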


\begin{proof}
We need to show that the image of $\UtoBfunctor$ respects the relations \cref{eq:doubleR} to \cref{eq:splitdcross}, and the graded planar isotopies. 
Thanks to \cref{lem:FLijisnatural} below and \cref{prop:fotimesgcommutes}, we can work locally since for $f : \F_\bi \rightarrow \F_\bj$ the following diagram $\gradC$-graded commutes:
\[
\begin{tikzcd}[column sep=6ex]
\UtoBfunctor(\F_{\bi_2} \F_{\bi} \F_{\bi_1}) 
\ar{r}{\UtoBfunctor(\bi_2, \bi\bi_1)^{-1}}
\ar[swap]{dd}{\UtoBfunctor(1\circ f \circ 1)}
&
\UtoBfunctor(\F_{\bi_2}) \otimes_H
\UtoBfunctor(\F_{\bi} \F_{\bi_1}) 
\ar{r}{1 \otimes \UtoBfunctor(\bi,\bi_1)^{-1}}
\ar{dd}{\UtoBfunctor(1) \otimes \UtoBfunctor(f \circ 1)}
&
\UtoBfunctor(\F_{\bi_2}) 
\otimes
\bigl(
\UtoBfunctor(\F_{\bi} )
\otimes
\UtoBfunctor(\F_{\bi_1}) 
\bigr)
\ar{dd}{\UtoBfunctor(1) \otimes \UtoBfunctor(f) \otimes \UtoBfunctor(1)}
\\
{}
&
{}
\ar[phantom]{l}{\Leftarrow}
&
{}
\ar[phantom]{l}{\Leftarrow}
\\
\UtoBfunctor(\F_{\bi_2} \F_{\bj} \F_{\bi_1}) 
&
\UtoBfunctor(\F_{\bi_2}) \otimes_H
\UtoBfunctor(\F_{\bj} \F_{\bi_1}) 
\ar{l}{\UtoBfunctor(\bi_2, \bj\bi_1)}
&
\UtoBfunctor(\F_{\bi_2}) 
\otimes
\bigl(
\UtoBfunctor(\F_{\bi} )
\otimes
\UtoBfunctor(\F_{\bi_1}) 
\bigr)
\ar{l}{1 \otimes \UtoBfunctor(\bj,\bi_1)}
\end{tikzcd}
\]
Therefore, if we can show that $\UtoBfunctor(f) = \UtoBfunctor(g)$ for some $g$ and $|\UtoBfunctor(f)|_\gradC = |\UtoBfunctor(g)|_\gradC$, then we have $\UtoBfunctor(1 \circ f \circ 1) = \UtoBfunctor(1 \circ g \circ 1)$. 
Furthermore, we also obtain that $\UtoBfunctor$ is stable under graded planar isotopy, thanks to \cref{eq:fotimesgradedcommutes}. 
Also, for similar reasons as in \cite[Proof of Proposition 2.11]{pedrodd}, it is enough to show that $\UtoBfunctor$ is stables under the relations \cref{eq:doubleR} - \cref{eq:doublesplitters}, and the remaining ones will follow. It appears that we can already verify (most of) the relations at the level of $\dotchcobcat$, so that we can do most of our computations using dotted cobordisms. 

We have that
\begin{align*}
\UtoBfunctor\left(
\tikzdiagc{
	\draw (0,0) node[below]{\small $\alpha_i$} -- (0,1) node[pos=.33,tikzdot]{} node[pos=.66,tikzdot]{};
}\right)
&= 0,
&
\UtoBfunctor\left(
\tikzdiagc[yscale=.5]{
	\draw (0,0) node[below]{\small $\alpha_i$} .. controls (0,.5) and (1,.5) .. (1,1) .. controls (1,1.5) and (0,1.5) .. (0,2);
	\draw (1,0) node[below]{\small $\alpha_i$} .. controls (1,.5) and (0,.5) .. (0,1) .. controls (0,1.5) and (1,1.5) .. (1,2);
}\right)
&= 0,
\end{align*}
immediately from \eqref{eq:dotcob}. 

We obtain
\[
\UtoBfunctor\left(
\tikzdiagc[yscale=.5]{
	\draw (0,0) node[below]{\small $\alpha_i$} .. controls (0,.5) and (1,.5) .. (1,1) .. controls (1,1.5) and (0,1.5) .. (0,2);
	\draw (1,0) node[below]{\small $\alpha_i$} .. controls (1,.5) and (0,.5) .. (0,1) .. controls (0,1.5) and (1,1.5) .. (1,2);
}\right)
=
\UtoBfunctor\left(
\tikzdiagc[yscale=.5]{
	\draw (0,0) node[below]{\small $\alpha_i$} -- (0,2);
	\draw (1,0) node[below]{\small $\alpha_i$} -- (1,2);
}\right)
\]
for $|i-j| > 1$ since the underlying cobordisms are related through a horizontal diffeotopy (we exchange the position of distant rungs before putting them at the same position again). Moreover, the composition of changes of chronology involved in \cref{eq:FLbicrossdistant} is homotopic to the identity, thus $\shiftFunct{\tau_{ji}} \circ \shiftFunct{\tau_{ij}} = 1$. 

In order to compute the case $j = i -1$, we first suppose the diagrams involved are not illegal. Then, we compute using \cref{eq:dotcob} that in $\dotchcobcat$:
\begin{align*}
 \lambda \ 
\tikzdiagc[scale=.35]{
	\draw (0,0) .. controls (1.5,0) and (2,1) .. (.5,1); 
	\draw (4,0) .. controls (2.5,0) and (3,1) .. (4.5,1); 
	\filldraw[fill=white, draw=white] (.5,.55)  rectangle  (4,2);
	\draw[dashed] (0,0) .. controls (1.5,0) and (2,1) .. (.5,1); 
	\draw[dashed] (4,0) .. controls (2.5,0) and (3,1) .. (4.5,1); 
	\draw (1.375,.5) .. controls (1.375,1.5) and (3.125,1.5) .. (3.125,.5);
	\draw[yshift=1cm]  (0,2) .. controls (1.5,2) and (2,3) .. (.5,3); 
	\draw[yshift=1cm]  (4,2) .. controls (2.5,2) and (3,3) .. (4.5,3); 
	\draw[yshift=1cm]  (1.375,2.5) .. controls (1.375,1.5) and (3.125,1.5) .. (3.125,2.5);
	\draw (0,0) -- (0,3);
	\draw (.5,3.25) -- (.5,4);
	\draw[dashed] (.5,1) -- (.5,4);
	\draw (4,0) -- (4,3);
	\draw (4.5,1) -- (4.5,4);
}
\ = \ 
 \tikzdiagc[scale=.35]{
	\draw (0,0) .. controls (1.5,0) and (2,1) .. (.5,1); 
	\draw (4,0) .. controls (2.5,0) and (3,1) .. (4.5,1); 
	\filldraw[fill=white, draw=white] (.5,.55)  rectangle  (4.5,2);
	\draw[dashed] (0,0) .. controls (1.5,0) and (2,1) .. (.5,1); 
	\draw[dashed] (4,0) .. controls (2.5,0) and (3,1) .. (4.5,1); 
	\draw (0,0) -- (0,4);
	\draw[dashed] (.5,1) -- (.5,5);
	\draw (1.375,.5) -- (1.375,2.5);
	\draw (3.125,.5) -- (3.125,2.5);
	\draw (.5,4) -- (.5,5) -- (4.5,5) -- (4.5,4);
	\draw (1.375,2.5) .. controls (1.375,3.5) and (3.125,3.5) .. (3.125,2.5);
	\draw (0,2) -- (0,4) -- (4,4);
	\draw (8,1) -- (8.5,1) -- (8.5,5);
	\draw (5.375,2.5) .. controls (5.375,1.5) and (7.125,1.5) .. (7.125,2.5);
	\draw[dashed] (4.5,5) -- (4.5,1) -- (8.5,1) -- (8.5,3);
	\draw (4,4) -- (4,0) -- (8,0) -- (8,2);
	\draw (4,4) .. controls (5.5,4) and (6,5) .. (4.5,5); 
	\draw (8,4) .. controls (6.5,4) and (7,5) .. (8.5,5); 
	\draw (8,2) -- (8,4);
	\draw (8.5,3) -- (8.5,5);
	\draw (5.375,2.5) -- (5.375,4.5);
	\draw (7.125,2.5) -- (7.125,4.5);
}
\ =  XZ \ 
\tikzdiagc[scale=.35]{
	\draw (0,0) .. controls (1.5,0) and (2,1) .. (.5,1); 
	\draw (4,0) .. controls (2.5,0) and (3,1) .. (4.5,1); 
	\filldraw[fill=white, draw=white] (.5,.55)  rectangle  (4,2);
	\draw[dashed] (0,0) .. controls (1.5,0) and (2,1) .. (.5,1); 
	\draw[dashed] (4,0) .. controls (2.5,0) and (3,1) .. (4.5,1); 
	\draw (1.375,.5) -- (1.375,3.5);
	\draw[yshift=1cm]  (0,2) .. controls (1.5,2) and (2,3) .. (.5,3); 
	\draw[yshift=1cm]  (4,2) .. controls (2.5,2) and (3,3) .. (4.5,3); 
	%
	\draw (3.125,.5) -- (3.125,3.5);
	\draw (0,0) -- (0,3);
	\draw (.5,3.25) -- (.5,4);
	\draw[dashed] (.5,1) -- (.5,4);
	\draw (4,0) -- (4,3);
	\draw (4.5,1) -- (4.5,4);
	\node [tikzdot] at(.5,1.75) {};
}
\ + YZ \ 
\tikzdiagc[scale=.35]{
	\draw (0,0) .. controls (1.5,0) and (2,1) .. (.5,1); 
	\draw (4,0) .. controls (2.5,0) and (3,1) .. (4.5,1); 
	\filldraw[fill=white, draw=white] (.5,.55)  rectangle  (4,2);
	\draw[dashed] (0,0) .. controls (1.5,0) and (2,1) .. (.5,1); 
	\draw[dashed] (4,0) .. controls (2.5,0) and (3,1) .. (4.5,1); 
	\draw (1.375,.5) -- (1.375,3.5);
	\draw[yshift=1cm]  (0,2) .. controls (1.5,2) and (2,3) .. (.5,3); 
	\draw[yshift=1cm]  (4,2) .. controls (2.5,2) and (3,3) .. (4.5,3); 
	%
	\draw (3.125,.5) -- (3.125,3.5);
	\draw (0,0) -- (0,3);
	\draw (.5,3.25) -- (.5,4);
	\draw[dashed] (.5,1) -- (.5,4);
	\draw (4,0) -- (4,3);
	\draw (4.5,1) -- (4.5,4);
	\node [tikzdot] at(3.5,1.75) {};
}
\end{align*}
where $\lambda = Z$ if the first saddle in the left-most term is a merge, and $\lambda = 1$ if it is a split, and all saddles are oriented with arrows going either left or forward. 
Moreover, it is not hard to see by drawing the underlying ladder diagrams that we always have
\[
\Gamma_\bw(j) = (-XY)\Gamma_\bw(i). 
\]
Therefore, we conclude that
\[
\UtoBfunctor_0(\tau_{ji}\tau_{ij}) = \lambda^{-1}\bigl( (-XY) XZ.\UtoBfunctor_0(x_j)+ YZ.\UtoBfunctor_0(x_i) \bigr),
\]
where we write $\tau_{ji}$ and $\tau_{ij}$ for the two crossings and $x_i, x_j$ for the dots on the strands labeled $\alpha_i$ and $\alpha_j$ respectively. 
Looking at \cref{eq:defUtoBtauj-1}, we first note that we can compute $H$ as the composition of changes of chronology
\[
\tikzdiagc[scale=.35]{
	\draw (0,0) .. controls (1.5,0) and (2,1) .. (.5,1); 
	\draw (4,0) .. controls (2.5,0) and (3,1) .. (4.5,1); 
	\filldraw[fill=white, draw=white] (.5,.55)  rectangle  (4,2);
	\draw[dashed] (0,0) .. controls (1.5,0) and (2,1) .. (.5,1); 
	\draw[dashed] (4,0) .. controls (2.5,0) and (3,1) .. (4.5,1); 
	\draw (1.375,.5) -- (1.375,3.5);
	\draw[yshift=1cm]  (0,2) .. controls (1.5,2) and (2,3) .. (.5,3); 
	\draw[yshift=1cm]  (4,2) .. controls (2.5,2) and (3,3) .. (4.5,3); 
	%
	\draw (3.125,.5) -- (3.125,3.5);
	\draw (0,0) -- (0,3);
	\draw (.5,3.25) -- (.5,4);
	\draw[dashed] (.5,1) -- (.5,4);
	\draw (4,0) -- (4,3);
	\draw (4.5,1) -- (4.5,4);
}
\xrightarrow{1}
 \tikzdiagc[scale=.35]{
	\draw (0,0) .. controls (1.5,0) and (2,1) .. (.5,1); 
	\draw (4,0) .. controls (2.5,0) and (3,1) .. (4.5,1); 
	\filldraw[fill=white, draw=white] (.5,.55)  rectangle  (4.5,2);
	\draw[dashed] (0,0) .. controls (1.5,0) and (2,1) .. (.5,1); 
	\draw[dashed] (4,0) .. controls (2.5,0) and (3,1) .. (4.5,1); 
	\draw (0,0) -- (0,4);
	\draw[dashed] (.5,1) -- (.5,5);
	\draw (1.375,.5) -- (1.375,2.5);
	\draw (3.125,.5) -- (3.125,2.5);
	\draw (.5,4) -- (.5,5) -- (4.5,5) -- (4.5,4);
	\draw (1.375,2.5) .. controls (1.375,3.5) and (3.125,3.5) .. (3.125,2.5);
	\draw (0,2) -- (0,4) -- (4,4);
	\draw (8,1) -- (8.5,1) -- (8.5,5);
	\draw (5.375,2.5) .. controls (5.375,1.5) and (7.125,1.5) .. (7.125,2.5);
	\draw[dashed] (4.5,5) -- (4.5,1) -- (8.5,1) -- (8.5,3);
	\draw (4,4) -- (4,0) -- (8,0) -- (8,2);
	\draw (4,4) .. controls (5.5,4) and (6,5) .. (4.5,5); 
	\draw (8,4) .. controls (6.5,4) and (7,5) .. (8.5,5); 
	\draw (8,2) -- (8,4);
	\draw (8.5,3) -- (8.5,5);
	\draw (5.375,2.5) -- (5.375,4.5);
	\draw (7.125,2.5) -- (7.125,4.5);
}
^{(1,1)}
\xrightarrow{\lambda}
\tikzdiagc[scale=.35]{
	\draw (0,0) .. controls (1.5,0) and (2,1) .. (.5,1); 
	\draw (4,0) .. controls (2.5,0) and (3,1) .. (4.5,1); 
	\filldraw[fill=white, draw=white] (.5,.55)  rectangle  (4,2);
	\draw[dashed] (0,0) .. controls (1.5,0) and (2,1) .. (.5,1); 
	\draw[dashed] (4,0) .. controls (2.5,0) and (3,1) .. (4.5,1); 
	\draw (1.375,.5) .. controls (1.375,1.5) and (3.125,1.5) .. (3.125,.5);
	\draw[yshift=1cm]  (0,2) .. controls (1.5,2) and (2,3) .. (.5,3); 
	\draw[yshift=1cm]  (4,2) .. controls (2.5,2) and (3,3) .. (4.5,3); 
	\draw[yshift=1cm]  (1.375,2.5) .. controls (1.375,1.5) and (3.125,1.5) .. (3.125,2.5);
	\draw (0,0) -- (0,3);
	\draw (.5,3.25) -- (.5,4);
	\draw[dashed] (.5,1) -- (.5,4);
	\draw (4,0) -- (4,3);
	\draw (4.5,1) -- (4.5,4);
}^{(1,1)}
\xrightarrow{\lambda_R((0,1),C)}
 \tikzdiagc[scale=.35]{
	\draw (.5,3) -- (.5,1) -- (4.5,1) -- (4.5,3);
	\filldraw [fill=white, draw=white] (0,0) rectangle (4,2); 
	\draw (0,2) .. controls (1.5,2) and (2,3) .. (.5,3); 
	\draw (4,2) .. controls (2.5,2) and (3,3) .. (4.5,3); 
	\draw (1.375,2.5) .. controls (1.375,1.5) and (3.125,1.5) .. (3.125,2.5);
	\draw[dashed] (.5,3) -- (.5,1) -- (4.5,1) -- (4.5,3);
	\draw (0,2) -- (0,0) -- (4,0) -- (4,2);
}^{(1,0)}
\circ
\tikzdiagc[scale=.35]{
	\draw (.5,1) -- (.5,3) -- (4.5,3) -- (4.5,1);
	\draw (0,0) .. controls (1.5,0) and (2,1) .. (.5,1); 
	\draw (4,0) .. controls (2.5,0) and (3,1) .. (4.5,1); 
	\filldraw[fill=white, draw=white] (.5,.55)  rectangle  (4,2);
	\draw[dashed] (0,0) .. controls (1.5,0) and (2,1) .. (.5,1); 
	\draw[dashed] (4,0) .. controls (2.5,0) and (3,1) .. (4.5,1); 
	\draw (1.375,.5) .. controls (1.375,1.5) and (3.125,1.5) .. (3.125,.5);
	\draw[dashed] (.5,1) -- (.5,3) -- (4.5,3) -- (4.5,1);
	\draw (0,0) -- (0,2) -- (4,2) -- (4,0);
}^{(0,1)}
\]
where $C = (0,-1)$ if $\lambda=Z$, and $C = (-1,0)$ if $\lambda=1$. Recall that by definition of $\shiftFunct{H}$, we multiply by 
the coefficients written on the arrows. 
 Moreover, the change of chronology involved in the defintion of $\shiftFunct{\F_\bi}  \circ  \shiftFunct{\tikzdiagh[scale=.2]{
	\draw (.5,3) -- (.5,1) -- (4.5,1) -- (4.5,3);
	\filldraw [fill=white, draw=white] (0,0) rectangle (4,2); 
	\draw (0,2) .. controls (1.5,2) and (2,3) .. (.5,3); 
	\draw (4,2) .. controls (2.5,2) and (3,3) .. (4.5,3); 
	\draw (1.375,2.5) .. controls (1.375,1.5) and (3.125,1.5) .. (3.125,2.5);
	%
	\draw (0,2) -- (0,0) -- (4,0) -- (4,2);
}^{(1,0)}}   ( \UtoBfunctor_0(\tau_{ij}) )$ gives 
\[
\lambda_R\bigl(\deg(\UtoBfunctor_0(\tau_{ij})),(1,0)+C)\bigr) = \lambda_R\bigl((0,-1),(1,0)+C)\bigr) = 
\begin{cases}
YZ &\text{if C = (0,-1)}, \\
1 &\text{if C = (-1,0)}.
\end{cases} 
\]
Finally, $\shiftFunct{\tau_{ij}} = \shiftFunct{\tau_{ji}} = 1$, so that by putting all this together we obtain
\begin{equation*}
\UtoBfunctor(\tau_{ji}\tau_{ij}) = -YZ^2.\UtoBfunctor(x_j)+ YZ^2.\UtoBfunctor(x_i).
\end{equation*}
We now suppose that the diagram involving two crossings in \cref{eq:doubleR} is illegal, but the ones on the right part are not. We must show that the right part acts as zero. In this case, the ladder diagram must looks like one of the followings:
\begin{align*}
\tikzdiagh{
	\draw[dt] (0,0) -- (0,.5);
	\draw[vt] (0,.5) -- (0,1);
	\draw[dt] (0,1) -- (0,1.5);
	\draw[->,vt] (-1,.5) -- (0,.5);
	\draw[->,vt] (0,1) -- (1,1);
	\draw[vt] (-1,0) -- (-1,.5);
	\draw[dt] (-1,.5) -- (-1,1.5);
	\draw[dotted] (1,0) -- (1,1.5);
} 
&&
\tikzdiagh{
	\draw[dt] (0,0) -- (0,.5);
	\draw[vt] (0,.5) -- (0,1);
	\draw[dt] (0,1) -- (0,1.5);
	\draw[->,vt] (-1,.5) -- (0,.5);
	\draw[->,vt] (0,1) -- (1,1);
	\draw[dds] (-1,0) -- (-1,.5);
	\draw[vt] (-1,.5) -- (-1,1.5);
	\draw[dotted] (1,0) -- (1,1.5);
} 
\end{align*}
In either ways, we have $\Gamma_\bw(i) = \Gamma_\bw(j)$ and $\UtoBfunctor_0(x_i) = \UtoBfunctor_0(x_j)$ because the two rungs are connected by a visible strand. 
Thus $\UtoBfunctor(x_i) = \UtoBfunctor(x_j)$ and the right part of \cref{eq:doubleR} is zero. 

The case $j = i + 1$ is similar, but still carries enough differences to be worth explaining a bit.
First, we have 
\[
\UtoBfunctor(x_i) =  \shiftFunct{\tikzdiagh[scale=.2]{
	\draw (.5,3) -- (.5,1) -- (4.5,1) -- (4.5,3);
	\filldraw [fill=white, draw=white] (0,0) rectangle (4,2); 
	\draw (0,2) .. controls (1.5,2) and (2,3) .. (.5,3); 
	\draw (4,2) .. controls (2.5,2) and (3,3) .. (4.5,3); 
	\draw (1.375,2.5) .. controls (1.375,1.5) and (3.125,1.5) .. (3.125,2.5);
	%
	\draw (0,2) -- (0,0) -- (4,0) -- (4,2);
}^{(1,0)}}
 \bigl(\UtoBfunctor_0(x_i)\bigr), 
\]
giving a scalar $1$ when the first saddle is a merge, and $XYZ^2$ when it is a split. The same applies for $x_j$. 
Moreover, keeping the same conventions as before, the saddles carry now a different orientation, so that $\lambda = Z$ when the first saddle is a merge, and $\lambda = XY$ when it is a split. The factors $\Gamma_\bw(i)$ are also different so that now
\[
\UtoBfunctor_0(\tau_{ji}\tau_{ij}) = \lambda^{-1}\bigl( XYZ.\UtoBfunctor_0(x_j) + (-XY)Z.\UtoBfunctor_0(x_i) \bigr).
\]
 Also, $\shiftFunct{H}$ multiplies by $\lambda' \lambda_R(C',(0,1)))^{-1}$, where $\lambda' = 1$ and $C' = (-1,0)$ if the first saddle is a merge, and $\lambda'= Z$ and $C'=(0,-1)$  in the other case. Putting all this together, we obtain
\begin{equation*}
\UtoBfunctor(\tau_{ji}\tau_{ij}) = XYZ.\UtoBfunctor(x_j) - XYZ.\UtoBfunctor(x_i).
\end{equation*}
The proof for the illegal diagrams is essentially the same, and we leave the details to the reader. 

We note that \cref{eq:nilHecke} immediately follows from \cref{eq:dotcob} and the definition of $\Gamma_\bw(i)$.

For \cref{eq:dotcommutescrossing}, if $|i-j| > 1$, then it is immediate. If $j = i + 1$ and considering the first case, then we first observe that $\Gamma_\bw(j) = \Gamma_{\bw'}(j)$ where $\bw' \F_j \bw$. Each side of \cref{eq:dotcommutescrossing} is sent by $ \UtoBfunctor$ to one of the two path in the following diagram:
\[
\begin{tikzcd}[column sep=12ex, row sep = 10ex]
\shiftFunct{\tikzdiagh[scale=.2]{
	\draw (.5,3) -- (.5,1) -- (4.5,1) -- (4.5,3);
	\filldraw [fill=white, draw=white] (0,0) rectangle (4,2); 
	\draw (0,2) .. controls (1.5,2) and (2,3) .. (.5,3); 
	\draw (4,2) .. controls (2.5,2) and (3,3) .. (4.5,3); 
	\draw (1.375,2.5) .. controls (1.375,1.5) and (3.125,1.5) .. (3.125,2.5);
	%
	\draw (0,2) -- (0,0) -- (4,0) -- (4,2);
}^{(1,0)}}
\tqft\left(
\tikzdiagh[scale=.5]{
	\draw (1,.5) .. controls (0,.5) .. (0,0);
	\draw (0,1.5) .. controls (0,1) .. (-1,1);
}
\right)
\ar[dash]{r}{\tqft\left( \ \tikzdiagh[scale=.2]{
	\draw (.5,3) -- (.5,1) -- (4.5,1) -- (4.5,3);
	\filldraw [fill=white, draw=white] (0,0) rectangle (4,2); 
	\draw (0,2) .. controls (1.5,2) and (2,3) .. (.5,3); 
	\draw (4,2) .. controls (2.5,2) and (3,3) .. (4.5,3); 
	\draw (1.375,2.5) .. controls (1.375,1.5) and (3.125,1.5) .. (3.125,2.5);
	%
	\draw (0,2) -- (0,0) -- (4,0) -- (4,2);
} \right)}
\ar{r}
\ar[dash,swap]{d}{
\shiftFunct{\tikzdiagh[scale=.2]{
	\draw (.5,3) -- (.5,1) -- (4.5,1) -- (4.5,3);
	\filldraw [fill=white, draw=white] (0,0) rectangle (4,2); 
	\draw (0,2) .. controls (1.5,2) and (2,3) .. (.5,3); 
	\draw (4,2) .. controls (2.5,2) and (3,3) .. (4.5,3); 
	\draw (1.375,2.5) .. controls (1.375,1.5) and (3.125,1.5) .. (3.125,2.5);
	%
	\draw (0,2) -- (0,0) -- (4,0) -- (4,2);
}^{(1,0)}}
\tqft\left(
\tikzdiagh[scale=.5]{
	\draw (1,.5) .. controls (0,.5) .. (0,0); \node[tikzdot] at (.5,.5){};
	\draw (0,1.5) .. controls (0,1) .. (-1,1);
}
\right)
}
\ar{d}
&
\tqft\left(
\tikzdiagh[scale=.5]{
	\draw (-1,.5) .. controls (0,.5) .. (0,0);
	\draw (0,1.5) .. controls (0,1) .. (1,1);
}
\right)
\ar[dash]{d}{
\tqft\left(
\tikzdiagh[scale=.5]{	
	\draw (-1,.5) .. controls (0,.5) .. (0,0);
	\draw (0,1.5) .. controls (0,1) .. (1,1); \node[tikzdot] at (.5,1){};
}
\right)
}
\ar{d}
\\
\shiftFunct{\tikzdiagh[scale=.2]{
	\draw (.5,3) -- (.5,1) -- (4.5,1) -- (4.5,3);
	\filldraw [fill=white, draw=white] (0,0) rectangle (4,2); 
	\draw (0,2) .. controls (1.5,2) and (2,3) .. (.5,3); 
	\draw (4,2) .. controls (2.5,2) and (3,3) .. (4.5,3); 
	\draw (1.375,2.5) .. controls (1.375,1.5) and (3.125,1.5) .. (3.125,2.5);
	%
	\draw (0,2) -- (0,0) -- (4,0) -- (4,2);
}^{(1,0)}}
\tqft\left(
\tikzdiagh[scale=.5]{
	\draw (1,.5) .. controls (0,.5) .. (0,0);
	\draw (0,1.5) .. controls (0,1) .. (-1,1);
}
\right)
\ar[dash,swap]{r}{\tqft\left( \ \tikzdiagh[scale=.2]{
	\draw (.5,3) -- (.5,1) -- (4.5,1) -- (4.5,3);
	\filldraw [fill=white, draw=white] (0,0) rectangle (4,2); 
	\draw (0,2) .. controls (1.5,2) and (2,3) .. (.5,3); 
	\draw (4,2) .. controls (2.5,2) and (3,3) .. (4.5,3); 
	\draw (1.375,2.5) .. controls (1.375,1.5) and (3.125,1.5) .. (3.125,2.5);
	%
	\draw (0,2) -- (0,0) -- (4,0) -- (4,2);
} \right)}
\ar{r}
&
\tqft\left(
\tikzdiagh[scale=.5]{
	\draw (-1,.5) .. controls (0,.5) .. (0,0);
	\draw (0,1.5) .. controls (0,1) .. (1,1);
}
\right)
\end{tikzcd}
\]
By \cref{eq:cobcommute1}, we have that at the level of the underlying cobordims there is a difference by a factor of $\lambda_R(C,(-1,-1))$ where $C$ depends on whether the saddle is a merge or a split after closing the tangle. Moreover, the grading shift functor on the left downard arrow multiply by a factor of $\lambda_R((-1,-1),(1,0)+C)$, so that in the end we have \cref{eq:dotcommutescrossing}. 
The other case and the cases $j = i-1$ are similar, and we leave the details to the reader.

Clearly, we have \eqref{eq:crosssymiszero} because we are adding a dot at the same height on the same surface. 

For \cref{eq:RijkR3}, we are permuting saddles points that we can treat as having $\bZ \times \bZ$-degrees given by $p_{ij}$ because of the changes of chronology.

Both \cref{eq:RiiiR3} and \cref{eq:RiijR3} are trivially verified because they factorize through invalid weights.

For \cref{eq:RijiR3} and $j = i +1$, we first note that the ladder diagram must have one of the two following forms (otherwise all diagrams are illegal):
\begin{align}\label{eq:twoladdersforR3}
\tikzdiagh[yscale=.5]{
	\draw[dds] (0,0) -- (0,1);
	\draw[vt] (0,1) -- (0,3);
	\draw[dt] (0,3) -- (0,4);
	\draw[dt] (1,0) -- (1,1);
	\draw[vt] (1,1) -- (1,2);
	\draw[dt] (1,2) -- (1,3);
	\draw[vt] (1,3) -- (1,4);
	\draw[dotted] (2,0) -- (2,4);
	\draw[vt, ->] (0,1) -- (1,1);
	\draw[vt, ->] (1,2) -- (2,2);
	\draw[vt, ->] (0,3) -- (1,3);
}
&&\text{ or }&&
\tikzdiagh[yscale=.5]{
	\draw[dds] (0,0) -- (0,1);
	\draw[vt] (0,1) -- (0,3);
	\draw[dt] (0,3) -- (0,4);
	\draw[vt] (1,0) -- (1,1);
	\draw[dds] (1,1) -- (1,2);
	\draw[vt] (1,2) -- (1,3);
	\draw[dds] (1,3) -- (1,4);
	\draw[dotted] (2,0) -- (2,4);
	\draw[vt, ->] (0,1) -- (1,1);
	\draw[vt, ->] (1,2) -- (2,2);
	\draw[vt, ->] (0,3) -- (1,3);
}
\end{align}
In the first case, the right part of the left term in \cref{eq:RijiR3} acts by zero and the other one decomposes as
\begin{align*}
\bigl(\shiftFunct{\chcob^{(1,0)}}\bigr)
&
\UtoBfunctor_0\left(
\tikzdiagh[yscale=.35,xscale=.5]{
	\draw[dds] (0,0) -- (0,1);
	\draw[vt] (0,1) -- (0,3);
	\draw[dt] (0,3) -- (0,4);
	\draw[dt] (1,0) -- (1,1);
	\draw[vt] (1,1) -- (1,2);
	\draw[dt] (1,2) -- (1,3);
	\draw[vt] (1,3) -- (1,4);
	\draw[dotted] (2,0) -- (2,4);
	\draw[vt, ->] (0,1) -- (1,1);
	\draw[vt, ->] (1,2) -- (2,2);
	\draw[vt, ->] (0,3) -- (1,3);
}
\right)
\xrightarrow{\Gamma . \UtoBfunctor_0(\tau_{ij}) }
\UtoBfunctor_0\left(
\tikzdiagh[yscale=.35,xscale=.5]{
	\draw[dds] (0,0) -- (0,1);
	\draw[vt] (0,1) -- (0,2);
	\draw[dt] (0,2) -- (0,4);
	\draw[dt] (1,0) -- (1,1);
	\draw[vt] (1,1) -- (1,2);
	\draw[dds] (1,2) -- (1,3);
	\draw[vt] (1,3) -- (1,4);
	\draw[dotted] (2,0) -- (2,4);
	\draw[vt, ->] (0,1) -- (1,1);
	\draw[vt, ->] (1,3) -- (2,3);
	\draw[vt, ->] (0,2) -- (1,2);
}
\right)
\\
&
\xrightarrow{ (-XY)\Gamma. \UtoBfunctor_0(\tau_{ii})}
\UtoBfunctor_0\left(
\tikzdiagh[yscale=.35,xscale=.5]{
	\draw[dds] (0,0) -- (0,1);
	\draw[vt] (0,1) -- (0,2);
	\draw[dt] (0,2) -- (0,4);
	\draw[dt] (1,0) -- (1,1);
	\draw[vt] (1,1) -- (1,2);
	\draw[dds] (1,2) -- (1,3);
	\draw[vt] (1,3) -- (1,4);
	\draw[dotted] (2,0) -- (2,4);
	\draw[vt, ->] (0,1) -- (1,1);
	\draw[vt, ->] (1,3) -- (2,3);
	\draw[vt, ->] (0,2) -- (1,2);
}
\right)
\xrightarrow{ \shiftFunct{\chcob^{(1,0)}} \UtoBfunctor_0(\tau_{ji}) \circ \shiftFunct{H}}
\bigl(\shiftFunct{\chcob^{(1,0)}}\bigr)
\UtoBfunctor_0\left(
\tikzdiagh[yscale=.35,xscale=.5]{
	\draw[dds] (0,0) -- (0,1);
	\draw[vt] (0,1) -- (0,3);
	\draw[dt] (0,3) -- (0,4);
	\draw[dt] (1,0) -- (1,1);
	\draw[vt] (1,1) -- (1,2);
	\draw[dt] (1,2) -- (1,3);
	\draw[vt] (1,3) -- (1,4);
	\draw[dotted] (2,0) -- (2,4);
	\draw[vt, ->] (0,1) -- (1,1);
	\draw[vt, ->] (1,2) -- (2,2);
	\draw[vt, ->] (0,3) -- (1,3);
}
\right)
\end{align*}
where $\Gamma := \Gamma_\bw(i)$ and $\chcob := \tikzdiagh[scale=.2]{
	\begin{scope}
	\clip (-1,.5) rectangle (1,-.25); 
	\draw (0,0) .. controls (-1.5,0) and (-1,1) .. (.5,1); 
	\end{scope}
	\begin{scope}
	\draw  (4,1) -- (4.5,1) -- (4.5,3);
	\end{scope}
	%
	%
	\draw (0,2) .. controls (1.5,2) and (2,3) .. (.5,3); 
	\draw (4,2) .. controls (2.5,2) and (3,3) .. (4.5,3); 
	\draw (1.375,2.5) .. controls (1.375,1.5) and (3.125,1.5) .. (3.125,2.5);
	%
	\draw (0,0) -- (4,0) -- (4,2);
	\draw (0,2) .. controls (-1.5,2) and (-1,3) .. (.5,3); 
	\draw (-.875,.5) -- (-.875,2.5);
}$. At the level of the underlying cobordism, we have by \cref{eq:reverseorientation} and \cref{eq:chcobrelbirthmerge} that
\[
\tikzdiagh[scale=.3]{
	\begin{scope}
	\clip (-1,.5) rectangle (1,-.25); 
	\draw (0,0) .. controls (-1.5,0) and (-1,1) .. (.5,1); 
	\end{scope}
	\begin{scope}
	\draw  (4,1) -- (4.5,1) -- (4.5,7);
	\end{scope}
	%
	%
	%
	\draw (1.375,2.5) .. controls (1.375,1.5) and (3.125,1.5) .. (3.125,2.5);
	%
	\draw (0,0) -- (4,0) -- (4,6);
	%
	%
	%
	\draw (-.875,.5) -- (-.875,2.5);
	\draw (-.875,2.5) .. controls (-.875,4.5) and (1.375,4.5) .. (1.375,2.5); 
	\draw (3.125,2.5) --  (3.125,6.5);
	\node at(1.5,6){
		\tikzdiagh[scale=.3]{
		\draw  (.5,3) -- (4.5,3) -- (4.5,1);
		\begin{scope}
			\clip (-1,.5) rectangle (1,-.5);
		\end{scope}
		%
		%
		\filldraw[fill=white, draw=white] (.5,.55)  rectangle  (4,2);
		%
		%
		\draw (1.375,.5) .. controls (1.375,1.5) and (3.125,1.5) .. (3.125,.5);
		%
		\draw (0,2) -- (4,2) -- (4,0);
		\draw (0,2) .. controls (-1.5,2) and (-1,3) .. (.5,3); 
		\draw (-.875,.5) -- (-.875,2.5);
		\draw (-.875,.5) .. controls (-.875,-1.5) and (1.375,-1.5) .. (1.375,.5); 
		}
	};
	\draw [->] (2.25,2) -- (2.75,2.75);
	\draw [<-] (2.75,5) -- (1.5,5);
} 
\ = X\ 
\tikzdiagh[scale=.3]{
	\begin{scope}
	\clip (-1,.5) rectangle (1,-.25); 
	\draw (0,0) .. controls (-1.5,0) and (-1,1) .. (.5,1); 
	\end{scope}
	\begin{scope}
	\draw  (4,1) -- (4.5,1) -- (4.5,7);
	\end{scope}
	\draw (0,0) -- (4,0) -- (4,6);
	\draw (-.875,.5) -- (-.875,6.5);
	\node at(1.5,6){
		\tikzdiagh[scale=.3]{
		\draw  (.5,3) -- (4.5,3) -- (4.5,1);
		\begin{scope}
			\clip (-1,.5) rectangle (1,-.5);
		\end{scope}
		\filldraw[fill=white, draw=white] (.5,.55)  rectangle  (4,2);
		\draw (0,2) -- (4,2) -- (4,0);
		\draw (0,2) .. controls (-1.5,2) and (-1,3) .. (.5,3); 
		\draw (-.875,.5) -- (-.875,2.5);
		}
	};
}
\]
where the death is negative as usual. Then, we compute using the same kind of arguments as before that $\shiftFunct{H}$ multiplies by a factor $YZ$ and the grading shift functor $\shiftFunct{\chcob^{(1,0)}}$ twists $\UtoBfunctor_0(\tau_{ji})$ by $\lambda_R((0,-1),(1,-1)) = YZ$. In conclusion, we get
\[
\UtoBfunctor(\tau_{ji}\tau_{ii}\tau_{ij}) = -YZ^{-2} 1_\bw.
\]
In the second case of ladder diagram from \cref{eq:twoladdersforR3}, the left part of the left term  in \cref{eq:RijiR3} gives zero. For the right part, we get
 \begin{align*}
\UtoBfunctor_0\left(
\tikzdiagh[yscale=.35,xscale=.5]{
	\draw[dds] (0,0) -- (0,1);
	\draw[vt] (0,1) -- (0,3);
	\draw[dt] (0,3) -- (0,4);
	\draw[vt] (1,0) -- (1,1);
	\draw[dds] (1,1) -- (1,2);
	\draw[vt] (1,2) -- (1,3);
	\draw[dds] (1,3) -- (1,4);
	\draw[dotted] (2,0) -- (2,4);
	\draw[vt, ->] (0,1) -- (1,1);
	\draw[vt, ->] (1,2) -- (2,2);
	\draw[vt, ->] (0,3) -- (1,3);
}
\right)
&
\xrightarrow{ \UtoBfunctor_0(\tau_{ij})  \circ \shiftFunct{H} }
\bigl(\shiftFunct{\chcob^{(1,0)}}\bigr)
\UtoBfunctor_0\left(
\tikzdiagh[yscale=.35,xscale=.5]{
	\draw[dds] (0,0) -- (0,2);
	\draw[vt] (0,2) -- (0,3);
	\draw[dt] (0,3) -- (0,4);
	\draw[vt] (1,0) -- (1,1);
	\draw[dt] (1,1) -- (1,2);
	\draw[vt] (1,2) -- (1,3);
	\draw[dds] (1,3) -- (1,4);
	\draw[dotted] (2,0) -- (2,4);
	\draw[vt, ->] (0,2) -- (1,2);
	\draw[vt, ->] (1,1) -- (2,1);
	\draw[vt, ->] (0,3) -- (1,3);
}
\right)
\\
&
\xrightarrow{ \Gamma. \shiftFunct{\chcob^{(1,0)}} \UtoBfunctor_0(\tau_{ii})}
\bigl(\shiftFunct{\chcob^{(1,0)}}\bigr)
\UtoBfunctor_0\left(
\tikzdiagh[yscale=.35,xscale=.5]{
	\draw[dds] (0,0) -- (0,2);
	\draw[vt] (0,2) -- (0,3);
	\draw[dt] (0,3) -- (0,4);
	\draw[vt] (1,0) -- (1,1);
	\draw[dt] (1,1) -- (1,2);
	\draw[vt] (1,2) -- (1,3);
	\draw[dds] (1,3) -- (1,4);
	\draw[dotted] (2,0) -- (2,4);
	\draw[vt, ->] (0,2) -- (1,2);
	\draw[vt, ->] (1,1) -- (2,1);
	\draw[vt, ->] (0,3) -- (1,3);
}
\right)
\xrightarrow{ \Gamma.  \UtoBfunctor_0(\tau_{ji})}
\UtoBfunctor_0\left(
\tikzdiagh[yscale=.35,xscale=.5]{
	\draw[dds] (0,0) -- (0,1);
	\draw[vt] (0,1) -- (0,3);
	\draw[dt] (0,3) -- (0,4);
	\draw[vt] (1,0) -- (1,1);
	\draw[dds] (1,1) -- (1,2);
	\draw[vt] (1,2) -- (1,3);
	\draw[dds] (1,3) -- (1,4);
	\draw[dotted] (2,0) -- (2,4);
	\draw[vt, ->] (0,1) -- (1,1);
	\draw[vt, ->] (1,2) -- (2,2);
	\draw[vt, ->] (0,3) -- (1,3);
}
\right)
\end{align*}
For the underlying cobordism, we have
\[
\tikzdiagh[scale=.3]{
	\begin{scope}
	\clip (-1,.5) rectangle (1,-.25); 
	\draw (0,0) .. controls (-1.5,0) and (-1,1) .. (.5,1); 
	\end{scope}
	\begin{scope}
	\draw  (4,1) -- (4.5,1) -- (4.5,7);
	\end{scope}
	%
	%
	%
	\draw (1.375,2.5) .. controls (1.375,1.5) and (3.125,1.5) .. (3.125,2.5);
	%
	\draw (0,0) -- (4,0) -- (4,6);
	%
	%
	%
	\draw (-.875,.5) -- (-.875,2.5);
	\draw (-.875,2.5) .. controls (-.875,4.5) and (1.375,4.5) .. (1.375,2.5); 
	\draw (3.125,2.5) --  (3.125,6.5);
	\node at(1.5,6){
		\tikzdiagh[scale=.3]{
		\draw  (.5,3) -- (4.5,3) -- (4.5,1);
		\begin{scope}
			\clip (-1,.5) rectangle (1,-.5);
		\end{scope}
		%
		%
		\filldraw[fill=white, draw=white] (.5,.55)  rectangle  (4,2);
		%
		%
		\draw (1.375,.5) .. controls (1.375,1.5) and (3.125,1.5) .. (3.125,.5);
		%
		\draw (0,2) -- (4,2) -- (4,0);
		\draw (0,2) .. controls (-1.5,2) and (-1,3) .. (.5,3); 
		\draw (-.875,.5) -- (-.875,2.5);
		\draw (-.875,.5) .. controls (-.875,-1.5) and (1.375,-1.5) .. (1.375,.5); 
		}
	};
	\draw [->] (2.25,2) -- (2.75,2.75);
	\draw [->] (2.75,5) -- (1.5,5);
} 
\ = \ 
\tikzdiagh[scale=.3]{
	\begin{scope}
	\clip (-1,.5) rectangle (1,-.25); 
	\draw (0,0) .. controls (-1.5,0) and (-1,1) .. (.5,1); 
	\end{scope}
	\begin{scope}
	\draw  (4,1) -- (4.5,1) -- (4.5,7);
	\end{scope}
	\draw (0,0) -- (4,0) -- (4,6);
	\draw (-.875,.5) -- (-.875,6.5);
	\node at(1.5,6){
		\tikzdiagh[scale=.3]{
		\draw  (.5,3) -- (4.5,3) -- (4.5,1);
		\begin{scope}
			\clip (-1,.5) rectangle (1,-.5);
		\end{scope}
		\filldraw[fill=white, draw=white] (.5,.55)  rectangle  (4,2);
		\draw (0,2) -- (4,2) -- (4,0);
		\draw (0,2) .. controls (-1.5,2) and (-1,3) .. (.5,3); 
		\draw (-.875,.5) -- (-.875,2.5);
		}
	};
}
\]
Moreover, $\shiftFunct{H}$ gives a factor $Z$ and $\shiftFunct{W^{(1,0)}}$ basically does nothing since $W$ is a merge. In conclusion, we obtain
\[
\UtoBfunctor(\tau_{ji}\tau_{ii}\tau_{ij}) = Z^{-1} 1_\bw.
\]
The case $j = i - 1$ is similar and we leave the details to the reader. 

Finally, 
\cref{eq:doublesplitters} is obvious by definition of $\UtoBfunctor$. 
\end{proof}

\subsubsection{Horizontal composition}

Fix $\F_\bj : \bw' \rightarrow \bw''$ and $\F_\bi : \bw \rightarrow \bw'$.  We write $\compMap{\F_\bj}{\F_\bi}$ for the compatibility map obtained from $\shiftFunct{\F_\bj}$ and $\shiftFunct{\F_\bi}$. 

Consider the map $\UtoBfunctor(\bj, \bi)$ defined by making the following diagram commutes:
\[
\begin{tikzcd}[column sep=14ex]
\UtoBfunctor(\F_\bj) \otimes_H \UtoBfunctor(\F_\bi)
\ar{r}{\UtoBfunctor(\bj, \bi)}
\ar[equals]{d}
&
\UtoBfunctor(\F_\bj \F_\bi)
\\
\shiftFunct{\F_\bj}\left( \tqft\bigl( \ladder (\F_\bj) \bigr) \right) \otimes_H \shiftFunct{\F_\bi}\left( \tqft\bigl( \ladder (\F_\bi) \bigr) \right) 
\ar[swap]{d}{\compMap{\F_\bj}{\F_\bi}}
&
\shiftFunct{\F_{\bj\bi}} \left( \tqft\bigl( \ladder (\F_\bj)\ladder (\F_\bj) \bigr) \right) 
\ar[equals]{u}
\\
\shiftFunct{\F_\bj} \bullet \shiftFunct{\F_\bi} \left( \tqft\bigl( \ladder (\F_\bj) \bigr)  \otimes_H ( \tqft\bigl( \ladder (\F_\bi) \bigr) \right) 
\ar[swap]{r}{\mu[\ladder(\F_\bj), \ladder(\F_\bi)]}
&
\shiftFunct{\F_\bj} \bullet \shiftFunct{\F_\bi} \left( \tqft\bigl( \ladder (\F_\bj)\ladder (\F_\bi) \bigr) \right) 
\ar[swap]{u}{h_{\bj,\bi}}
\end{tikzcd}
\]
where $h_{\bj,\bi}$ is given by 
is given by first adding a grading shift $\shiftFunct{\chcob_{\bj,\bi}^{v_{\bj,\bi}}}$ corresponding with the fact we potentially generated new pairs of \eqref{eq:normalization1}/\eqref{eq:normalization2}  by gluing the two ladder diagrams together, followed by a change of chronology to put it in normalized form. 
Note that $h_{\bj,\bi}$ is an isomorphism in  $\BIMOD_q(H^\bullet, H^\bullet)$. 
Therefore, since all the maps involved are isomorphism, we have that 
$\UtoBfunctor(\bj, \bi)$ 
is an isomorphism (potentially carrying a non-trivial $\gradC$-degree though).

\begin{lem}\label{lem:FLijisnatural}
The isomorphism
\[
\UtoBfunctor(\bj, \bi) : 
\UtoBfunctor(\F_\bj) \otimes_H \UtoBfunctor(\F_\bi) \xrightarrow{\simeq} \UtoBfunctor(\F_\bj \F_\bi),
\]
is $\gradC$-graded natural in $\F_\bi$ and $\F_\bj$.  
\end{lem}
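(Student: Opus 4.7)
The plan is to reduce naturality to the case where $f: \F_\bi \to \F_{\bi'}$ or $g: \F_\bj \to \F_{\bj'}$ is a single generating diagram of $\cR$ (a dot, splitter, or crossing), and then verify each case by unwinding the three-step definition of $\UtoBfunctor(\bj,\bi)$.

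First I would observe that $\UtoBfunctor(\bj,\bi)$ factors as the composite
\[
\shiftFunct{\F_\bj}\tqft(\ladder(\F_\bj)) \otimes_H \shiftFunct{\F_\bi}\tqft(\ladder(\F_\bi))
\xrightarrow{\compMap{\F_\bj}{\F_\bi}}
\shiftFunct{\F_\bj \bullet \F_\bi}\bigl(\tqft(\ladder(\F_\bj))\otimes_H\tqft(\ladder(\F_\bi))\bigr)
\xrightarrow{\mu}
\shiftFunct{\F_\bj\bullet\F_\bi}\tqft(\ladder(\F_\bj)\ladder(\F_\bi))
\xrightarrow{h_{\bj,\bi}}
\UtoBfunctor(\F_\bj\F_\bi),
\]
so naturality in $(\bj,\bi)$ will follow by checking naturality of each of these three pieces separately. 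Reducing to a single generator $f\otimes 1$ (resp.\ $1\otimes g$) is harmless by \cref{prop:fotimesgcommutes}, since the general case is a composition of such moves tensored with identities.

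Next I would handle the three layers one at a time. The compatibility map $\compMap{\F_\bj}{\F_\bi}$ is already a $\gradC$-graded natural transformation of bifunctors, as established just before \cref{sec:shiftingbimod}, so this layer is free. For the middle layer, $\mu[\ladder(\F_\bj),\ladder(\F_\bi)]$ is induced by the canonical chronological cobordism $\chcob_{cba}(\ladder(\F_\bj),\ladder(\F_\bi))$, and naturality amounts to showing that the cobordism representing a generator (a dot, saddle, or identity supported on one factor) commutes with the surgery cobordism $\chcob_{cba}$ up to a scalar prescribed by $\assoc$ and $\lambda_R$. Concretely, for a dot one uses \cref{eq:cobcommute1,eq:cobcommute2} together with the definition of the $\gradC$-degree on $\tqft$; for a saddle associated to a crossing $\tau_{ij}$ one uses \cref{prop:locvertchange} to identify the two orderings of saddle points through a locally vertical change of chronology, and \cref{lem:horcompdeg} to reconcile $\bZ\times\bZ$-degrees. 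The resulting scalar will always match the $\gradC$-graded commutativity scalar $\tau$ defined in \cref{sec:Gcommutativity}.

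The main obstacle is the third layer, the normalization map $h_{\bj,\bi}$, especially for the cases where $\UtoBfunctor(\tau_{ij})$ with $j=i\pm 1$ is defined using an auxiliary natural transformation $\shiftFunct{H_{\tau_{ij}}}$ (see the diagrams following \cref{eq:FLbicrossdistant}). Here one must show that the extra grading shift $\shiftFunct{\chcob_{\bj,\bi}^{v_{\bj,\bi}}}$ introduced by gluing two ladder diagrams is compatible with the generator: concretely, one checks that the change of chronology prescribed by $h_{\bj,\bi}$ on the glued diagram and the one prescribed by $h_{\bj',\bi}$ (resp.\ $h_{\bj,\bi'}$) on the image differ by a locally vertical change of chronology, hence by \cref{prop:locvertchange} produce the same scalar up to the $\gradC$-graded commutativity factor $\tau$. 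I expect this check to essentially reduce, for each generator, to a computation analogous to those already carried out in \cref{lem:onefunctorUtoB}, but now performed with an extra ladder $\ladder(\F_\bj)$ (or $\ladder(\F_\bi)$) attached on top (or bottom) acting as identity; invariance under this additional padding is exactly the content of \cref{prop:locvertchange} applied to the relevant cylinders. Once this is verified generator by generator, naturality follows for all $2$-morphisms in $\oddKCeven$.
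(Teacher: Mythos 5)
Your proposal has the right flavor — reduce to generators, track changes of chronology and $\lambda_R$-scalars, treat the $j = i\pm 1$ crossings specially — but the layer-by-layer decomposition does not quite hold up, and it hides a genuine gap.

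The claim that the compatibility-map layer is ``free'' because $\compMap{j}{i}$ is a $\gradC$-graded natural transformation of bifunctors conflates two different kinds of naturality. The proposition preceding \cref{sec:shiftingbimod} only asserts that $\compMap{j}{i}$ is natural in the bimodule arguments $M$ and $N$ for \emph{fixed} indices $j,i$. But in the naturality square you are trying to verify, applying $f : \F_\bi \to \F_{\bi'}$ and $g : \F_\bj \to \F_{\bj'}$ changes the shifting functors themselves: $\shiftFunct{\F_\bi}$ becomes $\shiftFunct{\F_{\bi'}}$, and with it the compatibility map becomes $\compMap{\F_{\bj'}}{\F_{\bi'}}$ rather than $\compMap{\F_\bj}{\F_\bi}$. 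Reconciling these two different compatibility maps across the square is precisely the content of the coherence conditions of the shifting 2-system (\cref{eq:shiftingsystvcomp} and the diagram \cref{eq:shiftingsystvcompdiag}), not mere bifunctoriality. The paper's proof verifies the full square \cref{eq:FLisonat} at once by invoking exactly this coherence to produce the big commuting diagram of boxed cobordisms, introducing the auxiliary 2-morphisms $\omega_1,\omega_2$ which it then identifies with the twists built into $\UtoBfunctor$. Your argument never invokes this coherence, so the step that actually makes the square close is missing. For the same reason, the middle ($\mu$) layer cannot be isolated either: when $\bi$ becomes $\bi'$ the ladder $\ladder(\F_{\bi'})$, the cobordism $\chcob_{cba}$, and the associated shift all change simultaneously, and the relevant identity (recorded in the paper as the pictured equation $\mu\circ(\chcob_g\otimes\chcob_f) = \compMap{\chcob_g}{\chcob_f}\cdot\mu\circ(\chcob_g\bullet\chcob_f)$) is a statement about compatibility maps, not just about changes of chronology.

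To repair this, drop the three-layer independence claim and instead verify the single square directly: expand both paths through the coherence diagram of compatibility maps, introduce the comparison scalars $\omega_1,\omega_2$, and then match them generator by generator with the twists in the definition of $\UtoBfunctor$ (the grading-shift twist from \cref{sec:grshiftBIMOD} for dots/splitters/distant crossings, and additionally $\shiftFunct{H_{\tau_{ij}}}$ for $j = i\pm 1$). The reduction to generators via \cref{prop:fotimesgcommutes} and the use of \cref{prop:locvertchange} that you propose do survive into the corrected argument.
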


\begin{proof}
Suppose we have $g : \F_\bj \rightarrow \F_{\bj'}$ and $f : \F_\bi \rightarrow \F_{\bi'}$. 
We claim the following diagram commutes:
\begin{equation}\label{eq:FLisonat}
\begin{tikzcd}[column sep=-8ex]
&[-6ex]
\shiftFunct{\F_{\bi'} \bullet \F_\bi} \bigl( \UtoBfunctor_0(\F_{\bi'}) \otimes_H \UtoBfunctor_0(\F_\bi) \bigr)
\ar{r}{\mu}
&[12 ex]
\shiftFunct{\F_{\bi'} \bullet \F_\bi} \bigl( \UtoBfunctor_0(\F_{\bi'\bi}) \bigr)
\ar{dr}{h_{\bi', \bi}}
&
\\
\shiftFunct{\F_{\bi'}} \UtoBfunctor_0(\F_{\bi'}) \otimes_H \shiftFunct{\F_{\bi}} \UtoBfunctor_0(\F_\bi) 
\ar[pos=.4]{ur}{\compMap{\F_\bi'}{\F_\bi}}
\ar[swap]{d}{\UtoBfunctor(g) \otimes_H \UtoBfunctor(f)}
&[-10ex]
&
&
\shiftFunct{\F_{\bi'\bi}} \bigl( \UtoBfunctor_0(\F_{\bi'\bi}) \bigr)
\ar{d}{\UtoBfunctor(g \circ f)}
\\
\shiftFunct{\F_{\bj'}} \UtoBfunctor_0(\F_{\bj'}) \otimes_H \shiftFunct{\F_{\bj}} \UtoBfunctor_0(\F_\bj) 
\ar[swap, pos=.4]{dr}{\compMap{\F_\bj'}{\F_\bj}}
&[-8ex]
&
&
\shiftFunct{\F_{\bj'\bj}} \bigl( \UtoBfunctor_0(\F_{\bj'\bj}) \bigr)
\\
&
\shiftFunct{\F_{\bj'} \bullet \F_\bj} \bigl( \UtoBfunctor_0(\F_{\bj'}) \otimes_H \UtoBfunctor_0(\F_\bj) \bigr)
\ar[swap]{r}{\mu}
&[10 ex]
\shiftFunct{\F_{\bj'} \bullet \F_\bj} \bigl( \UtoBfunctor_0(\F_{\bj'\bj}) \bigr)
\ar[swap]{ur}{h_{\bj',\bj}}
&
\end{tikzcd}
\end{equation}
up to $\shiftFunct{H} : |h_{\bj',\bj}| \circ (|g| \bullet |f|) \Rightarrow |g \circ f| \circ |h_{\bi',\bi}|$. 
It is easier to restrict to the case where $f$ or $g$ is a generator (and the other one is the identity), and the general result will follow. 

The difference of contribution of $\mu$ and of the $\UtoBfunctor_0$ part of $f$ and $g$ in the upper and lower path of \cref{eq:FLisonat} is given by 
the following equation:
\begin{align*}
\tikzdiag[yscale=.5,xscale=.75]{0}{
	\draw (1,-.5) node[below]{\small $y$}
		--
		(1,-.5)
		-- 
		(1,.5)   node[near end,rrect]{\small $\chcob_g$}
		--
		(1,1)
		.. controls (1,1.5) and (1.5,1.5) ..
		(1.5,2);
	\draw (2,-2.5) node[below]{\small $x$}
		--
		(2, -1)   node[near end,rrect]{\small $\chcob_f$}
		--
		(2,1)
		.. controls (2,1.5) and (1.5,1.5) ..
		(1.5,2)
		--
		(1.5,2.5);
}
\ = \compMap{\chcob_g}{\chcob_f}(|y|,|x|) \ 
\tikzdiag[yscale=.5,xscale=.75]{0}{
	\draw (1,-1) node[below]{\small $y$}
		--
		(1,-1)
		.. controls (1,-.5) and (1.5,-.5) ..
		(1.5,0);
	\draw (2,-2) node[below]{\small $x$}
		--
		(2, -1)   
		.. controls (2,-.5) and (1.5,-.5) ..
		(1.5,0)
		--
		(1.5,2.5) node[midway,rrect]{\small $\chcob_g \bullet \chcob_f$};
}
\end{align*}
where $\chcob_f$ (resp. $\chcob_g$) is the underlying cobordism of $\UtoBfunctor_0(f)$ (resp. $\UtoBfunctor_0(g)$). 

Moreover, by coherence of the compatibility maps, the following diagram commutes:
\[
\begin{tikzcd}[column sep = 24ex]
\tikzdiag[yscale=.5,xscale=.75]{0}{
	\draw (1,-.5) node[below]{\small $y$}
		--
		(1,-.5)
		-- 
		(1,.5)   node[near end,rrect]{\small $\chcob_{\bi'}$}
		--
		(1,1)
		.. controls (1,1.5) and (1.5,1.5) ..
		(1.5,2);
	\draw (2,-2.5) node[below]{\small $x$}
		--
		(2, -1.5)   node[near end,rrect]{\small $\chcob_{\bi}$}
		--
		(2,1)
		.. controls (2,1.5) and (1.5,1.5) ..
		(1.5,2)
		--
		(1.5,3) node[midway,rrect]{\small $|g| \bullet |f|$}
		-- 
		(1.5,5)  node[midway,rrect]{\small $\chcob_{\bj',\bj}$};
}
\ar{r}{\shiftFunct{H}}
\ar[swap]{d}{\omega_1}
&
\tikzdiag[yscale=.5,xscale=.75]{0}{
	\draw (1,-.5) node[below]{\small $y$}
		--
		(1,-.5)
		-- 
		(1,.5)   node[near end,rrect]{\small $\chcob_{\bi'}$}
		--
		(1,1)
		.. controls (1,1.5) and (1.5,1.5) ..
		(1.5,2);
	\draw (2,-2.5) node[below]{\small $x$}
		--
		(2, -1.5)   node[near end,rrect]{\small $\chcob_{\bi}$}
		--
		(2,1)
		.. controls (2,1.5) and (1.5,1.5) ..
		(1.5,2)
		--
		(1.5,3) node[midway,rrect]{\small $\chcob_{\bi',\bi}$}
		-- 
		(1.5,5)  node[midway,rrect]{\small $|g \circ f|$};
}
\ar{r}{h_{\bi',\bi} \circ \compMap{\F_\bi'}{\F_\bi}}
&
\tikzdiag[yscale=.5,xscale=.75]{0}{
	\draw (1,-.5) node[below]{\small $y$}
		--
		(1,-.5)
		-- 
		(1,.5)   
		--
		(1,1)
		.. controls (1,1.5) and (1.5,1.5) ..
		(1.5,2);
	\draw (2,-2.5) node[below]{\small $x$}
		--
		(2, -1.5)   
		--
		(2,1)
		.. controls (2,1.5) and (1.5,1.5) ..
		(1.5,2)
		--
		(1.5,3) node[midway,rrect]{\small $\chcob_{\bi'\bi}$}
		-- (1.5,3.5);
}
\ar{d}{\omega_2}
\\
\tikzdiag[yscale=.5,xscale=.75]{0}{
	\draw (1,-2.5) node[below]{\small $y$}
		-- 
		(1,-1.5)   node[near end,rrect]{\small $\chcob_g$}
		-- 
		(1,.5)   node[near end,rrect]{\small $\chcob_{\bj'}$}
		--
		(1,1)
		.. controls (1,1.5) and (1.5,1.5) ..
		(1.5,2);
	\draw (2,-6.5) node[below]{\small $x$}
		--
		(2, -5.5)   node[near end,rrect]{\small $\chcob_f$}
		--
		(2, -3.5)   node[near end,rrect]{\small $\chcob_{\bj}$}
		--
		(2,1)
		.. controls (2,1.5) and (1.5,1.5) ..
		(1.5,2)
		--
		(1.5,3) node[midway,rrect]{\small $\chcob_{\bj',\bj}$}
		-- (1.5,3.5);
}
\ar[swap]{r}{h_{\bj',\bj} \circ \compMap{\F_{\bj'}}{\F_{\bj}}} 
&
\tikzdiag[yscale=.5,xscale=.75]{0}{
	\draw (1,-.5) node[below]{\small $y$}
		--
		(1,-.5)
		-- 
		(1,.5)   node[near end,rrect]{\small $\chcob_g$}
		--
		(1,1)
		.. controls (1,1.5) and (1.5,1.5) ..
		(1.5,2);
	\draw (2,-2.5) node[below]{\small $x$}
		--
		(2, -1.5)   node[near end,rrect]{\small $\chcob_f$}
		--
		(2,1)
		.. controls (2,1.5) and (1.5,1.5) ..
		(1.5,2)
		--
		(1.5,3) node[midway,rrect]{\small $\chcob_{\bj'\bj}$}
		--  (1.5,3.5);
}
\ar[swap]{r}{\compMap{\chcob_g}{\chcob_f}}
&
\tikzdiag[yscale=.5,xscale=.75]{0}{
	\draw (1,.5) node[below]{\small $y$}
		--
		(1,1)
		.. controls (1,1.5) and (1.5,1.5) ..
		(1.5,2);
	\draw (2,0) node[below]{\small $x$}
		--
		(2,1)
		.. controls (2,1.5) and (1.5,1.5) ..
		(1.5,2)
		--
		(1.5,3) node[midway,rrect]{\small $\chcob_g \bullet \chcob_f$}
		-- (1.5,3.5)
		-- (1.5,4.5) node[midway,rrect]{\small $\chcob_{\bj'\bj}$}
		-- (1.5,5);
}
\end{tikzcd}
\]
Note that for a dot, splitter or crossing with $j \neq \pm 1$, then $\omega_1$ coincides with the change of chronology given by the definition of tensor product of maps in $\BIMOD^{\gradC}$ together with the twist in the definition of the grading shift functors on $\shiftFunct{\F_{\bi'}}(\UtoBfunctor_0(g))$ and $\shiftFunct{\F_{\bi}}(\UtoBfunctor_0(g))$. Similarly, $\omega_2$ coincides with the twist in $\shiftFunct{\F_{\bi'\bi}}((\UtoBfunctor_0(g \circ f)))$. Therefore, we conclude that \cref{eq:FLisonat} $\gradC$-graded commutes in these cases. 
When $f$ (or $g$) is a crossing with $j = i \pm 1$, then $\omega_1$ and $\omega_2$ also carry a coefficient coming from non-trivial changes of chronology that coincides with the $\shiftFunct{H_{\tau_{ji}}}$. 
\end{proof}

Inspired by the definition of a weak 2-functor (also called pseudofunctor), we introduce the following:
\begin{defn}
We say that a $R$-linear bicategory $\cD$ is $\gradC$-graded if any 2-morphism decomposes as a finite sum of 2-morphisms carrying a degree $\chcob^v \in I$, and it is compatible with respect to the horizontal and vertical composition. \\
A \emph{$\gradC$-graded 2-functor}  $P : \cC \rightarrow \cD$ from a bicategory $\cC$ to a $\gradC$-graded bicategory $\cD$ consists of
\begin{itemize}
\item for each object $X \in \cC$, and object $P_X \in \cD$;
\item for each hom-category $\Hom_\cC(Y,X)$, a functor $P_{Y,X} : \Hom_\cC(Y,X) \rightarrow \Hom_\cD(P_Y, P_X)$;
\item for each object $X \in \cC$, an invertible 2-morphism $P_{\id_X} : \id_{P_X} \rightarrow P_{X,X}(\id_X)$;
\item for each each triple $Z,Y,X \in \cC$ an isomorphism 
\[
P_{Z,Y,X}(f,g) : P_{Z,Y}(g) \circ P_{Y,X}(f) \Rightarrow P_{Z,X}(g \circ f),
\]
$\gradC$-graded natural in $g : Z \rightarrow X$ and in $f : Y \rightarrow X$, meaning that the following diagram 
\[
\begin{tikzcd}[column sep = 10ex]
P_{Z,Y}(g) \circ P_{Y,X}(f) 
\ar{r}{P_{Z,Y,X}(f,g)}
\ar[swap]{d}{P_{Z,Y}(\theta) \circ P_{Y,X}(\eta) }
&
P_{Z,X}(g \circ f)
\ar{d}{P_{Z,X}(\theta \circ \eta) }
\\
P_{Z,Y}(g') \circ P_{Y,X}(f') 
\ar[swap]{r}{P_{Z,Y,X}(f',g')}
\ar[Rightarrow,shorten <= 3ex, shorten >= 3ex]{ur}
&
P_{Z,X}(g' \circ f')
\end{tikzcd}
\]
$\gradC$-graded commutes for any pair of 2-morphisms $\theta : g \rightarrow g'$ and $\eta : f \rightarrow f'$. 
\end{itemize}
such that 
\begin{itemize}
\item the diagrams
\begin{equation} \label{eq:diagpseudofunctorL}
\begin{tikzcd}[column sep =10ex]
\id_{P_Y} \circ P_{Y,X}(f) \ar{r}{\lambda_\cD}  \ar[swap]{d}{P_{\id_Y} \circ \id}
&
 P_{Y,X}(f)
 \\
 P_{Y,Y}(\id_Y) \circ P_{Y,X}(f) \ar[swap]{r}{P_{Y,Y,X}(\id_Y, f)}
\ar[Rightarrow,shorten <= 3ex, shorten >= 3ex]{ur}
 &
 P_{Y,X}(\id_Y \circ f) \ar[swap]{u}{P_{Y,X}(\lambda_\cC)}
\end{tikzcd}
\end{equation}
and
\begin{equation} \label{eq:diagpseudofunctorR}
\begin{tikzcd}[column sep =10ex]
P_{Y,X}(f) \circ \id_{P_X} 
\ar{r}{\rho_\cD}
\ar[swap]{d}{P_{Y,X}(f) \circ P_{\id_X}}
&
P_{Y,X}(f)
\\
P_{Y,X}(f) \circ P_{X,X}(\id_X) 
\ar[swap]{r}{P_{Y,X,X}(f,\id_X)}
\ar[Rightarrow,shorten <= 3ex, shorten >= 3ex]{ur}
&
P_{Y,X}(f \circ \id_X)
\ar[swap]{u}{P_{Y,X}(\rho_\cC)}
\end{tikzcd}
\end{equation}
both $\gradC$-graded commute;
\item the diagram
\begin{equation} \label{eq:diagpseudofunctorHex}
\begin{tikzcd}
\bigl( P_{Z,Y}(h) \circ P_{Y,X}(g) \bigr) \circ P_{X,W}(f)
\ar{r}{\assoc_\cD}
\ar[swap]{d}{P_{Z,Y,X}(h,g) \circ \id}
&
P_{Z,Y}(h) \circ \bigl( P_{Y,X}(g) \circ P_{X,W}(f) \bigr) 
\ar{d}{\id \circ P_{Y,X,W}(g,f)}
\\
P_{Z,X}(h \circ g) \circ P_{X,W}(f)
\ar[swap]{d}{P_{Z,X,W}(h \circ g, f)}
& 
P_{Z,Y}(h) \circ P_{Y,W}(g \circ f)
\ar{d}{P_{Z,Y,W}(h, g \circ f)}
\\
P_{Z,W}((h \circ g) \circ f)
\ar[swap]{r}{P_{Z,W}(\assoc_\cC)}
\ar[Rightarrow,shorten <= 8ex, shorten >= 8ex]{uur}
&
P_{Z,W}(h \circ (g \circ h))
\end{tikzcd}
\end{equation}
$\gradC$-graded commutes for each quadruple $Z,Y,X,W \in \cC$.
\end{itemize}
with $\lambda_\cC$, $\rho_\cC$ and $\assoc_\cC$ are the left unitor, the right unitor and the associator in $\cC$ respectively.
\end{defn}

\begin{thm}
The assignment
\[
\UtoBfunctor : \oddKCeven \rightarrow \BIMODH
\] is a  $\gradC$-graded 2-functor. 
\end{thm}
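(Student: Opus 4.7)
The proof reduces to verifying the structural axioms of a $\gradC$-graded 2-functor, since the assignment on objects, 1-morphisms and 2-morphisms has been defined, and the hom-category functoriality $P_{\bw, \bw'}$ is already established by \cref{lem:onefunctorUtoB}. Thus the remaining work consists of producing the unit 2-isomorphism $P_{\id_\bw}$, the compositor $\UtoBfunctor(\bj, \bi)$ with its $\gradC$-graded naturality, and verifying the coherence diagrams \cref{eq:diagpseudofunctorL}, \cref{eq:diagpseudofunctorR}, and \cref{eq:diagpseudofunctorHex}. The compositor is defined just above and its $\gradC$-graded naturality in each argument is precisely \cref{lem:FLijisnatural}.

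For the unit isomorphism $P_{\id_\bw}$, observe that $\ladder(\id_\bw)$ contains no rungs, so $\ladderToTangle(\id_\bw) = 1_{s(\bw)/2}$ and $\UtoBfunctor(\id_\bw) = \tqft(1_{s(\bw)/2}) = H^{s(\bw)/2}$, with no grading shift since the normalization algorithm produces no pairs of \cref{eq:normalization1}/\cref{eq:normalization2} to resolve. We take $P_{\id_\bw}$ to be this canonical identification of the regular bimodule with $\UtoBfunctor(\id_\bw)$. The two unit axioms \cref{eq:diagpseudofunctorL} and \cref{eq:diagpseudofunctorR} then reduce to the unitality computation in \cref{prop:Hnalgebra} for $\mu[1_n, t]$ and $\mu[t, 1_n]$, combined with the observation that both $h_{\id_\bw, \bi}$ and $h_{\bi, \id_\bw}$ are identities (no new invisible/visible mismatches appear when gluing with the trivial ladder) and that the compatibility map with the neutral shift $\beta_{\neutralElement, -}$ reproduces exactly the unitor scalars entering the definition of the $\gradC$-graded algebra structure.

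The pentagon axiom \cref{eq:diagpseudofunctorHex} is the main substance of the proof. Unwinding all definitions, each of the two paths around the pentagon decomposes as a composition of four ingredients: (i)~the shift compatibility map $\beta$ for the relevant tuples of shifts, (ii)~the tangle composition map $\mu[-,-]$ on the underlying flat tangles, (iii)~the normalization isomorphism $h$ correcting the gluing, and (iv)~the associator in $\BIMOD_q(H^\bullet, H^\bullet)$. The $\gradC$-graded commutation of the pentagon will follow by combining three coherence results already in place: \cref{eq:shiftingsystcompdiag}, which controls how the $\beta$'s recompose under triple horizontal composition; \cref{lem:flattanglecompisoHex}, which records that $\mu$ is associative exactly up to $\assoc$; and the fact that the two normalization isomorphisms $h_{\bk, \bj\bi} \circ (1 \otimes h_{\bj,\bi})$ and $h_{\bk\bj, \bi} \circ (h_{\bk,\bj} \otimes 1)$ are related by a locally vertical change of chronology whose scalar is, by \cref{prop:locvertchange}, precisely $\assoc_1$ from the definition of the associator on $\gradC$.

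The hard part will be bookkeeping: checking that after pictorially reorganising each half of the pentagon in the style of \cref{eq:pictdefbeta}, the total scalar contributed by the $\beta$'s, by $\mu$, by the normalization changes, and by the ambient bimodule associator, multiplies out to exactly $\assoc(|\F_\bk|, |\F_\bj|, |\F_\bi|)$ as computed in \cref{prop:gradCassoc}. The strategy is to fix a triple $\F_\bk \F_\bj \F_\bi$, split each path into the same four layers, and apply \cref{prop:locvertchange} to identify the resulting chronological defects with the $\assoc_1$ factor, while the $\assoc_2$ factor emerges from the compatibility condition \cref{eq:shiftingsystcomp} applied to the three shifts $\shiftFunct{\F_\bk}, \shiftFunct{\F_\bj}, \shiftFunct{\F_\bi}$; the remaining $\lambda_R$ scalars coming from $h$ and from $\mu$ cancel pairwise by \cref{lem:splitcounting}. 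The unit axioms then follow as a degenerate case of this analysis.
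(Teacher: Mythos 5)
Your proof follows essentially the same route as the paper's, which is itself very terse. The decomposition of the compositor $\UtoBfunctor(\bj,\bi)$ into the four layers $\beta$, $\mu$, $h$, and the ambient associator is exactly what the paper's commutative diagram encodes, and invoking \cref{eq:shiftingsystcompdiag}, \cref{lem:flattanglecompisoHex}, and \cref{prop:locvertchange} to assemble them is the same coherence argument the paper compresses into a single sentence ("by the coherence conditions of the compatibility maps"). Your treatment of the unit axioms goes through the unitality of $H^n$ (\cref{prop:Hnalgebra}) together with the triviality of $h$ and of the neutral-shift compatibility maps, whereas the paper cites \cref{defn:shiftedbim} directly; these are equivalent unwindings, since the left action on a shifted bimodule is defined precisely by the neutral-shift compatibility map applied to $\mu[1_n,t]$.

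One imprecision worth flagging: you attribute the $\assoc_1$ factor to the locally vertical change of chronology relating $h_{\bk,\bj\bi}\circ(1\otimes h_{\bj,\bi})$ and $h_{\bk\bj,\bi}\circ(h_{\bk,\bj}\otimes 1)$. But $\assoc_1$ is, by definition (\cref{eq:lambda1diag}), the scalar of the change of chronology between the two bracketings of the \emph{surgery} cobordisms $\chcob_{cba}(t',t)$ underlying $\mu$; it is therefore already absorbed into the ambient associator $\assoc_\gradC$ of $\Bimod^\gradC$ (equivalently, into the associativity of $\mu$ via \cref{lem:muassoc} and \cref{lem:flattanglecompisoHex}). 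The change of chronology relating the two $h$-compositions is a separate locally vertical change — it appears as the extra $\shiftFunct{H}$ on the top edge of the paper's diagram — whose scalar is not $\assoc_1$ but a compatibility of the normalization cobordisms $\chcob_{\bj,\bi}^{v_{\bj,\bi}}$. Conflating these two would double-count the $\assoc_1$ factor; the correct bookkeeping is that the $\mu$-layer produces $\assoc$, the $\beta$-layer satisfies \cref{eq:shiftingsystcompdiag}, and the $h$-layer is reconciled by its own locally vertical change of chronology, independently of $\assoc_1$.
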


\begin{proof}
If we denote $\UtoBfunctor$ as $P$ in the definition of a $\gradC$-graded 2-functor, then we have 
\begin{itemize}
\item $P_\bw := H^{s(\bw)/2}$, 
\item $P_{\bw',\bw}$ is the 1-functor from \cref{lem:onefunctorUtoB}, 
\item $P_{\id_\bw} : H^{s(\bw)/2} \rightarrow \UtoBfunctor(1_\bw) \cong H^{s(\bw)/2}$ is basically the identity,
\item $P_{\bw'',\bw',\bw}(\F_\bj, \F_\bi) := \UtoBfunctor(\bj, \bi)$. 
\end{itemize}
Commutativity of \cref{eq:diagpseudofunctorL} and of \cref{eq:diagpseudofunctorR} is immediate by definition of left and right action on a shifted bimodule (see \cref{defn:shiftedbim}).

By the coherence conditions of the compatibility maps, we obtain that the following diagram commutes:
\[
\begin{tikzcd}[column sep = 10ex]
\tikzdiag[yscale=.5,xscale=.75]{0}{
	\draw (0,0)
		--
		(0,0)
		--
		(0,1)   node[near end,rrect]{\small ${\bi''}$}
		.. controls (0,1.5) and (.5,1.5) ..
		(.5,2)
		.. controls (.5,2.5) and (1.25,2.5) ..
		(1.25,3);
	\draw (1,-1)
		--
		(1,-1)
		-- 
		(1,0)   node[near end,rrect]{\small ${\bi'}$}
		--
		(1,1)
		.. controls (1,1.5) and (.5,1.5) ..
		(.5,2);
	\draw (2,-2)
		--
		(2, -1)   node[near end,rrect]{\small ${\bi}$}
		--
		(2,2)
		.. controls (2,2.5) and (1.25,2.5) ..
		(1.25,3) 
		-- 
		(1.25,4) node[midway,rrect]{\small $({\bi'',\bi'}) \bullet 1$}
		-- 
		(1.25,6) node[midway,rrect]{\small $({\bi''\bi',\bi}) $};
}
\ar[swap]{d}{h_{\bi''\bi',\bi} \circ  h_{\bi'',\bi'} \circ \compMap{\bi''\bi'}{\bi} \circ \compMap{\bi''}{\bi'} }
\ar{r}{ \assoc_\gradC \circ \shiftFunct{H}}
&
\tikzdiag[yscale=.5,xscale=.75]{0}{
	\draw (0,2) 
		--
		(0,2.5)  node[near end,rrect]{\small $\bi''$}
		.. controls (0,3) and (.75,3) ..
		(.75,3.5);
	\draw (1,-.5)
		-- 
		(1,.5)   node[near end,rrect]{\small $\bi'$}
		--
		(1,1)
		.. controls (1,1.5) and (1.5,1.5) ..
		(1.5,2);
	\draw (2,-1.5)
		--
		(2, -.5)   node[near end,rrect]{\small $\bi$}
		--
		(2,1)
		.. controls (2,1.5) and (1.5,1.5) ..
		(1.5,2)
		--
		(1.5,2.5)
		.. controls (1.5,3) and (.75,3) .. 
		(.75,3.5)
		-- 
		(.75,4.5) node[midway,rrect]{\small $1 \bullet ({\bi',\bi})$}
		-- 
		(.75,6.5) node[midway,rrect]{\small $({\bi'',\bi'\bi}) $};
}
\ar{d}{h_{\bi'',\bi'\bi} \circ h_{\bi',\bi} \circ \compMap{\bi''}{\bi'\bi} \circ \compMap{\bi'}{\bi}}
\\
\tikzdiag[yscale=.5,xscale=.75]{0}{
	\draw (0,0)
		--
		(0,0)
		--
		(0,1) 
		.. controls (0,1.5) and (.5,1.5) ..
		(.5,2)
		.. controls (.5,2.5) and (1.25,2.5) ..
		(1.25,3);
	\draw (1,-1)
		--
		(1,-1)
		-- 
		(1,0)   
		--
		(1,1)
		.. controls (1,1.5) and (.5,1.5) ..
		(.5,2);
	\draw (2,-2)
		--
		(2, -1)   
		--
		(2,2)
		.. controls (2,2.5) and (1.25,2.5) ..
		(1.25,3) 
		-- 
		(1.25,4) node[midway,rrect]{\small $\bi''\bi'\bi$}
		-- (1.25,4.5);
}
\ar[swap]{r}{\assoc_\gradC}
&
\tikzdiag[yscale=.5,xscale=.75]{0}{
	\draw (0,2) 
		--
		(0,2.5)  
		.. controls (0,3) and (.75,3) ..
		(.75,3.5);
	\draw (1,-.5)
		-- 
		(1,.5)   
		--
		(1,1)
		.. controls (1,1.5) and (1.5,1.5) ..
		(1.5,2);
	\draw (2,-1.5)
		--
		(2, -.5)  
		--
		(2,1)
		.. controls (2,1.5) and (1.5,1.5) ..
		(1.5,2)
		--
		(1.5,2.5)
		.. controls (1.5,3) and (.75,3) .. 
		(.75,3.5)
		-- 
		(.75,4.5) node[midway,rrect]{\small $\bi''\bi'\bi$}
		-- 
		(.75,5);
}
\end{tikzcd}
\]
where we denoted by $\bi$ the grading shift $\shiftFunct{\F_\bi}$ and $\bi',\bi$ for $\chcob_{\bi',\bi}^{v_{\bi',\bi}}$ etc, and where 
\[
\shiftFunct{H} : \chcob_{\bi''\bi',\bi}^{v_{\bi''\bi',\bi}} \circ (\chcob_{\bi',\bi}^{v_{\bi',\bi}} \bullet 1) 
\Rightarrow
\chcob_{\bi'',\bi'\bi}^{v_{\bi'',\bi'\bi}} \circ (1 \bullet \chcob_{\bi',\bi}^{v_{\bi',\bi}}).
\]
Since these are all the coefficients appearing in the definition of $\UtoBfunctor(\bj, \bi)$, we conclude  \cref{eq:diagpseudofunctorHex} $\gradC$-graded commutes. 
\end{proof}

\begin{prop}
The $\gradC$-graded 2-functor $\UtoBfunctor$ is faithful (at the level of 2-hom spaces). 
\end{prop}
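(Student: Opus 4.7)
The plan is to establish faithfulness by exhibiting a spanning set for each 2-hom space $\HOM(\F_\bi, \F_\bj)$ over $R$ and showing that its image under $\UtoBfunctor$ consists of $R$-linearly independent bimodule maps. First I would use the defining relations of $\cR$, namely the mixed nil-Hecke-type relations \cref{eq:doubleR,eq:nilHecke,eq:RiiiR3,eq:RijkR3,eq:RiijR3,eq:RijiR3} together with the splitter relations \cref{eq:doublesplitters,eq:splitcross,eq:splitdcross}, to reduce any diagram in $1_\bj \cR 1_\bi$ to a normal form: a minimal-crossing diagram realizing a chosen reduced expression for the permutation matching $\ladderToTangle(\F_\bi)$ with $\ladderToTangle(\F_\bj)$, decorated by dots on each strand. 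Each reduction step introduces only invertible scalars in $R^\times$, so the procedure terminates and produces a finite spanning set of $\HOM(\F_\bi, \F_\bj)$ as an $R$-module, indexed by the same data as the Brundan--Stroppel basis in the even case.

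For the linear independence of the images, I would argue by specialization. Setting $X = Y = Z = 1$ specializes $R$ to $\bZ$, collapses the $\gradC$-graded structure to the ordinary $\bZ$-grading, identifies each $H^n$ with the classical Khovanov arc algebra, and identifies $\UtoBfunctor$ with the Brundan--Stroppel 2-action of~\cite{brundanstroppel3}, which is known to be faithful on 2-hom spaces. An $R$-linear dependence $\sum_D c_D \UtoBfunctor(D) = 0$ among the images of the basis diagrams $D$, with $c_D \in R$, would after specialization yield the analogous dependence in the even case, forcing the $\bZ$-image of each $c_D$ to vanish; by exploiting the $\bZ \times \bZ$-homogeneity of the individual $\UtoBfunctor(D)$, together with iterated specializations (for instance $Y = -1$, recovering the odd 2-action of~\cite{pedrodd}), this propagates to force every monomial coefficient, and hence every $c_D$ itself, to vanish.

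The main obstacle I anticipate is step one: ensuring that the reduction to normal form genuinely exhausts the hom-space over $R$ and that the resulting spanning set matches its even-case counterpart up to invertible monomial rescalings. The chronological ambiguity in composing elementary cobordisms, combined with the grading-shift normalizations used in the definition of $\UtoBfunctor$ on ladder diagrams, requires a careful synchronization between the scalars produced by the relations in $\cR$ and the compatibility maps $\beta$, $\gamma$, $\Xi$ of the shifting 2-system on $\gradC$. Much of this bookkeeping is already handled by \cref{lem:onefunctorUtoB}, but establishing an honest $R$-basis (rather than merely a well-defined 2-functor) will require ruling out any additional hidden relations coming from chronological scalars, e.g.\ by running the reduction argument parallel to the analogous argument for the odd KLR algebra in~\cite{pedrodd}. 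Once this combinatorial input is in place, the specialization argument above completes the proof.
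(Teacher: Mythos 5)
Your proposal takes the same fundamental route as the paper's proof: specialize $X=Y=Z=1$, identify $\UtoBfunctor$ with the Brundan--Stroppel 2-action of~\cite{brundanstroppel3}, and pull faithfulness back to $R$. The paper's actual proof is only two sentences: it invokes the fully-faithfulness of the specialized functor and then asserts injectivity before specialization. You are filling in the argument the paper leaves implicit, and you correctly identify a genuine subtlety that the paper glosses over: because $R = \bZ[X,Y,Z^{\pm 1}]/(X^2=Y^2=1)$ has zero divisors (e.g.\ $(X-1)(X+1)=0$), an $R$-linear map whose reduction mod $(X-1,Y-1,Z-1)$ is injective need not itself be injective. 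So ``specialized faithful $\Rightarrow$ faithful'' is not a formal implication, and some structure (homogeneity, freeness of the hom-spaces, or analysis of which coefficients can occur) must be brought in. Your plan to construct an $R$-spanning set of $\HOM(\F_\bi,\F_\bj)$ by a normal-form reduction, and then argue linear independence of the images, is the right shape of argument and is substantially more work than the paper carries out.

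One point where your plan does not yet close: the ``iterated specializations'' step handles the torsion units $X,Y$ (since $X^2=Y^2=1$, the two specializations $X\mapsto\pm 1$ together kill any $\bZ[X]/(X^2-1)$-coefficient, and likewise for $Y$), but $Z$ is a free Laurent variable with no relation, so finitely many numeric specializations cannot force a $\bZ[Z^{\pm 1}]$-coefficient to vanish. You will need a different mechanism for $Z$ — for instance, observing that the $\bZ\times\bZ$-grading fixes the exponent of $Z$ appearing in any coefficient acting on a given homogeneous basis element of the target bimodule-hom space, so that working degree-by-degree pins down the $Z$-power; or, more structurally, showing that the relevant graded pieces of $\Hom_{\BIMOD_q}$ are free $R$-modules so that a nonzero element can't be annihilated by a nonzero scalar unless that scalar is a zero divisor, and then ruling out the zero-divisor case using the $X,Y$-specializations. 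You flag this region as ``the main obstacle'' and that assessment is accurate — but the gap is in the $Z$-direction specifically, and your current mechanism for it (more specializations) will not work.

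In short: same approach as the paper, considerably more detailed and more honest about the difficulties; the $Z$-variable issue in your linear-independence step is a real hole that needs a grading or freeness argument rather than more specializations.
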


\begin{proof}
By the results in~\cite{brundanstroppel3}, we know that $\UtoBfunctor$ is fully faithful after specializing $X=Y=Z=1$. Thus, it is injective at the level of 2-morphisms before specialization. 
\end{proof}

\subsubsection{Odd tangle invariant}

In particular, by specializing $X = Z=1$ and $Y=-1$, we obtain a 2-action of the cyclotomic odd half 2-Kac--Moody from~\cite{pedrodd} on the bicategory of quasi-associative bimodules over the odd arc algebra. Moreover, the complex of 1-morphisms constructed in the reference for an $(m,n)$-tangle $T$ coincides with $\kh(T)$ after applying the 2-functor $\UtoBfunctor$. 
Thus, showing that $\UtoBfunctor$ is full would be enough to prove that the invariant constructed in~\cite{pedrodd} coincides with ours, and in particular with the odd Khovanov homology~\cite{ORS} for links thanks to \cref{prop:recoverKh}.



\bibliographystyle{bibliography/habbrv}


\begin{thebibliography}{10}

\bibitem{quasialgebra}
H.~Albuquerque and S.~Majid.
\newblock Quasialgebra structure of the octonions.
\newblock {\em J. Algebra}, 220(1):188–224, 1999.

\bibitem{supermonoidal}
J.~Brundan and A.~P. Ellis.
\newblock Monoidal supercategories.
\newblock {\em Commun. Math. Phys.}, 351:1045--1089, 2017.

\bibitem{brundanstroppel2}
J.~Brundan and C.~Stroppel.
\newblock Highest weight categories arising from {K}hovanov's diagram algebra
  {II}: {K}oszulity.
\newblock {\em Transform. Groups}, 15:1--45, 2010.

\bibitem{brundanstroppel1}
J.~Brundan and C.~Stroppel.
\newblock Highest weight categories arising from {K}hovanov's diagram algebra
  {I}: cellularity.
\newblock {\em Mosc. Math. J.}, 11(4):685--722, 821--822, 2011.

\bibitem{brundanstroppel3}
J.~Brundan and C.~Stroppel.
\newblock Highest weight categories arising from {K}hovanov's diagram algebra
  {III}: category {$\mathcal O$}.
\newblock {\em Represent. Theory}, 15:170–243, 2011.

\bibitem{brundanstroppel4}
J.~Brundan and C.~Stroppel.
\newblock Highest weight categories arising from {K}hovanov's diagram algebra
  {IV}: the general linear supergroup.
\newblock {\em J. Eur. Math. Soc.}, 14(3):373--419, 2012.

\bibitem{chenkhovanov}
Y.~Chen and M.~Khovanov.
\newblock An invariant of tangle cobordisms via subquotients of arc rings.
\newblock {\em Fund. Math.}, 225(1):23--44, 2014.

\bibitem{centeralgebra}
A.~Davydov.
\newblock Centre of an algebra.
\newblock {\em Adv. Math.}, 225(1):319--348, 2010.

\bibitem{oddspringer}
J.~N. Eberhardt, G.~Naisse, and A.~Wilbert.
\newblock Real {S}pringer fibers and odd arc algebras.
\newblock 2020, 2003.07297.

\bibitem{oddnh}
A.~Ellis, M.~Khovanov, and A.~D. Lauda.
\newblock The odd nil{H}ecke algebra and its diagrammatics.
\newblock {\em Int. Math. Res. Not.}, (4):991--1062, 2014.

\bibitem{oddslt}
A.~Ellis and A.~Lauda.
\newblock An odd categorification of {$U_q(\mathfrak{sl}_2)$}.
\newblock {\em Quantum Topol.}, 7(2):329--433, 2016.

\bibitem{naissevaz}
N.~Gr\'egoire and P.~Vaz.
\newblock Odd {K}hovanov's arc algebra.
\newblock {\em Fund. Math.}, 241(2):143--178, 2018.

\bibitem{igusa}
K.~Igusa.
\newblock The space of framed functions.
\newblock {\em Trans. Amer. Math. Soc.}, 301(2):431--477, 1987.

\bibitem{supercat1}
S.-J. Kang, M.~Kashiwara, and S.-J. Oh.
\newblock Supercategorification of quantum {K}ac--{M}oody algebras.
\newblock {\em Adv. Math.}, 242:116--162, 2013.

\bibitem{supercat2}
S.-J. Kang, M.~Kashiwara, and S.-J. Oh.
\newblock Supercategorification of quantum {K}ac--{M}oody algebras {II}.
\newblock {\em Adv. Math.}, 265:169--240, 2014.

\bibitem{keller}
B.~Keller.
\newblock Deriving {DG} categories.
\newblock {\em Ann. Sci. \'Ecole Norm. Sup.}, 27(1):63--102, 1994.

\bibitem{khovanov00}
M.~Khovanov.
\newblock A categorification of the {J}ones polynomial.
\newblock {\em Duke Math. J.}, 101(3):359--426, 2000, 9908171.

\bibitem{khovanovHn}
M.~Khovanov.
\newblock A functor-valued invariant of tangles.
\newblock {\em Algebr. Geom. Topol.}, 2:665--741, 2002.

\bibitem{khovanovlauda3}
M.~Khovanov and A.~D. Lauda.
\newblock A categorification of quantum $sl(n)$.
\newblock {\em Quantum Topol.}, 1(1):1--92, 2010.

\bibitem{khovanovlauda2}
M.~Khovanov and A.~D. Lauda.
\newblock A diagrammatic approach to categorification of quantum groups ii.
\newblock {\em Trans. Amer. Math. Soc.}, 363(5):2685--2700, 2011.

\bibitem{ORS}
P.~Ozsv\'ath, J.~Rasmussen, and Z.~Szab\'o.
\newblock Odd {K}hovanov homology.
\newblock {\em Alg. Geom. Top.}, 13:1465--1488, 2013.

\bibitem{putyra14}
K.~Putyra.
\newblock A 2-category of chronological cobordisms and odd {K}hovanov homology.
\newblock {\em Banach Center Publ.}, 103:291--355, 2014, 1310.1895v2.

\bibitem{rouquier}
R.~Rouquier.
\newblock 2-{K}ac-{M}oody algebras.
\newblock 2008, 0812.5023v1.

\bibitem{stroppelalgebra}
C.~Stroppel.
\newblock Parabolic category {$\mathcal{O}$}, perverse sheaves on
  {G}rassmannians, {S}pringer fibres and {K}hovanov homology.
\newblock {\em Compos. Math.}, 145(4):954--992, 2009.

\bibitem{stroppelwebster12}
C.~Stroppel and B.~Webster.
\newblock 2-block {S}pringer fibers: convolution algebras and coherent sheaves.
\newblock {\em Comment. Math. Helv.}, 87(2):477--520, 2012.

\bibitem{pedrodd}
P.~Vaz.
\newblock Not even {K}hovanov homology.
\newblock 2019, 1909.03913v1.

\end{thebibliography}


\end{document}